\newcommand{\A}{{\mathbb{A}}}
\newcommand{\N}{{\mathbb{N}}}
\newcommand{\Z}{{\mathbb{Z}}}
\newcommand{\Q}{{\mathbb{Q}}}
\newcommand{\R}{\mathbb{R}}			
\newcommand{\C}{\mathbb{C}}
\newcommand{\F}{{\mathbb F}}
\newcommand{\G}{{\mathbb G}}
\newcommand{\Qp}{{\mathbb{Q}_p}}		
\renewcommand{\AA}{{\mathbb A}}
\newcommand{\PP}{{\mathbb P}}
\newcommand{\aank}{{\mathbb{A}^{1,\mathrm{an}}_k}}	
\newcommand{\pank}{{\mathbb{P}^{1,\mathrm{an}}_k}}	
\newcommand{\aanwka}{{\mathbb{A}^{1,\mathrm{an}}_{\widehat{k^a}}}}	 
\newcommand{\panwka}{{\mathbb{P}^{1,\mathrm{an}}_{\widehat{k^a}}}} 
\newcommand{\pana}[1]{{\mathbb{P}^{1,\mathrm{an}}_{#1}}}
\newcommand{\calA}{{\mathcal A}}
\newcommand{\calB}{{\mathcal B}}
\newcommand{\calC}{{\mathcal C}}
\newcommand{\calD}{{\mathcal D}}
\newcommand{\calH}{{\mathcal H}}
\newcommand{\calI}{{\mathcal I}}
\newcommand{\calJ}{{\mathcal J}}
\newcommand{\calO}{{\mathcal O}}
\newcommand{\calS}{{\mathcal S}}
\newcommand{\calT}{{\mathcal T}}
\newcommand{\calX}{{\mathcal X}}
\newcommand{\scrH}{\mathscr H}		
\newcommand{\scrC}{\mathscr C}
\newcommand{\id}{\mathrm{id}}
\newcommand{\an}{\mathrm{an}}
\newcommand\wc{{\mkern 2mu\cdot\mkern 2mu}}
\newcommand\va{|\wc|}
\newcommand\nm{\|\wc\|}
\newcommand\eps{\varepsilon}
\renewcommand\le{\leqslant}
\renewcommand\ge{\geqslant}
\newcommand\st{\mathrm{st}}
\newcommand\wka{\widehat{k^a}}
\newcommand\too{\longrightarrow}
\newcommand\mapstoo{\longmapsto}
\newcommand\simtoo{\overset{\sim}{\too}} 
\newcommand{\cn}[2]{\ensuremath{\llbracket{#1},{#2}\rrbracket}}
\DeclareMathOperator{\Spec}{Spec}
\DeclareMathOperator{\Hom}{Hom}
\DeclareMathOperator{\Mod}{Mod}
\DeclareMathOperator{\Char}{char}
\DeclareMathOperator{\Aut}{Aut}
\DeclareMathOperator{\Gal}{Gal}
\DeclareMathOperator{\GL}{GL}
\DeclareMathOperator{\PGL}{PGL}
\DeclareMathOperator{\trdeg}{tr. deg.}
\DeclareMathOperator{\ord}{ord}
\newtheoremstyle{plain2}    
  {}            
  {}            
  {\itshape}    
  {}            
  {\bfseries}   
  {.}           
  {5pt plus 1pt minus 1pt}  
  {{\thmnumber{(#2)} \thmname{#1}{\thmnote{ (#3)}}}}          
\newtheorem{theorem}{Theorem}[section]
\newtheorem{corollary}[theorem]{Corollary}
\newtheorem{lemma}[theorem]{Lemma}
\newtheorem{proposition}[theorem]{Proposition}
\newtheoremstyle{definition2}    
  {}   
  {}   
  {\normalfont}  
  {}       
  {\bfseries} 
  {.}        
  {5pt plus 1pt minus 1pt} 
  {{(\thmnumber{#2}) \thmname{#1}{\thmnote{#3}}}}          
\theoremstyle{definition}
\newtheorem{definition}[theorem]{Definition}
\newtheorem{notation}[theorem]{Notation}
\newtheorem*{notation*}{Notation}
\newtheorem*{ack}{Acknowledgments}
\newtheorem{example}[theorem]{Example}
\theoremstyle{remark}
\newtheorem{remark}[theorem]{Remark}
\newtheoremstyle{stepstyle}
  {}     {}   
  {\normalfont}  
  {\parindent}       
  {\itshape} 
  {}         
  {5pt plus 1pt minus 1pt} 
  {{\thmname{#1} \thmnumber{#2}:{\thmnote{#3}}}}          
\theoremstyle{stepstyle}
\newtheoremstyle{point}
  {}     {}   
  {\normalfont}  
  {}       
  {\bfseries} 
  {}         
  {5pt plus 1pt minus 1pt} 
  {{\thmname{#1}\thmnumber{#2}.\thmnote{ #3.}}}          
\theoremstyle{point}
\newtheorem{point}[subsection]{}
\newtheoremstyle{point*}
  {}     {}   
  {\normalfont}  
  {}       
  {\bfseries} 
  {}         
  {5pt plus 1pt minus 1pt} 
  {{\thmname{#1}\thmnote{ #3.}}}          
\theoremstyle{point*}
\newtheorem{point*}[subsubsection]{}
\numberwithin{equation}{subsection}
\newtheoremstyle{subpoint}
  {}     {}            
  {\normalfont}  
  {}                   
  {\normalfont} 
  {}         
  {5pt plus 1pt minus 1pt} 
  {{\thmname{#1}(\thmnumber{#2})\thmnote{ #3.}}}          
\theoremstyle{subpoint}
\newtheorem{subpoint}[equation]{}
\tikzset{
  htree leaves/.initial=4,
  sibling angle/.initial=40,
  htree level/.initial={}
}
\def\htree@growth{%
  \pgftransformrotate{%
    (\pgfkeysvalueof{/tikz/sibling angle})*(.2-.4*\tikznumberofchildren
      +.5*\tikznumberofcurrentchild)}%
  \pgftransformxshift{\the\tikzleveldistance}%
  \pgfkeysvalueof{/tikz/htree level}%
}
\tikzstyle{htree}=[
\tikzstyle{btree}=[
\long\def\ge@addto@macro#1#2{%
  \begingroup
  \toks@\expandafter\expandafter\expandafter{\expandafter#1#2}%
  \xdef#1{\the\toks@}%
  \endgroup}
\newcommand{\htree}[2][]{%
  \def\htree@start{\noexpand\coordinate}
  \def\htree@end{}
  \foreach \l in {0,...,#2} {
    \g@addto@macro\htree@start{child foreach \noexpand\x in {1,2,3,4,5} {\iffalse}\fi}
    \g@addto@macro\htree@end{\iffalse{\fi}}
    \global\let\htree@start\htree@start
    \global\let\htree@end\htree@end
  }
  \edef\htree@cmd{\htree@start\htree@end;}
  \begin{scope}[htree,#1]
  \htree@cmd
  \end{scope}
}
\title{Berkovich curves and Schottky uniformization}
\author{J\'er\^ome Poineau}
\address{Laboratoire de math\'ematiques Nicolas Oresme, Universit\'e de Caen - Normandie, France}
\email{\href{mailto:jerome.poineau@unicaen.fr}{jerome.poineau@unicaen.fr}}
\urladdr{\url{https://poineau.users.lmno.cnrs.fr/}}
\author{Daniele Turchetti}
\address{Dalhousie University, Department of Mathematics \& Statistics, Halifax, Nova Scotia, Canada}
\email{\href{mailto:daniele.turchetti@dal.ca}{daniele.turchetti@dal.ca}}
\urladdr{\url{https://www.mathstat.dal.ca/~dturchetti/}}
\date{\today}
\@date \else {\vskip3ex \centering\footnotesize\@date\par\vskip1ex}\fi
\else \@footnotetext{\@setdate}\fi}
\begin{document}

\begin{abstract}
This text is an exposition of non-Archimedean curves and Schottky uniformization from the point of view of Berkovich geometry. It consists of two parts, the first one of an introductory nature, and the second one more advanced.
The first part is meant to be an introduction to the theory of Berkovich spaces focused on the case of the affine line. We define the Berkovich affine line and present its main properties, with many details: classification of points, path-connectedness, metric structure, variation of rational functions, etc. Contrary to many other introductory texts, we do not assume that the base field is algebraically closed.
The second part is devoted to the theory of Mumford curves and Schottky uniformization. We start by briefly reviewing the theory of Berkovich curves, then introduce Mumford curves in a purely analytic way (without using formal geometry). We define Schottky groups acting on the Berkovich projective line, highlighting how geometry and group theory come together to prove that the quotient by the action of a Schottky group is an analytic Mumford curve. Finally, we present an analytic proof of Schottky uniformization, showing that any analytic Mumford curves can be described as a quotient of this kind. 
The guiding principle of our exposition is to stress notions and fully prove results in the theory of non-Archimedean curves that, to our knowledge, are not fully treated in other texts.
\end{abstract}

\maketitle

\setcounter{tocdepth}{1}

\vspace{-.5cm}

\tableofcontents



\section*{Introduction}

%

The purpose of the present survey is to provide an introduction to non-Archimedean analytic geometry from the perspective of uniformization of curves.
The main characters of this compelling story are \emph{analytic curves over a non-Archimedean complete valued field} $(k,\va)$, and \emph{Schottky groups}.
The main difficulty in establishing a theory of non-Archimedean analytic spaces over~$k$ is that the natural topology induced over $k$ by the absolute value~$\va$ gives rise to totally disconnected spaces, that are therefore not suitable for defining analytic notions, such as that of a function locally expandable in power series.


However, in the late 1950's, J.~Tate managed to develop the basics of such a theory (see \cite{TateRigid}), and christened the resulting spaces under the name \emph{rigid analytic spaces}. To bypass the difficulty mentioned above, those spaces are not defined as usual topological spaces, but as spaces endowed with a so-called Grothendieck topology: some open subsets and some coverings are declared admissible and are the only ones that may be used to define local notions. For instance, one may define an analytic function by prescribing its restrictions to the members of an admissible covering. We refer to \cite[\S 5.1]{Pellarin} in this volume for a short introduction to rigid analytic spaces.

Towards the end of the 1980's, V.~Berkovich provided another definition of non-Archimedean analytic spaces. One of the advantages of his approach is that the resulting spaces are true topological spaces, endowed with a topology that makes them especially nice: they are Hausdorff, locally compact, and locally path-connected. This is the theory that we will use in these notes.

In the case of curves, one can combine topology, algebra, and combinatorics to get a very satisfactory description of such spaces. 
If $k$ is algebraically closed, for instance, one can show that a smooth compact Berkovich curve $X$ can always be decomposed into a finite graph and an infinite number of open discs. If the genus of~$X$ is positive, there exists a smallest graph satisfying this property. It is classically called the \emph{skeleton} of $X$, an invariant that encodes a surprising number of properties of $X$. As an example, if the Betti number of the skeleton of~$X$ is equal to the genus of~$X$ and is at least~2, then the curve $X$ can be described analytically as a quotient $\Gamma \backslash O$, where $O$ is an open dense subset of the projective analytic line $\pank$ and $\Gamma$ a suitable subgroup of $\PGL_2(k)$. This phenomenon is known as \emph{Schottky uniformization}, and it is the consequence of a celebrated theorem of D. Mumford, which is the main result of \cite{Mumford72}. 

Obviously, D.~Mumford's proof did not make use of Berkovich spaces, as they were not yet introduced at that time, but rather of formal geometry and the theory of Bruhat-Tits trees. A few years later, L.~Gerritzen and M.~van der Put recasted the theory purely in the language of rigid analytic geometry (using in a systematic way the notion of reduction of a rigid analytic curve). We refer the reader to the reference manuscript~\cite{GerritzenPut80} for a detailed account of the theory and related topics, enriched with examples and applications.

In this text, we develop the whole theory of \emph{Schottky groups} and \emph{Mumford curves} from scratch, in a purely analytic manner, relying in a crucial way on the nice topological properties of Berkovich spaces, and the tools that they enable us to use: the theory of proper actions of groups on topological spaces, of fundamental groups, etc. We are convinced that those features, and Berkovich's point of view in general, will help improve our understanding of Schottky uniformization.

\medbreak

These notes are structured as follows. In Part~\ref{part:A1k}, we introduce the Berkovich affine line over a non-Archimedean valued field~$k$ and study its properties. Contrary to several introductory texts, we do not assume that $k$ is algebraically closed. This part is completely introductory, includes many details, and could be read by an undergraduate student with a minimal knowledge of abstract algebra and valuation theory. 
We develop the theory of the Berkovich affine line~$\aank$ over~$k$ starting from scratch: the definition (Section~\ref{sec:underlyingset}), classification of points with several examples (Section~\ref{sec:classification}), basic topological properties, such as local compactness or a description of bases of neighborhoods of points (Section~\ref{sec:topology}), and the definition of analytic functions (Section~\ref{sec:analyticfunctions}). We then move to more subtle aspects of the theory such as extensions of scalars, including a proof that the Berkovich affine line over~$k$ is the quotient of the Berkovich affine line over~$\wka$ (the completion of an algebraic closure of~$k$) by the absolute Galois group of~$k$ (Section~\ref{sec:extension}) and connectedness properties, culminating with the tree structure of~$\aank$ (Section~\ref{sec:connected}). We finally investigate even finer aspects of~$\aank$ by considering virtual discs and annuli, and their retractions onto points and intervals respectively (Section~\ref{sect:retractions}), defining canonical lengths of intervals inside~$\aank$ (Section~\ref{sec:lengths}), and ending with results on variations of rational functions, which are the very first steps of potential theory (Section~\ref{sec:variation}).

In Part~\ref{part:curves}, we investigate the theory of uniformization of curves under the viewpoint of Berkovich geometry. Compared to Part~\ref{part:A1k}, we have allowed ourselves to be sometimes more sketchy, but this should not cause any trouble to anyone familiar enough with the theory of Berkovich curves. We begin again by reviewing standard material. In Section~\ref{sec:BerkprojMobius}, we define the Berkovich projective line $\pank$ over~$k$, consider its group of $k$-linear automorphisms $\PGL_2(k)$ and introduce the Koebe coordinates for the loxodromic transformations. In Section~\ref{sec:kanal}, we give an introduction to the theory of Berkovich analytic curves, starting with those that locally look like the affine line. For the more general curves, we review the theory without proofs, but provide some references. We conclude this section by an original purely analytic definition of Mumford curves. In Section~\ref{sec:Schottky}, we propose two definitions of Schottky groups, first using the usual description of their fundamental domains and second, \emph{via} their group theoretical properties, using their action of~$\pank$. We show that they coincide by relying on the nice topological properties of Berkovich spaces. In Section~\ref{sec:uniformization}, we prove that every Mumford curve may be uniformized by a dense open subset of~$\pank$ with a group of deck transformations that is a Schottky group. Once again, our proof is purely analytic, relying ultimately on arguments from potential theory. To the best of our knowledge, this is the first complete proof of this result. We conclude the section by investigating automorphisms of Mumford curves and giving explicit examples.


\medbreak

We put a great effort in providing a self-contained presentation of the results above and including details that are often omitted in the literature. However, both the theories of Berkovich curves and Schottky uniformization have a great amount of ramifications and interactions with other branches of mathematics.
For the interested readers, we provide an appendix with a series of references that will hopefully help them to navigate through this jungle of wonderful mathematical objects.

The idea of writing down these notes came to the first author when he was taking part to the VIASM School on Number Theory in June 2018 in Hanoi. 
Just as the school was, the material presented here is primarily aimed at graduate students, although we also cover some of the most advanced developments in the field. Moreover, we have included questions that we believe could be interesting topics for young researchers (see Remarks~\ref{rem:Indra} and~\ref{rem:Liu}). The appendix provides additional material leading to active subjects of research and open problems.

The different chapters in this volume are united by the use of analytic techniques in the study of arithmetic geometry.
While they treat different topics, we encourage the reader to try to understand how they are related and may shed light on each other. In particular, the lecture notes of F.~Pellarin~\cite{Pellarin}, about Drinfeld modular forms, mention several topics related to ours, although phrased in the language of rigid analytic spaces, such as Schottky groups (\S 5) or quotient spaces (\S 6). It would be interesting to investigate to what extent the viewpoint of Berkovich geometry presented here could provide a useful addition to this theory. 
\medbreak

\begin{ack} 
We warmly thank Marco Maculan for his numerous comments on an earlier version of this text, and the anonymous referees for their remarks and corrections.
The Appendix~\ref{app:Shimura} greatly benefited from insights and remarks by Jan Vonk and Henri Darmon.

While writing this text, the authors were supported by the ERC Starting Grant ``TOSSIBERG'' (grant agreement 637027). 
The second author was partially funded by a prize of the \emph{Fondation des Treilles}.
The \emph{Fondation des Treilles} created by Anne Gruner-Schlumberger aims to foster the dialogue between sciences and arts in order to further research and creation. She welcomes researchers and creators in her domain of Treilles (Var, France).
\end{ack}

\medbreak

\begin{notation*}
Let $(\ell,\va)$ be a non-Archimedean valued field. 

We set $\ell^\circ := \{a\in \ell \,\colon\, |a|\le 1\}$. It is a subring of~$\ell$ with maximal ideal $\ell^{\circ\circ} := \{a\in \ell \,\colon\, |a|< 1\}$. We denote the quotient by~$\tilde\ell$ and call it the \emph{residue field} of~$\ell$.

We set $|\ell^\times| := \{|a|,\ a \in \ell^\times\}$. It is a multiplicative subgroup of~$\R_{>0}$ that we call the \emph{value group} of~$\ell$. We denote its divisible closure by $|\ell^\times|^\Q$. It is naturally endowed with a structure of $\Q$-vector space.
\end{notation*}

Once and for all the paper, we fix a non-Archimedean complete valued field $(k,\va)$, a separable closure~$k^s$ of~$k$ and the corresponding algebraic closure~$k^a$. 
The absolute value $\va$ on~$k$ extends uniquely to an absolute value on~$k^a$, thanks to the fact that it extends uniquely to any given finite extension.\footnote{The reader can find a proof of this classical result in many textbooks, for example in \cite[Chapter~7]{Cassels86}.} We denote by~$\widehat{k^a}$ the completion of~$k^a$: it is algebraically closed and coincides with the completion~$\widehat{k^s}$ of~$k^s$. We still denote by~$\va$ the induced absolute value on~$\widehat{k^a}$. We have $|\widehat{k^a}{}^\times| = |k^\times|^\Q$. 

\vskip1cm

\begin{center}
\textbf{\Large{Part I: The Berkovich affine line}}
\end{center}
\refstepcounter{part}
\addcontentsline{toc}{chapter}{\textbf{I. The Berkovich affine line}}
\label{part:A1k}

\medbreak

The first object we introduce in our exposition of non-Archimedean analytic geometry is the Berkovich affine line.
This is already an excellent source of knowledge of properties of Berkovich curves, such as local path-connectedness, local compactness, classification of points, and behaviour under base change.
Other properties, such as global contractibility, do not generalize, but will be useful later to study curves that ``locally look like the affine line'' (see Section \ref{sec:A1like}).

Our main reference for this section is V.~Berkovich's foundational book \cite{Berkovich90}. 
We have also borrowed regularly from A.~Ducros's thorough manuscript~\cite{DucrosRSS}.

\section{The underlying set}\label{sec:underlyingset}

\begin{definition}
The \emph{Berkovich affine line} $\aank$ is the set of multiplicative seminorms on~$k[T]$ that induce the given absolute value~$\va$ on~$k$.
\end{definition}

In more concrete terms, a point of~$\aank$ is a map $\va_{x} \colon k[T] \to \R_{+}$ satisfying the following properties:
\begin{enumerate}
\item $\forall P,Q \in k[T],\ |P+Q|_{x} \le \max(|P|_{x},|Q|_{x})$;
\item $\forall P,Q \in k[T],\ |PQ|_{x} = |P|_{x}|Q|_{x}$;
\item $\forall \alpha\in k,\ |\alpha|_{x} = |\alpha|$.
\end{enumerate}
With a slight abuse of notation, we set 
\[\ker(\va_{x}) := \{P\in k[T]\, \colon\, |P|_{x} = 0\}.\]
It follows from the multiplicativity of~$\va_{x}$ that $\ker(\va_{x})$~is a prime ideal of~$k[T]$.

In the following, we often denote a point of~$\aank$ by~$x$ and by~$\va_{x}$ the corresponding seminorm. This is purely for psychological and notational comfort since~$x$ and~$\va_{x}$ are really the same thing.

\begin{example}\label{ex:rationalpoint}
Each element~$\alpha$ of~$k$ gives rise to a point of~$\aank$ \emph{via} the seminorm
\[\va_{\alpha} \colon P\in k[T] \longmapsto |P(\alpha)| \in \R_{+}.\]
We denote it by~$\alpha$ again. Such a point is called a \emph{$k$-rational point} of~$\aank$. 

Note that, conversely, the element~$\alpha$ may be recovered from~$\va_{\alpha}$ since $\ker(\va_{\alpha}) = (T-\alpha)$. It follows that the construction provides an injection $k\hookrightarrow \aank$.
\end{example}

\begin{example}\label{ex:rigidpoint}
The construction of the previous example still makes sense if we start with a point $\alpha\in k^a$ and consider the seminorm
\[\va_{\alpha} \colon P\in k[T] \longmapsto |P(\alpha)| \in \R_{+}.\]
Such a point is called a \emph{rigid point} of~$\aank$.

However, it is no longer possible to recover~$\alpha$ from~$\va_{\alpha}$ in general. Indeed, in this case, we have $\ker(\va_{a}) = (\mu_{\alpha})$, where~$\mu_{\alpha}$ denotes the minimal polynomial of~$\alpha$ over~$k$ and, if~$\sigma$ is a $k$-linear automorphism of~$k^a$, then, by uniqueness of the extension of the absolute value, we get $\va_{\sigma(\alpha)} = \va_{\alpha}$. One can check that we obtain an injection $k^a/\Aut(k^a/k)\hookrightarrow \aank$.


%
%
%
\end{example}

Readers familiar with scheme theory will notice that the rigid points of~$\aank$ correspond exactly to the closed points of the schematic affine line~$\AA^1_{k}$. However the Berkovich affine line contains many more points, as the following examples show.

\begin{example}\label{ex:type1point}
Each element~$\alpha$ of~$\wka$ gives rise to a point of~$\aank$ \emph{via} the seminorm
\[\va_{\alpha} \colon P\in k[T] \longmapsto |P(\alpha)| \in \R_{+}.\]
This is similar to the construction of rigid points but, if $\alpha$ is transcendental over~$k$, then we have $\ker(\va_{\alpha}) = (0)$ (\emph{i.e.} $\va_{\alpha}$ is an absolute value) and the set of elements~$\alpha'$ in~$\wka$ such that $\va_{\alpha'} = \va_{\alpha}$ is infinite.

\end{example}


There also are examples of a different nature: points that look like ``generic points'' of discs. 

\begin{lemma}\label{lem:etaalphar}
Let $\alpha\in k$ and $r \in \R_{>0}$. The map 
\[\begin{array}{ccccc}
\va_{\alpha,r}& \colon & k[T] & \longrightarrow & \R_{\ge 0}\\
&&\sum_{i\ge 0} a_{i} (T-\alpha)^i &\longmapsto& \max_{i\ge 0} (|a_{i}| r^i)
\end{array}\]
is an absolute value on $k[T]$.

For $\alpha,\beta \in k$ and $r,s\in \R_{>0}$, we have $\va_{\alpha,r} = \va_{\beta,s}$ if, and only if, $|\alpha-\beta| \le r$ and $r=s$.
\end{lemma}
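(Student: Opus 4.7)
My plan is to verify in turn each defining property of an absolute value for $\va_{\alpha,r}$, then analyze the equality condition by testing on well-chosen polynomials and comparing Taylor expansions around $\alpha$ and $\beta$. Since every polynomial $P\in k[T]$ has a unique expansion $P = \sum_{i\geq 0} a_i(T-\alpha)^i$, the map $\va_{\alpha,r}$ is well-defined. The ultrametric triangle inequality is immediate: addition of polynomials is term-by-term in the $(T-\alpha)$-basis, so one simply takes the max of ultrametric inequalities coefficient-wise. The restriction to $k$ reads $|a|_{\alpha,r}=|a|r^0=|a|$ for constants, and non-degeneracy is clear since $|P|_{\alpha,r}=0$ forces every coefficient $a_i$ to vanish.

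The main step is multiplicativity. Given $P=\sum a_i(T-\alpha)^i$ and $Q=\sum b_j(T-\alpha)^j$, the product has coefficients $c_k=\sum_{i+j=k}a_ib_j$. The inequality $|PQ|_{\alpha,r}\le|P|_{\alpha,r}|Q|_{\alpha,r}$ is a direct application of the ultrametric property. For the reverse inequality, I would use a standard Gauss-lemma-style argument: let $i_0$ be the largest index attaining $\max_i|a_i|r^i$ and $j_0$ the largest attaining $\max_j|b_j|r^j$. Then, in the sum defining $c_{i_0+j_0}$, every term other than $a_{i_0}b_{j_0}$ either has $i>i_0$ (forcing $|a_i|r^i<|a_{i_0}|r^{i_0}$) or $j>j_0$ (forcing $|b_j|r^j<|b_{j_0}|r^{j_0}$), hence all are strictly dominated by $|a_{i_0}b_{j_0}|r^{i_0+j_0}$. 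By the strict ultrametric inequality, $|c_{i_0+j_0}|r^{i_0+j_0}=|a_{i_0}||b_{j_0}|r^{i_0+j_0}=|P|_{\alpha,r}|Q|_{\alpha,r}$, which gives the desired lower bound. I expect this to be the only step requiring genuine thought, and it is the standard obstacle one meets when defining Gauss-type norms.

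For the equality statement, the ``only if'' direction comes from evaluating both sides on the specific polynomial $T-\alpha$. One has $|T-\alpha|_{\alpha,r}=r$, and by writing $T-\alpha=(\beta-\alpha)+(T-\beta)$ I compute $|T-\alpha|_{\beta,s}=\max(|\beta-\alpha|,s)$. Equating gives $r=\max(|\beta-\alpha|,s)$, and symmetrically $s=\max(|\alpha-\beta|,r)$; the only way both hold is $r=s$ and $|\alpha-\beta|\le r$. For the ``if'' direction, assuming $r=s$ and $|\alpha-\beta|\le r$, I would re-expand a general $P=\sum a_i(T-\alpha)^i$ in powers of $(T-\beta)$ via the binomial formula
\[
(T-\alpha)^i = \sum_{j=0}^{i}\binom{i}{j}(\beta-\alpha)^{i-j}(T-\beta)^j,
\]
and bound each $(T-\beta)^j$-coefficient of $P$ using the ultrametric inequality together with $|\beta-\alpha|\le r$. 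This yields $|P|_{\beta,r}\le|P|_{\alpha,r}$, and the reverse inequality follows by symmetry, giving the desired equality of seminorms.
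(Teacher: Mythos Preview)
Your proof is correct and follows essentially the same strategy as the paper's: the Gauss-lemma argument for multiplicativity (the paper uses the \emph{minimal} indices $i_0,j_0$ rather than the largest, but this is the same trick), and the same test polynomials $T-\alpha$, $T-\beta$ for the ``only if'' direction. The only noticeable difference is in the ``if'' direction: you re-expand $(T-\alpha)^i$ via the binomial formula and bound coefficients, whereas the paper more slickly uses the already-established multiplicativity of $\va_{\beta,r}$ to get $|(T-\alpha)^i|_{\beta,r}=r^i$ directly from $|T-\alpha|_{\beta,r}=\max(|\alpha-\beta|,r)=r$, then applies the ultrametric inequality; both arguments are valid and yield the same bound.
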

\begin{proof}
It is easy to check that $\va_{\alpha,r}$ is a norm. It remains to prove that it is multiplicative.

Let $P = \sum_{i\ge 0} a_{i} T^i$ and $Q = \sum_{j\ge 0} b_{j} T^j$. We may assume that $PQ \ne 0$. Let $i_{0}$ be the minimal index such that $|a_{i_{0}}| r^{i_{0}} = |P|_{r}$ and $j_{0}$ be the minimal index such that $|b_{j_{0}}| r^{j_{0}} = |Q|_{r}$. 

For $\ell\in \N$, the coefficient of degree~$\ell$ in $PQ$ is
\[ c_{\ell} :=  \sum_{i+j = \ell} a_{i} b_{j}, \]
hence we have 
\[ |c_{\ell}| r^\ell \le \max_{i+j = \ell}  (|a_{i}|r^i \, |a_{j}|r^j) \le |P|_{r}\, |Q|_{r}.\] 
For $\ell = \ell_{0} :=  i_{0} + j_{0}$, we find 
\[ c_{\ell_{0}} = a_{i_{0}} b_{j_{0}} + \sum_{\substack{i+j = \ell_{0} \\ (i,j) \ne (i_{0},j_{0})}} a_{i} b_{j}.\]
For each $(i,j)  \ne (i_{0},j_{0})$ with $i+j = \ell_{0}$, we must have $i< i_{0}$ or $j <j_{0}$, hence $|a_{i}| r^i< |P|_{r}$ or $|b_{j}| r^j< |Q|_{r}$ and, in any case,
$|a_{i} b_{j}| r^{\ell_{0}} < |P|_{r} |Q|_{r}$.
We now deduce from the equality case in the non-Archimedean triangle inequality that $|c_{\ell_{0}}| r^{\ell_{0}} =  |P|_{r} |Q|_{r}$. The result follows.

\medbreak

Let $\alpha,\beta \in k$ and $r,s\in \R_{>0}$. Assume that we have $\va_{\alpha,r} = \va_{\beta,s}$. Applying the equality to $T-\alpha$ and $T-\beta$, we get
\[r = \max (|\alpha-\beta|,s) \textrm{ and } \max (|\alpha-\beta|,r) = s.\]
We deduce that $r=s$ and $|\alpha-\beta| \le r$, as claimed.

\medbreak

Conversely, assume that we have $r=s$ and $|\alpha-\beta| \le r$. Arguing by symmetry, it it enough to prove that, for each $P \in k[T]$, we have $|P|_{\beta,r} \le |P|_{\alpha,r}$. Let $P  = \sum_{i\ge 0} a_{i} (T-\alpha)^i \in k[T]$. We have
\[|T-\alpha|_{\beta,r} = \max(|\alpha-\beta|,r) = r\]
and, since $\va_{\beta,r}$ is multiplicative, $|(T-\alpha)^i|_{\beta,r} = r^i$ for each $i\ge 0$. Applying the non-Archimedean triangle inequality, we now get 
\[|P|_{\beta,r} \le \max_{i\ge 0}  (|a_{i}| r^i) = |P|_{\alpha,r}.\]
The result follows.
\end{proof}

\begin{example}\label{ex:genericpointdisc}
Let $\alpha\in k$ and $r \in \R_{>0}$. The map 
\[\va_{\alpha,r} \colon  \sum_{i\ge 0} a_{i} (T-\alpha)^i \mapsto \max_{i\ge 0} (|a_{i}| r^i)\]
from Lemma~\ref{lem:etaalphar} is an absolute value, hence gives rise to a point of~$\aank$, which we will denote by~$\eta_{\alpha,r}$. To ease notation, we set $\va_{r} := \va_{0,r}$ and $\eta_{r} := \eta_{0,r}$.

Note that the relation characterizing the equality between $\eta_{\alpha,r}$ and $\eta_{\beta,s}$ is the same that characterizes the equality between the disc with center~$\alpha$ and radius~$r$ and the disc with center~$\beta$ and radius~$s$ in a non-Archimedean setting. This is no coincidence and one can actually prove that, if~$k$ is not trivially valued, the absolute value~$\va_{\alpha,r}$ is equal to the supremum norm on the closed disc of radius~$\alpha$ and center~$r$ in the algebraic closure~$k^a$ of~$k$.

%
%
%
%
\end{example}

\begin{remark}
The definitions of~$\va_{r}$ and~$\va_{\alpha,r}$ from Example~\ref{ex:genericpointdisc} still make sense for~$r=0$. In this case, the points $\eta_{0} $ and $\eta_{\alpha,0}$ that we find are the rational points associated to~0 and~$\alpha$ respectively. It will sometimes be convenient to use this notation. 
\end{remark}

Note that we could combine the techniques of Examples~\ref{ex:rigidpoint} and~\ref{ex:genericpointdisc} to define even more points.

\begin{figure}[h]

\includegraphics[scale=.7]{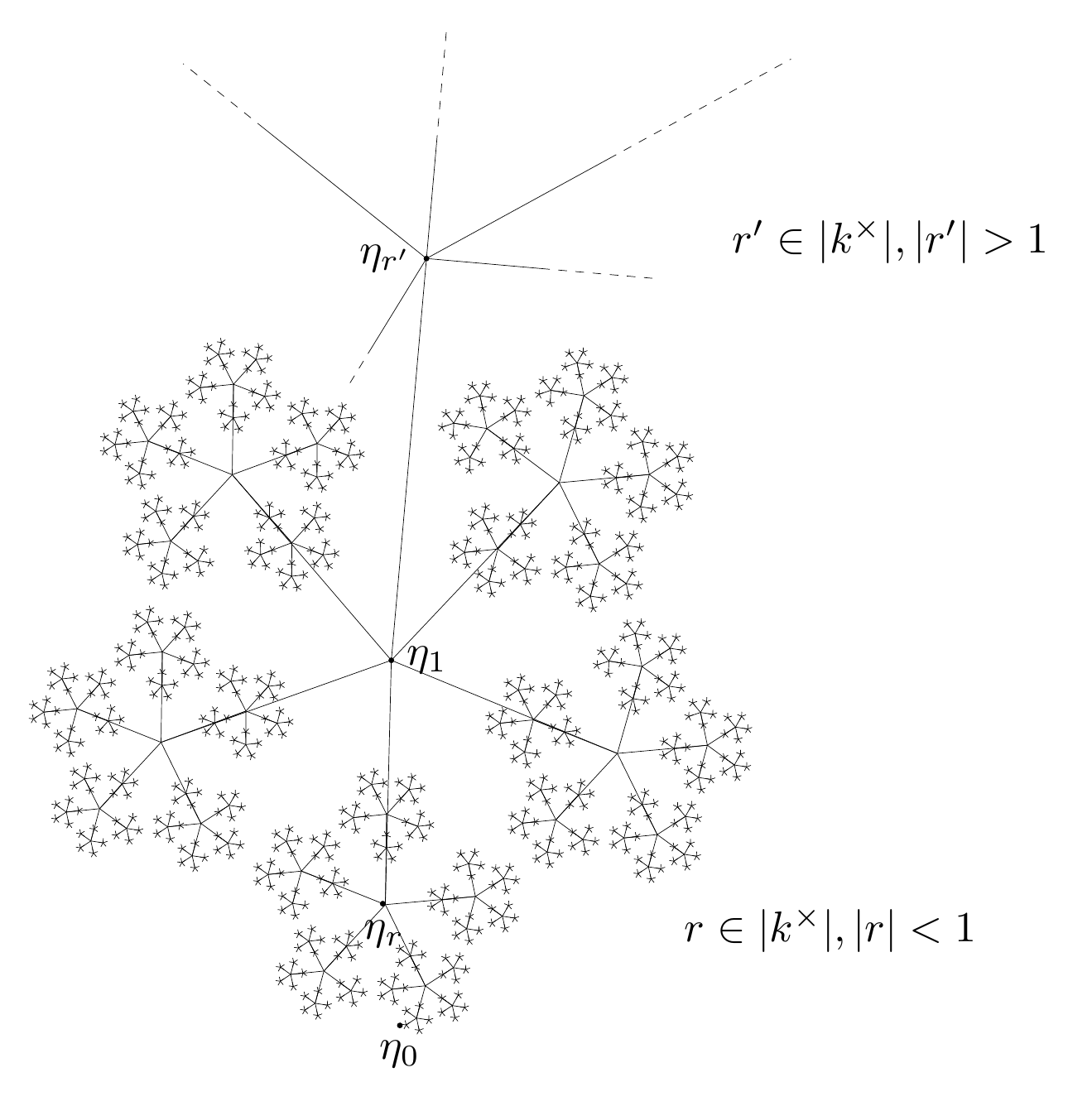}

\caption{The Berkovich affine line $\aank$ when $k$ is an algebraically closed, complete, valued field.} 
\end{figure}

\section{Classification of points}\label{sec:classification} 

In this section, we give a classification of the points of the Berkovich affine line~$\aank$. Let us first introduce a definition.

\begin{definition}\label{def:Hx}
Let $x\in \aank$. The \emph{completed residue field}~$\scrH(x)$ of~$x$ is the completion of the fraction field of $k[T]/\ker(\va_{x})$ with respect to the absolute value induced by~$\va_{x}$. It is a complete valued extension of~$k$. 

We will simply denote by~$\va$ the absolute value induced by~$\va_{x}$ on $\scrH(x)$. 
\end{definition}


The construction provides a canonical morphism of $k$-algebras $\chi_{x} \colon k[T] \to \scrH(x)$. We think of it as an evaluation morphism (into a field that varies with the point). For $P \in k[T]$, we set $P(x) := \chi_{x}(P)$. It then follows from the definition that we have $|P(x)| = |P|_{x}$.

\begin{example}
Let $x\in \aank$. If~$x$ is a $k$-rational point, associated to some element $\alpha\in k$, we have $\scrH(x) = k$ and the morphism $\chi_{x}$ is nothing but the usual evaluation morphism $P \in k[T] \mapsto P(\alpha) \in k$.

If~$x$ is a rigid point, associated to some element $\alpha\in k^a$, we have an isomorphism $\scrH(x) \simeq k(\alpha)$. Conversely, if $\scrH(x)$ is a finite extension of~$k$, then we have $\ker(\va_{x}) = (P)$ for some irreducible polynomial~$P$, hence the point~$x$ is rigid (associated to any root of~$P$).
\end{example}

\begin{example}
Let $\alpha\in k$ and $r\in \R_{>0} - |k^\times|^\Q$. Then $\scrH(\eta_{\alpha,r})$ is isomorphic to the field 
\[k_{r} : = \Big\{f = \sum_{i\in \Z} a_{i} (T-\alpha)^i : \lim_{i\to \pm\infty} |a_{i}| r^i = 0\Big\}\]
endowed with the absolute value $|f| = \max_{i\in \Z} ( |a_{i}| r^i )$.
\end{example}

In the previous example, there exists a unique $i_{0} \in \Z$ for which the quantity $|a_{i}| r^i$ is maximal. The fact that~$k_{r}$ is a field follows. This is no longer true for $r\in  |k^\times|^\Q$ and the completed residue field $\scrH(\eta_{\alpha,r})$ is more difficult to describe (see \cite[Theorem~2.1.6]{Christol83} for instance).

\begin{definition}
A \emph{character} of $k[T]$ is a morphism of $k$-algebras $\chi \colon k[T] \to K$, where~$K$ is some complete valued extension of~$k$.

Two characters $\chi' \colon k[T] \to K'$ and $\chi'' \colon k[T] \to K''$ are said to be equivalent if there exists a character $\chi \colon k[T] \to K$ and isometric embeddings $i' \colon K \to K'$ and $i'' \colon K \to K''$ that make the following diagram commutative:
\[\begin{tikzcd}
&&K'\\
k[T] \arrow[r, "\chi" near end]\arrow[urr, "\chi'"]\arrow[drr, "\chi''"'] & K\arrow[ur, "i'"']\arrow[dr, "i''"]\\
&&K''
\end{tikzcd}.\]

\end{definition}

We have already explained how a point $x\in \aank$ gives rise to a character $\chi_{x} \colon k[T] \to \scrH(x)$. Conversely, to each character $\chi \colon k[T] \to K$, where $(K,\va)$ is a complete valued extension of~$(k,\va)$, we may associate the multiplicative seminorm
\[ \va_{\chi} \colon P\in k[T] \to |\chi(P)| \in \R.\] 
Any equivalent character would lead to the same seminorm.

\begin{lemma}
The map $x \mapsto \chi_{x}$ is a bijection from $\aank$ to the set of equivalences classes of characters of~$k[T]$. Its inverse is the map $\chi \mapsto \va_{\chi}$.
\qed
\end{lemma}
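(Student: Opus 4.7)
The plan is to verify that the two maps $x \mapsto \chi_x$ and $\chi \mapsto \va_\chi$ are well-defined and mutually inverse, where the second is understood as defined on equivalence classes of characters.

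First I would check well-definedness. For $x \in \aank$, the character $\chi_x\colon k[T] \to \scrH(x)$ has already been constructed, and its equivalence class is intrinsic to~$x$. Conversely, for a character $\chi \colon k[T] \to K$, the map $\va_\chi \colon P \mapsto |\chi(P)|$ is a multiplicative seminorm because $\chi$ is a ring homomorphism and $\va$ on~$K$ is multiplicative, and it restricts to~$\va$ on~$k$ because $\chi$ is a $k$-algebra morphism and $k \hookrightarrow K$ is an isometric embedding by hypothesis. If $\chi'$ and $\chi''$ are equivalent through $\chi$ with isometric embeddings~$i'$, $i''$, then for every $P \in k[T]$,
\[ |\chi'(P)| = |i'(\chi(P))| = |\chi(P)| = |i''(\chi(P))| = |\chi''(P)|, \]
so $\va_\chi$ only depends on the equivalence class, and $\chi \mapsto \va_\chi$ descends to a map from equivalence classes to~$\aank$.

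Next I would verify the two composition identities. The identity $\va_{\chi_x} = \va_x$ is immediate, since by construction of~$\scrH(x)$ we have $|\chi_x(P)| = |P(x)| = |P|_x$ for all $P \in k[T]$. The other composition is the main content: given a character $\chi \colon k[T] \to K$ and setting $x := \va_\chi$, I need to show that~$\chi$ is equivalent to $\chi_x$. The key observation is that $\ker(\va_x) = \ker(\chi)$, because $|\chi(P)| = 0$ if and only if $\chi(P) = 0$ (as $\va$ is an absolute value on~$K$). Hence $\chi$ factors through an injection $k[T]/\ker(\chi) \hookrightarrow K$ which is isometric by definition of~$\va_x$. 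This injection extends to an isometric embedding of the fraction field and, by the universal property of completion (and the fact that~$K$ is complete), to an isometric embedding $j \colon \scrH(x) \hookrightarrow K$. The triangle
\[\begin{tikzcd}
k[T] \arrow[r, "\chi_x"]\arrow[dr, "\chi"'] & \scrH(x) \arrow[d, "j"] \\
& K
\end{tikzcd}\]
commutes, which, together with the trivial isometric embedding $\scrH(x) \xrightarrow{\id} \scrH(x)$, exhibits $\chi$ and $\chi_x$ as equivalent (taking $\chi_x$ itself as the middle character in the definition of equivalence).

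The main potential obstacle is a conceptual one rather than a technical difficulty: one must recognise that~$\chi_x$ plays the role of an \emph{initial} object among all characters inducing $\va_x$, so that every other such character is obtained from it by a unique isometric extension of scalars to its target field. Once this universal viewpoint is in place, the argument reduces to invoking the universal properties of quotient, fraction field, and completion, all of which are automatic.
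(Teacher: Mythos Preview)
Your proof is correct and complete. The paper does not give a proof of this lemma (it is marked with \qed\ and left to the reader), so there is nothing to compare against; your argument is exactly the standard one and supplies the details the paper omits. The observation that $\chi_x$ is initial among characters inducing~$\va_x$ is the right conceptual point, and incidentally also shows that the relation defined in the paper is genuinely an equivalence relation (transitivity is not completely obvious from the definition alone, but follows since any two characters with the same seminorm are both equivalent to~$\chi_x$ via~$\chi_x$ itself).
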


We mention the following related standard fact (see \cite[Lemma~2.8 and Remark~2.9]{KedlayaReified} for instance).

\begin{lemma}\label{lem:commonextension}
Any two complete valued extensions of~$k$ may be isometrically embedded in a common one.
\qed
\end{lemma}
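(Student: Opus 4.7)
The plan is to construct a common extension via the \emph{completed tensor product} and to extract an absolute value on it from a point of its Berkovich spectrum. Fix two complete valued extensions $K_{1}$ and $K_{2}$ of~$k$, with respective absolute values $|\wc|_{1}$ and $|\wc|_{2}$. First, I would endow $K_{1} \otimes_{k} K_{2}$ with the tensor product seminorm
\[\|z\| := \inf\Big\{\max_{i} |a_{i}|_{1}\, |b_{i}|_{2} : z = \textstyle\sum_{i} a_{i} \otimes b_{i}\Big\},\]
and complete it to obtain a Banach $k$-algebra~$A$. The canonical $k$-algebra maps $\iota_{1}\colon K_{1} \to A$, $a \mapsto a \otimes 1$ and $\iota_{2}\colon K_{2} \to A$, $b \mapsto 1\otimes b$ are contractive by construction; a standard argument using the existence of an ``almost orthogonal'' Schauder basis of~$K_{2}$ as a $k$-Banach space shows that~$\iota_{1}$ is in fact an isometry, so in particular~$A$ is nonzero.

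Next, I would invoke Berkovich's theorem that the spectrum $\calM(A)$ of any nonzero commutative Banach ring is nonempty, and pick any bounded multiplicative seminorm~$\chi$ on~$A$. Its pullback to~$K_{1}$ via~$\iota_{1}$ is a multiplicative seminorm bounded above by~$|\wc|_{1}$ and equal to~$\va$ on~$k$. Since $\chi(1)=1 \ne 0$, it is actually an absolute value, and for any $a\in K_{1}^\times$ the inequalities $\chi(a) \le |a|_{1}$ and $\chi(a^{-1}) \le |a|_{1}^{-1}$, combined with $\chi(a)\chi(a^{-1}) = 1$, force equality $\chi(a) = |a|_{1}$. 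The same argument applies to~$K_{2}$, so $\chi \circ \iota_{1}$ and $\chi \circ \iota_{2}$ coincide with the prescribed absolute values.

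Finally, I would set $\mathfrak p := \ker(\chi)$, a prime ideal of~$A$. Then~$\chi$ descends to an absolute value on the integral domain~$A/\mathfrak p$, extends uniquely to its fraction field, and the completion~$L$ of the latter is a complete valued extension of~$k$ into which both~$K_{1}$ and~$K_{2}$ embed isometrically by construction.

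The main obstacle is verifying that~$\iota_{1}$ is an isometry (equivalently, that $A \ne 0$), that is, that the tensor product seminorm does not collapse $1\otimes 1$ to zero. This reduces to the existence of an almost orthogonal $k$-basis of the Banach space~$K_{2}$, a classical (if technical) fact in non-Archimedean functional analysis that underlies the cited \cite[Lemma~2.8]{KedlayaReified}.
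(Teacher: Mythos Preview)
The paper does not give its own proof of this lemma: it is stated with a bare \qed and a pointer to \cite[Lemma~2.8 and Remark~2.9]{KedlayaReified}. Your argument is correct and is precisely the standard one that underlies that reference: form the completed tensor product $K_{1}\,\widehat\otimes_{k}\,K_{2}$ as a Banach $k$-algebra, invoke nonemptiness of the Berkovich spectrum to obtain a bounded multiplicative seminorm, and pass to the completed residue field at that point. Your observation that on each factor~$K_{i}$ any such seminorm is forced to equal~$|\wc|_{i}$ (via the inverse trick $\chi(a)\chi(a^{-1})=1$) is exactly the right way to get the isometry into~$L$ without first establishing it into~$A$. The parenthetical equivalence ``$\iota_{1}$ isometric $\Longleftrightarrow$ $A\ne 0$'' is also correct, though perhaps worth one more word: the nontrivial direction is that if $A\ne 0$ then any $\chi\in\calM(A)$ satisfies $|a|_{1}=\chi(\iota_{1}(a))\le\|\iota_{1}(a)\|_{A}\le|a|_{1}$, forcing equality.
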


Even if we do not have a explicit description of the completed residue fields associated to the points of~$\aank$, we can use them to introduce some invariants.

\begin{notation}
For each valued extension $(\ell,\va)$ of $(k,\va)$, we set 
\[ s(\ell) := \trdeg (\tilde \ell / \tilde k) \]
and
\[ t(\ell) := \dim_{\Q}(|\ell^\times|^\Q/|k^\times|^\Q).\]

For $x\in \aank$, we set 
\[ s(x) := s(\scrH(x)) \textrm{ and } t(x) := t(\scrH(x)). \]
\end{notation}

This invariants are related by the Abhyankar inequality (see \cite[VI, \S 10.3, Cor 1]{BourbakiAC57}).

\begin{theorem}\label{thm:Abhyankar}
Let $\ell$ be a valued extension of~$k$. Then, we have
\[s(\ell) + t(\ell) \le \trdeg (\ell / k).\]
Moreover, if $\ell/k$ is a finitely generated extension for which equality holds, then $|\ell^\times|/|k^\times|$ is a finitely generated abelian group and $\tilde\ell/\tilde k$ is a finitely generated field extension. 
\end{theorem}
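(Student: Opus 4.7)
The plan is to exhibit $s(\ell)+t(\ell)$ elements of~$\ell$ that are algebraically independent over~$k$, which will give the inequality, and to leverage the explicit description of the valuation those elements induce in order to handle the equality case.

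Setting $s:=s(\ell)$ and $t:=t(\ell)$, I would first choose $\bar u_{1},\ldots,\bar u_{s}\in\tilde\ell$ algebraically independent over~$\tilde k$, lift them to elements $u_{1},\ldots,u_{s}\in \ell^{\circ}$ (necessarily of absolute value~$1$, since their reductions are nonzero), and pick $v_{1},\ldots,v_{t}\in\ell^{\times}$ whose absolute values form a $\Q$-basis of $|\ell^{\times}|^{\Q}/|k^{\times}|^{\Q}$.

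The central claim is a Gauss-type formula: for every nonzero polynomial $P=\sum_{I,J}a_{I,J}X^{I}Y^{J}\in k[X_{1},\ldots,X_{s},Y_{1},\ldots,Y_{t}]$,
\[
|P(u,v)|=\max_{J}\,\max_{I}\bigl(|a_{I,J}|\,|v_{1}|^{j_{1}}\cdots|v_{t}|^{j_{t}}\bigr)>0.
\]
To establish this I would group $P(u,v)=\sum_{J}P_{J}(u)\,v^{J}$ and argue in two steps. For each~$J$ with $P_{J}\ne 0$, pick $b_{J}\in k^{\times}$ with $|b_{J}|=\max_{I}|a_{I,J}|$; then $b_{J}^{-1}P_{J}$ has coefficients in~$k^{\circ}$, at least one of absolute value~$1$, and its reduction in $\tilde k[X_{1},\ldots,X_{s}]$ is nonzero. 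Evaluated at $(\bar u_{1},\ldots,\bar u_{s})$ this reduction is nonzero by algebraic independence, giving $|P_{J}(u)|=|b_{J}|$. Next, for distinct multi-indices $J,J'$ with $P_{J},P_{J'}\ne 0$, the equality $|P_{J}(u)v^{J}|=|P_{J'}(u)v^{J'}|$ would force $|v_{1}|^{j_{1}-j'_{1}}\cdots|v_{t}|^{j_{t}-j'_{t}}\in|k^{\times}|$, contradicting the $\Q$-independence of the $|v_{i}|$ modulo $|k^{\times}|^{\Q}$. By the ultrametric strict triangle inequality, $|P(u,v)|$ equals the unique largest of the $|P_{J}(u)v^{J}|$, hence is positive. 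This proves algebraic independence of $u_{1},\ldots,u_{s},v_{1},\ldots,v_{t}$ over~$k$ and yields $s+t\le\trdeg(\ell/k)$.

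For the equality assertion, assume $\ell/k$ is finitely generated with $s+t=\trdeg(\ell/k)$. Then $u_{1},\ldots,u_{s},v_{1},\ldots,v_{t}$ form a transcendence basis, so the intermediate field $m:=k(u,v)$ satisfies $\ell/m$ finite algebraic. The Gauss formula above determines the restriction of the valuation to~$m$: its value group is $|k^{\times}|\cdot\langle|v_{1}|,\ldots,|v_{t}|\rangle$, and an analogous reduction computation on ratios $P(u,v)/Q(u,v)$ identifies its residue field with $\tilde k(\bar u_{1},\ldots,\bar u_{s})$. Hence $|m^{\times}|/|k^{\times}|$ is finitely generated (by the classes of the~$|v_{i}|$) and $\tilde m/\tilde k$ is a finitely generated field extension. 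I would then invoke the standard fundamental inequality for a finite extension of non-Archimedean valued fields, which bounds $(|\ell^{\times}|:|m^{\times}|)$ and $[\tilde\ell:\tilde m]$ by $[\ell:m]$; both being finite, $|\ell^{\times}|/|k^{\times}|$ remains finitely generated as an abelian group and $\tilde\ell/\tilde k$ remains finitely generated as a field extension.

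The hardest step will be the Gauss-norm computation: the bookkeeping with mixed indices $(I,J)$ must simultaneously invoke algebraic independence of the $\bar u_{i}$ (to handle the coefficient ring after the scalar rescaling by $b_{J}$) and $\Q$-independence of the $|v_{i}|$ (to separate distinct exponents $J$ through the strict triangle inequality). Once this formula is in hand, the production of a transcendence basis in the equality case and the reduction to the classical fundamental inequality for finite extensions of valued fields are entirely routine.
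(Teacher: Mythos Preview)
Your argument is correct and is precisely the classical proof of Abhyankar's inequality. The paper does not give its own proof of this theorem; it simply cites Bourbaki \cite[VI, \S 10.3, Cor~1]{BourbakiAC57}, and your Gauss-norm computation followed by the fundamental inequality for the finite extension $\ell/m$ is essentially the argument found there. One small remark: as stated, $s(\ell)$ or $t(\ell)$ could be infinite, so you should phrase the first part as producing $s'+t'$ algebraically independent elements for any finite $s'\le s(\ell)$ and $t'\le t(\ell)$, but this is cosmetic.
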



For each $x\in \aank$, the fraction field of $k[T]/\ker(\va_{x})$ has degree of transcendence~0 or~1 over~$k$. Since its invariants~$s$ and~$t$ coincide with that of~$\scrH(x)$, it follows from Abhyankar's inequality that we have
\[s(x) + t(x) \le 1.\]

We can now state the classification of the points of the Berkovich affine line.


\begin{definition}\label{def:typesofpoints}
Let $x\in \aank$.

The point~$x$ is said to be of \emph{type~1} if it comes from a point in~$\wka$ in the sense of Example~\ref{ex:type1point}. In this case, we have $s(x) = t(x) = 0$.

The point~$x$ is said to be of \emph{type~2} if we have $s(x) = 1$ and $t(x) = 0$. 

The point~$x$ is said to be of \emph{type~3} if we have $s(x) = 0$ and $t(x) = 1$. 

The point~$x$ is said to be of \emph{type~4} otherwise. In this case, we have $s(x) = t(x) = 0$.
\end{definition}

\begin{example}\label{ex:typeetaalphar}
Let $\alpha\in k$ and $r\in \R_{>0}$. 

Assume that $r \in |k^\times|^\Q$. There exist $n,m \in \N_{\ge 1}$ and $\gamma \in k$ with $r^n = |c|^m$. Consider such an equality with~$n$ minimal. Denote by~$t$ the image of $(T-\alpha)^n/c^m$ in $\widetilde{\scrH(x)}$. It is transcendental over~$\tilde k$ and we have $\tilde k(t) = \widetilde{\scrH(x)}$. We deduce that $\eta_{\alpha,r}$ has type~2.

Assume that $r \notin |k^\times|^\Q$. Then, we have $\widetilde{\scrH(\eta_{\alpha,r})} = \tilde k$, so $\eta_{\alpha,r}$ has type~3.
\end{example}

The classification can be made more explicit when~$k$ is algebraically closed. Note that, in this case, we have $|k^\times|^\Q = |k^\times|$. 

\begin{lemma}\label{lem:type23algclosed}
Assume that~$k$ is algebraically closed. Then~$x$ has type~2 (resp.~3) if, and only if, there exist $\alpha\in k$ and $r\in |k^\times|^\Q$ (resp. $r\notin |k^\times|^\Q$) such that $x = \eta_{\alpha,r}$.
\end{lemma}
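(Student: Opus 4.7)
The backward direction is exactly Example~\ref{ex:typeetaalphar}, so I concentrate on the forward direction. Let $x$ be of type~2 or~3. My plan is to show that $r := \inf_{\alpha\in k} |T-\alpha|_{x}$ is attained by some $\alpha_{0}\in k$ with $r>0$, to verify $x=\eta_{\alpha_{0},r}$, and finally to read off the dichotomy from Example~\ref{ex:typeetaalphar}.

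The easy preliminaries are as follows. First, $\ker(\va_{x}) = (0)$: since $k$ is algebraically closed, any nonzero prime ideal of $k[T]$ is maximal of the form $(T-\alpha)$, in which case $\scrH(x) = k$ and $s(x) = t(x) = 0$, making $x$ type~1 and contradicting the hypothesis. Second, $r > 0$: otherwise a minimizing sequence $(\alpha_{n})\subset k$ would be Cauchy by the ultrametric inequality $|\alpha_{n}-\alpha_{m}| \le \max(|T-\alpha_{n}|_{x},|T-\alpha_{m}|_{x})$, and its limit $\alpha \in k$ (using completeness of $k$) would satisfy $|T-\alpha|_{x} = 0$, contradicting $\ker(\va_{x})=(0)$.

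Once $\alpha_{0}$ realizing the minimum is found, the identity $x = \eta_{\alpha_{0},r}$ is a routine calculation. The bound $|P|_{x} \le |P|_{\alpha_{0},r}$ comes from expanding $P = \sum a_{i}(T-\alpha_{0})^{i}$. For the reverse inequality I would factor $P = a_{d}\prod_{j}(T-\beta_{j})$ using algebraic closedness, and for each $j$ combine the minimality of $r$ with the equality case of the ultrametric inequality applied to $T-\beta_{j} = (T-\alpha_{0}) + (\alpha_{0}-\beta_{j})$ to get $|T-\beta_{j}|_{x} = \max(r,|\alpha_{0}-\beta_{j}|) = |T-\beta_{j}|_{\alpha_{0},r}$. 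Multiplicativity of both seminorms then gives equality on all of $k[T]$.

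The main obstacle is showing that the infimum is attained; this is the only step that really uses the type~2 or~3 hypothesis. I would argue by contradiction: if not, pick $\alpha_{n}\in k$ with $r_{n} := |T-\alpha_{n}|_{x}$ strictly decreasing to $r$. By the ultrametric equality case the closed disks $D_{n} := \{\beta\in k : |\beta-\alpha_{n}| \le r_{n}\}$ are nested. For any $\beta\in k$ there is some $n_{0}$ with $\beta\notin D_{n_{0}}$ (otherwise $|T-\beta|_{x} = r$ would attain the infimum), and then $|T-\beta|_{x} = |\beta - \alpha_{n_{0}}| \in |k^\times|$. Factoring polynomials into linear terms yields $|P|_{x} \in |k^\times|$ for every nonzero $P\in k[T]$, so $t(x) = 0$. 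A parallel argument — rescaling each linear factor by an element of $k$ of the same absolute value, and observing that in the residue field $\widetilde{\scrH(x)}$ the class of $(T-\beta)/c$ coincides with that of $(\alpha_{n_{0}}-\beta)/c \in k$ for $n_{0}$ large enough — shows $\widetilde{\scrH(x)} = \tilde k$, hence $s(x) = 0$. Thus $x$ would be of type~4, a contradiction. Example~\ref{ex:typeetaalphar} then distinguishes type~2 ($r \in |k^\times|^\Q$) from type~3 ($r \notin |k^\times|^\Q$).
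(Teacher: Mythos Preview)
Your argument is correct, but it takes a route genuinely different from the paper's. The paper proceeds constructively and splits into cases: for type~2 it takes a polynomial~$P$ with $|P|_{x}=1$ whose reduction is transcendental over~$\tilde k$, factors it into linear terms (using $t(x)=0$ to see each factor has value in $|k^\times|$), and picks a linear factor $c(T-\alpha)$ whose reduction is transcendental; this single witness forces $x=\eta_{\alpha,|c|^{-1}}$. For type~3 it similarly factors a $P$ with $|P|_{x}\notin|k^\times|^\Q$ and finds a linear factor $T-\alpha$ with $|T-\alpha|_{x}\notin|k^\times|^\Q$, which again pins down~$x$. In both cases the paper jumps straight to the conclusion $x=\eta_{\alpha,r}$ without spelling out the verification you carry out in your Step~3.

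Your approach is instead a contrapositive characterization of type~4: you show that if the radius $r=\inf_{\alpha}|T-\alpha|_{x}$ is not realized, then $s(x)=t(x)=0$, so $x$ must be type~4. This treats types~2 and~3 uniformly until the very last line, and it dovetails nicely with the paper's subsequent Proposition~\ref{prop:family} and Remark~\ref{rem:intersection}, where type~4 points are described precisely as those arising from nested families of discs with empty intersection. The paper's proof is shorter and more direct because it exploits the specific witnesses for $s(x)=1$ or $t(x)=1$; yours is more self-contained, fills in the verification $x=\eta_{\alpha_{0},r}$ explicitly, and makes the link to the type~4 picture transparent.
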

\begin{proof}
Assume that~$x$ is of type~2. Since $s(x)=1$, there exists $P \in k[T]$ such that $|P(x)|=1$ and $\tilde P$ is transcendental over~$\tilde k$. Since~$k$ is algebraically closed, we have $|k^\times| = |k^\times|^\Q = |\calH(x)^\times|^\Q$, hence we may write~$P$ as a product of linear polynomials, all of which have absolute value~1. One of these linear polynomials has an image in~$\widetilde{\scrH(x)}$ that is transcendental over~$\tilde k$. Write it as $c(T-\alpha)$, with $c\in k^\times$ and $\alpha \in k$. We then have $x = \eta_{\alpha,|c|^{-1}}$.

Assume that~$x$ is of type~3. Since $t(x)=1$, there exists $P \in k[T]$ such that $r := |P(x)| \notin |k^\times|^\Q$. As before, we may assume that $P = T-\alpha$ with $\alpha \in k$. We then have $x = \eta_{\alpha,r}$.

The converse implications are dealt with in Example~\ref{ex:typeetaalphar}.
\end{proof}

\begin{proposition}\label{prop:family}
Assume that $k$~is algebraically closed. Let $x\in \aank$. There exist a set~$I$, a family $(\alpha_{i})_{i\in I}$ of~$k$ and a family $(r_{i})_{i\in I}$ of~$\R_{\ge0}$ such that, for each $i,j \in I$, we have 
\[\max(|\alpha_{i}- \alpha_{j}|,r_{i}) \le r_{j} \text{ or } \max(|\alpha_{i}- \alpha_{j}|,r_{j}) \le r_{i}\] 
and, for each $P\in k[T]$, 
\[ |P|_{x} = \inf_{i\in I} (|P|_{\alpha_{i},r_{i}}). \]
\end{proposition}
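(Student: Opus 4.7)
The plan is to take for $I$ the set of \emph{all} pairs $(\alpha, r) \in k \times \R_{\geq 0}$ such that $|T-\alpha|_x \le r$, and show that this family does the job. Writing $\alpha_i$ for the first coordinate and $r_i$ for the second, there are three things to verify: the nesting condition, the inequality $|P|_x \le |P|_{\alpha_i,r_i}$ for every $i$, and the fact that the infimum is actually attained for each given $P$.

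First, for the nesting condition, given two indices $i, j \in I$, I compute
\[ |\alpha_i - \alpha_j| \;=\; |(T - \alpha_j) - (T - \alpha_i)|_x \;\le\; \max(|T-\alpha_i|_x,|T-\alpha_j|_x) \;\le\; \max(r_i, r_j), \]
using the ultrametric inequality on $\scrH(x)$ together with the defining inequalities of $I$. Exchanging $i$ and $j$ if necessary to assume $r_i \le r_j$, this gives $\max(|\alpha_i - \alpha_j|, r_i) \le r_j$, as required.

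Second, for a fixed $(\alpha,r) = (\alpha_i,r_i) \in I$ and any polynomial $P = \sum_{\ell\ge 0} a_\ell (T-\alpha)^\ell$, the ultrametric inequality and multiplicativity of $\va_x$ give
\[ |P|_x \;\le\; \max_{\ell\ge 0} \bigl(|a_\ell|\, |T-\alpha|_x^\ell\bigr) \;\le\; \max_{\ell\ge 0} \bigl(|a_\ell|\, r^\ell\bigr) \;=\; |P|_{\alpha,r}. \]

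Third, and this is the heart of the argument, I need to show that for any nonzero $P$ the infimum is attained. Here I use that $k$ is algebraically closed to factor $P = c\prod_{j=1}^n (T-\beta_j)$ with $c \in k^\times$ and $\beta_j \in k$. Among the roots, choose $\beta_1$ minimizing $|T-\beta_j|_x$, and set $\alpha := \beta_1$, $r := |T-\beta_1|_x$. Then $(\alpha, r) \in I$ by construction, and I claim that $|T-\beta_j|_x = \max(|\beta_1 - \beta_j|, r)$ for every $j$, which will yield
\[ |P|_{\alpha,r} \;=\; |c|\prod_{j=1}^n \max(|\beta_1-\beta_j|, r) \;=\; |c|\prod_{j=1}^n |T-\beta_j|_x \;=\; |P|_x. \]
To prove the claim, write $T - \beta_j = (T-\beta_1) + (\beta_1 - \beta_j)$ in $\scrH(x)$. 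If $|\beta_1 - \beta_j| > r = |T-\beta_1|_x$, the equality case of the ultrametric inequality forces $|T-\beta_j|_x = |\beta_1 - \beta_j| = \max(|\beta_1-\beta_j|, r)$. If instead $|\beta_1 - \beta_j| \le r$, then $|T-\beta_j|_x \le r$ by the ultrametric inequality, while $|T-\beta_j|_x \ge r$ by the minimality of $r = |T-\beta_1|_x$ among the $|T-\beta_\ell|_x$, so equality holds and the common value is $r = \max(|\beta_1-\beta_j|, r)$.

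The main subtlety is the last step: one has to resist the temptation of picking the $\alpha_i$ randomly and instead tailor the choice of $\alpha$ to the polynomial $P$, exploiting algebraic closure to reduce to linear factors and then using the strict ultrametric equality to turn inequalities into equalities.
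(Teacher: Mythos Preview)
Your proof is correct and follows essentially the same approach as the paper: take the family of discs $D^+(\alpha,|T-\alpha|_x)$ indexed by $\alpha\in k$, verify nesting via the ultrametric inequality, and use algebraic closure to reduce to linear factors. The only cosmetic differences are that your index set is slightly larger (you allow $r\ge |T-\alpha|_x$, which is harmless since $|P|_{\alpha,r}$ is non-decreasing in $r$), and that where the paper observes the nesting makes the infimum multiplicative and then checks $|T-\alpha|_x=|T-\alpha|_{\alpha,r_\alpha}$ for each linear factor separately, you instead pick a single well-chosen index (the root of $P$ closest to $x$) and verify equality for the whole product at once.
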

\begin{proof}
Our set~$I$ will be the underlying set of~$k$. For each $a\in k$, we set $\alpha_{a} := a$ and $r_{a} := |T-a|_{x}$. 

Let $a,b \in k$. We have
\[|a-b| = |a-b|_{x} = |a-T + T-b|_{x} \le \max(|a-T|_{x}, |T-b|_{x}), \] 
so the first condition of the statement is satisfied. It implies that we have
\[\forall P \in k[T],\ |P|_{a,r_{a}} \le |P|_{b,r_{b}} \text{ or } \forall P \in k[T],\ |P|_{b,r_{b}} \le |P|_{a,r_{a}}.\]
It follows that the map $v \colon P \in k[T] \mapsto \inf_{a\in k} (|P|_{a,r_{a}})$ is multiplicative, hence a multiplicative seminorm.

Since~$k$ is algebraically closed, every polynomial factors as a product of monomials. As a consequence, to prove that~$v$ and~$\va_{x}$ coincide, it is enough to prove that they coincide on monomials, because of multiplicativity.

Let $\alpha\in k$. We have $|T-\alpha|_{\alpha,r_{\alpha}} = r_{\alpha} = |T-\alpha|_{x}$, hence $v(T-\alpha) \le |T-\alpha|_{x}$. On the other hand, for each $a\in k$, we have 
\[ |T-\alpha|_{x} = |T-a + a- \alpha|_{x} \le \max(|T-a|_{x}, |a-\alpha|_{x}) = \max(r_{a}, |a-\alpha|) = |T-\alpha|_{a,r_{a}},\] 
hence $|T-\alpha|_{x} \le v(T-\alpha)$.

\end{proof}

\begin{remark}\label{rem:intersection}
One should think of the families $(\alpha_{i})_{i\in I}$ and $(r_{i})_{i\in I}$ in the statement of Proposition~\ref{prop:family} as a single family of discs in~$k$ (with center~$\alpha_{i}$ and radius~$r_{i}$). Then, the condition of the statement translates into the fact that, for each pair of discs of the family, one is contained in the other. 

Moreover, it is not difficult to check that, if the intersection of this family of discs contains a point~$\alpha$ of~$k$, then we have $\va_{x} = \va_{\alpha,r}$, where $r = \inf_{i\in I}(r_{i}) \ge 0$.

On the other hand, if the family of discs has empty intersection, we find a new point, necessarily of type~4. Note that we must have $\inf_{i\in I} (r_{i}) >0$ is this case. Otherwise, the completeness of~$k$ would ensure that the intersection of the discs contains an element of~$k$.
\end{remark}

\begin{definition}
Assume that~$k$ is algebraically closed. For each $x\in \aank$, we define the \emph{radius of the point}~$x$ to be
\[r(x) := \inf_{c\in k} (|T-c|_{x}).\]
It can be thought of as the distance from the point to~$k$.
\end{definition}


\begin{example}
Assume that~$k$ is algebraically closed. Let $x\in \aank$. 

If~$x$ has type~1, then $r(x) = 0$.

If~$x$ has type~2 or~3, then, by Lemma~\ref{lem:type23algclosed}, it is of the form $x=\eta_{\alpha,r}$ and we have $r(x) = r$.

If~$x$ has type~4, then, with the notation of Proposition~\ref{prop:family}, we have $r(x) = \inf_{i\in I}(r_{i})$. Indeed, for each $i\in I$, we have $|T-\alpha_{i}|_{x} \le |T-\alpha_{i}|_{\alpha_{i},r_{i}} = r_{i}$. It follows that $r(x) \le \inf_{i\in I}(r_{i})$. On the other hand, let $c\in k$. For $i$ big enough, $c$ is not contained in the disc of center~$\alpha_{i}$ and radius~$r_{i}$, that is to say $|\alpha_{i} - c| > r_{i}$, from which it follows that $|T-c|_{\alpha_{i},r_{i}} = |\alpha_{i} - c|$. We deduce that $|T-c|_{x} = \inf_{i\in I} (|\alpha_{i} - c|) \ge \inf_{i\in I}(r_{i})$.
\end{example}

\begin{figure}[h]
\includegraphics[scale=.5]{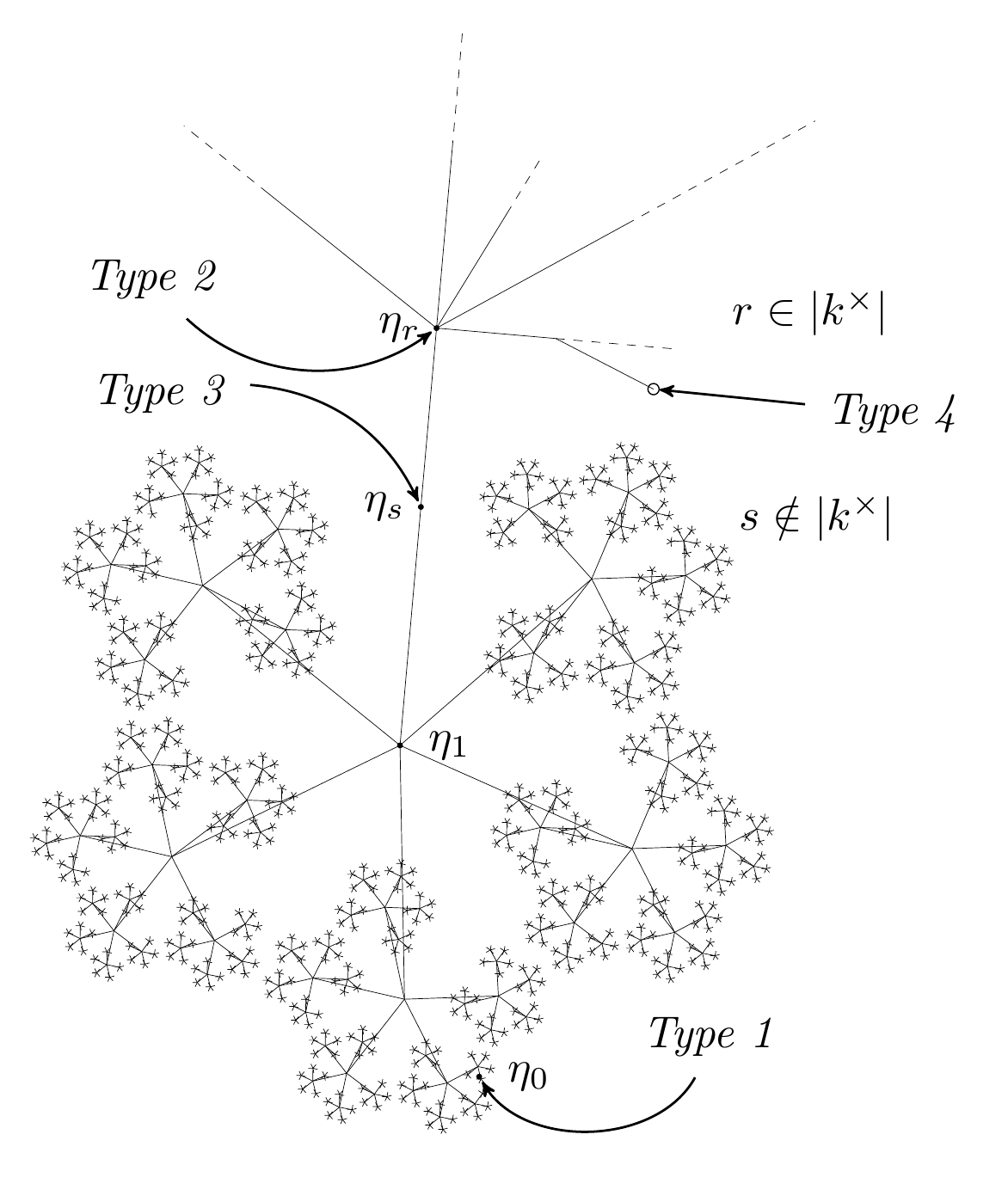}
\caption{The points $\eta_{\alpha,r}$ with $r\in |k^\times|$ are of type 2, the points $\eta_{\alpha,s}$ with $s\notin |k^\times|$ are of type 3, and the points $\eta_{\alpha,0}$ are of type 1. If $k$ is not spherically complete, points of type 4 will occur.}
\label{fig:DroiteTypePoints}
\end{figure}

\begin{remark}\label{rem:ratioradius}
The radius of a point of type different from~1 is not intrinsically attached to the point in the sense that it depends on the chosen coordinate~$T$ on~$\aank$. However, by studying the automorphisms of~$\aank$, one can prove that any change of coordinate will have the effect of multiplying all the radii by the same constant (in $|k^\times|$), see Proposition~\ref{prop:isodisc} and Remark~\ref{rem:isoaffineline}. In particular, the quotient of the radii of two points is well-defined.
\end{remark}

%
%
%

\begin{definition}
The field~$k$ is said \emph{spherically complete} if every family of discs that is totally ordered by inclusion has a non-empty intersection.

The field~$k$ is said \emph{maximally complete} if it has no non-trivial immediate extensions, \emph{i.e.} extensions with the same value group and residue field.
\end{definition}

We refer to the paper~\cite{PoonenMaximallyComplete} by B.~Poonen for more on those topics and in particular the construction of spherical completions, \emph{i.e.} minimal spherically complete extensions. We only quote the following important result.

\begin{theorem}\label{thm:equivalencesphmax}
A valued field is spherically complete if, and only if, it is maximally complete.
\qed
\end{theorem}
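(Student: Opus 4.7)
The plan is to prove the two implications separately; the forward direction requires only ultrametric bookkeeping, while the converse hinges on the construction of a non-trivial immediate extension out of a badly-nested family of discs.

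For the direction \emph{spherically complete implies maximally complete}, assume $L/k$ is immediate and pick $a\in L$. I would consider the collection $\{D(c,|a-c|)\}_{c\in k}$ of closed discs in~$k$, whose radii all lie in $|k^\times|\cup\{0\}$ by immediacy. Each such disc contains~$a$ when embedded in~$L$, so no two of them are disjoint, hence the family is totally ordered by inclusion. By spherical completeness its intersection meets~$k$, say at~$c^*$, and the ultrametric inequality $|a-c^*|\le\max(|a-c|,|c-c^*|)\le|a-c|$ then makes $\rho=\inf_{c\in k}|a-c|$ a minimum attained at~$c^*$. To conclude $a\in k$ it suffices to rule out $\rho>0$: picking $b\in k^\times$ of absolute value~$\rho$ makes $(a-c^*)/b$ a norm-one element of~$L$, whose reduction in $\tilde L=\tilde k$ is realized by some $d\in \ko$, and then $c^*+bd\in k$ approximates~$a$ strictly better than~$c^*$, a contradiction.

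For the converse, I would argue by contrapositive: given a totally ordered family $(D_i=D(c_i,r_i))_{i\in I}$ of discs in~$k$ with empty intersection, I build a non-trivial immediate extension of~$k$. The key object is the map
\[v\colon k[T]\too\R_{\ge 0},\qquad P\mapsto \inf_{i\in I}|P|_{c_i,r_i},\]
where $|\wc|_{c_i,r_i}$ is the Gauss absolute value of Lemma~\ref{lem:etaalphar}. Nestedness of the discs makes $(|P|_{c_i,r_i})_i$ a decreasing net, so the infimum exists; multiplicativity of each term passes to the limit and turns~$v$ into a multiplicative seminorm on~$k[T]$. Quotienting by $\ker(v)$ and completing yields a complete valued extension~$L$ of~$k$ containing an element~$a$ (the image of~$T$) with $|a-c_i|\le r_i$ for all~$i$, and $a\notin k$ because the $D_i$ have no common point in~$k$.

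The main obstacle, and the bulk of the work, is verifying that $L/k$ is immediate, which I would split according to whether $\ker(v)$ is zero. If $\ker(v)\ne 0$, its monic generator~$\mu$ is an irreducible polynomial whose roots lie in every~$D_i$ viewed in~$k^a$; then $k(\alpha)/k$ with $\alpha$ any such root is a non-trivial algebraic extension, and the uniqueness of the extension of~$\va$ to~$k^a$ together with the nestedness of the discs forces all conjugates of~$\alpha$ to have the same absolute value, from which one deduces (using Abhyankar's inequality and the fact that the relevant transcendence degree is zero) that $k(\alpha)/k$ is immediate. If $\ker(v)=0$, then~$L$ is the completion of $(k(T),v)$, and immediacy is checked by approximating any element of~$L$ by a rational function $P/Q$ and factoring $P$, $Q$ linearly over~$k^a$: for~$i$ large enough each factor $\max(|c_i-\alpha_j|,r_i)$ stabilizes to either $r_i$ (which, after reducing to a cofinal subfamily, may be taken in~$|k^\times|$) or to a value in~$|k^\times|$, so $|L^\times|=|k^\times|$, and the corresponding residue classes come from coefficients in~$k$, so $\tilde L=\tilde k$. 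In either case, $L/k$ is a non-trivial immediate extension, contradicting maximal completeness of~$k$.
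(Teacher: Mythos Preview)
The paper does not prove this theorem; it is quoted from \cite{PoonenMaximallyComplete} with a bare \qed, and Remark~\ref{rem:maxsph} only sketches the algebraically closed case via type~4 points. So there is no ``paper's proof'' to compare against, and your proposal should be judged on its own.

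Your forward implication is correct and cleanly written.

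The converse has a genuine gap. First, your dichotomy on $\ker(v)$ is illusory: since $\bigcap_i D_i=\emptyset$ in a complete field forces $r:=\inf_i r_i>0$, one has $|P|_{c_i,r_i}\ge |a_d|\,r_i^{\deg P}\ge |a_d|\,r^{\deg P}>0$ for every nonzero $P$, so $\ker(v)=0$ always. The relevant dichotomy is a different one: whether the intersection $\bigcap_i D_i^a$ taken in~$k^a$ is empty. If it is, then for every $P\in k[T]$ the net $|P|_{c_i,r_i}$ is eventually constant and equals $|P(c_{i_0})|\in|k^\times|$ for large~$i_0$; your argument for immediacy then goes through (note that the individual factors $|c_i-\alpha_j|$ lie only in $|k^\times|^{\Q}$, but their product is $|P(c_{i_0})|/|a_d|$, so this is harmless).

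The trouble is when $\bigcap_i D_i^a\ne\emptyset$. If $\beta\in\bigcap_i D_i^a$ has minimal polynomial~$\mu$ of degree~$n$, then all conjugates of~$\beta$ lie in $\bigcap_i D_i^a$ (the set is Galois-stable since the centers are in~$k$), so $|\mu|_{c_i,r_i}=r_i^{\,n}$ and $v(\mu)=r^n$. There is no reason for $r$ to lie in~$|k^\times|$, so your transcendental~$L$ need not be immediate over~$k$; reducing to a cofinal subfamily with $r_i\in|k^\times|$ does not help, since the \emph{infimum}~$r$ can still fall outside~$|k^\times|$. In this ``algebraic type'' case one must instead show that $k(\beta)/k$ is immediate for~$\beta$ of minimal degree in $\bigcap_i D_i^a$, and this does \emph{not} follow from Abhyankar's inequality: for a finite extension Abhyankar only gives $s=t=0$, i.e.\ $\tilde{k(\beta)}/\tilde k$ is algebraic and $|k(\beta)^\times|/|k^\times|$ is torsion, which is far weaker than $e=f=1$. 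The immediacy of $k(\beta)/k$ is a genuine theorem (Kaplansky, \emph{Maximal fields with valuations}, Theorem~3) whose proof uses the minimality of~$\deg\beta$ in an essential way; you need to either invoke it or reproduce the argument.
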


\begin{remark}\label{rem:maxsph}
Assume that~$k$ is algebraically closed. Then, the completed residue field of a point of type~4 is an immediate extension of~$k$. Using Remark~\ref{rem:intersection}, we can deduce a proof of Theorem~\ref{thm:equivalencesphmax} in this case.
\end{remark}

To make things more concrete, we would now like to give a rather explicit example of a point of type~4.

\begin{example}
Let $r \in (0,1)$ and consider the field of Laurent series $\C(\!(t)\!)$ endowed with the absolute value defined by $|f| = r^{v_{t}(f)}$. Recall that the $t$-adic valuation $v_{t}(f)$ of~$f$ is the infimum of the indices of the non-zero terms of~$f$ in its Taylor expansion $f = \sum_{n\in \Z} a_{n} t^n$. (Note that, for $f=0$, we have $v_{t}(0) = +\infty$, hence $|f|=0$.)

The algebraic closure of~$\C(\!(t)\!)$ is the field of Puiseux series:
\[\C(\!(t)\!)^a = \bigcup_{m\in\N_{\ge 1}} \C(\!(t^{1/m})\!).\]
In particular, the exponents of~$t$ in the expansion of any given element of $\C(\!(t)\!)^a$ are rational numbers with bounded denominators.

We choose our field~$k$ to be the completion of~$\C(\!(t)\!)^a$. Its elements may still be written as power series with rational exponents. This time, the exponents may have unbounded denominators but they need to tend to~$+\infty$.

Consider a power series of the form $\sum_{n\in \N} t^{q_{n}}$ where $(q_{n})_{n\in \N}$ is a strictly increasing bounded sequence of rational numbers. (For instance, $q_{n} = 1 - 2^{-n}$ would do.) The associated point of~$\aank$ is then a point of type~4. In this case, one can explicitly describe an associated family of discs by taking, for each $m\in \N$, the disc with center $\alpha_{m} := \sum_{n=0}^m t^{q_{n}}$ and radius $r_{m} := r^{q_{m+1}}$.

One can go even further in this case and describe a spherical completion of~$k$. It is the field of Hahn series $\C(\!(t^\Q)\!)$ consisting of the power series $f = \sum_{q\in \Q} a_{q} t^q$, where the $a_{q}$'s are rational numbers and the support $\{q\in \Q \mid a_{q} \ne 0\}$ of~$f$ is well-ordered: each non-empty subset of it has a smallest element.
\end{example}

\section{Topology}\label{sec:topology}

We endow the set $\aank$ with the coarsest topology such that, for each $P\in k[T]$, the map
\[x \in \aank \longmapsto |P(x)| \in \R\]
is continuous. In more concrete terms, a basis of the topology is given by the sets 
\[ \{x \in \aank : r < |P(x)| < s\}, \]
for $P\in k[T]$ and $r,s\in \R$.

\begin{remark}\label{rem:topologyk}
By Example~\ref{ex:rationalpoint}, we can see~$k$ as a subset of~$\aank$. The topology on~$k$ induced by that on~$\aank$ then coincides with that induced by the absolute value~$\va$.
\end{remark}

\begin{lemma}\label{lem:Hausdorff}
The Berkovich affine line~$\aank$ is Hausdorff.
\end{lemma}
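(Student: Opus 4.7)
The plan is to use the very definition of the topology: since it is the coarsest topology making each map $e_P \colon x \mapsto |P(x)|$ continuous (for $P \in k[T]$), and $\R$ is Hausdorff, we expect to separate any two distinct points by preimages of disjoint open subsets of~$\R$ under a single such evaluation map.

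First, given distinct points $x,y \in \aank$, I would unpack distinctness: by definition, points of~$\aank$ are multiplicative seminorms on~$k[T]$, so $x \neq y$ means $\va_x \neq \va_y$ as functions $k[T] \to \R_{\ge 0}$. Hence there exists $P \in k[T]$ with $|P(x)| \neq |P(y)|$. Without loss of generality, assume $|P(x)| < |P(y)|$, and pick any real number $r$ strictly between them.

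Next, I would produce the separating opens. Consider
\[U := \{z \in \aank : |P(z)| < r\} \quad \text{and} \quad V := \{z \in \aank : |P(z)| > r\}.\]
Both sets are open in $\aank$ by the description of a basis of the topology recalled just before the lemma (taking the $P$ from above, and, for~$U$, the polynomial~$0$ to bound from below or simply using that the map $e_P$ is continuous). They are disjoint since the conditions on $|P(z)|$ are incompatible, and by construction $x \in U$ and $y \in V$.

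There is no real obstacle here: the statement is essentially a formal consequence of the construction of the topology as the initial topology for the family of continuous functions $(e_P)_{P \in k[T]}$, together with the fact that two distinct seminorms disagree on at least one polynomial. The only minor care point is to make sure that the basic open sets of the form $\{r < |P(z)| < s\}$ really do suffice; one may equivalently note that $U$ and $V$ are preimages of the disjoint opens $(-\infty, r)$ and $(r, +\infty)$ of $\R$ under the continuous map $e_P$.
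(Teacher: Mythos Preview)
Your proof is correct and follows essentially the same approach as the paper: find $P \in k[T]$ on which the two seminorms differ, choose a separating value $r$, and take the open preimages $\{|P| < r\}$ and $\{|P| > r\}$. The paper's argument is identical step for step.
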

\begin{proof}
Let $x \ne y \in \aank$. Then, there exists $P\in k[T]$ such that $|P(x)| \ne |P(y)|$. We may assume that $|P(x)| < |P(y)|$. Let $r \in (|P(x)|,|P(y)|)$. Set
\[U := \{z\in \aank : |P(z)|<r\} \textrm{ and } V := \{z\in \aank : |P(z)|>r\}.\]
The sets~$U$ and~$V$ are disjoint open subsets of~$\aank$ containing respectively~$x$ and~$y$. The result follows.
\end{proof}

\begin{definition}
For $\alpha\in k$ and $r\in \R_{>0}$, the \emph{open disc of center~$\alpha$ and radius~$r$} is
\[ D^-(\alpha,r) = \{ x\in \aank : |(T-\alpha)(x)| < r\}.\]
For $\alpha\in k$ and $r\in \R_{> 0}$, the \emph{closed disc of center~$\alpha$ and radius~$r$} is
\[ D^+(\alpha,r) = \{ x\in \aank : |(T-\alpha)(x)| \le r\}.\]
For $\alpha\in k$ and $r < s\in \R_{> 0}$, the \emph{open annulus of center~$\alpha$ and radii~$r$} and~$s$ is
\[ A^-(\alpha,r,s) = \{ x\in \aank : r < |(T-\alpha)(x)| < s\}.\]
For $\alpha\in k$ and $r \le s\in \R_{> 0}$, the \emph{closed annulus of center~$\alpha$ and radii~$r$} and~$s$ is
\[ A^+(\alpha,r,s) = \{ x\in \aank : r \le |(T-\alpha)(x)| \le s\}.\]
For $\alpha\in k$ and $r\in \R_{> 0}$, the \emph{flat closed annulus of center~$\alpha$ and radius~$r$} is
\[ A^+(\alpha,r,r) = \{ x\in \aank : |(T-\alpha)(x)| =r\}.\]

\end{definition}


\begin{figure}[h]

\includegraphics[scale=.7]{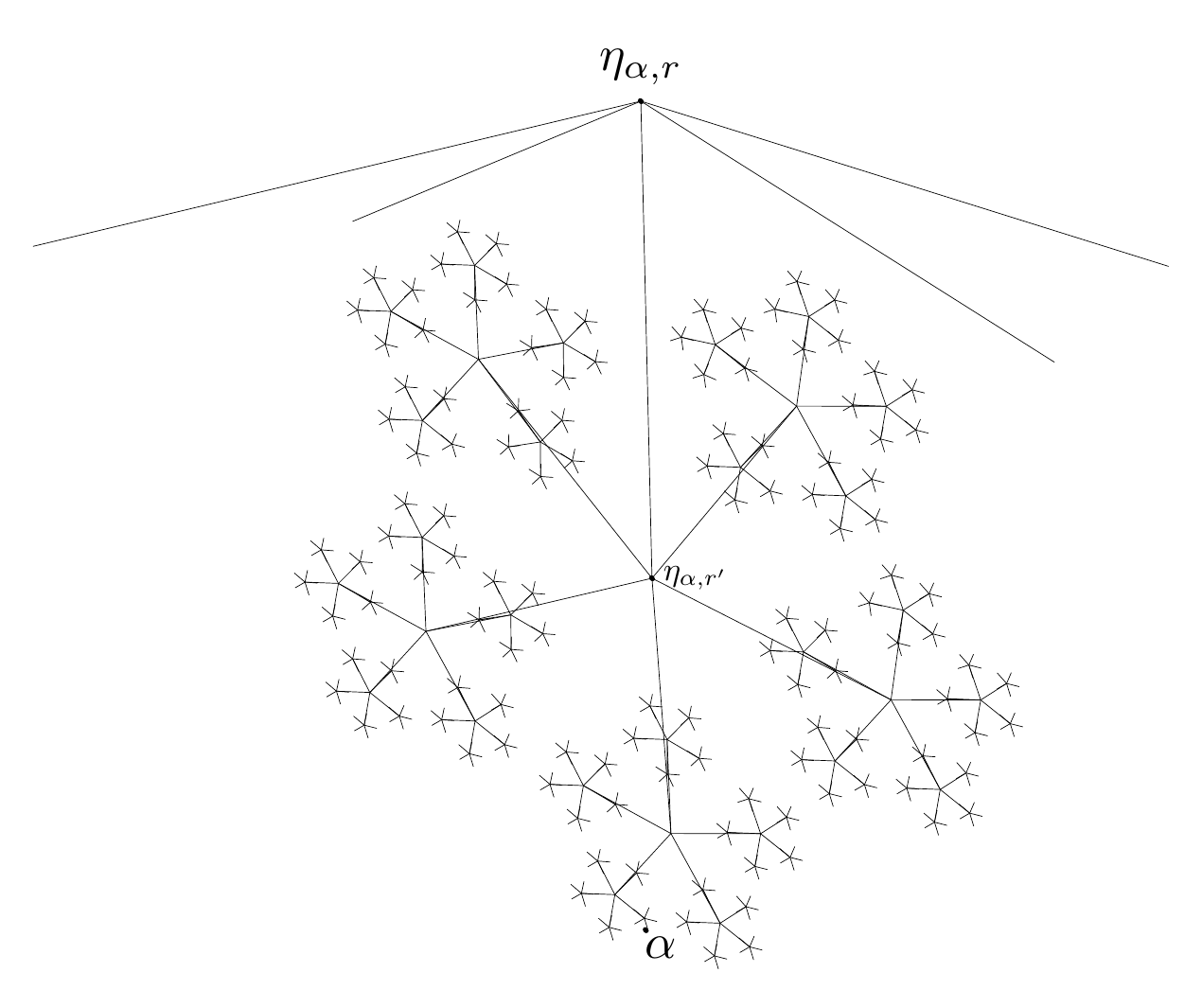}
\includegraphics[scale=.7]{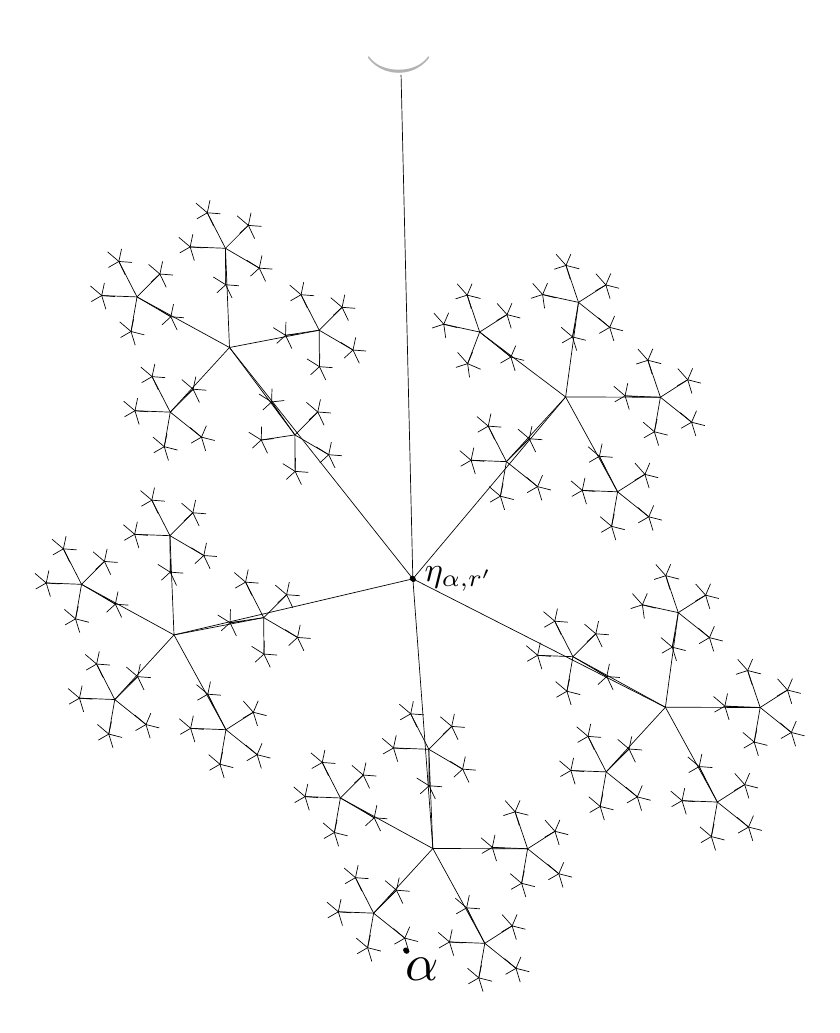}

\caption{On the left, the closed disc $D^+(\alpha, r)$. On the right, the open disc $D^-(\alpha, r)$, which is a maximal open sub-disc of $D^+(\alpha, r)$, but not the only one.}

\end{figure}

In the result that follows, we study the topology of discs and annuli as subsets of~$\aank$. 

\begin{lemma}\label{lem:discs}
Let $\alpha\in k$ and $r\in \R_{>0}$. The closed disc $D^+(\alpha,r)$ is compact and has a unique boundary point: $\eta_{\alpha,r}$. The open disc $D^-(\alpha,r)$ is open and its closure is $D^-(\alpha,r)\cup \{\eta_{\alpha,r}\}$.

Let $\alpha\in k$ and $r < s\in \R_{> 0}$. The closed annulus $A^+(\alpha,r,s)$ is compact and has two boundary points: $\eta_{\alpha,r}$ and $\eta_{\alpha,s}$. The open annulus $A^-(\alpha,r,s)$ is open and its closure is $A^-(\alpha,r,s)\cup \{\eta_{\alpha,r},\eta_{\alpha,s}\}$.

Let $\alpha\in k$ and $r \in \R_{> 0}$. The flat closed annulus $A^+(\alpha,r,r)$ is compact and has a unique boundary point: $\eta_{\alpha,r}$. 
\end{lemma}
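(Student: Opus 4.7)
The plan is to treat the three cases uniformly. Openness of $D^-(\alpha,r)$ and $A^-(\alpha,r,s)$, and closedness of their closed counterparts, is immediate from the definition of the topology on $\aank$, which makes the functions $y \mapsto |P(y)|$ continuous for every $P \in k[T]$.

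For the compactness assertions, I would use a Tychonoff embedding. The map $\Phi \colon x \mapsto (|P|_x)_{P \in k[T]}$ from $\aank$ to $\prod_{P \in k[T]} \R_{\ge 0}$ is a topological embedding by the very definition of the topology. For $x \in D^+(\alpha,r)$ and $P = \sum_j a_j (T-\alpha)^j$, the ultrametric inequality gives $|P|_x \le \max_j |a_j| r^j = |P|_{\alpha,r}$, so $\Phi$ sends $D^+(\alpha,r)$ into the compact product $\prod_P [0, |P|_{\alpha,r}]$ (Tychonoff). The image is cut out by the closed conditions defining multiplicative seminorms extending $|\wc|$ on $k$ together with $|T-\alpha| \le r$, hence is closed, hence compact. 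The same argument, using the bound $|P|_{\alpha,s}$ for $r\le s$, covers $A^+(\alpha,r,s)$ and the flat annulus $A^+(\alpha,r,r)$.

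For the topological boundary of $D^+(\alpha,r)$, I would first show that $\eta_{\alpha,r}$ is a boundary point: the points $\eta_{\alpha,r+\eps}$ lie outside $D^+(\alpha,r)$ and tend to $\eta_{\alpha,r}$ as $\eps\to 0^+$, since for every $P$ the value $|P|_{\alpha,r+\eps}$ depends continuously on $\eps$. Conversely, for $x \in D^+(\alpha,r) \setminus \{\eta_{\alpha,r}\}$ I would exhibit an open neighborhood of $x$ contained in $D^+(\alpha,r)$. If $|T-\alpha|_x < r$ the open disc $D^-(\alpha,r)$ itself works; if $|T-\alpha|_x = r$ but $x \ne \eta_{\alpha,r}$, then by the supremum-norm characterization of $\eta_{\alpha,r}$ recalled in Example~\ref{ex:genericpointdisc} there exists $P \in k[T]$ with $|P|_x < |P|_{\alpha,r}$, and an open set of the form $\{y : |P|_y < t\}$ for an intermediate value~$t$ provides the desired neighborhood. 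The closure of $D^-(\alpha,r)$ is then obtained by the same dichotomy: $\eta_{\alpha,r}$ is accumulated by $\eta_{\alpha,r-\eps}$, while any $x$ with $|T-\alpha|_x = r$ and $x \ne \eta_{\alpha,r}$ admits an open neighborhood disjoint from $D^-(\alpha,r)$; for instance, if $x = \eta_{\beta,s}$ with $|\beta-\alpha|=r$ and $s\le r$, the open disc $D^-(\beta,r)$ does the job because an ultrametric computation forces $|T-\alpha|_y = r$ for every $y$ there.

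The main obstacle is the case of a point $x \ne \eta_{\alpha,r}$ lying on the sphere $\{|T-\alpha|=r\}$: all the real work is in justifying the supremum-norm characterization of $\eta_{\alpha,r}$ without assuming $k$ algebraically closed. The cleanest route is to reduce to the algebraically closed setting by the scalar extension techniques of Section~\ref{sec:extension} and then invoke Proposition~\ref{prop:family}, which describes every point as an infimum of Gauss norms $|\wc|_{\alpha_i,r_i}$ associated to a nested family of discs. The annular statements are entirely analogous, with the one or two Gauss points $\eta_{\alpha,r}$ and $\eta_{\alpha,s}$ playing the role of the boundary; the flat annulus reduces to showing that any $y$ with $|T-\alpha|_y = r$ and $y \ne \eta_{\alpha,r}$ admits a small neighborhood on which $|T-\alpha|$ is constantly equal to~$r$, again by the same ultrametric argument used for the disc.
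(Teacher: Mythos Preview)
Your compactness argument via the Tychonoff embedding and your proof that $\eta_{\alpha,r}$ is a boundary point of $D^+(\alpha,r)$ match the paper's approach essentially verbatim. The divergence is in the hardest step: showing that a point $x$ on the sphere $\{|T-\alpha|=r\}$ with $x \ne \eta_{\alpha,r}$ does \emph{not} lie in the closure of $D^-(\alpha,r)$.

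You propose to reduce to the case where $x$ has the form $\eta_{\beta,s}$ by passing to~$\wka$ via Section~\ref{sec:extension}. As the paper is organised this is circular: Proposition~\ref{prop:piKk} proves properness of $\pi_{K/k}$ by invoking the compactness of closed discs established in the present lemma, and Corollary~\ref{cor:piwkak} (which you would need in order to push open neighbourhoods down to~$\aank$) rests on that. Even granting a reorganisation, your sketch does not explain how to separate a type~4 point of~$\aanwka$ from the open disc.

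The paper avoids base change and classification entirely. Starting from any $P\in k[T]$ with $|P|_x < |P|_{\alpha,r}$, it chooses one of \emph{minimal degree} and shows, by writing $P = P(\alpha) + (T-\alpha)Q$ and comparing norms, that necessarily $|P(\alpha)| = |P|_{\alpha,r}$; otherwise $Q$ would be a lower-degree witness. It follows at once that $|P|_y = |P(\alpha)| = |P|_{\alpha,r}$ for every $y \in D^-(\alpha,r)$, so $\{|P| < |P|_{\alpha,r}\}$ is an open neighbourhood of~$x$ disjoint from the open disc. This minimal-degree trick is the key idea you are missing; it works uniformly over any complete valued~$k$ and costs nothing beyond the ultrametric inequality.
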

\begin{proof}

Let $x\in D^+(\alpha,r)$. We have $|T-\alpha|_{x} \le r$, hence it follows from the non-Archimedean triangle inequality that we have $\va_{x} \le \va_{\alpha,r}$, as seminorms on $k[T]$. 

Consider the product $\prod_{P\in k[T]} [0,|P|_{r}]$ endowed with the product topology and its closed subset~$F$ consisting of the elements $(x_{P})_{P\in k[T]}$ satisfying the conditions
\[\begin{cases}
\forall P,Q \in k[T],\ \lambda_{P+Q} \le \max(\lambda_{P},\lambda_{Q});\\
\forall P,Q \in k[T],\ \lambda_{PQ} = \lambda_{P} \, \lambda_{Q}.
\end{cases}\]
It follows from the previous argument that the map
\[\renewcommand{\arraystretch}{1.2}\begin{array}{cccc}
p \colon &D^+(\alpha,r) & \too & \prod_{P\in k[T]} [0,|P|_{r}]\\
& x & \mapstoo & (|P|_{x})_{P \in k[T]}
\end{array}\]
induces a bijection between $D^+(\alpha,r) $ and~$F$. (The only non-trivial point is to check that the seminorm on~$k[T]$ associated to an element of~$F$ induces the given absolute value~$\va$ on~$k$.) Moreover, it follows from the very definition of the topology that~$p$ is a homeomorphism onto its image. Since $F$ is closed in $\prod_{P\in k[T]} [0,|P|_{r}]$, and the latter is compact by Tychonoff's theorem, $F$ is compact, hence $D^+(\alpha,r)$ is compact too.

\medbreak

Let $x \in D^+(\alpha,r) - \{\eta_{\alpha,r}\}$. Then, there exists $P \in k[T]$ such that $|P|_{x} \ne |P|_{\alpha,r}$, hence $|P|_{x} < |P|_{\alpha,r}$. In other words, the point~$x$ belong to the open subset $\{y\in \aank : |P|_{y} < |P|_{\alpha,r}\}$ of~$\aank$, which is contained in~$D^+(\alpha,r)$. It follows that~$x$ belongs to the interior of~$D^+(\alpha,r)$.

Let~$U$ be an open subset of~$\aank$ containing~$\eta_{\alpha,r}$. By definition of the topology, there exist $P_{1},\dotsc,P_{n} \in k[T]$ and $u_{1},v_{1},\dotsc,u_{n},v_{n} \in \R$ such that
\[\eta_{\alpha,r} \in \{y\in \aank : u_{i} < |P_{i}|_{y} < v_{i}\} \subseteq U.\]
Using the explicit definition of the norms~$\va_{\alpha,s}$, one shows that, for each~$s\in \R_{\ge 0}$ that is close enough to~$r$, we have $\eta_{\alpha,s} \in U$. We deduce that~$\eta_{\alpha,r}$ belongs to the boundary of~$D^+(\alpha,r)$ (because we can choose $s>r$) and to the closure of~$D^-(\alpha,r)$ (because we can choose $s<r$).  
 
This finishes the proof that the boundary of~$D^+(\alpha,r)$ is equal to~$\{\eta_{\alpha,r}\}$.

\medbreak

By definition of the topology, the disc $D^-(\alpha,r)$ is open. Since $D^+(\alpha,r)$ is compact, it contains the closure of~$D^-(\alpha,r)$. We have already proved that its closure contains~$\eta_{\alpha,r}$. 

Let $x\in D^+(\alpha,r) - (D^-(\alpha,r) \cup \{\eta_{\alpha,r}\})$. We have $|T-\alpha|_{x} = r$ and there exists $P\in k[T]$ such that $|P|_{x} < |P|_{\alpha,r}$. Let us choose such a polynomial~$P$ with minimal degree.

Arguing by contradiction, assume that $|P(\alpha)| < |P|_{\alpha,r}$. Write $P = P(\alpha) + (T-\alpha) Q$, with $Q \in k[T]$. We then have 
\[|P|_{\alpha,r} = \max(|P(\alpha)| , r |Q|_{\alpha,r}) = r |Q|_{\alpha,r}.\] 
If $|P(\alpha)| \ne r |Q|_{x}$, we have 
\[r |Q|_{x} \le\max(|P(\alpha)|, r |Q|_{x}) =  |P|_{x} < |P|_{\alpha,r} = r |Q|_{\alpha,r}.\]
If $|P(\alpha)| = r |Q|_{x}$, the same inequality holds. In any case, we have $|Q|_{x} < |Q|_{\alpha,r}$, which contradicts the minimality of the degree of~$P$. 

We have just proved that $|P(\alpha)| = |P|_{\alpha,r}$. It follows that, for each $y \in D^-(\alpha,r)$, we have $|P|_{y} = |P|_{\alpha,r}$, hence the open set  $\{y\in \aank : |P|_{y} < |P|_{\alpha,r}\}$ contains~$x$ and is disjoint from $D^-(\alpha,r)$, so~$x$ does not belong to the boundary of~$D^-(\alpha,r)$. 

We have finally proven that the closure of~$D^-(\alpha,r)$ is $D^-(\alpha,r) \cup \{\eta_{\alpha,r}\}$.

The results for the annuli are proven similarly.
\end{proof}

Since $\aank$ may be exhausted by closed discs, we deduce the following result.

\begin{corollary}\label{cor:loccompact}
The Berkovich affine line~$\aank$ is countable at infinity and locally compact. \qed
\end{corollary}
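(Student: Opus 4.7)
The plan is to exhibit an explicit exhaustion of~$\aank$ by compact sets and to use it to extract compact neighborhoods of arbitrary points.

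First, I would observe that for every $x \in \aank$, setting $r := |T|_{x}$, one has $x \in D^+(0,r)$ by definition of the closed disc. In particular, writing
\[ \aank = \bigcup_{n \in \N_{\ge 1}} D^+(0,n), \]
one obtains a countable cover by subsets that are compact thanks to Lemma~\ref{lem:discs}. This directly gives that $\aank$ is countable at infinity (i.e. a countable union of compact subsets).

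For local compactness, fix a point $x \in \aank$ and choose any real number $r > |T|_{x}$. Then $x \in D^-(0,r)$, and by Lemma~\ref{lem:discs} the open disc $D^-(0,r)$ is an open neighborhood of~$x$ whose closure equals $D^-(0,r) \cup \{\eta_{0,r}\}$, hence is contained in the compact set $D^+(0,r)$. Since $\aank$ is Hausdorff by Lemma~\ref{lem:Hausdorff}, this closure is a closed subset of a compact Hausdorff space, hence compact. Therefore $x$ admits a compact neighborhood, which proves local compactness.

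There is no serious obstacle here: the content of the statement is essentially a repackaging of the compactness of closed discs (Lemma~\ref{lem:discs}) together with the fact that the family $\{D^+(0,n)\}_{n \ge 1}$ covers $\aank$ and that each point lies in the interior of some member of this family. The only point that deserves a word of care is the passage from "closure contained in a compact" to "closure is compact", which uses the Hausdorff property established in Lemma~\ref{lem:Hausdorff}.
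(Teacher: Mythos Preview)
Your proof is correct and follows essentially the same approach as the paper, which simply remarks that~$\aank$ may be exhausted by closed discs and leaves the rest to the reader. You have spelled out the details; one minor simplification is that you do not need to pass to the closure of $D^-(0,r)$ at all, since $D^+(0,r)$ is already a compact neighborhood of~$x$ (it contains the open set $D^-(0,r)\ni x$).
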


It is possible to give a characterization of the fields~$k$ for which the space~$\aank$ is metrizable.

\begin{corollary}\label{cor:metrizable}
The following assertions are equivalent:
\begin{enumerate}
\item the Berkovich affine line~$\aank$ is metrizable;
\item the field~$k$ contains a countable dense subset.
\end{enumerate}
\end{corollary}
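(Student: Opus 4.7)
The plan is to first reduce the statement to the question of second-countability, and then handle each implication by a direct construction. By Lemma~\ref{lem:Hausdorff} and Corollary~\ref{cor:loccompact}, the space $\aank$ is locally compact, Hausdorff, and countable at infinity; for such a space metrizability and second-countability are equivalent. Urysohn's theorem supplies one direction (second-countability together with regularity, which is automatic for locally compact Hausdorff spaces, implies metrizability), while the converse follows from an exhaustion $\aank = \bigcup_n U_n$ with each $\overline{U_n}$ compact, since a metric on $\aank$ then turns each $\overline{U_n}$ into a compact metric space, hence second-countable, and the countable union inherits second-countability.

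For the implication (ii)~$\Rightarrow$~(i), I would fix a countable dense subset $D \subset k$ and an integer $m \ge 1$, and work on the compact disc $D^+(0,m)$. The key estimate, obtained by combining Lemma~\ref{lem:etaalphar} applied at $\eta_{0,m}$ with the non-Archimedean triangle inequality, is that for any two polynomials $P = \sum_{i=0}^d a_i T^i$ and $Q = \sum_{i=0}^d b_i T^i$ in $k[T]$ one has
\[\bigl||P(y)| - |Q(y)|\bigr| \le |P-Q|_{0,m} = \max_{0 \le i \le d} |a_i - b_i|\, m^i\]
uniformly for $y \in D^+(0,m)$. Since I can choose $b_i \in D$ making each $|a_i - b_i|\, m^i$ arbitrarily small, any basic open set $\{r < |P| < s\} \cap D^+(0,m)$ can be squeezed between two sets of the form $\{r' < |Q| < s'\} \cap D^+(0,m)$ with $Q \in D[T]$ and $r', s' \in \Q_{>0}$, so $D^+(0,m)$ has a countable basis. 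Writing $\aank = \bigcup_{n \ge 1} D^-(0,n)$, each $D^-(0,n)$ is an open subspace of the second-countable $D^+(0,n)$ and hence itself second-countable; a countable union of second-countable open subspaces is second-countable, so $\aank$ is.

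For the reverse implication (i)~$\Rightarrow$~(ii), once $\aank$ is known to be metrizable, it is second-countable by the reduction, so the subspace $k \subset \aank$ is also second-countable. Its induced topology is the natural absolute-value topology by Remark~\ref{rem:topologyk}, and picking one point from each non-empty element of a countable basis of $k$ produces a countable dense subset of~$k$.

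The main obstacle is the uniform approximation step inside $D^+(0,m)$ in the second paragraph: one must carry out the approximation disc by disc rather than globally on $\aank$, because polynomials of different degrees or leading coefficients behave very differently at infinity and a global uniform approximation of $|P(y)|$ by $|Q(y)|$ is simply not available. Matching the coefficients up to degree~$d$ and then controlling the error on a fixed disc is what makes the argument work.
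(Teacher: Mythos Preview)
Your proof is correct. The implication (ii)~$\Rightarrow$~(i) matches the paper's argument essentially verbatim: both exhibit the countable family $\{r < |Q| < s\}$ with $Q \in D[T]$ and $r,s$ rational as a basis, then invoke Urysohn together with Corollary~\ref{cor:loccompact}. Your disc-by-disc presentation of the approximation is a bit more explicit than the paper's one-line statement, but the content is identical.

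For (i)~$\Rightarrow$~(ii) you take a slightly different route. The paper works directly with the metric: it covers each compact disc $D^+(0,r)$ by finitely many metric balls of radius~$\eps_n$, picks a $k$-point from each ball meeting~$k$, and assembles these finite sets into a countable dense subset of~$k$. You instead pass through second-countability abstractly (metrizable $+$ $\sigma$-compact $\Rightarrow$ second-countable), restrict to the subspace~$k$, invoke Remark~\ref{rem:topologyk} to identify the subspace topology with the valuation topology, and then pick representatives from a countable basis. Both arguments are short and valid; yours is marginally cleaner in that it isolates the role of Remark~\ref{rem:topologyk} explicitly, while the paper's version is more self-contained in that it never names second-countability.
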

\begin{proof}
$(i) \implies (ii)$ Assume that~$\aank$ is metrizable. We fix a metric on~$\aank$ and will consider balls with respect to it. Let~$(\eps_{n})_{n\in \N}$ be a sequence of positive real numbers converging to~0. 

Let $r \in \R_{>0}$. By Lemma~\ref{lem:discs}, the closed disc $D^+(0,r)$ is compact. As a consequence, for each $n\in\N$, it is covered by finitely many metric balls of radius~$\eps_{n}$. For each such ball that contains a point of~$k$, pick a point of~$k$ in it. The collection of those points is a finite subset~$k_{r,n}$ of~$k$. The set
$ k_{r} := \bigcup_{n\in \N} k_{r,n} $
is a countable subset of~$k$ that is dense in $k\cap D^+(0,r)$. 

It follows that the set 
$k' := \bigcup_{m\in \N_{\ge 1}} k_{m}$
is a countable dense subset of~$k$.

%
%
%

\medbreak

$(ii) \implies (i)$ Assume that the field~$k$ contains a countable dense subset~$k'$. Then, the family of sets 
\[ \{x\in \aank : r < |P(x)| < s\} \]
with $P \in k'[T]$ and $r,s \in \Q$ is a countable basis of the topology. The result now follows from Corollary~\ref{cor:loccompact} and Urysohn's metrization theorem.
\end{proof}

\begin{figure}[h]

\includegraphics[scale=.3]{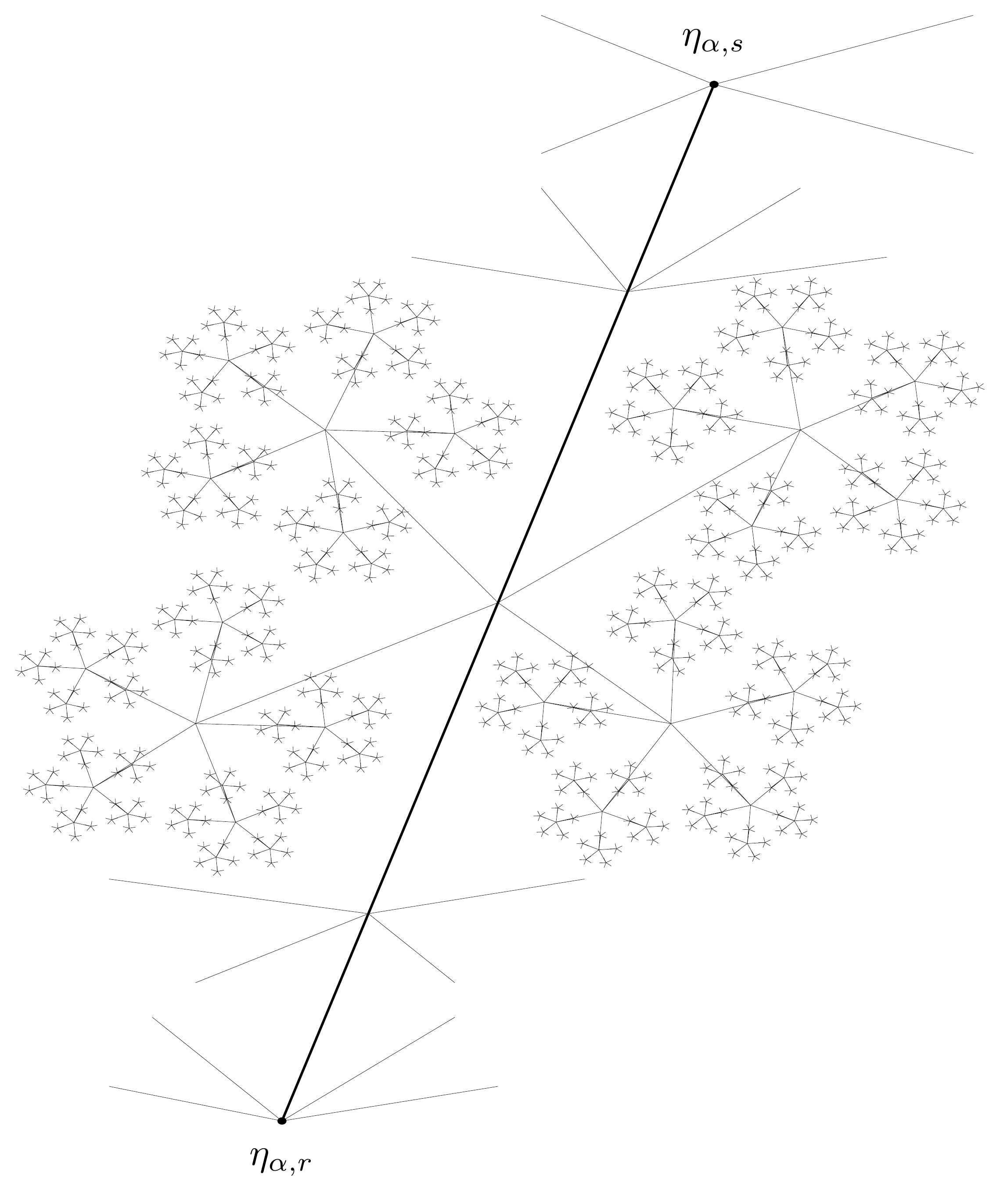}
\includegraphics[scale=.3]{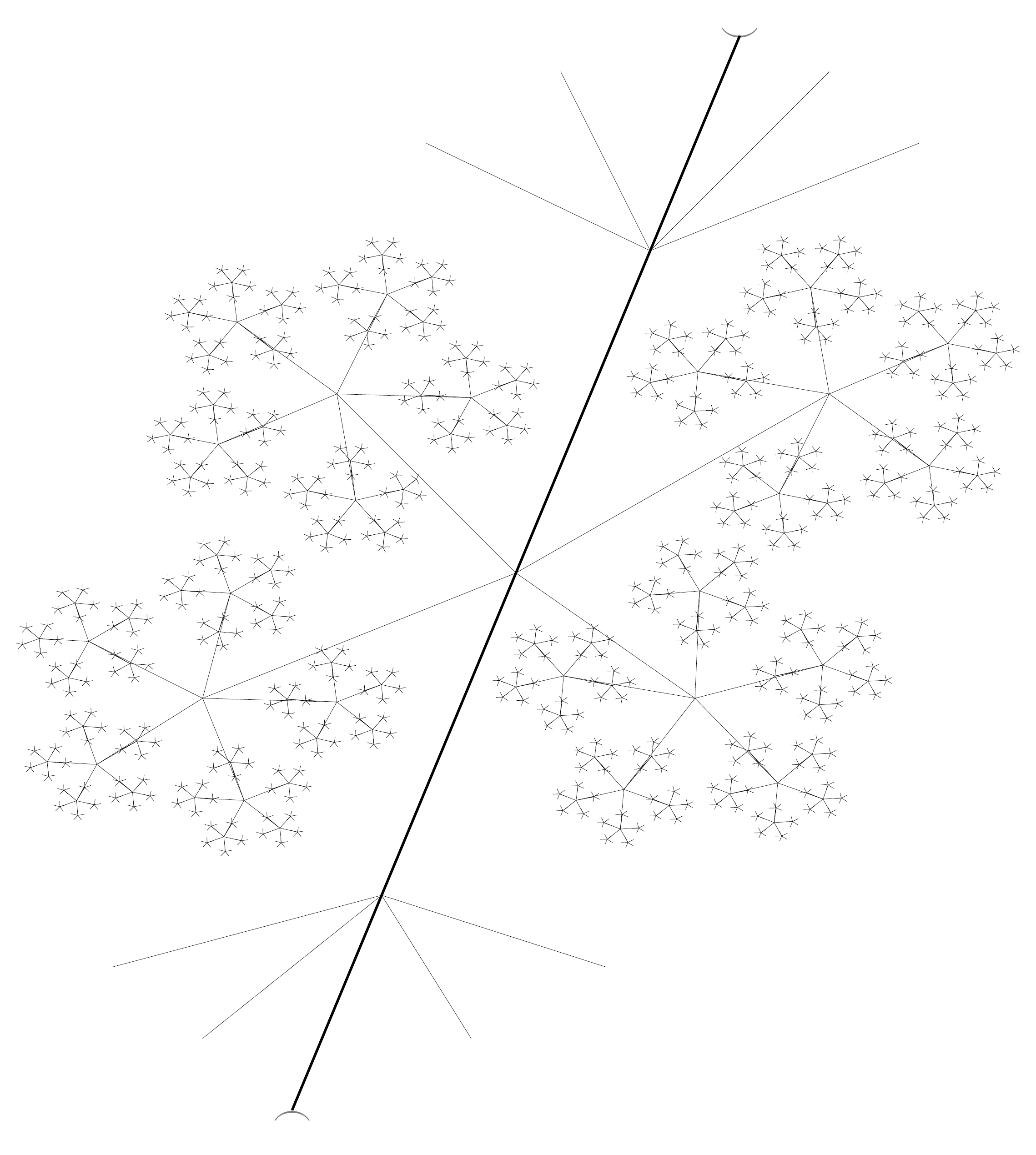}

\caption{On the left, the closed annulus $A^+(\alpha,r,s)$. On the right, the open annulus $A^-(\alpha,r,s)$, which is the unique maximal open sub-annulus of $A^+(\alpha,r,s)$.}

\end{figure}

By removing the boundary point of a closed disc, one may obtain either one or infinitely many discs, depending on the radius.
We will deal with this question assuming that~$k$ is algebraically closed and consider first the case where the radius does not belong to the value group of~$k$.

\begin{lemma}\label{lem:closeddisctype3}
Assume that~$k$ is algebraically closed. For each $\alpha\in k$ and $r \in \R_{>0} - |k^\times|$, we have 
\[D^+(\alpha,r) =  \{\eta_{\alpha,r}\}  \sqcup D^-(\alpha,r).\]
\end{lemma}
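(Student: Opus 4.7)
The goal is to show that any $x \in D^+(\alpha,r)$ distinct from $\eta_{\alpha,r}$ already lies in $D^-(\alpha,r)$, i.e., that $|T-\alpha|_x < r$. The plan is to rule out $|T-\alpha|_x = r$ by combining the classification of points (Definition~\ref{def:typesofpoints}, Lemma~\ref{lem:type23algclosed}) with the hypothesis $r \notin |k^\times|$. Since $k$ is algebraically closed, $|k^\times|$ is divisible, so $|k^\times| = |k^\times|^\Q$ and in particular $r \notin |k^\times|^\Q$.

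First I would handle the ``obvious'' cases. For $x$ of type~1, the point corresponds to some $\gamma \in k$ (note $\wka = k$ since $k$ is complete and algebraically closed), so $|T-\alpha|_x = |\gamma - \alpha| \in |k^\times| \cup \{0\}$. For $x$ of type~2 or~3, Lemma~\ref{lem:type23algclosed} writes $x = \eta_{\beta,s}$ for some $\beta\in k$, and then $|T-\alpha|_x = \max(|\beta-\alpha|,s)$. In the type~2 case $s \in |k^\times|$ and $|\beta-\alpha|\in |k^\times|\cup\{0\}$, so this maximum lies in $|k^\times|\cup\{0\}$ and cannot equal $r$. In the type~3 case $s\notin|k^\times|$; if the maximum equals $r$, then either $|\beta-\alpha|=r$ (impossible since $r\notin|k^\times|$) or $s=r$ with $|\beta-\alpha|\le r$, in which case Lemma~\ref{lem:etaalphar} gives $x=\eta_{\beta,s}=\eta_{\alpha,r}$, contradicting our assumption.

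The remaining case (type~4) is the only one that requires a slightly less mechanical argument; this is where I expect the main obstacle, but it is dispatched by the same divisibility observation. A type~4 point satisfies $t(x)=0$, i.e., $|\scrH(x)^\times|^\Q = |k^\times|^\Q = |k^\times|$, and because $|k^\times|$ is already divisible, $|\scrH(x)^\times| \subseteq |k^\times|$. Since $x$ is not of type~1, it is not a rational point, so $T-\alpha$ is nonzero in $\scrH(x)$ and $|T-\alpha|_x\in|\scrH(x)^\times|\subseteq|k^\times|$, which again does not contain~$r$. Combining all four cases yields $|T-\alpha|_x < r$ for every $x \in D^+(\alpha,r)\setminus\{\eta_{\alpha,r}\}$, which is precisely the claimed decomposition.
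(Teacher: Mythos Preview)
Your proof is correct, but it follows a different route from the paper's. The paper argues directly: if $x\in D^+(\alpha,r)\setminus D^-(\alpha,r)$, then $|T-\alpha|_x=r$, and for any $P=\sum_i a_i(T-\alpha)^i\in k[T]$ the nonzero values $|a_i|\,r^i$ are pairwise distinct (because $|k^\times|$ is divisible and $r\notin|k^\times|$), so the equality case of the ultrametric inequality forces $|P|_x=\max_i|a_i|\,r^i=|P|_{\alpha,r}$, whence $x=\eta_{\alpha,r}$. This is a two-line computation that never mentions the classification of points.

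Your argument instead filters through Definition~\ref{def:typesofpoints} and Lemma~\ref{lem:type23algclosed}, handling each type separately and exploiting the invariant $t(x)$ to dispatch type~4. The trade-off: the paper's proof is shorter and entirely self-contained at the level of seminorm manipulations, while yours makes transparent \emph{which} points can possibly satisfy $|T-\alpha|_x=r$ and why the answer is ``only $\eta_{\alpha,r}$'' from the structural viewpoint of the value-group invariants. Both rely on the same core fact that $|k^\times|$ is divisible; the paper applies it once to a single polynomial expansion, you apply it repeatedly across the type decomposition.
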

\begin{proof}
Let $x\in D^+(\alpha,r) - D^-(\alpha,r)$. We then have $|T-\alpha|_{x} = r$.

Let $P(T) = a_{d} T^d + \dotsb + a_{0} \in k[T]$. Since $k$~is algebraically closed, $|k^\times|$ is divisible and since $r \notin  |k^\times|$, all the terms $|a_{i}| r^i$ are distinct. It follows that
\[ |P|_{x} = \max_{1\le i\le d} (|a_{i}| r^i) = |P|_{\alpha,r}.\]
\end{proof}

We now handle the case of the disc $D^+(0,1)$. When~$k$ is algebraically closed, any disc of the form $D(\alpha,r)$ with $r\in |k^\times|$ may be turned into the latter by a suitable linear change of variable.

\begin{notation}
For each $u \in \tilde{k}$, we set $D^-(u,1) := D^-(\alpha,1)$, where~$\alpha$ is a lift of~$u$ in~$k^\circ$. 

Since any two lifts $\alpha_{1}$ and $\alpha_{2}$ satisfy $|\alpha_{1} - \alpha_{2}| < 1$, the definition does not depend on the choice of~$\alpha$.
\end{notation}

\begin{lemma}\label{lem:closeddisctype2}
Assume that~$k$ is algebraically closed. We have 
\[D^+(0,1) = \{\eta_{0,1}\}  \sqcup \bigsqcup_{u\in \tilde k} D^-(u,1).\] 
\end{lemma}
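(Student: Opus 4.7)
The plan is to show four things: that the discs $D^-(u,1)$ are pairwise disjoint, that each is contained in $D^+(0,1)$, that $\eta_{0,1}$ lies in none of them, and finally that every point of $D^+(0,1)$ other than $\eta_{0,1}$ lies in one of them. The first three are straightforward applications of the ultrametric triangle inequality; the last is the heart of the argument and exploits the fact that $k$ is algebraically closed.

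For disjointness, suppose $x \in D^-(u_1,1)\cap D^-(u_2,1)$ with lifts $\alpha_1,\alpha_2\in k^\circ$. Then $|T-\alpha_i|_x<1$ for $i=1,2$, hence
\[|\alpha_1-\alpha_2|=|(T-\alpha_2)-(T-\alpha_1)|_x\le\max(|T-\alpha_1|_x,|T-\alpha_2|_x)<1,\]
so $u_1=u_2$. For containment, if $x\in D^-(\alpha,1)$ with $\alpha\in k^\circ$, then $|T|_x\le\max(|T-\alpha|_x,|\alpha|)\le 1$. And $\eta_{0,1}$ is excluded from every $D^-(u,1)$ because $|T-\alpha|_{\eta_{0,1}}=\max(|\alpha|,1)=1$ for any $\alpha\in k^\circ$ by the explicit formula for $\va_{0,1}$.

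The key step is the exhaustion. Let $x\in D^+(0,1)$ with $x\ne\eta_{0,1}$. As in the proof of Lemma~\ref{lem:discs}, from $|T|_x\le 1$ one deduces $\va_x\le\va_{0,1}$ as seminorms on $k[T]$. Since $x\ne\eta_{0,1}$, there exists $P\in k[T]$ with $|P|_x<|P|_{0,1}$. Factor $P=c\prod_i(T-\alpha_i)$ using that $k$ is algebraically closed; by multiplicativity of both $\va_x$ and $\va_{0,1}$, at least one factor must satisfy $|T-\alpha_i|_x<|T-\alpha_i|_{0,1}$. I then claim $\alpha_i\in k^\circ$: indeed if $|\alpha_i|>1$, then $|T-\alpha_i|_x=|\alpha_i|$ by the strong triangle inequality (since $|T|_x\le 1<|\alpha_i|$), while $|T-\alpha_i|_{0,1}=\max(|\alpha_i|,1)=|\alpha_i|$, contradicting the strict inequality. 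Hence $\alpha_i\in k^\circ$ and $|T-\alpha_i|_{0,1}=1$, so $|T-\alpha_i|_x<1$, meaning $x\in D^-(\alpha_i,1)=D^-(u,1)$ for $u$ the reduction of $\alpha_i$.

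The only subtle point is the factorization argument and the case analysis on $|\alpha_i|$; the rest is mechanical. This proof parallels exactly the structure of Lemma~\ref{lem:closeddisctype3}, the difference being that for $r=1\in |k^\times|$ the Gauss point can no longer separate polynomials of different reductions, so instead of a single complementary disc one obtains one open disc for each residue class in $\tilde k$.
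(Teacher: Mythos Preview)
Your proof is correct and follows essentially the same approach as the paper's: both establish disjointness, exclude $\eta_{0,1}$, and prove exhaustion by finding $P$ with $|P|_x<|P|_{0,1}$, factoring it over the algebraically closed field~$k$, picking a linear factor $T-\alpha$ that witnesses the strict inequality, and ruling out $|\alpha|>1$. Your presentation is slightly more explicit (you include the containment $D^-(u,1)\subseteq D^+(0,1)$, which the paper leaves tacit), but the argument is the same.
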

\begin{proof}
Let $u_{1} \in \tilde k$ and let $\alpha_{1} \in k^\circ$ such that $\tilde \alpha_{1} = u_{1}$. We have $|T-\alpha_{1}|_{0,1} = \max(1,|\alpha_{1}|) =1$, hence $\eta_{0,1} \notin D^-(\alpha_{1},1)$.

Let $u_{2} \ne u_{1} \in \tilde k$ and let $\alpha_{2} \in k^\circ$ such that $\tilde \alpha_{2} = u_{2}$. For each $x\in D^-(u_{2},1)$, we have 
\[|T-\alpha_{1}|_{x} = |(T-\alpha_{2}) + (\alpha_{2}-\alpha_{1})|_{x} =1,\]
since $|T-\alpha_{2}|_{x} < 1$ and $|\alpha_{2} - \alpha_{1}| <1$. It follows that $x \notin D^-(u_{1},1)$.

\medbreak

To finish, it remains to prove that $D^+(0,1) - \{\eta_{0,1}\}$ is covered by the discs $D^-(u,1)$ with $u\in \tilde k$. Let $x\in D^+(0,1) - \{\eta_{0,1}\}$. There exists $P \in k[T]$ such that $|P|_{x} \ne |P|_{0,1}$, hence $|P|_{x} < |P|_{0,1}$. Since~$k$ is algebraically closed, we may find such a~$P$ that is a monomial: $P = T-\alpha$ for some $\alpha\in k$. If $|\alpha| >1$, then we have 
$|T-\alpha|_{x} = |\alpha| = |T-\alpha|_{0,1}$,
which contradicts the assumption. We deduce that $|\alpha| \le 1$, hence 
$|T- \alpha|_{x} < |T-\alpha|_{0,1} = 1$
and $x \in D^-(\tilde \alpha,1)$.
\end{proof}

We now want to describe bases of neighborhoods of the points of~$\aank$, at least in the algebraically closed case. To do this, contrary to the usual complex setting, discs are not enough. We will also need annuli and even more complicated subsets.

\begin{definition}\label{def:Swisscheese}
An open (resp. closed) \emph{Swiss cheese}\footnote{This is called a ``standard set'' in \cite[Section~4.2]{Berkovich90}} over~$k$ is a non-empty subset of~$\aank$ that may be written as the complement of finitely many closed (resp. open) discs over~$k$ in an open (resp. a closed) disc over~$k$.
\end{definition}

\begin{proposition}\label{prop:types}
Assume that~$k$ is algebraically closed. Let $x\in \aank$.

If~$x$ has type~1 or~4, it admits a basis of neighborhoods made of discs.

If~$x$ has type~2, it admits a basis of neighborhoods made of Swiss cheeses.

If~$x$ has type~3, it admits a basis of neighborhoods made of annuli.
\end{proposition}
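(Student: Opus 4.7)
The plan is to examine, case by case, how a basic open neighborhood of $x$ looks, exploiting that over the algebraically closed field $k$ every polynomial factors into linear factors. Consequently the topology on $\aank$ is generated by the subbasic sets $\{y : |T-\beta|_y < v\}$ and $\{y : |T-\beta|_y > u\}$ with $\beta \in k$, so a basis of open neighborhoods of $x$ is given by finite intersections
\[U = \bigcap_{j=1}^n \{y \in \aank : u_j < |T-\beta_j|_y < v_j\}\]
with $u_j < |T-\beta_j|_x < v_j$. I then need to fit, inside each such $U$, a neighborhood of the expected shape.

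For type~1, $x = \alpha \in k$, I would choose $\delta > 0$ strictly smaller than every nonzero $|\alpha - \beta_j|$; the strict ultrametric inequality then forces $|T-\beta_j|_y = |\alpha - \beta_j| = |T-\beta_j|_x$ for all $y \in D^-(\alpha, \delta)$, whence $D^-(\alpha, \delta) \subseteq U$. For type~4, I would use the totally ordered family of discs $D^+(\alpha_i, r_i)$ from Proposition~\ref{prop:family}, whose intersection in $k$ is empty by Remark~\ref{rem:intersection}. For each $\beta_j$ one thus finds an index $i_0(\beta_j)$ past which $|\alpha_i - \beta_j| > r_i$, so that $|T-\beta_j|_y = |\alpha_i - \beta_j| = |T-\beta_j|_x$ for every $y \in D^+(\alpha_i, r_i)$ by the strict ultrametric inequality. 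Taking $i$ larger than all $i_0(\beta_j)$ yields $D^+(\alpha_i, r_i) \subseteq U$, and this closed disc is a neighborhood of~$x$ because its only boundary point $\eta_{\alpha_i, r_i}$ differs from~$x$ by Lemma~\ref{lem:discs}.

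For types~2 and~3, Lemma~\ref{lem:type23algclosed} gives $x = \eta_{\alpha, r}$. I would partition the $\beta_j$'s into three groups according to $|\alpha - \beta_j|$ versus $r$: \emph{outside} ($|\alpha - \beta_j| > r$, hence $|T-\beta_j|_x = |\alpha - \beta_j|$), \emph{strictly inside} ($|\alpha - \beta_j| < r$, hence $|T-\beta_j|_x = r$), and \emph{boundary} ($|\alpha - \beta_j| = r$, which requires $r \in |k^\times|$ and so only occurs in type~2). Since outside and strictly inside $\beta_j$'s satisfy $|\alpha - \beta_j| \neq r$, I can choose $r_1 \in (\max_{\text{inside}} |\alpha - \beta_j|,\, r)$ and $r_2 \in (r,\, \min_{\text{outside}} |\alpha - \beta_j|)$, further shrunk to lie in each $(u_j, v_j)$ for the appropriate $j$'s. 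The strict ultrametric inequality then yields $|T-\beta_j|_y \in (u_j, v_j)$ on $A^-(\alpha, r_1, r_2)$ for all outside and strictly inside $j$. In type~3 this completes the proof since no boundary case arises.

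The main difficulty is the type~2 case, where a boundary $\beta_j$ allows $|T-\beta_j|_y$ to drop toward $0$ on $A^-(\alpha, r_1, r_2)$ near the rational point $\beta_j$, violating $|T-\beta_j|_y > u_j$. I would cure this by excising a small closed disc $D^+(\beta_j, s_j)$ around each boundary $\beta_j$, with $s_j \in (u_j, r)$ (positive even when $u_j = 0$). The resulting open Swiss cheese
\[V = D^-(\alpha, r_2) \setminus \Big( D^+(\alpha, r_1) \cup \bigcup_{j \text{ boundary}} D^+(\beta_j, s_j) \Big)\]
still contains $x$ and lies in $U$: the lower bound $|T-\beta_j|_y > s_j > u_j$ for boundary $j$ comes from the excisions, while the upper bound follows from $|T-\beta_j|_y \le \max(|T-\alpha|_y, r) < v_j$. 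This interplay between the boundary residue directions and the lower bounds~$u_j$ is precisely what forces one to pass from annuli to Swiss cheeses in the type~2 situation.
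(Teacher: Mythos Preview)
Your proof is correct and follows essentially the same strategy as the paper's: reduce to linear factors $T-\beta_j$ (the paper does this by writing $P = c\prod(T-\gamma_j)$ and invoking closure of the target classes under finite intersection, whereas you linearize the subbasis upfront), then argue case by case via the ultrametric inequality. Your type~2 argument is slightly sharper in that you excise discs only around the ``boundary'' $\beta_j$'s with $|\alpha-\beta_j|=r$, while the paper excises a small disc around every root; one minor omission is that in type~1 you should also impose $\delta < v_j$ for any $j$ with $\beta_j = \alpha$, and in type~2 ensure $r_2 \le v_j$ for boundary $j$'s, but these are trivial adjustments.
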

\begin{proof}
By definition of the topology, every neighborhood of~$x$ contains a finite intersection of sets of the form $\{u < |P| < v\}$ with $P\in k[T]$ and $u,v\in \R$. Since the sets in the statement are stable under finite intersections, it is enough to prove that each set of the form $\{u < |P| < v\}$ that contains~$x$ contains a neighborhood of~$x$ as described in the statement. 

Let $P\in k[T]$ and $u,v\in \R$ such that $|P(x)| \in (u,v)$. Write $P =  c \prod_{j=1}^m (T-\gamma_{j})$ with $c\in k^\times$ and $\gamma_{1},\dotsc,\gamma_{m}\in k$. 

Assume that~$x$ has type~1. Since~$k$ is algebraically closed, it is a rational point, hence associated to some $\alpha\in k$. One checks that $\{u < |P| < v\}$ then contains a disc of the form $D^-(\alpha,r)$ for $r \in \R_{>0}$.

Assume that~$x$ has type~3. By Lemma~\ref{lem:type23algclosed}, there exist $\alpha\in k$ and $r\in \R_{>0}$ such that $x=\eta_{\alpha,r}$. One checks that $\{u < |P| < v\}$ then contains an annulus of the form $A^-(\alpha,r_{1},r_{2})$ for some $r_{1},r_{2} \in \R_{>0}$ with $r \in (r_{1},r_{2})$.

Assume that~$x$ has type 4. By Proposition~\ref{prop:family} and Remark~\ref{rem:intersection}, it is associated to a family of closed discs $(D^+(\alpha_{i},r_{i}))_{i\in I}$ whose intersection contains no rational point. Because of this condition, there exists $i\in I$ such that $D^+(\alpha_{i},r_{i})$ contains none of the~$\gamma_{j}$'s. Then, for each $j \in \{1,\dotsc,m\}$, we have 
\[ |\alpha_{i} - \gamma_{j}| > r_{i}, \]
hence, for each $y \in D^+(\alpha_{i},r_{i})$, we have 
\[|T(y) - \gamma_{j}| = |T(y) - \alpha_{i} + \alpha_{i} - \gamma_{j}| = |\alpha_{i} - \gamma_{j}|.\]
We deduce that, for each $y \in D^+(\alpha_{i},r_{i})$, we have $|P(y)| = |P(x)|$ and the result follows.

Assume that~$x$ has type 2. By Lemma~\ref{lem:type23algclosed}, there exist $\alpha\in k$ and $r\in \R_{>0}$ such that $x = \eta_{\alpha,r}$. We have 
\[ |P(x)| = |c| \prod_{j=1}^m \max(r,|\alpha-\gamma_{j}|) < v, \]
hence there exists $\rho > r $ such that $|c| \prod_{j=1}^m \max(\rho,|\alpha-\gamma_{j}|) < v$. Then, for each $y\in D^-(\alpha,\rho)$, we have $|P(y)| < v$.

There exists $(\rho_{1},\dotsc,\rho_{m}) \in \prod_{j=1}^m (0,|T(x) - \gamma_{j}|)$ such that $|c| \prod_{j=1}^m \rho_{i} > u$. Then, for each $y \in \aank - \bigcup_{j=1}^m D^+(\gamma_{j},\rho_{j})$, we have $|P(y)|>u$. 

It follows that $D^-(\alpha,\rho) - \bigcup_{j=1}^m D^+(\gamma_{j},\rho_{j})$ is a neighborhood of~$x$ contained in $\{u < |P| < v\}$.
\end{proof}

\begin{remark}\label{rem:T2+1}
If $k$ is not algebraically closed, bases of neighborhoods of points may be more complicated. Let us give an example. Let $p$ be a prime number that is congruent to~3 modulo~4, so that $-1$ is not a square in~$\Q_{p}$. Consider the point~$x$ of~$\aank$ associated to a square root of~$-1$ in~$\C_{p}$. Equivalently, the point~$x$ is the unique point of~$\aank$ satisfying $|T^2+1|_{x} = 0$. 

The subset~$U$ of~$\aank$ defined by the inequality $|T^2+1| < 1$ is an open neighborhood of~$x$. It does not contains~0, so the function~$T$ is invertible on it and we may write
\[-1 = T^2 - (T^2+1) = T^2 \left(1 - \frac{T^2+1}{T^2}\right).\]
At each point~$y$ of~$U$, we have $|T^2+1|_{y}<1$, hence $|T^2|_{y} = 1$ and we deduce that~$-1$ has a square root on~$U$. In particular, $U$ contains no $\Q_{p}$-rational points and no discs.  
\end{remark}

Note that the topology of~$\aank$ is quite different from the topology of~$k$. We have already seen that~$\aank$ is always locally compact, whereas~$k$ is if, and only if, $|k^\times|$ is discrete and $\tilde k$~is finite. In another direction, $k$ is always totally disconnected, but $\aank$ contains paths, as the next result shows.

\begin{lemma}\label{lem:path}
Let $\alpha\in k$. The map
\[r \in \R_{\ge 0} \mapstoo \eta_{\alpha,r} \in \aank\]
is a homeomorphism onto its image~$I_{\alpha}$.
\end{lemma}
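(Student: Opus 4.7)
The plan is to verify the three conditions for a map to be a homeomorphism onto its image: injectivity, continuity, and continuity of the set-theoretic inverse $I_\alpha \to \R_{\geq 0}$.

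First I would establish injectivity. For $r, s \in \R_{>0}$, Lemma~\ref{lem:etaalphar} applied with $\beta = \alpha$ forces $r = s$ as soon as $\eta_{\alpha,r} = \eta_{\alpha,s}$, and the point $\eta_{\alpha,0}$ is the rational point associated with~$\alpha$ (see the remark following Example~\ref{ex:genericpointdisc}), which is distinct from every $\eta_{\alpha,r}$ with $r>0$. So the map $r \mapsto \eta_{\alpha,r}$ is injective, with image some set $I_\alpha \subseteq \aank$.

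Next I would check continuity. Since the topology on~$\aank$ is the coarsest one making each evaluation $x \mapsto |P(x)|$ continuous (for $P \in k[T]$), it suffices to show that for every $P \in k[T]$ the map $r \mapsto |P|_{\alpha,r}$ is continuous on~$\R_{\geq 0}$. Writing $P = \sum_{i=0}^d a_i (T-\alpha)^i$, this map equals $r \mapsto \max_{0 \le i \le d} |a_i|\, r^i$, which is a finite maximum of continuous functions of~$r$, hence continuous.

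For the continuity of the inverse, the key observation is that the evaluation $\ev \colon y \in \aank \mapsto |T-\alpha|_y \in \R_{\geq 0}$ is continuous by definition of the topology on $\aank$, and its restriction to $I_\alpha$ is precisely the inverse of our map: indeed $|T-\alpha|_{\eta_{\alpha,r}} = r$ by the explicit formula for~$\va_{\alpha,r}$. Hence the inverse $I_\alpha \to \R_{\geq 0}$ is continuous, completing the proof.

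There is no real obstacle here; the only mildly subtle point is making sure to include $r=0$ in the argument (handled by recognising $\eta_{\alpha,0}$ as the rational point $\alpha$), and observing that the formula $|T-\alpha|_{\eta_{\alpha,r}} = r$ gives a ready-made continuous left inverse, so that no separate openness argument is needed.
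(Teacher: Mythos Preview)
Your proof is correct and follows essentially the same approach as the paper: injectivity, then continuity via the explicit formula $|P|_{\alpha,r} = \max_i |a_i|\,r^i$. The only cosmetic difference is that the paper phrases the last step as ``the map is clearly open'' without further comment, whereas you instead exhibit the inverse explicitly as the restriction of $y \mapsto |T-\alpha|_y$; these are equivalent, and your version is arguably more informative.
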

\begin{proof}
It is clear that the map is injective and open, so to prove the result, it is enough to prove that it is continuous. By definition, it is enough to prove that, for each $P \in k[T]$, the map $r\in \R_{\ge 0} \mapsto |P|_{\alpha,r} \in \R_{\ge 0}$ is continuous. The result then follows from the explicit description of~$\va_{\alpha,r}$ (see Example~\ref{ex:genericpointdisc}). 
\end{proof}

\begin{remark}\label{rem:pathsmeet}
One may use the paths from the previous lemma to connect the points of~$k$. Let $\alpha,\beta \in k$ and consider the paths~$I_{\alpha}$ and~$I_{\beta}$. Example~\ref{ex:genericpointdisc} tells us that they are not disjoint but meet at the point $\eta_{\alpha,|\alpha-\beta|} = \eta_{\beta,|\alpha-\beta|}$ (and actually coincide from this point on). The existence of a path from~$\alpha$ to~$\beta$ inside~$\aank$ follows.


We will use this construction in Section~\ref{sec:connected} to show that~$\aank$ is path-connected.
\end{remark}

\section{Analytic functions}\label{sec:analyticfunctions}

So far, we have described the Berkovich affine line~$\aank$ as a topological space. It may actually be endowed with a richer structure, since we may define analytic functions over it.

\begin{definition}\label{def:analyticfunction}
Let~$U$ be an open subset of~$\aank$. An \emph{analytic function} on~$U$ is a map
\[F \colon U \to \bigsqcup_{x\in U} \scrH(x)\]
such that, for each $x\in U$, the following conditions hold:
\begin{enumerate}
\item $F(x) \in \scrH(x)$;
\item there exist a neighborhood~$V$ of~$x$ and sequences $(P_{n})_{n\in \N}$ and $(Q_{n})_{n\in \N}$ of elements of~$k[T]$ such that the $Q_{n}$'s do not vanish on~$V$ and 
\[\lim_{n\to +\infty} \sup_{y\in V} \Big(\Big| F(y) - \frac{P_{n}(y)}{Q_{n}(y)}\Big|\Big) =0.\]
\end{enumerate}
\end{definition}

\begin{remark}
The last condition can be reformulated by saying that~$F$ is locally a uniform limit of rational functions without poles, which then makes the definition similar to the usual complex one (where analytic functions are locally uniform limits of polynomials).
\end{remark}

The Berkovich affine line~$\aank$ together with its sheaf of analytic functions~$\calO$ now has the structure of a locally ringed space. It satisfies properties that are similar to those of the usual complex analytic line~$\C$. We state a few of them here without proof.

The set of global analytic functions on some simple open subsets of~$\aank$ may be described explicitly.

\begin{proposition}\label{prop:Odisc}
Let $r \in \R_{>0}$. Then $\calO(D^-(0,r))$ is the set of elements
\[ \sum_{i \in \N} a_{i} \, T^i \in k\llbracket T \rrbracket\]
with radius of convergence greater than or equal to~$r$:
\[ \forall s \in (0,r),\ \lim_{i \to +\infty} |a_{i}|\, s^i = 0.\]
\qed
\end{proposition}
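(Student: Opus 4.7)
\emph{Proof plan.} The plan is to prove the two inclusions separately, in both directions exploiting the transparent effect of the non-Archimedean triangle inequality on power-series expansions over closed subdiscs.

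For the forward inclusion, I would take $f = \sum_{i\ge 0} a_i T^i$ with radius of convergence at least $r$ and check that the pointwise assignment $x \mapsto \sum a_i T(x)^i$ belongs to $\calO(D^-(0,r))$. For each $x \in D^-(0,r)$ I would pick $s \in (|T|_x, r)$; then $|a_i||T|_x^i \le |a_i| s^i \to 0$, so the series converges in the complete valued field $\scrH(x)$. To check the local condition of Definition~\ref{def:analyticfunction} at $x$, I would take $V = D^+(0,s)$ (a neighborhood of $x$ since $|T|_x < s$) and use the polynomial truncations $P_n = \sum_{i \le n} a_i T^i$, $Q_n = 1$: the non-Archimedean triangle inequality immediately gives $|f(y) - P_n(y)| \le \sup_{i>n} |a_i| s^i$ for every $y \in V$, and the bound is independent of $y$ and tends to $0$.

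For the reverse inclusion, given $F \in \calO(D^-(0,r))$, my goal is to show that for each $s \in (0,r)$ the restriction $F|_{D^+(0,s)}$ is represented by a power series in the Tate algebra $k\{s^{-1}T\} := \{\sum a_i T^i : |a_i| s^i \to 0\}$, equipped with the Banach norm $\|\sum a_i T^i\|_s := \max_i |a_i| s^i$ (which coincides with the supremum of $|\cdot|_y$ over $y \in D^+(0,s)$). Since $D^+(0,s)$ is compact (Lemma~\ref{lem:discs}), the local definition of analyticity lets me cover it by finitely many opens $V_1, \ldots, V_m$ on each of which $F$ is a uniform limit of rational functions $P_{n,j}/Q_{n,j}$ with $Q_{n,j}$ pole-free on $V_j$. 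A patching argument then produces a single sequence of rational functions, none having poles on $D^+(0,s)$, converging uniformly to $F$ on $D^+(0,s)$. I would identify any such $P/Q \in k(T)$ with an element of $k\{s^{-1}T\}$ by factoring $Q = c\prod_j (T-\beta_j)$ over $k^a$: the pole-free condition forces $|\beta_j| > s$ for every $j$ (else the rigid point of $\aank$ attached to $\beta_j$ would be a zero of $Q$ inside $D^+(0,s)$), so each $(T-\beta_j)^{-1} = -\beta_j^{-1}\sum_{i\ge 0} (\beta_j^{-1}T)^i$ lies in $k^a\{s^{-1}T\}$; multiplying these expansions with $P$ yields an element of $k^a\{s^{-1}T\}$ whose coefficients lie in $k$ since $P/Q \in k(T)$. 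Completeness of $k\{s^{-1}T\}$ under $\|\cdot\|_s$ then forces $F|_{D^+(0,s)}$ itself to lie in this Tate algebra.

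The hard part is the patching step: turning finitely many local uniform rational approximations into a single one valid uniformly on all of $D^+(0,s)$, in the absence of analytic partitions of unity. Once that is done, the power-series expansions obtained for varying $s < r$ automatically agree as elements of $k\llbracket T\rrbracket$ (by uniqueness of Taylor coefficients at $0$, applied to the restriction to the smaller of two subdiscs), hence glue into a single $\sum a_i T^i$ satisfying $|a_i| s^i \to 0$ for every $s \in (0,r)$, which is the required description of $F$.
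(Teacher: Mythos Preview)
The paper states this result without proof (note the terminal box and the sentence ``We state a few of them here without proof'' just before it), so there is no argument in the paper to compare against.

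Your forward inclusion is correct and essentially complete.

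For the reverse inclusion your outline is sound, and you correctly isolate the crux: passing from local rational approximations on each $V_j$ (where poles are only excluded from $V_j$, so may well lie inside $D^+(0,s)$) to a single sequence of rational functions pole-free on all of $D^+(0,s)$. You flag this patching step as ``the hard part'' but do not resolve it, and it is genuinely nontrivial: it is precisely the disc case of Tate's acyclicity theorem. If you want a self-contained argument rather than quoting Tate, one route is: any open set containing the boundary point~$\eta_{0,s}$ meets $D^+(0,s)$ in the complement of finitely many closed subdiscs $D^+(\beta_i,t_i)$; decompose each rational approximant on that open set via partial fractions into a piece lying in $k\{s^{-1}T\}$ plus principal parts supported in the $D^+(\beta_i,t_i)$; on each $D^+(\beta_i,t_i)$, your own observation shows the other local approximants may be taken to be polynomials (any rational function without poles on a closed disc expands there as a convergent power series), and comparison on the overlapping annuli forces the principal parts to be uniformly small. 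This Mittag-Leffler style gluing is standard but requires care to write out.
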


\begin{corollary}\label{cor:O0}
The local ring~$\calO_{0}$ at the point~0 of~$\aank$ consists of the power series in $ k\llbracket T \rrbracket$ with positive radius of convergence.
\end{corollary}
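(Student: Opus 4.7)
The strategy is to compute the stalk $\calO_0$ as a filtered colimit over a cofinal system of neighborhoods of $0$, namely the open discs $D^-(0,r)$ with $r>0$. Once this cofinality is established, Proposition~\ref{prop:Odisc} immediately describes each term of the colimit, and the claim about positive radius of convergence follows formally.

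First I would verify that $\{D^-(0,r)\}_{r>0}$ is a fundamental system of open neighborhoods of $0$. By definition of the topology of $\aank$, it suffices to show that any basic open set of the form $\{x\in\aank:|P(x)|<v\}$ or $\{x\in\aank:|P(x)|>u\}$ that contains $0$ contains some $D^-(0,r)$. In the second case, if $P(0)=0$ then $u<0$ and the condition is trivial; otherwise one writes $P(T)=P(0)+TQ(T)$ and observes that for $x\in D^-(0,r)$ one has $|T(x)|<r$ and $|Q(x)|\le|Q|_{0,r}$, so for $r$ small enough the product $|T(x)Q(x)|$ is strictly less than $|P(0)|$. The ultrametric inequality then forces $|P(x)|=|P(0)|$, so the whole disc lies in the basic open set. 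The case $|P|<v$ is handled the same way, bounding $|P(x)|\le\max(|P(0)|,|T(x)Q(x)|)$.

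Next I would use that $\calO_0$ is the filtered colimit $\varinjlim_{r>0}\calO(D^-(0,r))$ along the restriction maps. By Proposition~\ref{prop:Odisc}, each $\calO(D^-(0,r))$ is identified with the set of power series $\sum a_iT^i\in k\llbracket T\rrbracket$ whose radius of convergence is at least $r$, and the restriction map $\calO(D^-(0,r))\to\calO(D^-(0,s))$ for $0<s<r$ is simply the inclusion of such power series into the larger set. These restriction maps are injective, so the colimit is the union $\bigcup_{r>0}\calO(D^-(0,r))$, which is exactly the set of power series in $k\llbracket T\rrbracket$ of positive radius of convergence.

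The only genuinely non-formal point is the cofinality of the discs, and the verification above is routine once one exploits the equality case of the non-Archimedean triangle inequality. No obstacle beyond that is expected, since Proposition~\ref{prop:Odisc} does all the work of translating analyticity on a disc into convergence of a single power series.
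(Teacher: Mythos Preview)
Your proposal is correct and follows essentially the same approach as the paper: both arguments combine Proposition~\ref{prop:Odisc} with the fact that the discs $D^-(0,r)$ form a basis of neighborhoods of~$0$. The only difference is that you verify this cofinality directly via the ultrametric inequality, whereas the paper cites the proof of Proposition~\ref{prop:types}; your self-contained verification is a fine substitute.
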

\begin{proof}
It follows from Proposition~\ref{prop:Odisc} and the fact that the family of discs~$D^-(0,r)$, with $r>0$, forms a basis of neighborhoods of~0 in~$\aank$ (see the proof of Proposition~\ref{prop:types}).
\end{proof}

\begin{corollary}\label{cor:O0}
Assume that $k$ is algebraically closed. Let~$x \in \aank$ be a point of type 4, associated to a family of closed discs $(D^+(\alpha_{i},r_{i}))_{i\in I}$ as in Proposition~\ref{prop:family}. The local ring~$\calO_{x}$ at the point~$x$ of~$\aank$ consists of the union over $i\in I$ of the sets of power series in $ k\llbracket T-\alpha_{i} \rrbracket$ with radius of convergence bigger than or equal to~$r_{i}$.
\end{corollary}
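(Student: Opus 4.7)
The plan is to mimic the proof of the preceding corollary about $\calO_0$, using Proposition~\ref{prop:Odisc} together with a fundamental system of open-disc neighborhoods of $x$. The extra difficulty compared to the type~1 case is that at a type~4 point the discs $D^-(\alpha_i,r_i)$ attached to the family of Proposition~\ref{prop:family} need not contain $x$ a priori: the natural choice $r_i = |T-\alpha_i|_x$ places $x$ on the ``boundary circle'' of the closed disc rather than strictly inside the open one.

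First I would check that $x$ lies in the topological interior of each $D^+(\alpha_i,r_i)$. That $x$ belongs to the closed disc follows from Proposition~\ref{prop:family}: for any $j$ with $D^+(\alpha_j,r_j)\subseteq D^+(\alpha_i,r_i)$, the stated nesting condition gives $|T-\alpha_i|_x\le \max(|\alpha_i-\alpha_j|,r_j)\le r_i$. Since by Lemma~\ref{lem:discs} the unique boundary point of $D^+(\alpha_i,r_i)$ in $\aank$ is $\eta_{\alpha_i,r_i}$, which has type~2 or~3, the type~4 point $x$ is automatically interior. Lemmas~\ref{lem:closeddisctype3} and~\ref{lem:closeddisctype2} then describe this interior as a disjoint union of maximal open sub-discs of the form $D^-(\beta,r_i)$; choosing $\beta_i\in k$ in the one containing $x$ and noting that $D^+(\beta_i,r_i)=D^+(\alpha_i,r_i)$ since $|\beta_i-\alpha_i|\le r_i$, I may replace $\alpha_i$ by $\beta_i$ without altering the original family of closed discs. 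Thus, up to recentering, one may assume $x\in D^-(\alpha_i,r_i)$ for every $i\in I$.

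Next I would argue that $\{D^-(\alpha_i,r_i)\}_{i\in I}$ is then a fundamental system of open neighborhoods of $x$. This is essentially contained in the type~4 case of the proof of Proposition~\ref{prop:types}: given any open $U\ni x$, that argument produces an index $i\in I$ with $D^+(\alpha_i,r_i)\subseteq U$, hence a fortiori $D^-(\alpha_i,r_i)\subseteq U$. Applying the definition $\calO_x=\varinjlim_{V\ni x}\calO(V)$, one therefore obtains $\calO_x=\varinjlim_{i\in I}\calO(D^-(\alpha_i,r_i))$ along the cofinal system given by reverse inclusion. Finally, Proposition~\ref{prop:Odisc}, transported by the translation $T\mapsto T-\alpha_i$, identifies each $\calO(D^-(\alpha_i,r_i))$ with the ring of power series in $T-\alpha_i$ of radius of convergence at least $r_i$, and the transition maps of the direct system are the canonical recentering maps on power series. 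The direct limit is therefore exactly the union described in the statement.

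The step that requires real care is the recentering: one has to observe that the topological interior of $D^+(\alpha_i,r_i)$ already contains an open disc of exactly the same radius $r_i$ around $x$. This rests crucially on the decomposition of a closed disc over an algebraically closed field into maximal open sub-discs given by Lemmas~\ref{lem:closeddisctype3} and~\ref{lem:closeddisctype2}, and hence on the standing hypothesis that $k$ is algebraically closed.
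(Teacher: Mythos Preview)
Your proof is correct and follows the same route as the paper's: combine Proposition~\ref{prop:Odisc} with the fact that the discs attached to~$x$ form a neighborhood basis (proof of Proposition~\ref{prop:types}). You supply more detail than the paper by explicitly recentering via Lemmas~\ref{lem:closeddisctype3} and~\ref{lem:closeddisctype2} so that~$x$ lies in the \emph{open} discs~$D^-(\alpha_i,r_i)$, a subtlety the paper's two-line proof leaves implicit.
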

\begin{proof}
It follows from Proposition~\ref{prop:Odisc} and the fact that the family of discs~$(D^+(\alpha_{i},r_{i}))_{i\in I}$, with $i\in I$, forms a basis of neighborhoods of~$x$ in~$\aank$ (see the proof of Proposition~\ref{prop:types}).
\end{proof}

\begin{proposition}\label{prop:Oannulus}
Let $r,s \in \R_{>0}$ with $r < s$. Then $\calO(A^-(0,r,s))$ is the set of elements
\[ \sum_{i \in \Z} a_{i} \, T^i \in k\llbracket T,T^{-1} \rrbracket\]
satisfying the following condition:
\[ \forall t \in (r,s),\ \lim_{i \to \pm \infty} |a_{i}|\, t^i = 0.\]
\qed
\end{proposition}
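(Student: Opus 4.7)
The plan is to establish both inclusions. For the easy direction, given a Laurent series $F = \sum_{i \in \Z} a_i T^i$ satisfying the stated convergence condition and a point $x \in A^-(0,r,s)$ with $|T(x)| = \rho$, I choose $r < t_1 < \rho < t_2 < s$ and set $V := A^+(0,t_1,t_2)$, which is a compact neighborhood of $x$ by Lemma \ref{lem:discs}. The partial sums $S_N := \sum_{i=-N}^N a_i T^i$ are rational functions whose only possible pole is at $T = 0 \notin V$, and for every $y \in V$ one has $|T^i(y)| \le \max(t_1^i, t_2^i)$, so that
\[ \sup_{y \in V} \big|F(y) - S_N(y)\big| \le \max_{|i|>N} |a_i|\max(t_1^i, t_2^i) \xrightarrow[N\to\infty]{} 0. \]
This verifies condition (ii) of Definition \ref{def:analyticfunction} and shows that $F$ defines an element of $\calO(A^-(0,r,s))$.

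For the converse, let $F \in \calO(A^-(0,r,s))$. My strategy is to prove that, for each pair $r < t_1 < t_2 < s$, the restriction of $F$ to the closed sub-annulus $A^+(0,t_1,t_2)$ is represented by a convergent Laurent series $\sum_{i\in\Z} a_i^{(t_1,t_2)} T^i$ with $|a_i^{(t_1,t_2)}|\max(t_1^i,t_2^i) \to 0$. Since $A^-(0,r,s)$ is exhausted by the closed annuli $A^+(0,t_1,t_2)$ and the coefficients of such a Laurent expansion are uniquely determined by the function it represents, the expansions obtained for different sub-annuli automatically agree, assembling into a single Laurent series for $F$ on all of $A^-(0,r,s)$ that satisfies the required convergence condition on each $(t_1,t_2)$.

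The key step is therefore to handle a fixed closed annulus $A^+(0,t_1,t_2)$. Using its compactness (Lemma \ref{lem:discs}) together with the local definition of analyticity, $F|_{A^+(0,t_1,t_2)}$ can be expressed as a uniform limit of rational functions $P_n/Q_n \in k(T)$ whose denominators do not vanish on $A^+(0,t_1,t_2)$. By partial fraction decomposition (performed over a suitable finite extension, then grouping into Galois orbits and descending back to $k$), each such rational function is the sum of a polynomial and finitely many terms of the form $c(T-\alpha)^{-m}$ with $|\alpha|\notin [t_1,t_2]$. Each such elementary term admits a Laurent expansion convergent on $A^+(0,t_1,t_2)$ via the geometric series (expanding in $T/\alpha$ when $|\alpha| > t_2$, or in $\alpha/T$ when $|\alpha| < t_1$). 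The space $\calB_{t_1,t_2}$ of Laurent series $\sum a_i T^i$ with $|a_i|\max(t_1^i,t_2^i) \to 0$, equipped with the norm $\|\sum a_i T^i\| := \max_i |a_i|\max(t_1^i,t_2^i)$, is a Banach $k$-algebra, and this norm coincides with the sup norm on $A^+(0,t_1,t_2)$ (the sup is attained at the two boundary points $\eta_{0,t_1}$ and $\eta_{0,t_2}$, via a direct generalization of the computation in Lemma \ref{lem:etaalphar}). Hence uniform convergence on $A^+(0,t_1,t_2)$ coincides with convergence in the Banach norm of $\calB_{t_1,t_2}$, and the uniform limit $F$ of the Laurent series $P_n/Q_n \in \calB_{t_1,t_2}$ is itself an element of $\calB_{t_1,t_2}$.

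The main obstacle I expect is the identification of the sup norm on $A^+(0,t_1,t_2)$ with the natural Banach norm $\|\cdot\|$ on $\calB_{t_1,t_2}$, i.e.\ the statement that the sup norm is attained at one of the two boundary points $\eta_{0,t_1}$, $\eta_{0,t_2}$. This is the Laurent-series analogue of Lemma \ref{lem:etaalphar} and can be proven by an extension of the same equality-case argument in the non-Archimedean triangle inequality used there, now applied to the product of two Laurent series. Once this spectral identification is in place, the partial fraction step is routine (only finitely many poles are present in any given approximant) and the compatibility of the Laurent expansions across nested sub-annuli follows from the uniqueness of coefficients in $\calB_{t_1,t_2}$.
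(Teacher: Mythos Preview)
The paper states this proposition without proof (it carries a bare \qed and is one of the results announced as ``stated without proof''), so there is no argument in the paper to compare against. Your outline is the standard one and the easy direction is fine, but the converse contains a real gap, and not where you think it is.

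The problematic sentence is: ``Using its compactness together with the local definition of analyticity, $F|_{A^+(0,t_1,t_2)}$ can be expressed as a uniform limit of rational functions $P_n/Q_n$ whose denominators do not vanish on $A^+(0,t_1,t_2)$.'' Compactness alone does not give this. Definition~\ref{def:analyticfunction} furnishes, for each point, a neighborhood $V$ and a sequence of rationals converging uniformly \emph{on $V$}; compactness yields a finite cover $V_1,\dots,V_m$, but the approximating rationals on different $V_j$ are unrelated, and there is no single rational function close to $F$ on all of $A^+(0,t_1,t_2)$. Passing from local to global uniform approximation is a Mittag--Leffler (Cousin) problem: on an overlap $V_i\cap V_j$ the difference $R_i - R_j$ of two local approximants is small, and one must split it as $g_i - g_j$ with each $g_k$ extending to $V_k$ with controlled norm. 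For sub-annuli centered at $0$ this splitting is precisely the Laurent decomposition into non-negative and negative powers of $T$; making it work for a general finite cover by Swiss cheeses is essentially Tate's acyclicity theorem for the structure sheaf. This is the substantive ingredient, and you have not supplied it. Note also that you cannot sidestep this by taking the $V_j$ to be annuli centered at $0$: such annuli do not form a basis of neighborhoods of a type~2 point on the skeleton.

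By contrast, the step you flag as the ``main obstacle''---identifying the sup norm on $A^+(0,t_1,t_2)$ with the Banach norm on $\calB_{t_1,t_2}$---is routine: one inequality is the non-Archimedean triangle inequality, and the other follows by evaluating at $\eta_{0,t_1}$ and $\eta_{0,t_2}$ exactly as in Lemma~\ref{lem:etaalphar}. Your partial-fraction and geometric-series steps are also fine once global approximation is in hand (a pole of $R_n$ inside the closed annulus would make $|R_n|$ unbounded there, contradicting uniform closeness to $F$, whose absolute value is continuous, hence bounded on the compact set).
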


\begin{corollary}\label{cor:Oetat}
Let $t \in \R_{>0} - |k^\times|$. The local ring~$\calO_{\eta_{t}}$ at the point~$\eta_{t}$ of~$\aank$ is the set of elements
\[ \sum_{i \in \Z} a_{i} \, T^i \in k\llbracket T,T^{-1} \rrbracket\]
satisfying the following condition:
\[ \exists t_{1},t_{2} \in \R_{>0} \textrm{ with } t_{1} < t < t_{2} \textrm{ such that } \lim_{i \to -\infty} |a_{i}|\, t_{1}^i =  \lim_{i \to +\infty} |a_{i}|\, t_{2}^i  = 0.\]
\qed
\end{corollary}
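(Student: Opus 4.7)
The plan is to follow the same strategy as in the proof of Corollary~\ref{cor:O0}: identify a cofinal family of neighborhoods of~$\eta_{t}$ on which analytic functions admit an explicit description, then pass to the direct limit. Concretely, I will first show that the open annuli $A^-(0,t_1,t_2)$ with $t_1 < t < t_2$ form a basis of neighborhoods of~$\eta_t$ in $\aank$, and then invoke Proposition~\ref{prop:Oannulus}.

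For the cofinality claim, let $U$ be an open neighborhood of~$\eta_t$. By definition of the topology on~$\aank$, $U$ contains a finite intersection of sets of the form $V_{P,u,v} := \{y \in \aank : u < |P(y)| < v\}$ with $P \in k[T]$ and $u,v \in \R$, each containing~$\eta_t$. It suffices to exhibit, for each such $V_{P,u,v}$, an annulus $A^-(0,t_1,t_2)$ around~$\eta_t$ contained in $V_{P,u,v}$, and then intersect the finitely many annuli thus obtained.

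Given $V_{P,u,v}$, I would factor $P = c \prod_{j=1}^n (T-\gamma_j)$ with $c \in k^\times$ and $\gamma_j \in k^a$. Using the hypothesis on~$t$, one can choose $t_1 < t < t_2$ close enough to~$t$ so that the interval $[t_1,t_2]$ contains none of the~$|\gamma_j|$'s. Then, for every $y \in A^-(0,t_1,t_2)$, working in a suitable valued extension, the strong triangle inequality yields $|T-\gamma_j|_y = \max(|T(y)|,|\gamma_j|)$ for each~$j$, and hence
\[|P(y)| = |c| \prod_{j=1}^n \max(|T(y)|,|\gamma_j|).\]
This is a continuous function of $|T(y)| \in (t_1,t_2)$ which equals $|P(\eta_t)| \in (u,v)$ at $|T(y)| = t$; shrinking $t_1$ and $t_2$ further if necessary ensures $|P(y)| \in (u,v)$ throughout the annulus.

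With the cofinality of annuli established, we have $\calO_{\eta_t} = \varinjlim \calO(A^-(0,t_1,t_2))$ as $t_1 \nearrow t$ and $t_2 \searrow t$. By Proposition~\ref{prop:Oannulus}, each term of this direct limit consists of Laurent series $\sum_{i \in \Z} a_i T^i$ with $\lim_{i \to \pm\infty} |a_i|\, s^i = 0$ for every $s \in (t_1,t_2)$. Since $s \mapsto s^i$ is monotonic (increasing for $i \ge 0$, decreasing for $i \le 0$), this family of uniform conditions is equivalent to $\lim_{i \to -\infty} |a_i|\, t_1^i = \lim_{i \to +\infty} |a_i|\, t_2^i = 0$ for the specific endpoints. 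Taking the union over all such pairs $(t_1,t_2)$ gives exactly the Laurent series characterized in the statement. The main obstacle is the cofinality step, and within it the careful choice of $t_1,t_2$ avoiding the~$|\gamma_j|$'s, which is where the hypothesis on~$t$ is used in an essential way.
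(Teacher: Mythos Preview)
Your approach is exactly the paper's: it too derives the result from Proposition~\ref{prop:Oannulus} and the fact that the annuli $A^-(0,t_1,t_2)$ with $t_1 < t < t_2$ form a basis of neighborhoods of~$\eta_t$, citing the proof of Proposition~\ref{prop:types}.

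There is, however, a subtle gap in your cofinality argument. You assert that the hypothesis on~$t$ lets you choose $[t_1,t_2]$ avoiding all the~$|\gamma_j|$'s, but the roots~$\gamma_j$ lie in~$k^a$, so their absolute values lie in $|k^\times|^\Q$, whereas the stated hypothesis only excludes $|k^\times|$. Take for instance $k = \Q_p$, $t = p^{-1/2} \notin |\Q_p^\times|$, and $P = T^2 - p$: the roots $\pm\sqrt p$ satisfy $|{\pm}\sqrt p| = t$, and the rigid point of~$\aank$ associated with~$\sqrt p$ lies in every annulus $A^-(0,t_1,t_2)$ around~$\eta_t$ while $|P|$ vanishes there; hence no such annulus is contained in $\{|P| > \varepsilon\}$ for $\varepsilon > 0$. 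In this example $\eta_t$ has type~2 by Example~\ref{ex:typeetaalphar}, and the annuli genuinely do not form a basis of neighborhoods. Your argument, and the paper's appeal to Proposition~\ref{prop:types} (stated for~$k$ algebraically closed, where $|k^\times| = |k^\times|^\Q$), really requires $t \notin |k^\times|^\Q$.
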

\begin{proof}
It follows from Proposition~\ref{prop:Oannulus} and the fact that the family of annuli~$A^-(0,t_{1},t_{2})$, with $t_{1}<t<t_{2}$, forms a basis of neighborhoods of~$\eta_{t}$ in~$\aank$ (see the proof of Proposition~\ref{prop:types}).
\end{proof}


\begin{remark}
The local rings at points of type~2 of~$\aank$ do not admit descriptions as simple as that of the other points, due to the fact that they have more complicated bases of neighborhoods (see Proposition~\ref{prop:types}). Over an algebraically closed field~$k$, one may still obtain a rather concrete statement as follows. Let~$C$ be a set of lifts of the elements of~$\tilde k$ in~$k^\circ$. Then, the local ring~$\calO_{\eta_{1}}$ at the point~$\eta_{1}$ of~$\aank$ consists in the functions that may be written as a sum of power series with coefficients in~$k$ of the form
\[ \sum_{i\in\N} a_{i}\, T^i + \sum_{c \in C_{0}} \sum_{i\in \N_{\ge 1}} a_{c,i} \, (T-c)^{-i},\]
where $C_{0}$ is a finite subset of~$C$ and the series all have radius of convergence strictly bigger than~1. We refer to~\cite[Proposition~2.2.6]{FresnelPut04} for details.
\end{remark}

%

%
%

Let us now state some properties of the local rings, \emph{i.e.} germs of analytic functions at one point. They are easily seen to hold for~$\calO_{0}$ using its explicit description as a ring of power series (see Proposition~\ref{prop:Odisc}).


\begin{proposition}
Let $x\in \aank$. The ring~$\calO_{x}$ is a local ring with maximal ideal
\[\mathfrak{m}_{x} = \{F \in \calO_{x} : F(x) = 0\}.\]
The quotient $\calO_{x}/\mathfrak{m}_{x}$ is naturally a dense subfield of~$\scrH(x)$.

If~$x$ is a rigid point (see Example~\ref{ex:rigidpoint}), then $\calO_{x}$ is a discrete valuation ring that is excellent and henselian. Otherwise, $\calO_{x}$ is a field.
\qed
\end{proposition}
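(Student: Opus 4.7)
The proposition's hint identifies $\calO_0$ (the ring of power series of positive radius of convergence) as the template, which is visibly a henselian excellent DVR. My plan is to reduce the four assertions to this kind of explicit calculation using the approximation definition of analytic functions and the local descriptions from the preceding corollaries.

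For the local ring statement, $\mathfrak{m}_x$ is clearly an additive subgroup and is an ideal by multiplicativity of~$\va_x$. For maximality, given $F \in \calO_x$ with $F(x) \ne 0$, continuity gives $|F| \ge \varepsilon > 0$ on some neighborhood $V$ of~$x$; taking rational approximants $P_n/Q_n \to F$ uniformly on $V$ with $Q_n$ non-vanishing, non-Archimedean estimates show that $Q_n/P_n$ is uniformly Cauchy on a smaller neighborhood and converges to an analytic inverse of $F$. So any element outside $\mathfrak{m}_x$ is a unit. The residue-field assertion then follows at once: evaluation $F \mapsto F(x)$ is a $k$-algebra homomorphism $\calO_x \to \scrH(x)$ with kernel $\mathfrak{m}_x$, and its image contains every $P(x)$ and every $1/Q(x)$ for $Q(x) \ne 0$, hence the full fraction field of $k[T]/\ker(\va_x)$, which is dense in $\scrH(x)$ by construction.

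For the dichotomy, consider first the non-rigid case, where $\ker(\va_x) = (0)$. Using the explicit local descriptions in Corollaries~\ref{cor:O0} and~\ref{cor:Oetat} (and the analogous type-$2$ sketch), any non-zero germ is represented by a convergent (Laurent) series with at least one non-zero coefficient, and the formulas for the seminorms $\va_{\alpha,r}$ force $|F|_x > 0$. Hence $F$ is a unit and $\calO_x$ is a field. When $x$ is rigid, $\ker(\va_x) = (\mu)$ for some irreducible $\mu \in k[T]$. I would show $\mathfrak{m}_x = (\mu)$ by a Weierstrass-style factorization --- passing to a splitting field of $\mu$ where the statement for $\calO_0$ applies root-by-root to write each analytic function vanishing at a root of $\mu$ as a multiple of the corresponding linear factor, and descending Galois-equivariantly --- and iterate to obtain a $\mu$-adic valuation making $\calO_x$ a DVR. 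The henselian property then comes from the implicit function theorem for convergent power series, and excellence by identifying $\calO_x$ with the henselization of the excellent algebraic localization $k[T]_{(\mu)}$.

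The main obstacle is the rigid case when $\deg \mu > 1$: both the factorization $F = \mu G$ with $G$ analytic and the identification of $\calO_x$ with a henselization require a careful comparison between germs of analytic functions over $k$ and over an extension in which $\mu$ splits, together with Galois-equivariant descent verifying that the local factors can be reassembled $k$-rationally. Once this structural comparison is in hand, excellence and the henselian property follow from general structure theorems for local rings.
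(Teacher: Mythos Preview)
The paper does not actually prove this proposition: it is stated with a bare~$\qed$, preceded only by the remark that the claims ``are easily seen to hold for~$\calO_{0}$ using its explicit description as a ring of power series.'' Your outline is thus an attempt to supply what the paper leaves implicit, and its overall shape matches the intended reduction: the local-ring assertion via inversion of non-vanishing germs, the density statement via the evaluation map, and the rigid case via Weierstrass preparation with Galois descent are all the right ideas, and you correctly flag the descent for $\deg\mu>1$ as the genuine technical point.

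Two corrections. First, your treatment of the non-rigid case via the explicit series descriptions covers types~2, 3, and~4 (after base change to~$\wka$) but overlooks non-rigid type~1 points, i.e.\ those coming from transcendental $\alpha\in\wka$. No series description of~$\calO_x$ over~$k$ is available there; one argues instead that the Galois orbit of~$\alpha$, being the continuous image of the compact group~$\Gal(k^s/k)$, is compact, and is infinite (a finite orbit would place~$\alpha$ in a finite extension of~$k$, at least in characteristic~$0$ via Ax--Sen--Tate), hence accumulates at~$\alpha$; a nonzero $k$-analytic germ vanishing at~$x$ would then have infinitely many zeros near~$\alpha$ over~$\wka$, contradicting Weierstrass preparation.

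Second, your excellence argument misidentifies~$\calO_x$: it is strictly larger than the henselization of~$k[T]_{(\mu)}$, since the latter consists of \emph{algebraic} power series while~$\calO_x$ contains transcendental convergent ones (e.g.\ $\exp(pT)\in\calO_0$ over~$\Q_p$). Henselianity still follows from the implicit-function argument you sketch, but excellence of the convergent-power-series DVR must be argued directly; it is automatic in characteristic~$0$ and a known but non-formal fact about affinoid local rings in general.
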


\begin{remark}
The existence of the square of~$-1$ in Remark~\ref{rem:T2+1} may be reproved using Henselianity. With the notation of that remark, we have $\calO_{x}/\mathfrak{m}_{x} = \scrH(x) = k[T]/(T^2+1)$, hence the residue field $\calO_{x}/\mathfrak{m}_{x}$ contains a root of~$T^2+1$. Since we are in characteristic~0, this root is simple, hence, by Henselianity, it lifts to a root of~$T^2+1$ in~$\calO_{x}$.
\end{remark}

The next step is to define a notion of morphism between open subsets of~$\aank$. As one should expect, such a morphism $\varphi \colon U \to V$ underlies a morphism of locally ringed spaces (hence a morphism of sheaves $\varphi^\sharp \colon \calO_{V} \to \varphi_{*} \calO_{U}$), but the precise definition is more involved since we want the seminorms associated to the points of~$U$ and~$V$ to be compatible. For instance, for each $x\in U$, we want the map $\calO_{\varphi(x)}/\mathfrak{m}_{\varphi(x)} \to \calO_{x}/\mathfrak{m}_{x}$ induced by~$\varphi^\sharp$ to be an isometry with respect to~$\va_{\varphi(x)}$ and~$\va_{x}$ (so that it induces an isometry $\scrH(\varphi(x)) \to \scrH(x)$). We will not dwell on those questions, which would lead us too far for this survey. Anyway, in the rest of the text, we will actually make only a very limited use of morphisms.

Let us mention that, as in the classical theories, global sections of the structure sheaf correspond to morphisms to the affine line.

\begin{lemma}\label{lem:morphismtoA1}
Let~$U$ be an open subset of~$\aank$. Then, the map
\[ \begin{array}{ccc}
\Hom(U,\aank) & \longrightarrow & \calO(U)\\
\varphi & \longmapsto & \varphi^\sharp(T)
\end{array}\]
is a bijection.
\qed
\end{lemma}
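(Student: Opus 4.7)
The plan is to construct the inverse map $\calO(U) \to \Hom(U, \aank)$ explicitly. Given $F \in \calO(U)$, for each $x \in U$ the value $F(x)$ lies in $\scrH(x)$, so the assignment $T \mapsto F(x)$ extends to a character $\chi_x \colon k[T] \to \scrH(x)$, which in turn defines a point $\varphi_F(x) \in \aank$ via the correspondence between points and equivalence classes of characters. Concretely, $|P|_{\varphi_F(x)} = |P(F(x))|$ for every $P \in k[T]$. I will show that $F \mapsto \varphi_F$ is a two-sided inverse of $\varphi \mapsto \varphi^\sharp(T)$.

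First I would check continuity of the set map $\varphi_F \colon U \to \aank$. Since the topology on $\aank$ is generated by the preimages of open intervals under the functions $|P|$ for $P \in k[T]$, it suffices to verify that $x \mapsto |P(F(x))|$ is continuous on $U$ for every such $P$. But $P \circ F$ is again analytic on $U$: locally around any $x \in U$, $F$ is a uniform limit of rational functions $P_n/Q_n$ without poles, hence $P \circ F$ is locally a uniform limit of rational functions of the form $P(P_n)/Q_n^{\deg P}$, whose denominators still do not vanish in a neighborhood of $x$. Evaluation of an analytic function at a point is then continuous, because the uniform convergence in a neighborhood passes through the seminorm.

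Next I would upgrade $\varphi_F$ to a morphism by defining the sheaf pullback $\varphi_F^\sharp \colon \calO_V \to (\varphi_F)_* \calO_{\varphi_F^{-1}(V)}$ for every open $V \subseteq \aank$. On a germ $G \in \calO_{\varphi_F(x)}$ represented by a uniform limit of rational functions $P_n/Q_n$ on a neighborhood of $\varphi_F(x)$ in $V$, I set $\varphi_F^\sharp(G)$ to be the uniform limit of $P_n(F)/Q_n(F)$ on a neighborhood of $x$; this is well-defined by the same argument as above, since $Q_n(F)(y) = Q_n(F(y))$ is invertible in $\scrH(y)$ as soon as $\varphi_F(y)$ lies in the domain where $Q_n$ has no zero. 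By construction $|\varphi_F^\sharp(G)(x)| = |G(\varphi_F(x))|$, so the induced extension $\scrH(\varphi_F(x)) \to \scrH(x)$ (sending $T \mapsto F(x)$) is an isometry, as required by the notion of morphism. In particular $\varphi_F^\sharp(T) = F$, so the composition $F \mapsto \varphi_F \mapsto \varphi_F^\sharp(T)$ is the identity.

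Conversely, given any $\varphi \in \Hom(U, \aank)$, set $F := \varphi^\sharp(T) \in \calO(U)$. Since $\varphi^\sharp$ is a $k$-algebra morphism, one has $\varphi^\sharp(P) = P(F)$ for every $P \in k[T]$, and the isometric embedding $\scrH(\varphi(x)) \hookrightarrow \scrH(x)$ yields $|P|_{\varphi(x)} = |\varphi^\sharp(P)(x)| = |P(F(x))|$. Hence $\varphi(x) = \varphi_F(x)$ pointwise, and the sheaf-level identification $\varphi^\sharp = \varphi_F^\sharp$ follows since both agree on polynomial germs and both are compatible with uniform limits of rational functions, which by Definition~\ref{def:analyticfunction} are dense in every stalk. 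The main obstacle is that the paper deliberately leaves the precise definition of morphisms somewhat informal; the delicate point is therefore to verify that the pullback $\varphi_F^\sharp$ on germs is independent of the chosen representation as a uniform limit of rational functions and that the isometry condition survives the limiting process. Both reductions rest on the continuity of the seminorms $\va_x$ on the local rings $\calO_x$.
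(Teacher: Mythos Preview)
The paper states this lemma without proof (the \qed\ immediately follows the statement), so there is nothing to compare against; your task was really to supply the argument the authors omitted. Your outline is the standard one and is essentially correct: build $\varphi_F$ from the characters $T\mapsto F(x)$, check it is a morphism, and verify the two compositions are identities.

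Two small points. First, your formula $P(P_n)/Q_n^{\deg P}$ is wrong: expanding $P(P_n/Q_n)$ with $P=\sum a_iT^i$ gives $\big(\sum_i a_i P_n^{i}Q_n^{\deg P-i}\big)/Q_n^{\deg P}$, not $P(P_n)/Q_n^{\deg P}$. This does not affect the conclusion, since the numerator is still a polynomial and the denominator still has no zeros on the relevant neighborhood. In fact you can bypass this computation entirely: $\calO(U)$ is a $k$-algebra, so $P(F)\in\calO(U)$ automatically, and continuity of $x\mapsto|P(F)(x)|$ follows from the general fact (immediate from Definition~\ref{def:analyticfunction}) that $x\mapsto|G(x)|$ is continuous for any $G\in\calO(U)$.

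Second, you are right to flag that the paper leaves the notion of morphism informal; the paragraph preceding the lemma says as much. Your construction of $\varphi_F^\sharp$ and the isometry check are exactly what is needed once one fixes a reasonable definition (morphism of locally ringed spaces inducing isometric embeddings $\scrH(\varphi(x))\hookrightarrow\scrH(x)$). The independence of $\varphi_F^\sharp(G)$ from the presentation of~$G$ as a uniform limit is not really an obstacle: two sequences of rational functions converging uniformly to the same~$G$ have interlacings that are Cauchy, and composition with~$F$ preserves uniform Cauchy-ness on compacta by the continuity you already established.
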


For later use, we record here the description of isomorphisms of discs and annuli. Once the rings of global sections are known (see Proposition~\ref{prop:Odisc} and~\ref{prop:Oannulus}), those results are easily proven using the theory of Newton polygons (or simple considerations on the behaviour of functions as in Section~\ref{sec:variation}). We refer to \cite[3.6.11 and 3.6.12]{DucrosRSS} for complete proofs.

\begin{proposition}\label{prop:isodisc}
Let $r_{1},r_{2} \in \R_{>0}$. Let $\varphi \colon D^-(0,r_{1}) \to D^-(0,r_{2})$ be an isomorphism such that $\varphi(0) = 0$. Write 
\[ \varphi^\sharp(T) = \sum_{i \in \N_{\ge 1}} a_{i} T^i \in k\llbracket T \rrbracket\]
using Proposition~\ref{prop:Odisc}. Then, we have
\[\forall s \in (0,r_{1}),\ |a_{1}|\, s > \sup_{i \ge 2} (|a_{i}| \,s^i).\]
In particular, we have $r_{2} = |a_{1}| \, r_{1}$ and, for each $s \in [0,r_{1})$, $\varphi(\eta_{s}) = \eta_{|a_{1}| s}$.
\qed
\end{proposition}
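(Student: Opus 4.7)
The plan is to introduce the inverse isomorphism $\psi := \varphi^{-1} \colon D^-(0,r_2) \to D^-(0,r_1)$, which also fixes $0$, and to analyze the two monotone ``radius transfer'' functions $\sigma(s) := |f|_{\eta_{s}}$ and $\tau(t) := |g|_{\eta_{t}}$, where $f = \varphi^{\sharp}(T) = \sum_{i\ge1} a_{i}T^{i}$ and $g = \psi^{\sharp}(T) = \sum_{j\ge1} b_{j}T^{j}$ (no constant terms, since $f(0) = g(0) = 0$). Extending Lemma~\ref{lem:etaalphar} from polynomials to convergent power series by continuity of $|\cdot|_{\eta_{s}}$ on the sequence of partial sums gives $\sigma(s) = \max_{i\ge1}(|a_{i}|s^{i})$ and $\tau(t) = \max_{j\ge1}(|b_{j}|t^{j})$. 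The compositional identities $g(f) = T$ and $f(g) = T$ follow from $\psi \circ \varphi = \mathrm{id}$ and $\varphi \circ \psi = \mathrm{id}$ by pulling back $T$; in particular $a_{1}b_{1} = 1$, so $a_{1}, b_{1} \ne 0$.

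Next I would verify that $\sigma \colon [0,r_{1}) \to [0,r_{2})$ and $\tau \colon [0,r_{2}) \to [0,r_{1})$ are continuous, strictly increasing bijections. Continuity and the inclusion of values in the target interval follow from the formulas and from $\sigma(s) = |T|_{\varphi(\eta_{s})} < r_{2}$ (as $\varphi(\eta_{s}) \in D^{-}(0,r_{2})$); strict monotonicity is elementary from the explicit expressions; surjectivity comes from $D^{-}(0,r_{1}) = \bigcup_{s<r_{1}} D^{+}(0,s)$ together with $\varphi$ being a bijection onto $D^{-}(0,r_{2})$, which forces $\lim_{s \to r_{1}^-} \sigma(s) = r_{2}$.

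The central step is to show $\tau \circ \sigma = \mathrm{id}_{[0,r_{1})}$. For any $s$ at which the maximum $\tau(\sigma(s)) = \max_{j}(|b_{j}|\sigma(s)^{j})$ is attained by a unique index $j_{0}$, the decomposition $g(f) = \sum_{j} b_{j}f^{j}$ has exactly one summand of maximal $\eta_{s}$-norm, so by ultrametric dominance $|g(f)|_{\eta_{s}} = |b_{j_{0}}|\sigma(s)^{j_{0}} = \tau(\sigma(s))$. Since $g(f) = T$, this equals $s$. The generic such $s$ form an open dense subset of $[0,r_{1})$, so $\tau \circ \sigma = \mathrm{id}$ everywhere by continuity. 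I expect this generic--non--cancellation argument to be the main subtlety of the proof: one must be confident that the ``corner'' values of $s$ form only a negligible (locally finite) set, which in turn rests on the piecewise log-linear nature of $\sigma$ and $\tau$.

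A Newton polygon argument then finishes things. The function $u \mapsto \log \sigma(e^{u})$ is piecewise linear and convex, with positive integer slopes that are non-decreasing and whose leftmost value is $1$ (since $a_{1} \ne 0$); the same holds for $\log \tau$. Being mutual inverses forces the product of slopes at each differentiability point to equal $1$, hence each slope is identically $1$. We conclude $\sigma(s) = |a_{1}|s$ on $[0,r_{1})$, whence $r_{2} = |a_{1}|r_{1}$ by taking $s \to r_{1}^-$. The strict inequality $|a_{1}|s > |a_{i}|s^{i}$ for $i \ge 2$ and $s \in (0,r_{1})$ follows from $\sigma(s) = |a_{1}|s$: an equality $|a_{1}|s_{0} = |a_{i_{0}}|s_{0}^{i_{0}}$ would imply $|a_{i_{0}}|s^{i_{0}} > |a_{1}|s$ for $s$ slightly larger than $s_{0}$, violating $\sigma(s) = |a_{1}|s$. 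Finally, to identify $\varphi(\eta_{s}) = \eta_{|a_{1}|s}$, I would compare seminorms on $k[T]$: for $\beta \in k$ one gets $|T-\beta|_{\varphi(\eta_{s})} = |f - \beta|_{\eta_{s}} = \max(|\beta|, |a_{1}|s) = |T-\beta|_{\eta_{|a_{1}|s}}$, and the case of general polynomials reduces to this after base change to $\wka$ and factoring into linear terms over the algebraic closure, using multiplicativity of each of the two seminorms involved.
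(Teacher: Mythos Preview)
Your proof is correct. The paper itself omits the proof (the statement carries a \qed and the preceding paragraph merely says the result ``is easily proven using the theory of Newton polygons (or simple considerations on the behaviour of functions as in Section~\ref{sec:variation})'' and points to \cite[3.6.11 and 3.6.12]{DucrosRSS}), so there is nothing to compare against beyond that hint; your argument is precisely a Newton-polygon argument of the kind the paper alludes to, carried out cleanly via the mutually inverse radius-transfer functions $\sigma,\tau$ and the observation that their log-linear slopes, being positive integers multiplying to~$1$, must both equal~$1$.
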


\begin{remark}\label{rem:isoaffineline}
The previous result still holds if we allow the radii to be infinite, considering the affine line as the disc of infinite radius. 
\end{remark}

\begin{proposition}\label{prop:isoannulus}
Let $r_{1},s_{1},r_{2},s_{2} \in \R_{>0}$ with $r_{1}<s_{1}$ and $r_{2}<s_{2}$. Let $\varphi \colon A^-(0,r_{1},s_{1}) \to A^-(0,r_{2},s_{2})$ be an isomorphism. Write 
\[ \varphi^\sharp(T) = \sum_{i \in \Z} a_{i} T^i \in k\llbracket T,T^{-1} \rrbracket\]
using Proposition~\ref{prop:Oannulus}. Then, there exists $i_{0} \in \{-1,1\}$ such that we have
\[\forall t \in (r_{1},s_{1}),\ |a_{i_{0}}|\, s^{i_{0}} > \sup_{i \ne i_{0}} (|a_{i}| \, s^i).\]
In particular, if $i_{0} = 1$, we have $r_{2} = |a_{1}| \, r_{1}$, $s_{2} = |a_{1}| \, s_{1}$ and, for each $t \in (r_{1},s_{1})$, $\varphi(\eta_{s}) = \eta_{|a_{1}| s}$. If $i_{0} = -1$, we have $r_{2} = |a_{1}|/s_{1}$, $s_{2} = |a_{1}|/r_{1}$ and, for each $t \in (r_{1},s_{1})$, $\varphi(\eta_{s}) = \eta_{|a_{1}|/s}$.
\qed
\end{proposition}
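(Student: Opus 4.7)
The plan is to exploit that $f := \varphi^\sharp(T)$ is a unit in $\calO(A^-(0,r_1,s_1))$: indeed, $T$ is non-vanishing on the target annulus, so $\varphi^\sharp(T^{-1})$ provides an inverse to $f$. Using Proposition~\ref{prop:Oannulus} and the formula $|g|_{\eta_t} = \sup_k |c_k| t^k$ for $g = \sum_k c_k T^k$ (the sup being attained by the convergence condition), the function $u \mapsto \log |f|_{\eta_{\exp u}}$ is a supremum of affine functions with integer slopes, hence convex on $(\log r_1, \log s_1)$. The same holds for $\log |f^{-1}|_{\eta_{\exp u}}$; and since the two sum to zero, both are concave as well. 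Hence $\log |f|_{\eta_{\exp u}}$ is affine, giving $|f|_{\eta_t} = c\, t^n$ for some $c > 0$ and some integer $n$.

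To upgrade this to strict dominance by a single monomial, I would argue as follows. If $|a_i| t_0^i = c t_0^n$ for some $i \ne n$ and $t_0 \in (r_1, s_1)$, then the affine functions $\log |a_i| + i \log t$ and $\log c + n \log t$ cross transversely at $\log t_0$, so on one side of $t_0$ within $(r_1, s_1)$ one would have $|a_i| t^i > c t^n$, contradicting $\sup_i |a_i| t^i = c t^n$. Hence $|a_i| t^i < c t^n$ strictly for all $i \ne n$ and $t \in (r_1, s_1)$, and by convergence the supremum over $i \ne n$ is attained and therefore strictly less than $c t^n$; considering $i = n$ in the bound $|a_n| t^n \le c t^n$ together with the existence of a dominating index forces $c = |a_n|$. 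This yields the first assertion with $i_0 := n$.

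Writing $f = a_n T^n (1 + w)$ with $w = \sum_{i \ne n} (a_i/a_n) T^{i-n}$, the dominance gives $|w|_y \le \sup_{i \ne n} (|a_i|/|a_n|) |T|_y^{i-n} < 1$ at every $y \in A^-(0, r_1, s_1)$, hence $|(1+w)^k - 1|_y < 1$ for every $k \in \Z$ by the non-Archimedean binomial inequality. Expanding $\varphi^\sharp(g) = \sum_k c_k a_n^k T^{nk} (1+w)^k$ for any $g = \sum_k c_k T^k \in \calO(A^-(0, r_2, s_2))$ and comparing the term $\sum_k c_k a_n^k T^{nk}$ with the remainder, whose norm at $\eta_t$ is strictly smaller term by term, I obtain $|\varphi^\sharp(g)|_{\eta_t} = \sup_k |c_k| (|a_n| t^n)^k$. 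This identifies $\varphi(\eta_t)$ with the seminorm $\eta_{|a_n| t^n}$ on the target annulus.

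Finally, I would apply the same analysis to $\psi := \varphi^{-1}$, writing $\psi^\sharp(T) = \sum_j b_j T^j$, obtaining a dominant integer $m$ and the identity $\psi(\eta_{t'}) = \eta_{|b_m| (t')^m}$. Evaluating $\psi \circ \varphi = \id$ at $\eta_t$ then forces $|b_m| (|a_n| t^n)^m = t$ for all $t \in (r_1, s_1)$, whence $nm = 1$ and so $i_0 = n \in \{-1, 1\}$. The radii relations in the two cases follow by requiring $t \mapsto |a_n| t^n$ to be a bijection $(r_1, s_1) \to (r_2, s_2)$, which is guaranteed by the symmetric role of $\psi$. The main obstacle is the monomial-dominance step: it is the mechanism by which invertibility of $f$ is converted into the rigid statement that one single integer slope governs $|f|_{\eta_t}$ throughout the interval.
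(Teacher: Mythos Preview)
Your argument is correct and essentially complete. The paper does not give a proof of this proposition: it states it with a \qed, refers to \cite[3.6.11 and 3.6.12]{DucrosRSS}, and remarks that ``those results are easily proven using the theory of Newton polygons (or simple considerations on the behaviour of functions as in Section~\ref{sec:variation}).'' Your convexity/concavity argument for $u \mapsto \log |f|_{\eta_{\exp u}}$ is precisely the Newton polygon mechanism the paper alludes to, rephrased without the polygon language: the graph of that function is the lower boundary of the Newton polygon of~$f$, and invertibility forces it to be a single segment of integral slope.

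Two minor points worth tightening. First, in the identification $\varphi(\eta_t) = \eta_{|a_n|t^n}$, your term-by-term comparison relies on the uniform bound $|(1+w)^k - 1|_{\eta_t} \le |w|_{\eta_t}$ for all $k \in \Z$; this is indeed true (for $k \ge 0$ by the ultrametric binomial expansion with integer coefficients of norm at most~$1$, and for $k < 0$ via the geometric series $(1+w)^{-1} = \sum_{j\ge 0}(-w)^j$), but it is the uniformity in~$k$ that makes the remainder estimate go through, so you might state it explicitly. Second, note that the statement as printed in the paper has a typo in the $i_0 = -1$ case (it writes $|a_1|$ where $|a_{-1}|$ is meant); your argument of course produces the correct coefficient $|a_{i_0}|$.
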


\begin{remark}
The previous result still holds if we allow $r_{1}$ or~$r_{2}$ to be~0 and $s_{1}$ or~$s_{2}$ to be infinite. 
\end{remark}

\begin{definition}\label{def:modulus}
Let $A = A^-(\alpha,r,s)$ be an open annulus. We define the \emph{modulus} of~$A$ to be 
\[ \Mod(A) := \frac s r \in (1,+\infty).\]
\end{definition}

By Proposition~\ref{prop:isoannulus}, the modulus of an annulus only depends on its isomorphism class and not on the coordinate chosen to describe it.

\section{Extension of scalars}\label{sec:extension}

Let $(K,\va)$ be a complete valued extension of~$(k,\va)$. The ring morphism $k[T] \to K[T]$ induces a map
\[\pi_{K/k} \colon \AA^{1,\an}_{K} \too \aank\]
called the \emph{projection map}. In this section, we study this map.

%

\begin{proposition}\label{prop:piKk}
Let~$K$ be a complete valued extension of~$k$. 
The projection map~$\pi_{K/k}$ is continuous, proper and surjective. 
\end{proposition}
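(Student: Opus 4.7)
The plan is to verify the three properties in turn, with surjectivity being the step where one needs to invoke some outside input (namely Lemma~\ref{lem:commonextension}), while continuity and properness will follow essentially formally from the definitions and from the compactness of closed discs.

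For continuity, I would argue directly from the universal property of the topologies. By definition, the topology on $\aank$ is the coarsest making every evaluation map $x \mapsto |P(x)|$ continuous for $P \in k[T]$, so it suffices to check that each composition $y \mapsto |P(\pi_{K/k}(y))|$ is continuous on $\AA^{1,\an}_K$. But this composition is precisely $y \mapsto |P(y)|$, where now $P$ is viewed as an element of $K[T]$ via the inclusion $k[T] \hookrightarrow K[T]$; and this map is continuous by the definition of the topology on $\AA^{1,\an}_K$.

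For surjectivity, I would use the description of points via characters. Given $x \in \aank$, the character $\chi_x \colon k[T] \to \scrH(x)$ exhibits $\scrH(x)$ as a complete valued extension of $k$. Since $K$ is also a complete valued extension of $k$, Lemma~\ref{lem:commonextension} provides a common complete valued extension $L$ together with isometric embeddings $K \hookrightarrow L$ and $\scrH(x) \hookrightarrow L$ over $k$. The composed map $\chi \colon K[T] \to L$ sending $T$ to the image of $T$ under $\chi_x$ then defines a point $y \in \AA^{1,\an}_K$ whose associated seminorm, when restricted to $k[T]$, recovers $\va_x$; that is, $\pi_{K/k}(y) = x$.

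For properness, I would reduce to the compactness of closed discs proved in Lemma~\ref{lem:discs}. A continuous map between locally compact Hausdorff spaces is proper precisely when the preimage of every compact set is compact, and both $\aank$ and $\AA^{1,\an}_K$ are locally compact Hausdorff by Lemma~\ref{lem:Hausdorff} and Corollary~\ref{cor:loccompact}. Now let $C \subseteq \aank$ be compact. The continuous function $x \mapsto |T(x)|$ attains a maximum $R$ on $C$, so $C \subseteq D^+_k(0,R)$; but directly from the definition of $\pi_{K/k}$, the preimage of this disc is
\[ \pi_{K/k}^{-1}(D^+_k(0,R)) = \{y \in \AA^{1,\an}_K : |T(y)| \le R\} = D^+_K(0,R), \]
which is compact over $K$ by Lemma~\ref{lem:discs}. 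Since $\pi_{K/k}^{-1}(C)$ is closed in $\AA^{1,\an}_K$ (preimage of a closed set under a continuous map) and contained in a compact set, it is itself compact. The main subtlety is convincing oneself that no Galois-theoretic or algebraic input is needed for properness — it is purely a topological consequence of the disc exhaustion, which is exactly what makes the Berkovich framework so pleasant here.
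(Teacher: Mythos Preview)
Your proof is correct and follows essentially the same approach as the paper: continuity from the definition of the topologies, properness from the fact that the preimage of a closed disc $D^+_k(0,R)$ is the closed disc $D^+_K(0,R)$ (compact by Lemma~\ref{lem:discs}), and surjectivity by invoking Lemma~\ref{lem:commonextension} to build a character $K[T]\to L$ lifting~$\chi_x$. Your write-up is somewhat more detailed than the paper's (which dispatches continuity and properness in one line each), but the ideas are identical.
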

\begin{proof}
The map~$\pi_{K/k}$ is continuous as a consequence of the definitions. To prove that it is proper, note that the preimage of a closed disc in~$\aank$ is a closed disc in~$\AA^{1,\an}_{K}$ with the same center and radius, hence a compact set by Lemma~\ref{lem:discs}.

It remains to prove that~$\pi_{K/k}$ is surjective. Let~$x\in \aank$ and consider the associated character $\chi_{x} \colon k[T] \to \scrH(x)$. By Lemma~\ref{lem:commonextension}, there exists a complete valued field~$L$ containing both~$K$ and~$\scrH(x)$. The character~$\chi_{x}$ over~$k$ induces a character over~$K$ given by 
\[K[T] \xrightarrow[]{\chi_{x}\otimes K} \scrH(x)\otimes_{k}K \to L.\]
The associated point of~$\AA^{1,\an}_{K}$ belongs to $\pi_{K/k}^{-1}(x)$.
\end{proof}

The following result shows that the fibers of the projection map may be quite big. 

\begin{lemma}\label{lem:fibertype4}
Let~$K$ be a complete valued extension of~$k$. 
Assume that~$k$ and~$K$ are algebraically closed and that~$K$ is maximally complete. Let $x\in \aank$ be a point of type~4. Then $\pi_{K/k}^{-1}(x)$ is a closed disc of radius~$r(x)$.
\end{lemma}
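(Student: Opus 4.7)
The plan is to describe~$x$ via nested-disc data, lift a center into~$K$ using maximal completeness, and then verify that the preimage is exactly the closed $K$-disc of radius~$r(x)$ around this lift.

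By Proposition~\ref{prop:family} one may write $|\wc|_x = \inf_{i\in I} |\wc|_{\alpha_i,r_i}$ for a totally ordered family of closed discs $D^+(\alpha_i,r_i)$ in~$k$. By Remark~\ref{rem:intersection}, the type-4 hypothesis forces this family to have empty intersection in~$k$ and guarantees $r := r(x) = \inf_i r_i > 0$ (using completeness of~$k$). Since~$K$ is maximally, hence spherically, complete by Theorem~\ref{thm:equivalencesphmax}, there exists $\beta\in K$ with $|\beta-\alpha_i|\le r_i$ for every~$i$. The crucial consequence of empty intersection is the strict inequality
\[|\beta - \gamma| > r \qquad \text{for every }\gamma\in k,\]
because otherwise $|\gamma - \alpha_i| \le \max(r, r_i) = r_i$ would hold for all~$i$, contradicting the fact that $\bigcap_i D^+(\alpha_i, r_i)\cap k = \varnothing$.

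I then claim that $\pi_{K/k}^{-1}(x)$ coincides with the closed $K$-disc $D^+(\beta, r) \subseteq \AA^{1,\an}_K$. For the inclusion $\subseteq$, any $y$ with $\pi_{K/k}(y) = x$ satisfies $|T-\alpha_i|_y = |T-\alpha_i|_x \le r_i$ by Proposition~\ref{prop:family}, and then the ultrametric inequality applied to $T-\beta = (T-\alpha_i)+(\alpha_i-\beta)$ gives $|T-\beta|_y \le r_i$ for every~$i$, so $|T-\beta|_y \le r$. For the reverse inclusion, let $y\in D^+(\beta, r)$ and factor an arbitrary $P\in k[T]$ as $P = c\prod_j(T-\gamma_j)$ with $\gamma_j\in k$, using that~$k$ is algebraically closed. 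Since $|T-\beta|_y \le r < |\beta-\gamma_j|$, the strong form of the ultrametric inequality yields $|T-\gamma_j|_y = |\beta-\gamma_j|$, hence $|P(y)| = |c|\prod_j|\beta-\gamma_j|$. The same strong-triangle computation, applied to an index~$i$ deep enough that $r_i < |\beta-\gamma_j|$ for all~$j$, turns the formula $|P(x)| = \inf_i |P|_{\alpha_i,r_i}$ into $|P(x)| = |c|\prod_j|\beta-\gamma_j|$ as well. Thus $|P(y)| = |P(x)|$ for every $P\in k[T]$, i.e.\ $\pi_{K/k}(y) = x$.

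The one genuinely non-trivial step is the strict inequality $|\beta-\gamma|>r$ for every $\gamma\in k$, which is exactly where the type-4 hypothesis enters: without the empty-intersection property, $\beta$ could be chosen to coincide with a $k$-point and~$x$ would be a rational point, a type-2 point, or a type-3 point of the form $\eta_{\beta,r}$ rather than a genuinely new seminorm. Once this key inequality is in hand, both inclusions reduce to direct applications of the ultrametric inequality.
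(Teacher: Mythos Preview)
Your proof is correct and follows essentially the same approach as the paper: both use the nested-disc description from Proposition~\ref{prop:family}, invoke maximal completeness to find a point~$\beta$ in the intersection over~$K$, and verify both inclusions by reducing to linear polynomials via the algebraic closedness of~$k$. Your explicit isolation of the strict inequality $|\beta-\gamma|>r$ for all $\gamma\in k$ is a nice touch; the paper encodes the same fact as ``for $i$ big enough, $c$ is not in the disc of center~$\alpha_i$ and radius~$r_i$'', and the two formulations are equivalent.
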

\begin{proof}
Fix notation as in Proposition~\ref{prop:family}. We have $r(x) = \inf_{i\in I}(r_{i})$. Since~$K$ is maximally complete, the intersection of all the discs $D^+(\alpha_{i},r_{i})$ in~$\AA^{1,\an}_{K}$ contains a point $\gamma \in K$. 

Let us prove that $\pi_{K/k}^{-1}(x) = D^+(\gamma,r(x))$ in $\A^{1,\an}_{K}$.

Let $c\in k$. For $i$ big enough, $c$ is not contained in the disc of center~$\alpha_{i}$ and radius~$r_{i}$, that is to say $|\alpha_{i} - c| > r_{i}$. It follows that, for each $y \in D^+(\alpha_{i},r_{i})$, we have 
\[|T-c|_{y} = |\alpha_{i} - c| = |T-c|_{\alpha_{i},r_{i}}.\] 

We have $D^+(\gamma,r(x)) \subseteq \bigcap_{i\in I} D^+(\alpha_{i},r_{i})$. It then follows from the previous argument that, for each $z \in D^+(\gamma,r(x))$ and each $c\in k$, we have
\[|T-c|_{z} = \inf_{i\in I} (|\alpha_{i}-c|) = |T-c|_{x}.\]
Since~$k$ is algebraically closed, every polynomial is a product of monomials and we deduce that $D^+(\gamma,r(x)) \subseteq \pi_{K/k}^{-1}(x)$.

Let $y\in \AA^{1,\an}_{K} \setminus D^+(\gamma,r(x))$. Then, there exists $i\in I$ such that 
\[|T-\alpha_{i}|_{y} > r_{i} = |T-\alpha_{i}|_{\alpha_{i},r_{i}} \ge |T-\alpha_{i}|_{x}.\]
The result follows.
\end{proof}

\begin{remark}\label{rem:intersectiontype4}
Fix notation as in Proposition~\ref{prop:family}. The preceding proof also shows that we have $\bigcap_{i\in I} D^+(\alpha_{i},r_{i}) = \{x\}$ in~$\aank$.
\end{remark}

\begin{remark}
The preceding lemma shows that the projection map is not open and does not preserve the types of points in general. 
\end{remark}

We now deal more specifically with the case of Galois extensions. Let $\sigma\in \Aut(K/k)$ and assume that it preserves the absolute value on~$K$. (This condition is automatic if $K/k$ is algebraic.) For each $x\in \AA^{1,\an}_{K}$, the map
\[P \in  K[T] \mapstoo |\sigma(P)|_{x} \in \R_{\ge 0}\]
is a multiplicative seminorm. We denote by~$\sigma(x)$ the corresponding point of~$\AA^{1,\an}_{K}$.

\begin{proposition}\label{prop:piGalois}
Let~$K$ be a finite Galois extension of~$k$. The map 
\[(\sigma,x) \in  \Gal(K/k) \times \AA^{1,\an}_{K} \mapstoo \sigma(x) \in \AA^{1,\an}_{K}\]
is continuous and proper. 

The projection map $\pi_{K/k}$ induces a homeomorphism
\[ \AA^{1,\an}_{K}/\Gal(K/k) \simtoo  \aank .\]
In particular, $\pi_{K/k}$ is continuous, proper, open and surjective.
\end{proposition}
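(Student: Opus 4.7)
The plan is to first handle the straightforward topological and algebraic preliminaries, then concentrate on the core injectivity claim that $\Gal(K/k)$ acts transitively on each fibre of $\pi_{K/k}$. For continuity of the action, I fix $\sigma$ and observe that $x \mapsto \sigma(x)$ pulls back a basic open set $\{x : |P|_x \in U\}$ to $\{x : |\sigma(P)|_x \in U\}$; since $\Gal(K/k)$ is finite with the discrete topology, joint continuity and properness follow at once (the preimage of a compact $C$ is the finite disjoint union $\bigsqcup_\sigma \{\sigma\} \times \sigma^{-1}(C)$, and each $\sigma^{-1}(C)$ is compact). The relation $\pi_{K/k} \circ \sigma = \pi_{K/k}$ is immediate since $\sigma$ fixes $k[T]$, so $\pi_{K/k}$ factors through a continuous map $\varphi \colon \AA^{1,\an}_K/\Gal(K/k) \to \aank$, which is surjective by Proposition~\ref{prop:piKk}.

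The main obstacle is proving injectivity of $\varphi$. My approach is to fix $z \in \aank$ and study the finite \'etale $\scrH(z)$-algebra $A := \scrH(z) \otimes_k K$, which since $K/k$ is finite Galois decomposes as a product $A = \prod_{i=1}^s L_i$ of finite field extensions of $\scrH(z)$, each equipped with the unique extension of the absolute value. Any $x \in \pi_{K/k}^{-1}(z)$ gives an isometric embedding $\scrH(z) \hookrightarrow \scrH(x)$ coming from the equality $|P|_x = |P|_z$ on $k[T]$, which combined with $K \hookrightarrow \scrH(x)$ yields an $\scrH(z)$-algebra morphism $A \to \scrH(x)$; since $\scrH(x)$ is a field, this factors through a unique factor $L(x) \in \{L_1,\dots,L_s\}$, and the induced $L(x) \hookrightarrow \scrH(x)$ is isometric by uniqueness of the extension to a finite extension of a complete valued field. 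Hence the character $\chi_x$ is equivalent to the composition $K[T] \to A \to L(x)$, so $x$ is completely determined by $L(x)$. A direct unwinding then shows that the action of $\sigma$ on $\AA^{1,\an}_K$ corresponds, via $x \mapsto L(x)$, to the permutation of the $L_i$ induced by the natural $\Gal(K/k)$-action on $A$ (acting only on the second tensor factor).

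Transitivity of this permutation action on factors follows from a standard argument: the sum of the minimal idempotents in any $\Gal(K/k)$-orbit is a $\Gal(K/k)$-invariant idempotent of $A$, and since invariants commute with tensoring by the field $\scrH(z)$, we have $A^{\Gal(K/k)} = \scrH(z) \otimes_k K^{\Gal(K/k)} = \scrH(z)$, which as a field has no nontrivial idempotents, forcing the orbit to exhaust $\{L_1,\dots,L_s\}$. This gives bijectivity of $\varphi$.

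To conclude, by Proposition~\ref{prop:piKk} the map $\pi_{K/k}$ is proper, and since both spaces are locally compact Hausdorff it is also closed; since the quotient map $q$ is surjective, $\varphi(C) = \pi_{K/k}(q^{-1}(C))$ for each closed $C$, so $\varphi$ is a continuous closed bijection, hence a homeomorphism. Finally, $\pi_{K/k} = \varphi \circ q$ is open because $q$, as the quotient map for a continuous group action by homeomorphisms, is always open.
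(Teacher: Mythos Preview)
Your proof is correct and follows essentially the same architecture as the paper's: both decompose $\scrH(z)\otimes_k K$ as a product of finite field extensions of~$\scrH(z)$, identify the fibre $\pi_{K/k}^{-1}(z)$ with the set of factors, and then argue that $\Gal(K/k)$ permutes these factors transitively. The closing homeomorphism argument (proper $\Rightarrow$ closed, continuous closed bijection $\Rightarrow$ homeomorphism) is also the same.

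Two points differ in execution. First, for continuity of the action you exploit that $\Gal(K/k)$ is finite discrete, reducing to continuity of each individual $x\mapsto\sigma(x)$; the paper instead proves an explicit inequality
\[
\big||\sigma(P)|_x - |\tau(P)|_y\big| \le \big||\sigma(P)|_x - |\sigma(P)|_y\big| + \|(\sigma-\tau)(P)\|_\infty \max(1,|T|_y)^{\deg P},
\]
which is overkill here but is precisely what gets reused in Corollary~\ref{cor:piwkak} for the profinite group $\Gal(k^s/k)$. Second, for transitivity on the factors the paper picks a primitive element $\alpha$ for $K/k$, factors its minimal polynomial over $\scrH(z)$, and uses that $\Gal(K/k)$ acts transitively on the roots; your idempotent argument via $(\scrH(z)\otimes_k K)^{\Gal(K/k)}=\scrH(z)$ is a cleaner, basis-free alternative that avoids the primitive element theorem altogether.
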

\begin{proof}
To prove the first continuity statement, it is enough to prove that, for each $P \in K[T]$, the map 
\[(\sigma,x) \in  \Gal(K/k) \times \AA^{1,\an}_{K} \mapstoo |P(\sigma(x))| = |\sigma(P)|_{x} \in \R_{\ge 0}\]
is continuous.

Let $P \in K[T]$. We may assume that $P\ne 0$. For each $\sigma,\tau \in \Gal(K/k)$ and each $x,y \in \AA^{1,\an}_{K}$, we have
\begin{align*}
\big| |\sigma(P)|_{x} - |\tau(P)|_{y} \big|  & \le  \big| |\sigma(P)|_{x} - |\sigma(P)|_{y} \big| +  \big| |\sigma(P)|_{y} - |\tau(P)|_{y} \big|\\
& \le \big| |\sigma(P)|_{x} - |\sigma(P)|_{y} \big| +  \|(\sigma - \tau)(P)\|_{\infty} \, \max(1,|T|_{y})^{\deg(P)},
\end{align*}
where, for each $R \in K[T]$, we denote by $\|R\|_{\infty}$ the maximum of the absolute values of its coefficients. The continuity now follows from the continuity of the maps $z \in  \AA^{1,\an}_{K} \mapsto |\sigma(P)|_{z}$ and $\sigma \in \Gal(K/k) \mapsto \sigma(c)$, for $c\in K$.

\medbreak

Let us now prove properness. Since any compact subset of $\AA^{1,\an}_{K}$ is contained in a disc of the form $D^+(0,r)$ for some~$r\in \R_{> 0}$, it is enough to prove that the preimage of such a disc is compact. Since this preimage is equal to $\Gal(K/k) \times D^+(0,r)$, the result follows from the compactness of $D^+(0,r)$ (see Lemma~\ref{lem:discs}).

\medbreak

We now study the map~$\pi_{K/k}$. It is continuous and proper, by Proposition~\ref{prop:piKk}.


Let~$x\in \aank$ and consider the associated character $\chi_{x} \colon k[T] \to \scrH(x)$. It induces a morphism of $K$-algebras $\chi_{K,x} \colon K[T] \to \scrH(x)\otimes_{k} K$.

Let~$\alpha \in K$ be a primitive element for~$K/k$. Denote by~$P$ its minimal polynomial over~$k$ and by $P_{1},\dotsc,P_{r}$ the irreducible factors of~$P$ in~$\scrH(x)[T]$. For each $i\in \{1,\dotsc,r\}$, let~$L_{i}$ be an extension of~$\scrH(x)$ generated by a root of~$P_{i}$. We then have an isomorphism of $K$-algebras $\varphi \colon \scrH(x)\otimes_{k}K \xrightarrow[]{\sim} \prod_{i=1}^r L_{i}$.


For each $i\in\{1,\dotsc,r\}$, by composing $\varphi \circ \chi_{K,x}$ with the projection on the $i^\text{th}$ factor, we get a character $\chi_{i} \colon K[T] \to L_{i}$. Denote by~$y_{i}$ the associated point of~$\AA^{1,\an}_{K}$. We have $\pi_{K/k}(y_{i}) = x$. 

Conversely, let $y\in \AA^{1,\an}_{K}$ such that $\pi_{K/k}(y) = x$. The field~$\scrH(y)$ is an extension of both~$\scrH(x)$ and~$K$, so the universal property of the tensor product yields a natural morphism $\scrH(x)\otimes_{k}K \to \scrH(y)$. It follows that there exists $i\in\{1,\dotsc,r\}$ such that~$\scrH(y)$ is an extension of~$L_{i}$, and we then have $y=y_{i}$. We have proven that $\pi^{-1}_{K/k}(x) = \{y_{1},\dotsc,y_{r}\}$. 

Since~$P$ is irreducible in $k[T]$, the group $\Gal(K/k)$ acts transitively on its roots, hence on the~$P_{i}$'s. It follows that, for each $i,j \in \{1,\dotsc,r\}$, there exists $\sigma \in \Gal(K/k)$ such that $\chi_{i} \circ \sigma = \chi_{j}$, hence $\sigma(y_{i}) = y_{j}$. We have proven that $\pi^{-1}_{K/k}(x)$ is a single orbit under $\Gal(K/k)$.

The arguments above show that the projection map $\pi_{K/k} \colon \AA^{1,\an}_{K} \to \aank$ 
factors through a map $\pi_{K/k}' \colon \AA^{1,\an}_{K}/\Gal(K/k) \to \aank$ and that the latter is continuous and bijective. Since~$\pi_{K/k}$ is proper, $\pi_{K/k}'$ is proper too, hence a homeomorphism. 
\end{proof}

Recall that every element of $\Gal(k^s/k)$ preserves the absolute value on~$k^s$. It particular, it extends by continuity to an automorphism of $\widehat{k^s} = \widehat{k^a}$. We endow $\Gal(k^s/k)$ with its usual profinite topology.

\begin{corollary}\label{cor:piwkak}
The map 
\[(\sigma,x) \in  \Gal(k^s/k) \times \AA^{1,\an}_{\wka} \mapsto \sigma(x) \in \AA^{1,\an}_{\wka}\]
is continuous and proper. 

The projection map $\pi_{\wka/k}$ induces a homeomorphism
\[ \AA^{1,\an}_{\wka}/\Gal(k^s/k) \xrightarrow[]{\sim} \aank .\]
In particular, $\pi_{\wka/k}$ is continuous, proper, open and surjective.
\end{corollary}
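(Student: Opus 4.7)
The plan is to deduce the statement from the finite Galois case (Proposition~\ref{prop:piGalois}) by passing to the limit, exploiting that $\Gal(k^s/k) = \varprojlim_{L} \Gal(L/k)$ is profinite (hence compact) and that $k^s$ is dense in~$\wka$. First, I would prove continuity of the action map by imitating the estimate in the proof of Proposition~\ref{prop:piGalois}: it suffices to show continuity of $(\sigma,x) \mapsto |\sigma(P)|_{x}$ for each fixed $P \in \wka[T]$. Approximating~$P$ in sup-norm by some $P' \in L[T]$ with $L/k$ a finite Galois subextension, the map $\sigma \mapsto \sigma(P')$ factors through $\Gal(L/k)$ and hence is locally constant in~$\sigma$, while the error introduced by replacing~$P$ by~$P'$ is controlled by $2\|P-P'\|_{\infty} \max(1,|T|_{x})^{\deg P}$, exactly as in the finite case. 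Properness of the action map then follows from the observation that each~$\sigma$ fixes the indeterminate~$T$, so $|T|_{\sigma(x)} = |T|_{x}$; hence $\sigma$ preserves every compact disc $D^{+}(0,r)$, and the preimage of a compact set is a closed subset of $\Gal(k^s/k) \times D^{+}(0,r)$ for large~$r$, which is compact by Tychonoff. Continuity, properness and surjectivity of $\pi_{\wka/k}$ itself are given directly by Proposition~\ref{prop:piKk}.

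The main step, and where I expect the main obstacle to lie, is to show that the fibers of~$\pi_{\wka/k}$ are exactly the orbits under $\Gal(k^s/k)$. Given $y_{1},y_{2}$ mapping to the same $x \in \aank$, I would consider for each finite Galois subextension $L/k$ of $k^s/k$ the intermediate projection $\pi_{\wka/L} \colon \AA^{1,\an}_{\wka} \to \AA^{1,\an}_{L}$ (well-defined since~$L$ is complete). Proposition~\ref{prop:piGalois} applied to $L/k$ yields an element of $\Gal(L/k)$ sending $\pi_{\wka/L}(y_{1})$ to $\pi_{\wka/L}(y_{2})$, so the set $\tilde F_{L} \subseteq \Gal(k^s/k)$ of elements whose restriction to~$L$ satisfies this equation is closed and non-empty, and the family $(\tilde F_{L})_{L}$ is filtered by reverse inclusion (using the compatibility $\pi_{L'/L} \circ \pi_{\wka/L'} = \pi_{\wka/L}$). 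Compactness of $\Gal(k^s/k)$ then produces some $\sigma \in \bigcap_{L} \tilde F_{L}$. To verify that $\sigma(y_{1}) = y_{2}$, I would fix $P \in \wka[T]$ and $\eps > 0$, pick a finite Galois~$L$ and $P' \in L[T]$ with $\|P-P'\|_{\infty} < \eps$; then $|P'|_{\sigma(y_{1})} = |P'|_{y_{2}}$ holds by construction, and the two approximation errors are bounded uniformly by $\eps \max(1,|T|_{y_{1}})^{\deg P}$ (using $|T|_{\sigma(y_{1})} = |T|_{y_{1}}$), so letting $\eps \to 0$ gives the desired equality of seminorms.

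Once fibers are identified with orbits, the induced map $\bar\pi \colon \AA^{1,\an}_{\wka}/\Gal(k^s/k) \to \aank$ is continuous and bijective. Since $\pi_{\wka/k}$ is a proper continuous map between the locally compact Hausdorff spaces of Corollary~\ref{cor:loccompact}, it is closed; therefore so is~$\bar\pi$, which is accordingly a homeomorphism. Openness of $\pi_{\wka/k} = \bar\pi \circ q$ follows from the fact that the quotient map~$q$ is open, a general property of continuous group actions.
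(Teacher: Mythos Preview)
Your proposal is correct and follows essentially the same route as the paper: reduce to Proposition~\ref{prop:piGalois} at each finite Galois level, then use compactness of $\Gal(k^s/k)$ to glue, and finally pass from $k^s[T]$ to $\wka[T]$ by density. The only cosmetic difference is that you phrase the compactness step via the finite intersection property of the closed sets~$\tilde F_{L}$, whereas the paper extracts a convergent subfamily from the~$(\sigma_{K})_{K}$; both are equivalent and your formulation arguably makes the nestedness (hence non-emptiness of the intersection) more transparent.
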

\begin{proof}
The first part of the statement is proven as in Proposition~\ref{prop:piGalois}, using the fact that the group $\Gal(k^s/k)$ is compact.

Let $x,y \in \AA^{1,\an}_{\wka}$ such that $\pi_{\wka/k}(x) = \pi_{\wka/k}(y)$. By Proposition~\ref{prop:piGalois}, for each finite Galois extension~$K$ of~$k$, there exists~$\sigma_{K}$ in~$\Gal(K/k)$ such that $\sigma_{K}(\pi_{K/k}(x)) = \pi_{K/k}(y)$. By compactness, the family $(\sigma_{K})_{K}$ admits a subfamily converging to some $\sigma \in \Gal(k^s/k)$. We then have
\[\forall P \in k^s[T],\ |\sigma(P)|_{x} = |P|_{y}.\]

Let $Q\in \wka[T]$. We want to prove that we still have $|\sigma(Q)|_{x} = |Q|_{y}$. We may assume that~$Q$ is non-zero. Let~$d$ be its degree. By density of~$k^s$ into~$\wka$, there exists a sequence $(Q_{n})_{n\ge 0}$ of elements of~$k^s[T]$ of degree~$d$ that converge to~$Q$ for the norm~$\nm_{\infty}$ that is the supremum norm of the coefficients. Note that we have
\[|Q - Q_{n}|_{y} \le \|Q-Q_{n}\|_{\infty} \, \max(1,|T|_{y})^d,\]
so that $(|Q_{n}|_{y})_{n\ge 0}$ converges to~$|Q|_{y}$. The same argument shows that $(|\sigma(Q_{n})|_{x})_{n\ge 0}$ converges to~$|\sigma(Q)|_{x}$ and the results follows.

We have just proven that the map $\AA^{1,\an}_{\wka}/\Gal(k^s/k) \to \aank$ induced by~$\pi_{\wka/k}$ is a bijection. The rest of the statement follows as in the proof of Proposition~\ref{prop:piGalois}. 
\end{proof}

\begin{lemma}
Let $y,z\in \aanwka$ such that $\pi_{\wka/k}(y) = \pi_{\wka/k}(z)$. Then $y$ and~$z$ are of the same type and have the same radius.
\end{lemma}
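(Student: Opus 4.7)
The plan is to reduce, via Corollary~\ref{cor:piwkak}, to the case where $z=\sigma(y)$ for some $\sigma\in\Gal(k^s/k)$ extended by continuity to an isometric field automorphism of~$\wka$, and then to verify that this Galois action preserves each of the invariants appearing in Definition~\ref{def:typesofpoints} and in the definition of the radius. By Corollary~\ref{cor:piwkak}, the hypothesis $\pi_{\wka/k}(y)=\pi_{\wka/k}(z)$ produces such a~$\sigma$ with $|P|_z = |\sigma(P)|_y$ for every $P\in\wka[T]$.

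From this formula, the ring automorphism $\psi\colon\wka[T]\to\wka[T]$ sending $P\mapsto\sigma(P)$ is semilinear over~$\sigma$ and defines an isometry from $(\wka[T],|\cdot|_z)$ to $(\wka[T],|\cdot|_y)$. It therefore sends $\ker(|\cdot|_z)$ bijectively onto $\ker(|\cdot|_y)$ and, after passing to fraction fields and completing, induces an isometric field isomorphism $\tilde\psi\colon\scrH(z)\simtoo\scrH(y)$ whose restriction to~$\wka$ is~$\sigma$. Being an isometry, $\tilde\psi$ sends $\scrH(z)^\circ$ to $\scrH(y)^\circ$ and $\scrH(z)^{\circ\circ}$ to $\scrH(y)^{\circ\circ}$, hence induces a field isomorphism $\widetilde{\scrH(z)}\simtoo\widetilde{\scrH(y)}$ lying over the automorphism of $\widetilde{\wka}$ coming from~$\sigma$, while the equality $|h|=|\tilde\psi(h)|$ forces $|\scrH(z)^\times|=|\scrH(y)^\times|$ as subgroups of~$\R_{>0}$. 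Since transcendence degree is preserved by field isomorphisms and the value groups are literally equal, we obtain $s(y)=s(z)$ and $t(y)=t(z)$. To separate type~1 from type~4, assume $y$ comes from some $\alpha\in\wka$ as in Example~\ref{ex:type1point}; then
\[|P|_z = |\sigma(P)|_y = |\sigma(P)(\alpha)| = |\sigma(P(\sigma^{-1}(\alpha)))| = |P(\sigma^{-1}(\alpha))|,\]
showing that $z$ is the type~1 point associated to $\sigma^{-1}(\alpha)\in\wka$.

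The radius is well-defined because $\wka$ is algebraically closed, and the same formula yields
\[r(z)=\inf_{c\in\wka}|T-c|_z = \inf_{c\in\wka}|T-\sigma(c)|_y = \inf_{c'\in\wka}|T-c'|_y = r(y),\]
where the middle equality uses that $\sigma$ fixes~$T$ and acts on coefficients, and the last equality uses that $c\mapsto\sigma(c)$ is a bijection of~$\wka$. The only genuinely non-routine step in this plan is the passage from the pointwise formula $|P|_z=|\sigma(P)|_y$ to a semilinear isometric isomorphism of completed residue fields; this follows directly by tracking the semilinearity through the fraction-field and completion constructions, and it is the single non-trivial input for the whole argument.
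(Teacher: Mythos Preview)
Your proof is correct and follows exactly the same approach as the paper: invoke Corollary~\ref{cor:piwkak} to obtain $\sigma\in\Gal(k^s/k)$ with $z=\sigma(y)$, and then check that the Galois action preserves type and radius. The paper's own proof is the one-line ``the result follows easily'' after citing that corollary; you have simply written out the routine verification in full.
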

\begin{proof}
By Corollary~\ref{cor:piwkak}, there exists $\sigma \in \Gal(k^s/k)$ such that $z = \sigma(y)$. The result follows easily.
\end{proof}

As a consequence, we may define the type and the radius of a point of the Berkovich affine line over any complete valued field. 

\begin{definition}
Let $x \in \aank$. 

We define the \emph{type of the point~$x$} to be the type of the point~$y$, for any $y \in \pi_{\wka/k}^{-1}(x)$.

We define the \emph{radius of the point~$x$} to be the radius of the point~$y$, for any $y \in \pi_{\wka/k}^{-1}(x)$. We denote it by~$r(x)$.
\end{definition}

We end this section with a finiteness statement that is often useful.

\begin{lemma}\label{lem:finitefiber}
Let $X$ be a subset of~$\aanwka$ that is either a disc, an annulus or a singleton containing a point of type~2 or~3. Then, the orbit of~$X$ under~$\Gal(k^s/k)$ is finite.

In particular, for each $x \in \aank$ of type~2 or~3, the fiber $\pi^{-1}_{\wka/k}(x)$ is finite.
\end{lemma}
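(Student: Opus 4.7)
The plan is to reduce every case to the elementary fact that an algebraic element of $k^s$ has a finite Galois orbit, by rewriting $X$ using a center that lies in $k^s$. The key ingredient is that $\wka$ is, by definition, the completion of $k^s$, so $k^s$ is dense in $\wka$.

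I would first treat the singleton case. If $x \in \aanwka$ is of type~2 or~3, Lemma~\ref{lem:type23algclosed} (applicable because $\wka$ is algebraically closed) gives $x = \eta_{\alpha, r}$ for some $\alpha \in \wka$ and $r>0$. By density choose $\alpha' \in k^s$ with $|\alpha - \alpha'| \le r$; the equality criterion of Lemma~\ref{lem:etaalphar} then yields $x = \eta_{\alpha', r}$. A direct check from the definitions gives $\sigma(\eta_{\alpha',r}) = \eta_{\sigma(\alpha'),r}$ for every $\sigma \in \Gal(k^s/k)$, and since $\alpha' \in k^s$ is algebraic over $k$, its Galois orbit is finite, hence so is the orbit of $x$.

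The disc and annulus cases proceed identically. For $X = D^\pm(\alpha, r)$ with $\alpha \in \wka$, pick $\alpha' \in k^s$ with $|\alpha - \alpha'| < r$ (strict, so the argument covers open and closed discs uniformly); the strong triangle inequality gives $D^\pm(\alpha,r) = D^\pm(\alpha',r)$, and then $\sigma(D^\pm(\alpha',r)) = D^\pm(\sigma(\alpha'),r)$. Analogously, for an annulus $A^\pm(\alpha,r,s)$ one chooses $\alpha' \in k^s$ with $|\alpha-\alpha'| < r$ to obtain $A^\pm(\alpha,r,s) = A^\pm(\alpha',r,s)$, and Galois acts on the center. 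In both cases the finiteness of the orbit of $\alpha'$ forces the finiteness of the orbit of $X$.

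Finally, the ``in particular'' statement follows by combining the singleton case with the preceding lemma and Corollary~\ref{cor:piwkak}: the fiber $\pi^{-1}_{\wka/k}(x)$ is a single $\Gal(k^s/k)$-orbit, and every point in it has the same type as $x$, so for $x$ of type~2 or~3 each $y$ in the fiber is covered by the singleton case. The main thing requiring care is checking that the algebraic approximation $\alpha'$ genuinely describes the same disc, annulus, or point as~$\alpha$; this reduces to the strong triangle inequality and Lemma~\ref{lem:etaalphar}, so I do not expect a real obstacle, only a handful of routine verifications.
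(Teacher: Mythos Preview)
Your proof is correct and uses the same core idea as the paper: replace the $\wka$-center by an algebraic approximation (the paper uses $k^a$, you use $k^s$; both are dense in $\wka$) and invoke finiteness of Galois orbits of algebraic elements. The only difference is organizational: the paper treats the closed-disc case first and then deduces the singleton and annulus cases by observing (via Lemma~\ref{lem:discs}) that type~2/3 points are boundary points of discs and that annuli are determined by their boundary points, whereas you run the density argument afresh in each case.
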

\begin{proof}
Let us first assume that~$X$ is a closed disc: there exists $\alpha\in \wka$ and $r\in \R_{>0}$ such that $X = D^+(\alpha,r)$. Since~$k^a$ is dense in~$\wka$, we may assume $\alpha \in k^a$. Then orbit of~$\alpha$ is then finite, hence so is the orbit of~$X$. The case of an open disc is dealt with similarly.

Points of type~2 or~3 are boundary points of closed discs by Lemma~\ref{lem:discs}, hence the orbits of such points are finite too. Since closed and open annuli are determined by their boundary points, which are of type~2 or~3 (see Lemma~\ref{lem:discs} again), their orbits are finite too.

Finally, if $x \in \aank$ is a point of type~2 (resp.~3), then, by Corollary~\ref{cor:piwkak}, the fiber $\pi^{-1}_{\wka/k}(x)$ is an orbit of a point of type~2 (resp.~3). The last part of the result follows.
\end{proof}

%
%
%
%
%

\section{Connectedness}\label{sec:connected}

In this section, we study the connectedness properties of the Berkovich affine line and its subsets. Our main source here is \cite[Section~4.2]{Berkovich90} for the connectedness properties (see also \cite[Section~3.2]{Berkovich90} for higher-dimensional cases) and \cite[Section~1.9]{DucrosRSS} for properties of quotients of graphs.

\begin{proposition}\label{prop:discsconnected}
Open and closed discs, annuli and Swiss cheeses are path-connected. The Berkovich affine line~$\aank$ is path-connected and locally path-connected.
\end{proposition}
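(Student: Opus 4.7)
My plan is to reduce to the case where $k$ is algebraically closed. The projection map $\pi_{\wka/k} \colon \AA^{1,\an}_{\wka} \to \aank$ is continuous and surjective (Proposition~\ref{prop:piKk}), and it sends a closed disc, open disc, closed annulus, open annulus, or Swiss cheese in $\AA^{1,\an}_{\wka}$ with parameters in~$k$ onto the analogously defined subset of $\aank$ (since $|T-\alpha|$ is a function defined at the level of both spaces). Hence path-connectedness of these subsets in $\aank$ will follow from path-connectedness of their preimages in $\AA^{1,\an}_{\wka}$. For \emph{local} path-connectedness, I would additionally invoke Corollary~\ref{cor:piwkak}, which asserts that $\pi_{\wka/k}$ is also open; a continuous, open, surjective map transfers local path-connectedness from source to target, by direct diagram chase.

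Assume now that $k$ is algebraically closed. The central tool is the path $I_\alpha=\{\eta_{\alpha,r}:r\ge 0\}$ provided by Lemma~\ref{lem:path}, together with Remark~\ref{rem:pathsmeet} showing that $I_\alpha$ and $I_\beta$ meet at $\eta_{\alpha,|\alpha-\beta|}$. Points of types~1, 2, and~3 lie on some $I_\alpha$ directly (using Lemma~\ref{lem:type23algclosed} for types~2 and~3). For a type~4 point~$x$, I would invoke Proposition~\ref{prop:family} together with Remark~\ref{rem:intersection} to obtain a totally ordered nested family $(D^+(\alpha_i,r_i))_{i\in I}$ of closed discs with intersection $\{x\}$ in $\aank$ and $r(x)=\inf_i r_i>0$. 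Parameterizing by radius, I would define $\gamma\colon [r(x),r_{i_0}]\to\aank$ by choosing, for each $r>r(x)$, some index~$i$ with $r_i\le r$ and setting $\gamma(r):=\eta_{\alpha_i,r}$; the nesting condition $\max(|\alpha_i-\alpha_j|,r_i)\le r_j$ combined with Lemma~\ref{lem:etaalphar} shows this is independent of the choice of~$i$. Extending by $\gamma(r(x)):=x$ gives a path from~$x$ to $\eta_{\alpha_{i_0},r_{i_0}}\in I_{\alpha_{i_0}}$, and concatenating with a suitable $I_\alpha$ connects~$x$ to any fixed base point of~$k$.

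The path-connectedness of individual discs, annuli, and Swiss cheeses amounts to checking that the above paths can be chosen to lie inside the subset. For instance, in $D^+(\alpha,r)$, given any point $y$ of type~1, 2, or~3 in the disc, one has $|T-\alpha|_y\le r$; the path from $y$ to $\eta_{\alpha,r}$ travels through points of the form $\eta_{\beta,s}$ with $|\beta-\alpha|\le r$ and $s\le r$, hence remains in $D^+(\alpha,r)$ by Lemma~\ref{lem:etaalphar}. The type~4 case is similar, since the constructed family $(D^+(\alpha_i,r_i))$ may be assumed to lie inside $D^+(\alpha,r)$ by intersecting. The cases of open discs, annuli (open and closed, including the flat case), and Swiss cheeses are handled in the same spirit, since all the bounding inequalities defining the subset are preserved along the constructed paths. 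Finally, local path-connectedness of $\aank$ in the algebraically closed case follows from Proposition~\ref{prop:types}, which gives every point a basis of neighborhoods consisting of discs, annuli, or Swiss cheeses, each path-connected by what precedes.

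I expect the main obstacle to be the careful construction of~$\gamma$ at a type~4 point, specifically establishing continuity at the endpoint $r=r(x)$. This will require showing that the family $(D^+(\alpha_i,r_i))_{i\in I}$ is a neighborhood basis of~$x$ in $\aank$, which I would extract from the infimum formula of Proposition~\ref{prop:family} combined with Remark~\ref{rem:intersection}, together with the explicit description of $\va_{\alpha_i,r_i}$ to control the seminorm $\va_{\gamma(r)}$ on arbitrary polynomials as $r\searrow r(x)$.
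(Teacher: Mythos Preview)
Your approach is correct, but it differs from the paper's in one key simplification. The paper does not reduce merely to~$\wka$; it invokes Proposition~\ref{prop:piKk} for an \emph{arbitrary} complete valued extension~$K$ and chooses~$K$ to be both algebraically closed and \emph{maximally complete}. By Remark~\ref{rem:maxsph}, over such a~$K$ there are no points of type~4, so every point of the disc is of the form~$\eta_{\beta,s}$ and the path $t\mapsto\eta_{\beta,s+(r-s)t}$ to~$\eta_{\alpha,r}$ is immediate. This bypasses entirely the construction of~$\gamma$ at a type~4 point that you flag as the main obstacle.

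Your route---staying over~$\wka$ and building the path to a type~4 point by hand from the nested family of Proposition~\ref{prop:family}---also works, and the continuity argument you sketch at $r=r(x)$ goes through (the discs $D^+(\alpha_i,r_i)$ do form a neighbourhood basis of~$x$; this is exactly the content of the type~4 case in the proof of Proposition~\ref{prop:types}, and is stated later as Remark~\ref{rem:intersectiontype4}). What you gain is a more self-contained argument that avoids appealing to the existence of spherical completions; what the paper gains is a two-line proof once the extension is chosen. For the local path-connectedness and the descent via Corollary~\ref{cor:piwkak}, your argument coincides with the paper's.
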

\begin{proof}
Let us deal first handle the case of a closed disc, say $D^+(\alpha,r)$ with $\alpha\in k$ and $r\in \R_{>0}$. By Proposition~\ref{prop:piKk}, the projection map~$\pi_{K/k}$ is continuous and surjective for any complete valued extension~$K$ of~$k$. As a result, it is enough to prove the result on some extension of~$k$, hence we may assume that $k$~is algebraically closed and maximally complete.

Let $x\in D^+(\alpha,r)$. By Lemma~\ref{lem:type23algclosed} and Remark~\ref{rem:maxsph}, there exist $\beta\in k$ and $s\in \R_{\ge 0}$ such that $x = \eta_{\beta,s}$. Since $x\in D^+(\alpha,r)$, we have 
\[|T-\alpha|_{\beta,s} = \max(|\alpha-\beta|,s) \le r,\]
hence $s\le r$ and $\eta_{\alpha,r} = \eta_{\beta,r}$. As a consequence, the map 
\[\lambda\in [0,1] \mapsto \eta_{\alpha,s + (r-s)\lambda} \in \aank \]
defines a continuous path from~$x$ to~$\eta_{\alpha,r}$ in $D^+(\alpha,r)$ (see Lemma~\ref{lem:path}). It follows that the disc $D^+(\alpha,r)$ is path-connected.

The same argument may be used in order to prove that closed annuli and Swiss cheeses are connected. Indeed, if such a set~$S$ is written as the complement of some open discs in~$D^+(\alpha,r)$, it is easy to check that, for each $x\in S$, the path joining~$x$ to~$\eta_{\alpha,r}$ that we have just described actually remains in~$S$. 

Since the open figures may be written as increasing unions of the closed ones, the result holds for them too.

\medbreak

The last statement follows from the fact that $\aank$ may be written as an increasing union of discs for the global part, and from Proposition~\ref{prop:types} and Corollary~\ref{cor:piwkak} for the local part.
\end{proof}

The Berkovich affine line actually satisfies a stronger connectedness property: it is uniquely path-connected. We will now prove this result, starting with the case of an algebraically closed field.


\begin{definition}
We define a partial ordering~$\le$ on~$\AA^{1,\an}_{\wka}$ by setting
\[x \le y \text{ if } \forall P\in \wka[T],\ |P|_{x} \le |P|_{y}.\]

For each $x\in \AA^{1,\an}_{\wka}$, we set
\[I_{x} := \{y \in \AA^{1,\an}_{\wka} : y\ge x\}.\]
\end{definition}

\begin{remark}\label{rem:sigmaIx}
Let~$\sigma \in \Gal(k^s/k)$. By definition of the action of~$\sigma$, for every $x,y \in \aanwka$, we have $x\le y$ if, and only if, $\sigma(x) \le \sigma(y)$. It follows that, for each $z\in \aanwka$, we have $\sigma(I_{z}) = I_{\sigma(z)}$.
\end{remark}

The proof of the following lemma is left as an exercise for the reader.
\begin{lemma}\label{lem:orderetaar}
Let $\alpha \in \wka$ and $r\in\R_{> 0}$. 
For each $x\in \aanwka$, we have $x\le \eta_{\alpha,r}$ if, and only if, $x \in D^+(\alpha,r)$.

In particular, for $\beta\in \wka$ and $s\in \R_{\ge 0}$, we have $\eta_{\alpha,r} \le \eta_{\beta,s}$ if, and only if, $\max(|\alpha-\beta|,r) \le s$. Moreover, when those conditions hold, we have $\eta_{\beta,s} = \eta_{\alpha,s}$.
\qed
\end{lemma}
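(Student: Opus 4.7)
My plan is to reduce both statements directly to the explicit formula defining $\va_{\alpha,r}$ from Example~\ref{ex:genericpointdisc}, together with Lemma~\ref{lem:etaalphar}. The lemma should fall out in a few lines with no real technical obstacle; the proof really just asks the reader to unpack definitions.

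For the first equivalence, the ``only if'' direction is immediate: if $x \le \eta_{\alpha,r}$ then applying the defining inequality to the single polynomial $P = T-\alpha$ gives $|T-\alpha|_{x} \le |T-\alpha|_{\alpha,r} = r$, which is precisely $x \in D^+(\alpha,r)$. For the ``if'' direction, I would assume $|T-\alpha|_{x} \le r$ and then, for an arbitrary $P \in \wka[T]$, expand $P = \sum_{i\ge 0} a_{i} (T-\alpha)^i$ and apply multiplicativity together with the non-Archimedean triangle inequality to obtain
\[ |P|_{x} \le \max_{i\ge 0} \bigl(|a_{i}| \, |T-\alpha|_{x}^i\bigr) \le \max_{i\ge 0} (|a_{i}|\, r^i) = |P|_{\alpha,r}. \]
This is exactly $x \le \eta_{\alpha,r}$.

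For the ``in particular'' part, I would simply specialize the first statement to $x = \eta_{\alpha,r}$, reducing the relation $\eta_{\alpha,r} \le \eta_{\beta,s}$ to the computation of $|T-\beta|_{\alpha,r}$. Writing $T-\beta = (T-\alpha) + (\alpha-\beta)$ and applying the explicit formula for $\va_{\alpha,r}$ in the basis $(T-\alpha)^i$ yields $|T-\beta|_{\alpha,r} = \max(|\alpha-\beta|, r)$, from which the equivalence follows at once. Finally, for the last sentence: the hypothesis $\max(|\alpha-\beta|,r) \le s$ forces $s > 0$ (since $r > 0$) as well as $|\alpha-\beta| \le s$, so Lemma~\ref{lem:etaalphar} directly gives $\eta_{\alpha,s} = \eta_{\beta,s}$.

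The only place I would pause is to make sure the two uses of the non-Archimedean inequality are stated cleanly, but there is no genuine obstacle; everything is a direct unwinding of the definitions.
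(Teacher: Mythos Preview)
Your proposal is correct and is exactly the direct unwinding of definitions the paper intends: the lemma is stated with a \qed and the remark ``The proof of the following lemma is left as an exercise for the reader,'' so there is no proof in the text to compare against. Your argument is the natural one---indeed the ``if'' direction of the first equivalence replicates verbatim the last paragraph of the proof of Lemma~\ref{lem:etaalphar}, and the rest follows immediately from the explicit formula for $\va_{\alpha,r}$.
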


\begin{corollary}\label{cor:ordermin}
The minimal points for the ordering~$\le$ on~$\aanwka$ are exactly the points of type~1 and~4.
\end{corollary}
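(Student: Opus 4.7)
The plan is to verify separately that (a) every point of type~1 or~4 is minimal and (b) no point of type~2 or~3 is minimal. Throughout we are working over the algebraically closed, complete field~$\wka$, so Proposition~\ref{prop:family}, Lemma~\ref{lem:type23algclosed} and Remark~\ref{rem:intersectiontype4} all apply directly.

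First, I would handle type~1 points. Such a point is an element $\alpha \in \wka$, with associated seminorm $|P|_\alpha = |P(\alpha)|$. If $y \le \alpha$, then in particular $|T-\alpha|_y \le |T-\alpha|_\alpha = 0$, so $T-\alpha \in \ker(\va_y)$. Since $\ker(\va_y)$ is a prime ideal of $\wka[T]$ and $(T-\alpha)$ is already maximal, we get $\ker(\va_y) = (T-\alpha)$, hence $\scrH(y) = \wka$ and the character $\chi_y$ is evaluation at~$\alpha$. Therefore $y = \alpha$, proving minimality.

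Next, I would handle type~4 points. Given a type~4 point~$x$, apply Proposition~\ref{prop:family} to obtain a family of pairs $(\alpha_i, r_i)_{i\in I}$ with $|P|_x = \inf_{i} |P|_{\alpha_i,r_i}$ for all $P$; in particular $x \le \eta_{\alpha_i,r_i}$ for every~$i$. Suppose $y \le x$. Then $y \le \eta_{\alpha_i, r_i}$ for every~$i$, which by Lemma~\ref{lem:orderetaar} means $y \in D^+(\alpha_i, r_i)$ for every~$i$. By Remark~\ref{rem:intersectiontype4} (which gives $\bigcap_{i\in I} D^+(\alpha_i, r_i) = \{x\}$ in $\aanwka$), we conclude $y = x$, so $x$ is minimal.

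Finally, for the converse, let $x$ be of type~2 or~3. By Lemma~\ref{lem:type23algclosed} there exist $\alpha \in \wka$ and $r > 0$ with $x = \eta_{\alpha, r}$. Then the type~1 point $\alpha$ lies in $D^+(\alpha, r)$, so by Lemma~\ref{lem:orderetaar} we have $\alpha \le \eta_{\alpha, r} = x$, while $\alpha \ne x$ since their kernels differ (one is $(T-\alpha)$, the other is $(0)$). Hence $x$ is not minimal, which completes the proof. The only mildly subtle step is the type~4 case, where the key input is the already-established fact that the decreasing intersection of the associated closed discs is reduced to the single point~$x$.
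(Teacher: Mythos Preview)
Your proof is correct and follows essentially the same route as the paper's: Lemma~\ref{lem:orderetaar} handles types~2 and~3 (non-minimal) and type~1 (minimal), while the type~4 case is deduced from Remark~\ref{rem:intersectiontype4} via Lemma~\ref{lem:orderetaar}. The only cosmetic difference is that you unpack the type~1 argument through the kernel rather than citing Lemma~\ref{lem:orderetaar} directly, but the content is the same.
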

\begin{proof}
It follows from Lemma~\ref{lem:orderetaar} that points of type~2 and~3 are not minimal and that points of type~1 are. To prove the result, it remains to show that points of type~4 are minimal too. This assertion follows from the fact that any such point is the unique point contained in the intersection of a family of closed discs (see Remark~\ref{rem:intersectiontype4}) by applying Lemma~\ref{lem:orderetaar} again.
\end{proof}


\begin{corollary}\label{cor:Ixexplicit}
Let $\alpha\in \wka$ and $r\in \R_{\ge 0}$. We have
\[I_{\eta_{\alpha,r}} = \{\eta_{\alpha,s} : s\ge r\}.\]

Let~$x \in \aanwka$ be a point of type~4 and fix notation as in Proposition~\ref{prop:family}. Then, for each $i,j \in I$, we have $I_{\eta_{\alpha_{i},r_{i}}} \subseteq I_{\eta_{\alpha_{j},r_{j}}}$ or $I_{\eta_{\alpha_{j},r_{j}}} \subseteq I_{\eta_{\alpha_{i},r_{i}}}$ and we have
\[I_{x} = \{x\} \cup \bigcup_{i\in I} I_{\eta_{\alpha_{i},r_{i}}}.\]
\qed
\end{corollary}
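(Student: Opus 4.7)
My plan is to derive both parts from Lemma~\ref{lem:orderetaar}, Corollary~\ref{cor:ordermin}, and Remark~\ref{rem:intersectiontype4}, with the genuinely delicate step being the $\subseteq$ inclusion in~(ii).

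For part~(i) I would check both inclusions. The inclusion $\{\eta_{\alpha,s} : s\ge r\} \subseteq I_{\eta_{\alpha,r}}$ is immediate from Lemma~\ref{lem:orderetaar}: for $s\ge r$, $\max(|\alpha-\alpha|,r) = r \le s$, so $\eta_{\alpha,r}\le \eta_{\alpha,s}$. For the converse, let $y\ge \eta_{\alpha,r}$. If $y$ has type~2 or~3, write $y=\eta_{\beta,s}$ with $\beta\in\wka$ and $s>0$; Lemma~\ref{lem:orderetaar} forces $r\le s$ and $|\alpha-\beta|\le s$, whence $\eta_{\beta,s}=\eta_{\alpha,s}$ with $s\ge r$. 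If instead $y$ is of type~1 or~4, Corollary~\ref{cor:ordermin} tells us that $y$ is minimal, so $\eta_{\alpha,r}\le y$ forces $y=\eta_{\alpha,r}$; this can only happen when $r=0$, in which case $y=\eta_{\alpha,0}$ is still of the claimed form.

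For part~(ii), the nesting of the $I_{\eta_{\alpha_i,r_i}}$ follows by combining the nesting condition in Proposition~\ref{prop:family} with Lemma~\ref{lem:orderetaar} and part~(i): if, say, $\max(|\alpha_i-\alpha_j|,r_i)\le r_j$, then $\eta_{\alpha_i,r_i}\le \eta_{\alpha_j,r_j}$, and applying~(i) on both sides yields $I_{\eta_{\alpha_j,r_j}}\subseteq I_{\eta_{\alpha_i,r_i}}$. For the set equality, the inclusion $\supseteq$ is direct: $x\in I_x$ by reflexivity, while for $i\in I$ and $s\ge r_i$, the formula $|P|_x=\inf_{j\in I}|P|_{\alpha_j,r_j}$ from Proposition~\ref{prop:family} gives $|P|_x\le |P|_{\alpha_i,r_i}\le |P|_{\alpha_i,s}$, so $\eta_{\alpha_i,s}\ge x$.

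The main obstacle is the reverse inclusion. Given $y\ge x$ with $y\ne x$, minimality of type~1 and type~4 points (Corollary~\ref{cor:ordermin}) prevents $y$ from being of those types, so $y=\eta_{\beta,s}$ for some $\beta\in\wka$ and $s>0$. Lemma~\ref{lem:orderetaar} identifies $\{z : z\le y\}$ with the closed disc $D^+(\beta,s)$, which therefore contains~$x$. I would then combine Remark~\ref{rem:intersectiontype4} with the standard non-Archimedean fact that two closed discs sharing a point are nested: if no $D^+(\alpha_i,r_i)$ were contained in $D^+(\beta,s)$, then each would strictly contain it, forcing $D^+(\beta,s)\subseteq \bigcap_{i\in I} D^+(\alpha_i,r_i)=\{x\}$ and contradicting $y\in D^+(\beta,s)\setminus\{x\}$. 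Picking an index~$i$ with $D^+(\alpha_i,r_i)\subseteq D^+(\beta,s)$ yields $\max(|\alpha_i-\beta|,r_i)\le s$, and Lemma~\ref{lem:orderetaar} together with part~(i) then places $y=\eta_{\alpha_i,s}$ inside $I_{\eta_{\alpha_i,r_i}}$, as required.
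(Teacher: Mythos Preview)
Your proof is correct. The paper gives no argument for this corollary at all (it is simply marked \qed), treating it as an immediate consequence of Lemma~\ref{lem:orderetaar} and Corollary~\ref{cor:ordermin}; your write-up supplies exactly the omitted details, using the intended tools together with Proposition~\ref{prop:family} and Remark~\ref{rem:intersectiontype4}. One small remark: the ``standard non-Archimedean fact'' you invoke for Berkovich closed discs (two discs sharing a point are nested) is not literally stated in the paper for $\aanwka$, but it follows in one line from the ultrametric inequality applied at the common point~$x$, namely $|\alpha_i-\beta|\le \max(|T-\alpha_i|_x,|T-\beta|_x)\le \max(r_i,s)$, which forces comparability via Lemma~\ref{lem:orderetaar}.
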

%

The former result shows in particular that our notation is consistent with that of Lemma~\ref{lem:path}.

\begin{corollary}\label{cor:Ix}
Let $x\in \aanwka$. The radius map $r \colon \aanwka \to \R_{\ge 0}$ induces a homeomorphism from~$I_{x}$ onto $[r(x),+\infty)$.

The restriction of the projection map~$\pi_{\wka/k}$ to~$I_{x}$ is injective.
\end{corollary}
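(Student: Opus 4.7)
The plan is to establish the first statement first, and then deduce the second as an immediate corollary. Indeed, once we know that the radius map $r \colon I_x \to [r(x), +\infty)$ is a bijection, the injectivity of $\pi_{\wka/k}|_{I_x}$ follows at once: if $y_1, y_2 \in I_x$ satisfy $\pi_{\wka/k}(y_1) = \pi_{\wka/k}(y_2)$, then by the very definition of the radius of a point of~$\aank$ we have $r(y_1) = r(y_2)$, so injectivity of~$r$ on~$I_x$ forces $y_1 = y_2$.

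To prove the homeomorphism statement, I would split according to the type of~$x$. If $x$ has type~1, 2, or~3, then by Corollary~\ref{cor:Ixexplicit} we have $I_x = \{\eta_{\alpha, s} : s \ge r(x)\}$ for some $\alpha \in \wka$, and the radius map is the inverse of the parametrization $s \mapsto \eta_{\alpha, s}$, which is a homeomorphism onto its image by Lemma~\ref{lem:path}. This case is straightforward.

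The substantive case is when $x$ has type~4. Using Corollary~\ref{cor:Ixexplicit}, I would define a candidate inverse $\phi \colon [r(x), +\infty) \to I_x$ by $\phi(r(x)) = x$ and $\phi(s) = \eta_{\alpha_i, s}$ for any index $i$ with $r_i \le s$. This is well-defined: if $r_i, r_j \le s$ with, say, $r_i \le r_j$, then $|\alpha_i - \alpha_j| \le r_j \le s$, so $\eta_{\alpha_i, s} = \eta_{\alpha_j, s}$ by Lemma~\ref{lem:etaalphar}. The map $\phi$ is a bijection onto~$I_x$ by Corollary~\ref{cor:Ixexplicit}, and $r \circ \phi = \mathrm{id}$ on $(r(x), +\infty)$ since $r(\eta_{\alpha_i, s}) = s$ (and $r(x) = \inf_i r_i$ at $s = r(x)$). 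Continuity of~$\phi$ on $(r(x), +\infty)$ reduces locally to Lemma~\ref{lem:path}. The main obstacle is continuity of~$\phi$ at~$r(x)$: here I would use Remark~\ref{rem:intersectiontype4}, which tells us that the discs $D^+(\alpha_i, r_i)$ form a basis of neighborhoods of~$x$. Given such a neighborhood $D^+(\alpha_{i_0}, r_{i_0})$, I would choose an index~$j$ far enough in the chain so that $r_j$ is small and $D^+(\alpha_j, r_j) \subseteq D^+(\alpha_{i_0}, r_{i_0})$, which forces $|\alpha_j - \alpha_{i_0}| \le r_{i_0}$; then for $s$ close to $r(x)$ with $s < r_{i_0}$ and $r_j \le s$, we find $\phi(s) = \eta_{\alpha_j, s} \in D^+(\alpha_{i_0}, r_{i_0})$.

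To pass from a continuous bijection to a homeomorphism, I would restrict $\phi$ to each compact interval $[r(x), M]$: its image is compact (and hence closed) in the Hausdorff space~$I_x$, so a continuous bijection from a compact to a Hausdorff space is automatically a homeomorphism, giving continuity of~$r$ on $\phi([r(x),M])$. Since any convergent sequence in~$I_x$ eventually lies in such a set $\phi([r(x),M])$ (its limit has finite radius, so elements close to it have bounded radius), continuity of~$r$ on all of~$I_x$ follows, completing the proof.
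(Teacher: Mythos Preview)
Your proof is correct, and your argument for the second part is genuinely different from (and more elegant than) the paper's. You deduce injectivity of~$\pi_{\wka/k}|_{I_x}$ immediately from the well-definedness of the radius of a point of~$\aank$ (equivalently, the lemma in Section~\ref{sec:extension} stating that Galois-conjugate points of~$\aanwka$ have the same radius), combined with injectivity of~$r$ on~$I_x$. The paper instead argues directly: for $x < y < z$ in~$I_x$, it writes $y = \eta_{\alpha,r}$ and $z = \eta_{\alpha,s}$ with $\alpha \in k^s$, and uses the minimal polynomial~$P_\alpha$ of~$\alpha$ over~$k$ to exhibit a polynomial in~$k[T]$ separating them, namely $|P_\alpha|_y < |P_\alpha|_z$. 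Your approach is shorter and makes better use of material already established; the paper's approach is more hands-on and gives an explicit separating polynomial.

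For the first part, the paper is terse: it just says bijectivity follows from Corollary~\ref{cor:Ixexplicit} and that continuity and openness of the inverse follow ``by arguing as in the proof of Lemma~\ref{lem:path}''. Your detailed treatment of the type~4 case is fine, but two small points: your citation for the fact that the discs $D^+(\alpha_i,r_i)$ form a basis of neighborhoods of~$x$ should be Proposition~\ref{prop:types} (or its proof), not Remark~\ref{rem:intersectiontype4}, which only asserts that their intersection is~$\{x\}$. More importantly, your final step (``any convergent sequence in~$I_x$ eventually lies in some $\phi([r(x),M])$'') needs the inequality $r(z) \le |T(z)|$ to justify that radii stay bounded near a limit point, and you should be aware that sequential continuity may not suffice since~$\aanwka$ need not be metrizable (cf.\ Corollary~\ref{cor:metrizable}). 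Both issues are easily handled: the inequality $r(z)\le |T(z)|$ shows that~$\phi$ is proper, hence closed (the target being locally compact Hausdorff), hence a homeomorphism; alternatively one checks directly that $\phi([r(x),b)) = I_x \cap D^-(\alpha_i,b)$ for suitable~$i$, giving openness of~$\phi$.
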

\begin{proof}
The fact that the radius map induces is a bijection from~$I_{x}$ onto its image follows from Corollary~\ref{cor:Ixexplicit}. One can then prove directly that its inverse is continuous and open by arguing as in the proof of Lemma~\ref{lem:path}.

Let us now prove the second part of the statement. Let $y, z \in I_{x}$ such that $x < y < z$. Then, there exist $\alpha\in \wka$ and $r< s \in \R_{>0}$ such that $y =\eta_{\alpha,r}$ and $z=\eta_{\alpha,s}$. Since $k^s$ in dense in~$\wka$, we may assume that $\alpha \in k^s$. Let $P_{\alpha} \in k[T]$ be the minimal polynomial of~$\alpha$ over~$k$. Since there exists $Q_{\alpha} \in \wka[T]$ such that $P_{\alpha} = (T-\alpha) \, Q_{\alpha}$, we have
\[|P_{\alpha}|_{y} = r \,|Q_{\alpha}|_{y} < s \, |Q_{\alpha}|_{y} = |P_{\alpha}|_{z}.\]
It follows that $\pi_{\wka/k}(y) \ne \pi_{\wka/k}(z)$.

Finally, assume that there exists $z\in I_{x} - \{x\}$ such that $\pi_{\wka/k}(z) = \pi_{\wka/k}(x)$. For each $y\in I_{x}$ such that $x < y < z$ and each $P \in k[T]$, we have
\[|P|_{x} \le |P|_{y} \le |P|_{z} = |P|_{x},\]
hence $|P|_{y} = |P|_{z}$, which contradicts what we have just proved. 
\end{proof}

\begin{corollary}\label{cor:ppcm}
Let $x,y \in \aanwka$. The set $\{z\in \aanwka : z \ge x \text{ and } z \ge y\}$ admits a smallest element. We will denote it by $x \vee y$.

In particular, if~$x$ and~$y$ are comparable for~$\le$, then $I_{x} \cup I_{y}$ is homeomorphic to a half-open interval and, otherwise, $I_{x} \cup I_{y}$ is homeomorphic to a tripod with one end-point removed, \emph{i.e.} the union of two closed intervals and one half-open interval glued along a common end-point.
\end{corollary}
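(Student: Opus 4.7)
The plan is to realize the set $S := \{z \in \aanwka : z \ge x \text{ and } z \ge y\}$ as $I_x \cap I_y$ and exploit the chain structure of $I_x$ provided by Corollary~\ref{cor:Ix}. For every $P \in \wka[T]$, the map $z \mapsto |P|_z$ is continuous by the very definition of the topology on~$\aanwka$, so $I_x = \bigcap_{P} \{z : |P|_z \ge |P|_x\}$ is a closed subset of~$\aanwka$; likewise for~$I_y$, and therefore $S$ is closed.

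To see that $S$ is non-empty, I pick $\alpha, \beta \in \wka$ so that a terminal segment of $I_x$ (resp.\ $I_y$) is of the form $\{\eta_{\alpha, R} : R \ge R_0\}$ (resp.\ $\{\eta_{\beta, R} : R \ge R_0'\}$): if $x = \eta_{\alpha, r}$ take $\alpha$ itself, whereas if $x$ is of type~4, I pick $\alpha = \alpha_i$ for any index from the nested family of Proposition~\ref{prop:family}, with $I_{\eta_{\alpha_i, r_i}} \subseteq I_x$ guaranteed by Corollary~\ref{cor:Ixexplicit}; I proceed similarly for~$y$. For any $R \ge \max(R_0, R_0', |\alpha - \beta|)$, Lemma~\ref{lem:etaalphar} yields $\eta_{\alpha, R} = \eta_{\beta, R}$, so this common point lies in~$S$.

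Transporting $S \subseteq I_x$ through the homeomorphism $I_x \simtoo [r(x), +\infty)$ furnished by Corollary~\ref{cor:Ix}, the set $S$ becomes a non-empty closed upward-closed subset of $[r(x), +\infty)$; upward-closedness in~$I_x$ comes from transitivity of~$\le$, since any $z' \ge z \in S$ satisfies $z' \ge y$ and a fortiori lies in $I_x$. Any such subset of $[r(x), +\infty)$ is necessarily a closed half-line $[\rho, +\infty)$, and the preimage of~$\rho$ is the desired smallest element $x \vee y$ of~$S$.

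For the topological description, set $w := x \vee y$, so that $I_x \cap I_y = I_w$. If $x$ and $y$ are comparable with, say, $x \le y$, then $w = y$ and $I_y \subseteq I_x$, hence $I_x \cup I_y = I_x$ is homeomorphic to the half-open interval $[r(x), +\infty)$. Otherwise $w$ is strictly greater than both~$x$ and~$y$; the initial segments $I_x \setminus I_w$ and $I_y \setminus I_w$ correspond through the radius map to the half-open intervals $[r(x), r(w))$ and $[r(y), r(w))$ respectively, while $I_w$ is homeomorphic to $[r(w), +\infty)$, and the three pieces are glued at the single common point~$w$. This is precisely a tripod with two closed edges (ending at~$x$ and~$y$) and one half-open edge (from~$w$ toward infinity), i.e.\ a tripod with one end-point removed. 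The main technical point is the chain description of~$I_x$ when~$x$ is of type~4, but this is already encoded in Corollary~\ref{cor:Ixexplicit}.
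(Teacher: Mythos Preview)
Your proof is correct and follows essentially the same approach as the paper's: both identify $S = I_x \cap I_y$, establish non-emptiness, exploit the order-homeomorphism $I_x \simeq [r(x),+\infty)$ from Corollary~\ref{cor:Ix}, and decompose $I_x \cup I_y$ into the two initial segments and $I_{x\vee y}$. The paper's non-emptiness argument is slightly slicker (it takes $R = \max(|T|_x,|T|_y)$ and observes $\eta_R \in S$ via Lemma~\ref{lem:orderetaar}), while you add an explicit closedness argument for $I_x$ that the paper leaves implicit.
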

\begin{proof}
We have $\{z\in \aanwka : z \ge x \text{ and } z \ge y\} = I_{x} \cap I_{y}$. By Corollary~\ref{cor:Ix}, $I_{x}$ and~$I_{y}$ may be sent to  intervals of the form $[\ast,+\infty)$ by order-preserving homeomorphisms. We deduce that, in order to prove the result, it is enough to prove that $I_{x} \cap I_{y}$ is non-empty. Setting $R := \max(|T|_{x},|T|_{y})$, the points $x$ and~$y$ belong to~$D^+(0,R)$, hence $\eta_{R}$ belongs to~$I_{x,y}$, by Lemma~\ref{lem:orderetaar}. 

Denoting by~$x \vee y$ the smallest element of~$I_{x}\cap I_{y}$, we have $I_{x} \cap I_{y} = I_{x\vee y}$. In particular, by Corollary~\ref{cor:Ix}, $I_{x} \cap I_{y}$ is homeomorphic to a half-open interval with end-point~$x\vee y$. Set
\[[x,x\vee y] := \{z\in I_{x} : x \le z\le (x\vee y)\} \text{ and } [y,x\vee y] := \{z\in I_{y} : y \le z\le (x\vee y)\}.\] 
By Corollary~\ref{cor:Ix} again, $[x,x\vee y]$ and $[y,x\vee y]$ are homeomorphic to closed intervals (possibly singletons if~$x$ and~$y$ are comparable). Finally, the result follows by writing 
\[I_{x} \cup I_{y} = [x,x\vee y] \cup [y,x\vee y] \cup I_{x\vee y}.\]
\end{proof}

\begin{corollary}\label{cor:Ixclosed}
Let $x\in \aanwka$. The set $\aanwka -I_{x}$ is a union of open discs. In particular, $I_{x}$~is closed.
\end{corollary}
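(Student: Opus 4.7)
The plan is to reduce the statement to an explicit construction: for each $y \in \aanwka \setminus I_x$, exhibit an open disc $D$ with $y \in D$ and $D \cap I_x = \emptyset$. Since open discs are open, this will show $\aanwka \setminus I_x$ is open and hence $I_x$ is closed.

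The starting point is the definition of the ordering: $y \notin I_x$ means $y \not\ge x$, so there exists a polynomial $P \in \wka[T]$ with $|P|_{y} < |P|_{x}$. Note that this forces $|P|_{x} > 0$, since otherwise $|P|_{y} < 0$. Since $\wka$ is algebraically closed, I would factor $P = c \prod_{j} (T-\gamma_{j})^{m_{j}}$ with $c \in \wka^\times$ and $\gamma_j \in \wka$. Because $|P|_x > 0$, each $|T-\gamma_j|_x > 0$. If we had $|T-\gamma_j|_y \ge |T-\gamma_j|_x$ for every $j$, the multiplicativity of the seminorms would give $|P|_y \ge |P|_x$, contradicting our hypothesis. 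So there is at least one index $j$ with
\[ |T-\gamma_{j}|_{y} < |T-\gamma_{j}|_{x}. \]

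Setting $\gamma := \gamma_{j}$ and $\epsilon := |T-\gamma|_{x} > 0$, I would propose the open disc $D := D^-(\gamma,\epsilon)$. By construction, $|T-\gamma|_{y} < \epsilon$, so $y\in D$. On the other hand, for any $z\in I_{x}$, i.e.\ $z \ge x$, the very definition of the ordering gives $|T-\gamma|_{z} \ge |T-\gamma|_{x} = \epsilon$, so $z \notin D$. Hence $D \subseteq \aanwka \setminus I_{x}$, and $\aanwka \setminus I_{x}$ is indeed a union of open discs.

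The only subtle point---and the one I expect to be the main obstacle---is the reduction from the abstract witness polynomial $P$ to a single linear witness $T-\gamma$ with $|T-\gamma|_x > 0$. The subtlety is purely bookkeeping: one must check that the strict inequality $|P|_y < |P|_x$ cannot be ``absorbed'' by vanishing factors at $x$, which is handled precisely by the observation $|P|_x > 0$. Everything else is a direct calculation with the ultrametric seminorms, and the final conclusion that $I_x$ is closed is then immediate.
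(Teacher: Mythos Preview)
Your proof is correct. It differs from the paper's argument in an interesting way.

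The paper proceeds through the order structure it has just built: given $y \notin I_x$, it invokes the join $x\vee y$ from Corollary~\ref{cor:ppcm} to find some $z$ with $y < z$ and $z \notin I_x$; then the closed disc $\{u : u \le z\}$ (which is a disc by Lemma~\ref{lem:orderetaar}, since $z$ lies on $I_y$ above $y$ and is therefore of type~2 or~3) is disjoint from~$I_x$, and $y$ lies strictly inside it, hence in an open subdisc by Lemmas~\ref{lem:closeddisctype3} and~\ref{lem:closeddisctype2}.

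Your route bypasses all of this machinery and goes straight to the definition of the ordering: you exploit the algebraic closedness of~$\wka$ to pass from a polynomial witness $|P|_y < |P|_x$ to a \emph{linear} witness $|T-\gamma|_y < |T-\gamma|_x$, and then the open disc $D^-(\gamma,|T-\gamma|_x)$ does the job immediately. This is more elementary and self-contained; it does not need Corollaries~\ref{cor:Ixexplicit} or~\ref{cor:ppcm}, nor the case analysis of Lemmas~\ref{lem:closeddisctype3} and~\ref{lem:closeddisctype2}. The paper's approach, on the other hand, fits more naturally into the order-theoretic narrative being developed and reuses the structural results just proved. Your version would work just as well in the paper, and the reduction to a linear factor is a standard and clean maneuver over an algebraically closed field.
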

\begin{proof}
Let $y \in \aanwka -I_{x}$. The point $x \vee y$ defined in Corollary~\ref{cor:ppcm} belongs to~$I_y$ and is not equal to~$y$. It follows that there exists $z\in I_{y} - I_{x}$ such that $y < z$. By Corollary~\ref{cor:Ixexplicit}, $y$ is a point of type~2 or~3, hence, by Lemma~\ref{lem:orderetaar}, the set $D := \{u \in \aanwka : u\le z\}$ is a closed disc of positive radius. It is contained in $\aanwka -I_{x}$. Since~$z$ is not the boundary point of~$D$, $y$ indeed belongs to some open subdisc of~$D$ by Lemmas~\ref{lem:closeddisctype3} and~\ref{lem:closeddisctype2}.
\end{proof}

\begin{proposition}\label{prop:aan-S}
Let~$\Gamma$ be a subset of~$\aanwka$ such that, for each $x\in \Gamma$, $I_{x} \subseteq \Gamma$. Then, $\aanwka - \Gamma$ is a union of discs and points of type~4.

If, moreover, $\Gamma$ is closed, then $\aanwka - \Gamma$ is a union of open discs.
\end{proposition}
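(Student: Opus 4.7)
The hypothesis on $\Gamma$ says precisely that $\Gamma$ is upward-closed for the partial order $\leq$ on $\aanwka$; equivalently, $\aanwka - \Gamma$ is downward-closed. My plan is to exploit this order-theoretic reformulation together with the explicit description of the sets $I_x$ from Corollary~\ref{cor:Ixexplicit} and Lemma~\ref{lem:orderetaar}, recalling in particular that for $y = \eta_{\alpha,r}$, the set of points $\leq y$ is exactly the closed disc $D^+(\alpha,r)$.

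For the first assertion, I pick $y \in \aanwka - \Gamma$ and split on its type. If $y$ has type 4, the singleton $\{y\}$ lies in the complement and appears in the decomposition as such. Otherwise, by (the extension to all $r \geq 0$ of) the classification, $y = \eta_{\alpha,r}$ for some $\alpha \in \wka$ and $r \geq 0$, and downward-closure immediately gives $D^+(\alpha,r) = \{z : z \leq y\} \subseteq \aanwka - \Gamma$. This covers $y$ by a closed disc, possibly degenerate of radius~$0$ when $y$ is of type~1 (a rational point), in line with the convention recalled after Example~\ref{ex:genericpointdisc}.

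For the second assertion, assume $\Gamma$ closed. I refine the previous closed disc into a genuine open one. For $y = \eta_{\alpha,r}$ of type~1, 2, or~3, I introduce
\[ S := \{s \geq 0 : \eta_{\alpha,s} \in \Gamma\} \subseteq [0,+\infty). \]
Upward-closure of $\Gamma$ together with Lemma~\ref{lem:orderetaar} make $S$ upward-closed, while continuity of $s \mapsto \eta_{\alpha,s}$ (Lemma~\ref{lem:path}) combined with closedness of $\Gamma$ make $S$ closed in $[0,+\infty)$. Hence $S = [\sigma,+\infty)$ for some $\sigma \in [0,+\infty]$, and $y \notin \Gamma$ forces $r < \sigma$ \emph{strictly}. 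Upward-closure then gives $D^+(\alpha,s) \subseteq \aanwka - \Gamma$ for each $s < \sigma$, so
\[ D^-(\alpha,\sigma) \;=\; \bigcup_{s<\sigma} D^+(\alpha,s) \;\subseteq\; \aanwka - \Gamma \]
is an open disc containing $y$ (with the convention that $D^-(\alpha,+\infty) = \aanwka$, itself a union of open discs).

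It remains to handle the type~4 case, which I reduce to the previous one. For $y$ of type~4, Proposition~\ref{prop:types} together with Proposition~\ref{prop:family} provides a neighborhood basis of $y$ made of the closed discs $D^+(\alpha_i,r_i)$ from the associated family. Since $\aanwka - \Gamma$ is open, some $D^+(\alpha_i,r_i) \subseteq \aanwka - \Gamma$; its boundary point $\eta_{\alpha_i,r_i}$ is of type~2 or~3 and lies in the complement, so the previous paragraph produces an open disc $D^-(\alpha_i,\sigma_i) \supseteq D^+(\alpha_i,r_i) \ni y$ contained in $\aanwka - \Gamma$. The main subtlety is precisely this upgrade from closed to open discs: it crucially requires both the continuity of the parametrization $s \mapsto \eta_{\alpha,s}$ and the closedness of $\Gamma$ to obtain the strict inequality $r < \sigma$; without closedness, $S$ can be a half-open interval $(r,+\infty)$, only yielding a closed disc $D^+(\alpha,r)$, which is why the first statement cannot in general be strengthened.
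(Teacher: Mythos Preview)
Your proof is correct and follows essentially the same approach as the paper: both exploit that the hypothesis makes $\aanwka - \Gamma$ downward-closed, so that for a point of type~1, 2, or~3 the set $\{z : z \le y\} = D^+(\alpha,r)$ lies in the complement, and then use openness of the complement (when $\Gamma$ is closed) to strictly enlarge along~$I_y$. The only stylistic differences are that you explicitly construct the optimal radius~$\sigma$ and treat type~4 by reduction to type~2/3 via the disc-basis of neighborhoods, whereas the paper argues uniformly for all types (pick any $y'>y$ in $I_y - \Gamma$, take the closed disc below~$y'$, and invoke Lemmas~\ref{lem:closeddisctype3}, \ref{lem:closeddisctype2} to extract an open subdisc, as in the proof of Corollary~\ref{cor:Ixclosed}).
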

\begin{proof}
To prove the first statement, it is enough to show that each point of $\aanwka - \Gamma$ that is not of type~4 is contained in a closed disc. Let~$x$ be such a point. Then, by Lemma~\ref{lem:orderetaar}, $\{y \in \aanwka : y\le x\}$ is a closed disc. By assumption, it is contained in $\aanwka - \Gamma$, and the result follows.

Let us now assume that~$\Gamma$ is closed. Let $x\in \aanwka - \Gamma$. Since~$\Gamma$ is closed, there exists $y\in I_{x} - \Gamma$ such that $y>x$. We then conclude as in the proof of Corollary~\ref{cor:Ixclosed}.
\end{proof}

\begin{corollary}\label{cor:pathwka}
For every $x, y \in \aanwka$, there exists a unique injective path from~$x$ to~$y$.
\end{corollary}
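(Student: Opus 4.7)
The plan is to construct the canonical path via the order structure on $\aanwka$ and then establish uniqueness by a cut-point argument. For existence, set $p := x \vee y$, which exists by Corollary \ref{cor:ppcm}. By Corollary \ref{cor:Ix}, the radius map restricts to an order-preserving homeomorphism $I_x \xrightarrow{\sim} [r(x), +\infty)$, under which $[x, p] \subseteq I_x$ is a closed subinterval; the analogous fact holds for $[p, y] \subseteq I_y$. Gluing these two intervals at the common endpoint $p$ yields an injective continuous path $\Gamma$ from $x$ to $y$.

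For uniqueness, let $\gamma \colon [0, 1] \to \aanwka$ be any injective path from $x$ to $y$; the goal is to show $\gamma([0,1]) = \Gamma$. The key claim is that every interior point $z$ of $\Gamma$ (i.e., $z \in \Gamma \setminus \{x, y\}$) is a cut point separating $x$ from $y$, meaning that $x$ and $y$ lie in distinct connected components of $\aanwka \setminus \{z\}$. Granting this, the connectedness of $\gamma([0, 1])$ forces $z \in \gamma([0, 1])$ for every such $z$, so $\Gamma \subseteq \gamma([0, 1])$. Since $\gamma$ is an embedding, $\gamma([0, 1])$ is an arc whose endpoints are $x$ and $y$, and $\Gamma$ is a connected subset of this arc containing both endpoints; the only such subset is the full arc, whence $\Gamma = \gamma([0, 1])$.

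To establish the separation claim, first note via Corollary \ref{cor:Ixexplicit} and Example \ref{ex:typeetaalphar} that every interior point of $\Gamma$ is of type $2$ or $3$. Let $z = \eta_{\alpha, r}$ be such a point. Suppose first $z \in [x, p)$ with $z \ne x$. Then $z \ge x$, combined with $y, z \le p$ and minimality of $x \vee y = p$, yields $y \vee z = p > z$, hence $y \not\le z$, so $y \notin D^+(\alpha, r)$ by Lemma \ref{lem:orderetaar}, while $x \le z$ places $x$ in $D^+(\alpha, r) \setminus \{z\}$. By Lemma \ref{lem:closeddisctype3} or \ref{lem:closeddisctype2}, $D^+(\alpha, r) \setminus \{z\}$ decomposes as a disjoint union of open discs, each clopen in $\aanwka \setminus \{z\}$, while $\aanwka \setminus D^+(\alpha, r)$ is another clopen piece containing $y$; thus $x$ and $y$ land in distinct components. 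The case $z \in (p, y]$ is symmetric. When $z = p$ (which forces $p$ to be of type $2$, since types $1$ and $4$ are minimal by Corollary \ref{cor:ordermin}, and type $3$ would put $x, y$ into $D^-(\alpha, r)$ forcing $x \vee y$ strictly below $p$), a parallel argument shows that $x$ and $y$ must lie in two distinct maximal open subdiscs at $p$, for otherwise $x \vee y$ would drop strictly below $p$. The main technical obstacle is precisely this case analysis, which requires threading the combinatorics of $\vee$ through the disc decompositions of Lemmas \ref{lem:closeddisctype3} and \ref{lem:closeddisctype2}, via the order-disc dictionary of Lemma \ref{lem:orderetaar}.
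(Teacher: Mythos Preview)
Your proof is correct. The existence part matches the paper exactly, but your uniqueness argument takes a genuinely different route.

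The paper argues globally: it invokes Corollary~\ref{cor:Ixclosed} and Proposition~\ref{prop:aan-S} to show that $\aanwka - (I_x \cup I_y)$ is a disjoint union of open discs, each with a unique boundary point (Lemma~\ref{lem:discs}); an injective path from~$x$ to~$y$ cannot enter such a disc without crossing that boundary point twice, so the path must stay inside $I_x \cup I_y$, where uniqueness is already known from the tripod description of Corollary~\ref{cor:ppcm}. This avoids all case analysis at the cost of relying on the slightly heavier Proposition~\ref{prop:aan-S}.

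Your approach is local: you show directly that each interior point of~$\Gamma$ is a cut point separating~$x$ from~$y$, forcing any injective path to contain all of~$\Gamma$ and hence to equal it. This is more hands-on and self-contained (it only needs Lemmas~\ref{lem:orderetaar}, \ref{lem:closeddisctype3}, \ref{lem:closeddisctype2} and the order structure), but the price is the three-way case split on the position of~$z$, including the somewhat delicate case $z = p$ where you must rule out type~3 and argue that $x$ and~$y$ fall into distinct residue discs. Both arguments ultimately exploit the same phenomenon---that points of type~2 or~3 disconnect the line---but the paper packages it once in Proposition~\ref{prop:aan-S}, whereas you unpack it pointwise.
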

\begin{proof}
Set $I_{x,y} := I_{x} \cup I_{y}$. It follows from Corollaries~\ref{cor:Ix} and~\ref{cor:ppcm} that it is homeomorphic a half-open interval or a tripod with one end-point removed. It follows that there exists a unique injective path from~$x$ to~$y$ inside~$I_{x,y}$. In particular, there exists an injective path from~$x$ to~$y$ in~$\aanwka$.

By Corollary~\ref{cor:Ixclosed} and Proposition~\ref{prop:aan-S}, $\aanwka - I_{x,y}$ is a union of open discs. By Lemma~\ref{lem:discs}, every open disc has a unique boundary point. As a consequence, an injective path going from~$x$ to~$y$ cannot meet any of these open disc, since otherwise it would contain its boundary point twice. It follows that such a path is contained in~$I_{x,y}$.
 \end{proof}

%
%
%
%
%


We want to deduce the result for~$\aank$ by using the projection map $\pi_{\wka/k} \colon \aanwka \to \aank$. Recall that, by Corollary~\ref{cor:piwkak}, it is a quotient map by the group $\Gal(k^s/k)$.

%

\begin{proposition}\label{prop:pathk}
For every $x, y \in \aank$, there exists a unique injective path from~$x$ to~$y$.
\end{proposition}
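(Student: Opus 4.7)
The plan is to descend from the uniqueness of arcs in $\aanwka$ (Corollary~\ref{cor:pathwka}) to $\aank$ along the Galois quotient map $\pi := \pi_{\wka/k}$ given by Corollary~\ref{cor:piwkak}. Existence is immediate: the space $\aank$ is Hausdorff by Lemma~\ref{lem:Hausdorff} and path-connected by Proposition~\ref{prop:discsconnected}, so the classical topological fact that every path-connected Hausdorff space is arc-connected yields an injective path from $x$ to $y$.

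For uniqueness, I fix lifts $\tilde x, \tilde y \in \aanwka$ of $x$ and $y$. Combining Corollary~\ref{cor:pathwka} with Corollary~\ref{cor:ppcm}, the unique injective path from $\tilde x$ to $\tilde y$ in $\aanwka$ is $\tilde\gamma_{\tilde x, \tilde y} := [\tilde x, \tilde x \vee \tilde y] \cup [\tilde y, \tilde x \vee \tilde y]$, with the first segment contained in $I_{\tilde x}$ and the second in $I_{\tilde y}$. Since $I_{\tilde x}$ is closed in $\aanwka$ (Corollary~\ref{cor:Ixclosed}), the restriction $\pi|_{I_{\tilde x}}$ is continuous, injective (Corollary~\ref{cor:Ix}), and proper (as the restriction of a proper map to a closed subspace), hence a homeomorphism onto its image; the same holds for $I_{\tilde y}$.

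The crux is to choose the lift $\tilde y$ so that $\pi$ is injective on all of $\tilde\gamma_{\tilde x, \tilde y}$. The Galois orbit of $\tilde y$ is compact and the radius function is continuous along each branch, so I pick $\tilde y$ in this orbit minimizing $r(\tilde x \vee \tilde y)$. Were $\pi|_{\tilde\gamma_{\tilde x, \tilde y}}$ not injective, there would exist $\sigma \in \Gal(k^s/k)$ together with points $a \in [\tilde x, \tilde x \vee \tilde y]$, $b \in [\tilde y, \tilde x \vee \tilde y]$ such that $\sigma(a) = b$ and $\{a,b\} \ne \{\tilde x \vee \tilde y\}$. Since $\sigma$ preserves the absolute value, $r(a) = r(b)$, and strict monotonicity of $r$ along each branch (Corollary~\ref{cor:Ix}) forces both $a$ and $b$ to lie strictly below $\tilde x \vee \tilde y$. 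Applying Remark~\ref{rem:sigmaIx} to $\sigma^{-1}$ one gets $a \ge \sigma^{-1}(\tilde y)$, hence $\tilde x \vee \sigma^{-1}(\tilde y) \le a < \tilde x \vee \tilde y$, contradicting minimality. This produces a distinguished injective path $\pi(\tilde\gamma_{\tilde x, \tilde y})$ from $x$ to $y$ in $\aank$.

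To close uniqueness, given any injective path $\eta: [0,1] \to \aank$ from $x$ to $y$, the plan is to show $\eta([0,1]) = \pi(\tilde\gamma_{\tilde x, \tilde y})$. Since $\pi$ is a quotient map for the proper action of the compact group $\Gal(k^s/k)$ on the Hausdorff space $\aanwka$, path-lifting provides a continuous $\tilde\eta: [0,1] \to \aanwka$ with $\pi \circ \tilde\eta = \eta$ and $\tilde\eta(0) = \tilde x$. Extracting an arc from $\tilde\eta$ and applying uniqueness in $\aanwka$ identifies its image with some $\tilde\gamma_{\tilde x, \tilde y'}$ for $\tilde y'$ a lift of $y$; a connectedness argument (the set $\pi(\tilde\gamma_{\tilde x, \tilde y'})$ is a connected subset of $\eta([0,1]) \cong [0,1]$ containing both endpoints) yields $\eta([0,1]) = \pi(\tilde\gamma_{\tilde x, \tilde y'})$, which must then coincide with $\pi(\tilde\gamma_{\tilde x, \tilde y})$ by the minimality established above. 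The main obstacle throughout is controlling the interplay between the Galois action and the tree structure of $\aanwka$: the minimization step rules out ``Galois shortcuts'', while the path-lifting property is a standard but delicate consequence of the properness of $\pi$ and the compactness of $\Gal(k^s/k)$.
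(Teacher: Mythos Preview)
Your existence argument and the construction of the canonical arc $\pi(\tilde\gamma_{\tilde x, \tilde y})$ via a minimizing lift are correct; in particular, the minimality argument showing that $\pi$ is injective on $\tilde\gamma_{\tilde x, \tilde y}$ is a nice device not present in the paper. However, the final step contains a genuine gap: you assert that an arbitrary path $\eta$ in $\aank$ admits a continuous lift $\tilde\eta$ to $\aanwka$, claiming this is ``a standard but delicate consequence'' of $\pi$ being the quotient by a proper action of a compact group. This is not a standard result. Quotient maps by compact (even profinite) group actions do not in general enjoy the path-lifting property, and nothing in the paper establishes it for $\pi_{\wka/k}$. Even if path lifting does hold here --- which is plausible given the tree structure upstairs --- a direct proof would most naturally use that $\aank$ itself is uniquely arc-connected, which is precisely what you are trying to show.

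The paper avoids lifting altogether by arguing in the base. For \emph{any} lifts $x',y'$ it sets $J := I_{x'} \cup I_{y'}$ and $\Gamma := \pi^{-1}(\pi(J)) = \bigcup_{\sigma \in \Gal(k^s/k)} \sigma(J)$, checks that $\Gamma$ is closed and satisfies the hypothesis of Proposition~\ref{prop:aan-S}, and deduces that $\aanwka - \Gamma$ is a union of open discs. By Galois-invariance, $\aank - \pi(J)$ is then a union of connected open sets, each with a single boundary point lying in $\pi(J)$. The same confinement trick as in Corollary~\ref{cor:pathwka} now applies directly in $\aank$: an injective path from $x$ to $y$ that entered one of these sets would have to cross its unique boundary point twice. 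This traps every injective path inside $\pi(J)$, which is a tripod with one end removed, and uniqueness there is immediate. No choice of optimal lift and no lifting of paths is required.
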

\begin{proof}


To ease notation, we will write~$\pi$ instead of~$\pi_{\wka/k}$. 

Let $x,y \in \aank$. Choose $x' \in \pi^{-1}(x)$ and $y' \in \pi^{-1}(y)$. Set $J := I_{x'} \cup I_{y'}$. By Lemma~\ref{cor:Ix}, $\pi(I_{x'})$ and $\pi(I_{y'})$ are half-open intervals and, by Corollary~\ref{cor:ppcm}, they meet.

Let $z \in \pi(I_{x'}) \cap \pi(I_{y'})$. There exists $z' \in I_{x'}$ such that $\pi(z')=z$ and $\sigma\in \Gal(k^s/k)$ such that $\sigma(z') \in I_{y'}$. By Remark~\ref{rem:sigmaIx}, we have $\sigma(I_{z'}) = I_{\sigma(z')}$ and we deduce that $\pi(J)$ is a tripod with one end-point removed. In particular, there exists a unique injective path from~$x$ to~$y$ in $\pi(J)$, and at least one such path in~$\aank$.

%

\medbreak

To conclude, let us prove that any injective path from~$x$ to~$y$ in~$\aank$ is contained in $\pi(J)$. Set 
\[\Gamma :=\pi^{-1}(\pi(J)) = \bigcup_{\sigma \in \Gal(k^s/k)} \sigma(J).\]

Since the action of the elements of~$\Gal(k^s/k)$ preserves the norm, for each $R \in \R_{>0}$, we have 
\[\Gamma \cap D^+(0,R) = \pi^{-1}(\pi(J \cap D^+(0,R))),\]
and it follows that $\Gamma \cap D^+(0,R)$ is compact. We deduce that~$\Gamma$ is closed.

Moreover, we have 
\[\Gamma = \bigcup_{z \in \pi^{-1}(x) \cup \pi^{-1}(y)} I_{z},\]
hence, by Proposition~\ref{prop:aan-S}, $\aanwka - \Gamma$ is a union of open discs. Using the fact that it is invariant under the action of $\Gal(k^s/k)$, we may now conclude as in the proof of Corollary~\ref{cor:pathwka}.
\end{proof}

\begin{notation}
For $x, y \in \aank$, we will denote by~$[x,y]$ the unique injective path between~$x$ and~$y$. We set $(x,y) := [x,y] - \{x,y\}$.
\end{notation}

\begin{remark}
The fact that $\aank$ is uniquely path-connected means that it has the structure of a real tree. The type of the points may be easily read off this structure. Indeed the end-points are the points of type~1 and~4 (see Remark~\ref{rem:intersectiontype4} for points of type~4). Among the others, type~2 points are branch-points (with infinitely many edges meeting there, see Lemma~\ref{lem:closeddisctype2}) while type~3 points are not (see Lemma~\ref{lem:closeddisctype3}).
For a graphical representation of this fact, see Figure \ref{fig:DroiteTypePoints}.
\end{remark}

\section{Virtual discs and annuli}\label{sect:retractions}

In this section, we introduce generalizations of discs and annuli that are more suitable when working over arbitrary fields and study them from the topological point of view. 
We explain that they retract onto some simple subsets of the real line, namely singletons and intervals respectively. Here we borrow from \cite[Section~6.1]{Berkovich90}, which also contains a treatment of more general spaces.


\subsection{Definitions}


\begin{definition}
A connected subset~$U$ of~$\aank$ is called a \emph{virtual open (resp. closed) disc} if $\pi_{\wka/k}^{-1}(U)$ is a disjoint union of open (resp. closed) discs. We define similarly virtual open, closed and flat annuli and virtual open and closed Swiss cheeses. \end{definition}
We now introduce particularly interesting subsets of virtual annuli.

\begin{definition}\label{def:skeletonannulus}
Let~$A$ be a virtual open or closed annulus. The \emph{skeleton} of~$A$ is the complement of all the virtual open discs contained in~$A$. We denote it by~$\Sigma_{A}$.
\end{definition}

\begin{example}\label{ex:skeleton}
Consider the open annulus $A := A^-(\gamma,\rho_{1},\rho_{2})$, with $\gamma\in k$ and $\rho_{1} < \rho_{2}\in \R_{> 0}$. Its skeleton is
\[\Sigma_{A} =  \{\eta_{\gamma,s} : \rho_{1} < s < \rho_{2}\}.\]

\begin{figure}[ht]
\includegraphics[scale=.3]{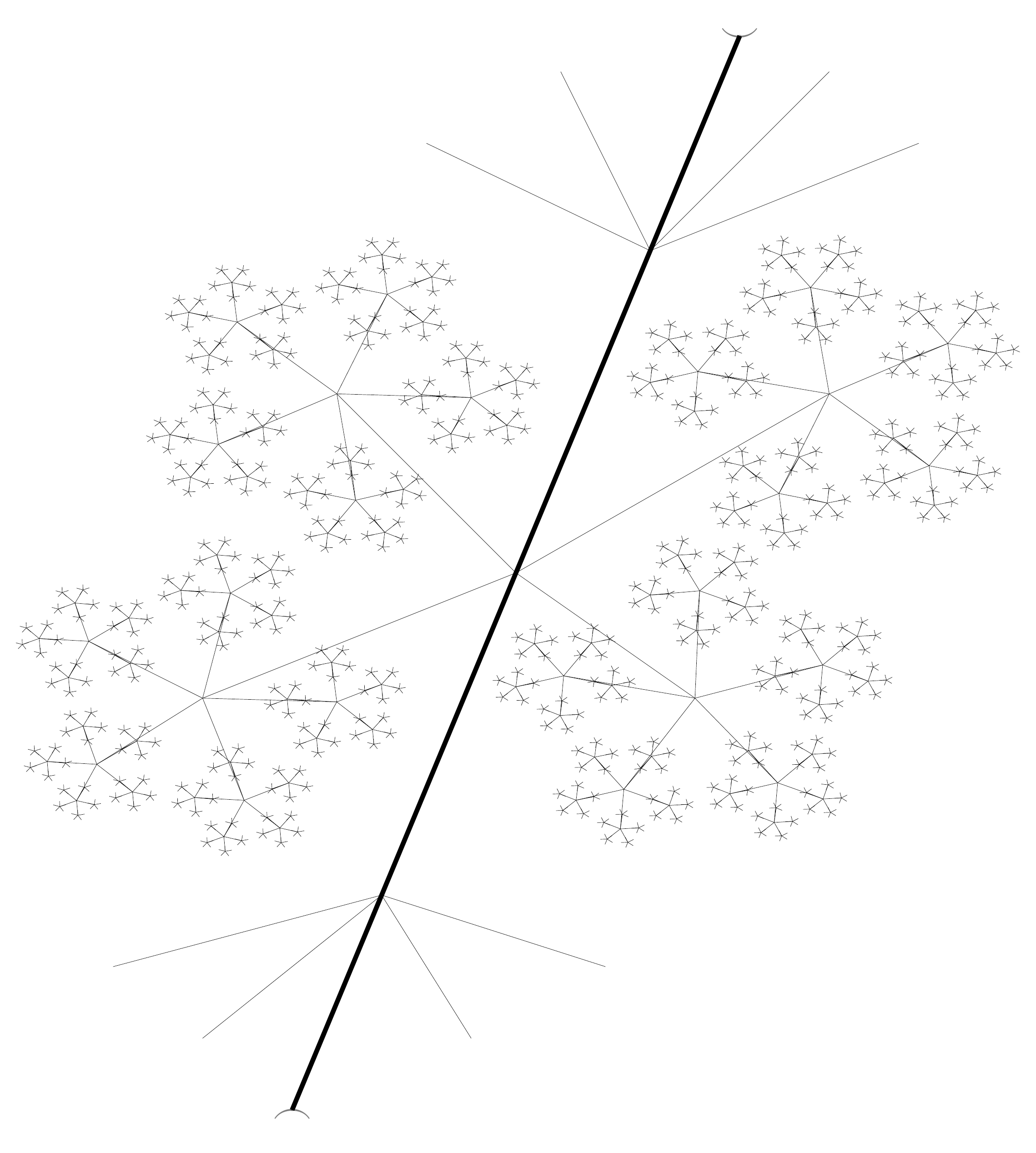}
\caption{The skeleton of an open annulus is the open line segment joining its boundary points}
\end{figure}

%
\end{example}

\begin{lemma}\label{lem:skeletonvirtual}
Let~$A$ be a virtual open (resp. closed) annulus. 
Let $C$ be a connected component of $\pi^{-1}_{\wka/k}(A)$. Then, $C$ is an open (resp. closed) annulus and $\pi_{\wka/k}$ induces a homeomorphism between~$\Sigma_{C}$ and~$\Sigma_{A}$. In particular, $\Sigma_{A}$ is an open (resp. closed) interval.
\end{lemma}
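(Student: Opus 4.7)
Since $\pi_{\wka/k}^{-1}(A)$ is by definition a disjoint union of annuli and each annulus is connected by Proposition~\ref{prop:discsconnected}, the connected component $C$ is itself an open (resp. closed) annulus. The strategy will be to exhibit $\pi_{\wka/k}|_{\Sigma_C}$ as a continuous bijection onto~$\Sigma_A$ and then upgrade it to a homeomorphism via properness. Setting $G := \Gal(k^s/k)$, I first observe that $G$ acts transitively on the set of connected components of $\pi_{\wka/k}^{-1}(A)$: otherwise, since $\pi_{\wka/k}$ is open by Corollary~\ref{cor:piwkak}, the images of two distinct $G$-orbits of components would form a non-trivial disjoint open cover of the connected set~$A$. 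Writing $H$ for the stabilizer of~$C$ in~$G$, every fiber of $\pi_{\wka/k}$ over a point of $A$ then meets~$C$.

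To prove $\pi_{\wka/k}(\Sigma_C) = \Sigma_A$, I argue both inclusions by contradiction. If $x \in \Sigma_C$ had its image in a virtual open disc $D \subseteq A$, then $\pi_{\wka/k}^{-1}(D) \cap C$ would be a union of open discs of~$\aanwka$ containing~$x$, contradicting membership of $x$ in the skeleton. Conversely, given $y \in \Sigma_A$, transitivity supplies a preimage $x \in C$; if $x$ lay in some open disc $D \subseteq C$, then by Lemma~\ref{lem:finitefiber} the orbit $G \cdot D$ would be a finite union of open discs which are pairwise disjoint (two open discs of the same radius in~$\aanwka$ either coincide or are disjoint), so $\pi_{\wka/k}(D) \subseteq A$ would be a virtual open disc containing~$y$, a contradiction.

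For injectivity, any $\sigma \in H$ preserves~$\Sigma_C$ (the skeleton being intrinsically defined as the complement of open subdiscs) and preserves radii of points; since Example~\ref{ex:skeleton} shows $\Sigma_C$ is bijectively parameterized by the radius map $t \mapsto \eta_{\alpha,t}$ associated with a chosen center~$\alpha$ of~$C$, the element $\sigma$ must fix each point of $\Sigma_C$. Hence $H$ acts trivially on~$\Sigma_C$, and $\pi_{\wka/k}|_{\Sigma_C} \colon \Sigma_C \to \Sigma_A$ is a continuous bijection.

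To upgrade to a homeomorphism, I verify properness. For $K \subseteq \Sigma_A$ compact, $\pi_{\wka/k}^{-1}(K)$ is compact by Proposition~\ref{prop:piKk}; intersecting with~$C$, closed in $\pi_{\wka/k}^{-1}(A)$ as a connected component, keeps it compact; intersecting further with $\Sigma_C = I_\alpha \cap C$, closed in $C$ by Corollary~\ref{cor:Ixclosed}, does too. A proper continuous bijection into a Hausdorff space is a homeomorphism, which concludes the proof, and the description of $\Sigma_C$ as an open (resp. closed) interval then transfers to~$\Sigma_A$. The step I expect to be most delicate is the injectivity, which packages intrinsicity of the skeleton, Galois-invariance of radii, and the explicit radius parameterization of~$\Sigma_C$ into a single argument.
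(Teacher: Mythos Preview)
Your proof is correct and follows essentially the same strategy as the paper: both arguments reduce to showing that the stabilizer $H = G_C$ of the component~$C$ acts trivially on~$\Sigma_C$. The paper establishes this by observing that the unbounded complementary component $D^+_\infty(\gamma,\rho_2)$ contains a Galois-fixed point $\eta_r$ for large~$r$, so each $\sigma \in G_C$ must preserve the inner disc $D^+(\gamma,\rho_1)$, forcing $|\sigma(\gamma)-\gamma|\le\rho_1$ and hence $\sigma(\eta_{\gamma,s})=\eta_{\gamma,s}$ for all $s\in(\rho_1,\rho_2)$. Your argument reaches the same conclusion more abstractly, via Galois-invariance of the radius and the fact that the radius map parameterizes~$\Sigma_C$ bijectively; this is slightly slicker and avoids the case analysis on complementary components. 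Conversely, the paper obtains the homeomorphism directly from the quotient identification $C/G_C \simeq A$ of Corollary~\ref{cor:piwkak}, which is cleaner than your properness argument. The paper also treats the equality $\pi_{\wka/k}(\Sigma_C)=\Sigma_A$ as immediate from the definitions, whereas you spell out both inclusions explicitly; your version is more careful here.
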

\begin{proof}
Let us handle the case of a virtual open annulus, the other one being dealt with similarly. The connected component~$C$ is an open annulus, by the very definition of virtual open annulus. It also follows from the definitions that we have $\Sigma_{A} = \pi_{\wka/k}(\Sigma_{C})$. 

Denote by~$G_{C}$ the subgroup of~$\Gal(k^s/k)$ consisting of those elements that preserve~$C$. By Corollary~\ref{cor:piwkak}, $\pi_{\wka/k}$ induces a homeomorphism $C/G_{C} \simeq A$, hence a homeomorphism $\Sigma_C/G_{C} \simeq \Sigma_A$.

Write $C = A^-(\gamma,\rho_{1},\rho_{2})$, with $\gamma\in \wka$ and $\rho_{1} < \rho_{2}\in \R_{> 0}$. Its complement in~$\aanwka$ has two connected components, namely $D^+(\gamma,\rho_{1})$ and 
\[D^+_{\infty}(\gamma,\rho_{2}) := \{ x\in \aanwka : |(T-\gamma)(x)| \ge \rho_{2}\}.\]
For $r$ big enough, we have $\eta_{\gamma,r} = \eta_{r}$, which belongs to $D^+_{\infty}(\gamma,\rho_{2})$ and is stable under~$\Gal(k^s/k)$. It follows that $D^+(\gamma,\rho_{1})$ and $D^+_{\infty}(\gamma,\rho_{2})$ are stable under~$G_{C}$. 

Let $\sigma \in G_{C}$. We have 
\[ \sigma(D^+(\gamma,\rho_{1})) = D^+(\sigma(\gamma),\rho_{1}) = D^+(\gamma,\rho_{1}),\]
hence $|\sigma(\gamma)-\gamma| \le \rho_{1}$. It follows that, for each $s\in (\rho_{1},\rho_{2})$, we have 
\[ \sigma(\eta_{\gamma,s}) = \eta_{\sigma(\gamma),s} = \eta_{\gamma,s},\]
that is to say $\sigma$ acts as the identity on~$\Sigma_{C}$. The result follows.
\end{proof}

\begin{remark}\label{rem:virtual}
Virtual discs and annuli are usually defined as arbitrary connected $k$-analytic curves (see Section~\ref{sec:smoothcurves}) whose base change to~$\wka$ is a disjoint union of discs or annuli, without requiring an embedding into~$\aank$. Our definition is \textit{a priori} more restrictive.

With this definition of virtual annulus, an additional difficulty appears. If $A$ is such a virtual annulus and $C$ is a connected component of $\pi^{-1}_{\wka/k}(A)$, then there may exist elements of~$\Gal(k^s/k)$ that preserve~$C$ but swap its two ends. In this case, the skeleton~$\Sigma_{A}$ is a half-open interval. (For an example of such a behaviour, consider a Galois orbit in~$\wka$ consisting of two points, its image~$x$ in~$\aank$ and let~$A$ be the complement in~$\pank$ of a small virtual closed disc containing~$x$.) Some authors (for instance A.~Ducros) explicitly rule out this possibility in the definition of virtual annulus.
\end{remark}

We may also extend the notion of modulus (see Definition~\ref{def:modulus}) from annuli to virtual annuli. Recall that, by Lemma~\ref{lem:finitefiber}, the set of connected components of the preimage of a virtual open annulus by~$\pi^{-1}_{\wka/k}$ is finite and that, by Corollary~\ref{cor:piwkak}, $\Gal(k^s/k)$ acts transitively on it.


\begin{definition}
Let $A$ be a virtual open annulus. Denote by~$\calC$ the set of connected components of $\pi^{-1}_{\wka/k}(A)$ and let~$C_{0}$ be one of them. We define the \emph{modulus} of~$A$ to be 
\[ \Mod(A) := \Mod(C_{0})^{1/\sharp \calC} \in (1,+\infty).\]
It is independent of the choice of~$C_{0}$.
\end{definition}

\begin{remark}
Beware that other normalizations exist in the literature for the modulus of a virtual open annulus. For instance, A.~Ducros sets $\Mod(A) := \Mod(C_{0})$ (see \cite[3.6.15.11]{DucrosRSS}).
\end{remark}
 
We refer the reader to \cite[Section~3.6]{DucrosRSS} for a thorough treatment of classical and virtual discs and annuli.

\subsection{The case of an algebraically closed and maximally complete base field}

In this section, we assume that $k$~is algebraically closed and maximally complete. This will allow us to prove our results through direct computations.

Recall that, by Lemma~\ref{lem:type23algclosed} and Remark~\ref{rem:maxsph}, each point of~$\aank$ (or a disc or an annulus) is of the form~$\eta_{\alpha,r}$, for $\alpha\in k$ and $r\in\R_{\ge0}$. 

\medbreak

Let $\gamma\in k$ and $\rho \in \R_{> 0}$. We consider the closed disc $D^+(\gamma,\rho)$.

\begin{lemma}\label{lem:retractiondisc}
The map
\[[0,1] \times D^+(\gamma,\rho) \ni (t,\eta_{\alpha,r}) \longmapsto \eta_{\alpha,\max(r,t\rho)} \in D^+(\gamma,\rho)\]
is well-defined and continuous. It induces a deformation retraction of~$D^+(\gamma,\rho)$ and $D^-(\gamma,\rho) \cup\{\eta_{\gamma,\rho}\}$ onto their unique boundary point~$\eta_{\gamma,\rho}$.
\qed
\end{lemma}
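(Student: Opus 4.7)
The plan is to verify three things in turn: well-definedness of the map, its continuity, and the deformation-retraction axioms. Under the standing hypothesis that $k$ is algebraically closed and maximally complete, every point of $D^+(\gamma,\rho)$ can be written as $\eta_{\alpha,r}$ with $\alpha\in k$ and $r\in[0,\rho]$ satisfying $|\alpha-\gamma|\le\rho$ (Lemma~\ref{lem:type23algclosed} together with Remark~\ref{rem:maxsph}), so the formula at least makes sense after choosing a representative.

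First I would check well-definedness. If $\eta_{\alpha,r}=\eta_{\alpha',r'}$, then Lemma~\ref{lem:etaalphar} gives $r=r'$ and $|\alpha-\alpha'|\le r\le\max(r,t\rho)$, whence another application of the same lemma yields $\eta_{\alpha,\max(r,t\rho)}=\eta_{\alpha',\max(r',t\rho)}$. Moreover $\max(r,t\rho)\le\rho$ and $|\alpha-\gamma|\le\rho$, so $|T-\gamma|_{\varphi(t,x)}=\max(|\alpha-\gamma|,\max(r,t\rho))\le\rho$ and the image indeed stays inside $D^+(\gamma,\rho)$.

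The key step is continuity, and this is where care is needed since $\varphi$ is defined via a choice of representative. By definition of the topology on $\aank$ it is enough to show that for each $P\in k[T]$, the map $(t,x)\mapsto|P|_{\varphi(t,x)}$ is continuous on $[0,1]\times D^+(\gamma,\rho)$. Since $k$ is algebraically closed we may write $P=c\prod_i(T-\beta_i)$, and then, for $x=\eta_{\alpha,r}$,
\[
|P|_{\varphi(t,x)}=|c|\prod_i\max(|\alpha-\beta_i|,r,t\rho)=|c|\prod_i\max(|T-\beta_i|_x,t\rho).
\]
The right-hand side is expressed purely in terms of $x$ and $t$, and each factor $x\mapsto|T-\beta_i|_x$ is continuous by definition of the topology. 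This passage from the representative-dependent expression to one involving only $x$ is the only real subtlety; the continuity of $\varphi$ then follows immediately.

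The retraction properties are then routine: $\varphi(0,\eta_{\alpha,r})=\eta_{\alpha,r}$, the boundary point is fixed since $\varphi(t,\eta_{\gamma,\rho})=\eta_{\gamma,\max(\rho,t\rho)}=\eta_{\gamma,\rho}$, and $\varphi(1,\eta_{\alpha,r})=\eta_{\alpha,\rho}=\eta_{\gamma,\rho}$ by Lemma~\ref{lem:etaalphar} since $|\alpha-\gamma|\le\rho$. For the variant with domain $D^-(\gamma,\rho)\cup\{\eta_{\gamma,\rho}\}$ one only needs to check invariance: if $x=\eta_{\alpha,r}\in D^-(\gamma,\rho)$ then $|\alpha-\gamma|<\rho$ and $r<\rho$, so for $t<1$ we have $\max(r,t\rho)<\rho$ and $|T-\gamma|_{\varphi(t,x)}=\max(|\alpha-\gamma|,\max(r,t\rho))<\rho$, keeping $\varphi(t,x)\in D^-(\gamma,\rho)$; at $t=1$ the image collapses to $\eta_{\gamma,\rho}$, which is included in the domain. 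I do not anticipate a serious obstacle; the continuity computation above is the one place where one must be slightly alert.
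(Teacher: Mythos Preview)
Your proof is correct and complete; the well-definedness, continuity (via the representative-independent rewriting $\max(|\alpha-\beta_i|,r,t\rho)=\max(|T-\beta_i|_x,t\rho)$), and homotopy axioms are all verified cleanly. The paper itself gives no proof of this lemma at all (the \qed appears immediately after the statement), so your argument simply fills in what the authors considered routine, and it does so along the expected lines.
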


Let $\gamma\in k$ and $\rho_{1} < \rho_{2}\in \R_{> 0}$. We consider the open annulus $A := A^-(\gamma,\rho_{1},\rho_{2})$. Each rational point of~$A$ is contained in an open disc. Indeed, let $\alpha \in k \cap A$ (which implies that $\rho_{1} < |\gamma-\alpha| < \rho_{2}$). Then, the open disc $D^-(\alpha, |\gamma-\alpha|)$ is the maximal open disc containing~$\alpha$ that lies in~$A$. This open disc is relatively compact in~$A$ and its unique boundary point is $\eta_{\alpha, |\gamma-\alpha|} = \eta_{\gamma,|\gamma-\alpha|}$. 


We have 
\[ A -  \Sigma_{A} = \bigcup_{\alpha \in k \cap A} D^-(\alpha, |\gamma-\alpha|).\]
In particular, each connected component of $A-\Sigma_{A}$ is an open disc. For each such open disc~$D$, we denote by~$\eta_{D}$ its boundary point in~$A$. 

\begin{lemma}\label{lem:retractionannuli}
The map 
\[ [0,1] \times A \ni (t,\eta_{\alpha,r}) \text{ with } r\le |\gamma-\alpha| \longmapsto \eta_{\alpha,\max(r,t|\gamma-\alpha|)} \in A\]
is well-defined and continuous. It induces a deformation retraction of~$A$ onto~$\Sigma_{A}$.

Its restriction to each connected component~$D$ of~$A-\Sigma_{A}$ coincides with the map from Lemma~\ref{lem:retractiondisc} and induces a deformation retraction of~$D$ onto~$\eta_{D}$. 

For each $\eta\in \Sigma_{A}$, the set of points that are sent to~$\eta$ by the retraction map is the union of~$\eta$ and all the connected component~$D$ of~$A-\Sigma_{A}$ such that $\eta_{D} =\eta$. It is a flat closed annulus.
\qed
\end{lemma}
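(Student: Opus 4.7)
The plan is to reduce everything to computations with the explicit formula $|\,\cdot\,|_{\alpha,r}$ of Lemma~\ref{lem:etaalphar}. First I would verify that the formula is well-defined. Every $x \in A$ admits a representation $\eta_{\alpha,r}$ with $r \le |\gamma-\alpha|$: writing $s := |T-\gamma|_x \in (\rho_1, \rho_2)$, either $x = \eta_{\gamma,s}$ lies on the skeleton, in which case I pick any $\alpha \in k$ with $|\gamma-\alpha|=s$ and observe $\eta_{\alpha,s} = \eta_{\gamma,s}$ via Lemma~\ref{lem:etaalphar}, or $x = \eta_{\alpha,r}$ already has $r < |\gamma-\alpha|$. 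If $(\alpha, r)$ and $(\alpha', r')$ are two such representations of the same point, Lemma~\ref{lem:etaalphar} yields $r = r'$ and $|\alpha-\alpha'| \le r \le |\gamma-\alpha|$, so the ultrametric inequality forces $|\gamma-\alpha| = |\gamma-\alpha'|$ and one more application of Lemma~\ref{lem:etaalphar} gives equality of the outputs. A direct calculation then shows $|T-\gamma|_{h(t,x)} = |\gamma-\alpha| \in (\rho_1, \rho_2)$, so $h(t,x)$ remains in $A$.

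For continuity I would establish the intrinsic identity
\[
|T-c|_{h(t,x)} = \max\bigl(|T-c|_x,\, t\,|T-\gamma|_x\bigr) \quad\text{for every } c \in k,
\]
which follows at once because both sides equal $\max(|c-\alpha|,\, r,\, t|\gamma-\alpha|)$. Since $k$ is algebraically closed, every $P \in k[T]$ is a scalar multiple of a product of linear factors, so by multiplicativity of the seminorms the map $(t,x) \mapsto |P|_{h(t,x)}$ is continuous for every polynomial~$P$; continuity of~$h$ then follows from the definition of the topology on~$\aank$.

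The three remaining assertions are short computations. At $t=0$, $h(0,x) = x$; at $t=1$, $h(1,\eta_{\alpha,r}) = \eta_{\alpha,|\gamma-\alpha|} = \eta_{\gamma,|\gamma-\alpha|}$ lies in $\Sigma_A$ by Example~\ref{ex:skeleton}; and for $y = \eta_{\gamma,s} \in \Sigma_A$, choosing $\alpha$ with $|\gamma-\alpha|=s$ gives $h(t,y) = \eta_{\alpha,s} = y$ throughout, hence $h$ is a deformation retraction onto $\Sigma_A$. A connected component of $A-\Sigma_A$ is of the form $D = D^-(\alpha_0,\rho)$ with $\rho := |\gamma-\alpha_0|$ and $\eta_D = \eta_{\alpha_0,\rho}$; every point of~$D$ admits a representation $\eta_{\alpha',r}$ with $\alpha'\in D\cap k$ and $r < |\gamma-\alpha'| = \rho$, so the formula specializes to $(t,\eta_{\alpha',r}) \mapsto \eta_{\alpha',\max(r,t\rho)}$, which is exactly the map of Lemma~\ref{lem:retractiondisc} applied to $D^+(\alpha_0,\rho)$. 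Finally, $h(1,x) = \eta_{\gamma,s}$ is equivalent to $|T-\gamma|_x = s$, so the fiber is the flat closed annulus $A^+(\gamma,s,s)$, which by Lemmas~\ref{lem:closeddisctype3} and~\ref{lem:closeddisctype2} decomposes as the union of $\eta_{\gamma,s}$ and all open discs $D^-(\alpha,s)$ with $|\gamma-\alpha|=s$, in agreement with the claim. The chief obstacle is the bookkeeping needed to handle the non-unique representatives $\eta_{\alpha,r}$; the intrinsic reformulation above sidesteps it cleanly, after which everything reduces to routine applications of Lemma~\ref{lem:etaalphar}.
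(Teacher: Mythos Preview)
The paper leaves this lemma without proof (it ends with a bare \qed), so your argument is supplying what the paper omits, and your overall strategy is exactly what one would expect: reduce to Lemma~\ref{lem:etaalphar} and the explicit formula for $|\cdot|_{\alpha,r}$. The intrinsic identity
\[
|T-c|_{h(t,x)} = \max\bigl(|T-c|_x,\ t\,|T-\gamma|_x\bigr)
\]
is the key observation and your use of it for continuity is clean and correct.

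There is one small gap. You claim that every $x\in A$ admits a representation $\eta_{\alpha,r}$ with $r\le|\gamma-\alpha|$, and for a skeleton point $\eta_{\gamma,s}$ you ``pick any $\alpha\in k$ with $|\gamma-\alpha|=s$''. Such an~$\alpha$ need not exist: even when $k$ is algebraically closed and maximally complete, the value group $|k^\times|$ can be a proper subgroup of $\R_{>0}$, so a type~3 skeleton point $\eta_{\gamma,s}$ with $s\notin|k^\times|$ has no representation satisfying the constraint. Fortunately your own intrinsic formula already repairs this: for \emph{any} representation $\eta_{\alpha,r}$ of a skeleton point (e.g.\ $\alpha=\gamma$, $r=s$) the right-hand side $\max(|T-c|_x,\,t|T-\gamma|_x)=\max(|T-c|_x,\,ts)=|T-c|_x$ since $|T-c|_x\ge s$ for all~$c$, so the map fixes the point as required. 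In other words, the constraint $r\le|\gamma-\alpha|$ is not needed for the formula to be well-defined, and once you drop it (or simply take the intrinsic identity as the \emph{definition} of~$h$) the argument goes through unchanged for all points of~$A$.
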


\subsection{The general case}

We now remove the assumption on~$k$. Let~$K$ be an extension of~$k$ that is algebraically closed and maximally complete. If~$X$ is a virtual disc or a virtual annulus, then each connected component of $\pi_{K/k}^{-1}(X)$ is a true disc or annulus over~$K$ and the results of the previous section apply. By continuity and surjectivity of~$\pi_{K/k}$, we deduce retraction results in this setting too.

\begin{proposition}\label{prop:retractionvirtualdisc}
Let~$D$ be a virtual open disc in~$\aank$. Then, $D$ has a unique boundary point~$\eta_{D}$ in~$\aank$ and there exists a canonical deformation retraction $\tau_{D} \colon D \cup\{\eta_{D}\} \to \{\eta_{D}\}$.
\qed
\end{proposition}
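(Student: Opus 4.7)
The plan is to reduce to the algebraically closed case through the projection $\pi := \pi_{\wka/k}$ and descend. By definition, $\pi^{-1}(D)$ is a disjoint union of open discs $D_1, D_2, \ldots$, and the Galois group $G := \Gal(k^s/k)$ permutes them. This action is transitive: if $J$ were a proper, nonempty, $G$-stable subset of the index set, then $\pi\bigl(\bigsqcup_{i \in J} D_i\bigr)$ would be a proper nonempty subset of $D$ that is both open and closed (both it and its complement are images under the open map $\pi$ of open $G$-saturated subsets, by Corollary~\ref{cor:piwkak}), contradicting connectedness of $D$. Combined with Lemma~\ref{lem:finitefiber} applied to the boundary point of any $D_i$ (a type-2 or type-3 point by Lemma~\ref{lem:discs}), this shows that there are only finitely many components, say $D_1, \ldots, D_n$.

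For the unique boundary point, Lemma~\ref{lem:discs} gives each $D_i$ a unique boundary point $\eta_{D_i}$ in $\aanwka$, and any $\sigma \in G$ with $\sigma(D_i) = D_j$ must satisfy $\sigma(\eta_{D_i}) = \eta_{D_j}$; hence $\{\eta_{D_1}, \ldots, \eta_{D_n}\}$ is a single $G$-orbit whose image in $\aank$ is a single point, which I call $\eta_D$. Since $\pi$ is proper (Proposition~\ref{prop:piKk}), it is closed, so $\pi^{-1}(\overline D) = \overline{\pi^{-1}(D)} = \bigcup_{i=1}^n (D_i \cup \{\eta_{D_i}\})$; subtracting $\pi^{-1}(D)$ gives $\pi^{-1}(\overline D \setminus D) = \{\eta_{D_1}, \ldots, \eta_{D_n}\}$, which projects exactly to $\{\eta_D\}$, so $\eta_D$ is indeed the unique boundary point of $D$.

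For the retraction, I apply Lemma~\ref{lem:retractiondisc} on each $D_i \cup \{\eta_{D_i}\}$---or rather its analogue, obtained by first pulling back to a spherical completion of $\wka$ where the lemma literally applies and then pushing down. This produces a continuous deformation retraction $H_i \colon [0,1] \times (D_i \cup \{\eta_{D_i}\}) \to D_i \cup \{\eta_{D_i}\}$, whose intrinsic description---send $y$ at time $t$ to the unique point of $[y, \eta_{D_i}]$ lying at the appropriate position on the segment---makes it transparent that the family $\{H_i\}_{i}$ is $G$-equivariant when we identify $(D_i, \eta_{D_i})$ with $(\sigma(D_i), \eta_{\sigma(D_i)})$ via $\sigma \in G$. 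Gluing the $H_i$ yields a continuous $G$-equivariant map $H \colon [0,1] \times \pi^{-1}(D \cup \{\eta_D\}) \to \pi^{-1}(D \cup \{\eta_D\})$. Because $\pi$ is open (Corollary~\ref{cor:piwkak}), its restriction to the saturated set $\pi^{-1}(D \cup \{\eta_D\})$ is again a $G$-quotient map, and so is its product with $\mathrm{id}_{[0,1]}$; therefore $H$ descends to the desired continuous deformation retraction $\tau_D$.

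The main obstacle I expect is the Galois equivariance in the retraction step: one must argue that the formula in Lemma~\ref{lem:retractiondisc}, which is written in terms of a chosen center and radius of $D_i$, admits a canonical reformulation depending only on the pair $(D_i, \eta_{D_i})$, so that the glued map $H$ is genuinely $G$-equivariant rather than merely equivariant up to a choice of coordinates. Once this canonicity is established, the descent is purely formal.
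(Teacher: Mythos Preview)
Your proof is correct and follows essentially the same strategy as the paper, namely to descend the explicit retraction of Lemma~\ref{lem:retractiondisc} through the projection map. The paper is terser: it passes in one step to an extension~$K$ that is simultaneously algebraically closed and maximally complete, applies Lemma~\ref{lem:retractiondisc} there, and then appeals only to ``continuity and surjectivity of~$\pi_{K/k}$'' for the descent. You instead factor the descent through~$\wka$, where the Galois action of Corollary~\ref{cor:piwkak} gives you an honest quotient map and makes the equivariance argument clean, and handle the possible failure of maximal completeness of~$\wka$ by a further (harmless) base change to a spherical completion. Your route is slightly longer but makes the descent step more transparent; the paper's one-line justification hides exactly the compatibility that you isolate as the ``main obstacle'' and then resolve via the intrinsic description of the retraction along~$[y,\eta_{D_i}]$.
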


\begin{proposition}\label{prop:retractionvirtualannulus}
Let~$A$ be a virtual open annulus. 
Each connected component of $A-\Sigma_{A}$ is a virtual open disc. 

There exists a canonical deformation retraction $\tau_{A} \colon A \to \Sigma_{A}$. Its restriction to any connected component~$D$ of~$A-\Sigma_{A}$ induces the map~$\tau_{D}$ from Proposition~\ref{prop:retractionvirtualdisc}.

Moreover, for each open interval (resp. closed interval, resp. singleton) $I$ in $\Sigma_{A}$, the set $\tau_{A}^{-1}(I)$ is a virtual open annulus (resp. a virtual closed annulus, resp. a virtual flat closed annulus).

\qed
\end{proposition}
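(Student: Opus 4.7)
The plan is to reduce everything to the algebraically closed, maximally complete setting of Lemma~\ref{lem:retractionannuli} through base change, and then to descend to~$\aank$ via the quotient structure provided by Corollary~\ref{cor:piwkak}. The underlying principle is that the retraction of an open annulus onto its skeleton admits an intrinsic characterization in terms of the tree structure of the Berkovich line, so it is automatically compatible with base change and with the action of~$\Gal(k^{s}/k)$.

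I would fix~$K$ an algebraically closed, maximally complete extension of~$\wka$ and write $\pi = \pi_{\wka/k}$ and $p = \pi_{K/\wka}$. By Lemma~\ref{lem:finitefiber}, $\pi^{-1}(A)$ is a disjoint union of finitely many open annuli $B_{1},\ldots,B_{m}$ over~$\wka$; for each~$j$, $p^{-1}(B_{j})$ is similarly a disjoint union of finitely many open annuli over~$K$. Lemma~\ref{lem:skeletonvirtual} applied to the~$B_{j}$'s identifies $\Sigma_{A}$ with each~$\Sigma_{B_{j}}$ via~$\pi$, so $\Sigma_{A}$ is an interval and $\pi^{-1}(\Sigma_{A}) = \bigsqcup_{j}\Sigma_{B_{j}}$. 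On each component~$C$ of $p^{-1}(B_{j})$, Lemma~\ref{lem:retractionannuli} provides a canonical deformation retraction $\tau_{C} \colon [0,1] \times C \to C$ onto $\Sigma_{C}$ whose restriction to each component of $C - \Sigma_{C}$ is the canonical retraction of an open disc onto its unique boundary point (Lemma~\ref{lem:retractiondisc}).

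The crucial observation is that $\tau_{C}(1,x)$ is intrinsically characterized as the unique point of $\Sigma_{C}$ lying on the injective path of Corollary~\ref{cor:pathwka} from~$x$ to~$\Sigma_{C}$, and the curve $t \mapsto \tau_{C}(t,x)$ is a reparametrization of the initial segment of that path. This characterization involves only the tree structure, the skeleton, and the decomposition of $C - \Sigma_{C}$ into maximal open discs, all of which are preserved by arbitrary $k$-linear isometric automorphisms of~$\aana{K}$. Consequently the family $(\tau_{C})_{C}$ glues, descends through~$p$ to a continuous retraction on each~$B_{j}$ agreeing with the canonical retractions of the virtual open discs forming $B_{j} - \Sigma_{B_{j}}$ (by Proposition~\ref{prop:retractionvirtualdisc}), and the resulting family on the~$B_{j}$'s is equivariant for the natural $\Gal(k^{s}/k)$-action on~$\aanwka$. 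Corollary~\ref{cor:piwkak} then ensures the whole map descends to a continuous deformation retraction $\tau_{A} \colon [0,1] \times A \to A$ onto $\Sigma_{A}$ with the required compatibility on every connected component~$D$ of $A - \Sigma_{A}$; the fact that each such $D$ is itself a virtual open disc follows by inspecting its preimage under $\pi\circ p$, which is a disjoint union of true open discs over~$K$ by Lemma~\ref{lem:retractionannuli}.

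For the preimage statement, fix $I \subseteq \Sigma_{A}$ and pull back to~$K$: $(\pi \circ p)^{-1}(\tau_{A}^{-1}(I))$ is the disjoint union, over all components~$C$ of $(\pi \circ p)^{-1}(A)$, of $\tau_{C}(1,\wc)^{-1}$ applied to the corresponding interval in~$\Sigma_{C}$. By the last part of Lemma~\ref{lem:retractionannuli} these are open (resp.\ closed, resp.\ flat closed) annuli over~$K$ when $I$ is an open interval (resp.\ a closed interval, resp.\ a singleton), so $\tau_{A}^{-1}(I)$ inherits the stated virtual annulus structure. The main difficulty in the whole argument lies in the descent step of the third paragraph: one must verify that the intrinsic description of~$\tau_{C}$ really forces the compatibility $p(\tau_{C}(t,x)) = p(\tau_{C'}(t,x'))$ whenever $p(x) = p(x')$ across different components of~$p^{-1}(B_{j})$, and, in turn, that the resulting map on each~$B_{j}$ is continuous --- conclusions that rest on the openness and properness of~$\pi$ (Corollary~\ref{cor:piwkak}) and the analogous properties of~$p$.
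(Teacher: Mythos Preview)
Your approach is essentially the same as the paper's: the paragraph opening Section~I.7.3 explains that one passes to an algebraically closed, maximally complete extension~$K$, applies the explicit retraction of Lemma~\ref{lem:retractionannuli} to the true annuli appearing in the preimage, and descends via the continuity and surjectivity of~$\pi_{K/k}$. Your version is more detailed, factoring the descent through~$\wka$ in order to exploit the Galois-quotient description of Corollary~\ref{cor:piwkak}, and you correctly flag that the nontrivial point is verifying constancy of the retraction on fibers of the projection (which the paper leaves implicit); one small imprecision is that since~$\wka$ is already algebraically closed, each $p^{-1}(B_{j})$ is a single annulus over~$K$ rather than several.
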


\begin{lemma}\label{lem:ccannuli}
Let $A$ be a virtual open annulus. Let~$F$ be a finite subset of~$\Sigma_{A}$ and denote by~$\calI$ the set of connected components of $\Sigma_{A} - F$. The elements of~$\calI$ are open intervals and we have
\[ \Mod(A) = \prod_{I \in \calI} \Mod(\tau_{A}^{-1}(I)).\]
\qed
\end{lemma}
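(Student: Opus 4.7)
The plan is to reduce the statement to the case of a true open annulus over $\wka$ by means of the projection map $\pi:=\pi_{\wka/k}$, and then to handle the true case by a direct computation and induction on $\sharp F$.

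First, I would treat the true annulus case. Pick an annulus $C = A^-(\gamma,\rho_1,\rho_2)$ in $\aanwka$. By Example~\ref{ex:skeleton}, its skeleton is $\{\eta_{\gamma,s} : \rho_1 < s < \rho_2\}$, homeomorphic to $(\rho_1,\rho_2)$ via $s \mapsto \eta_{\gamma,s}$. A single splitting point $\eta_{\gamma,t}$ with $\rho_1<t<\rho_2$ breaks $\Sigma_C$ into two open intervals whose preimages under $\tau_C$ are the open annuli $A^-(\gamma,\rho_1,t)$ and $A^-(\gamma,t,\rho_2)$ (by Proposition~\ref{prop:retractionvirtualannulus}), with moduli $t/\rho_1$ and $\rho_2/t$; their product is $\rho_2/\rho_1 = \Mod(C)$. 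Straightforward induction on $\sharp F_C$, where $F_C\subset\Sigma_C$ is finite, then yields
\[\Mod(C) = \prod_{I'\in\calI_C} \Mod(\tau_C^{-1}(I'))\]
with $\calI_C$ the set of connected components of $\Sigma_C-F_C$ (all open intervals, since $\Sigma_C$ is an open interval).

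Next, I would pass from the virtual annulus $A$ to a component $C \in \calC$ of $\pi^{-1}(A)$, where $\calC$ denotes the set of connected components of $\pi^{-1}(A)$; set $N := \sharp\calC$. By Lemma~\ref{lem:skeletonvirtual}, $\pi$ restricts to a homeomorphism $\Sigma_C \simtoo \Sigma_A$, so $\Sigma_A$ is an open interval and $F_C := \pi^{-1}(F)\cap\Sigma_C$ is a finite subset of the same cardinality as $F$. In particular, the elements of $\calI$ are open intervals, and $\pi$ induces a bijection $\calI_C \simtoo \calI$ sending $I'$ to $\pi(I')$.

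Now for $I \in \calI$, I would analyze $\pi^{-1}(\tau_A^{-1}(I))$. Since $\pi:\Sigma_{C'}\to\Sigma_A$ is a homeomorphism for every $C'\in\calC$, the preimage $\pi^{-1}(I)$ meets each $\Sigma_{C'}$ in an open interval, and applying $\tau_{C'}^{-1}$ produces exactly one true open annulus per element of $\calC$. These $N$ annuli are permuted transitively by $\Gal(k^s/k)$ (since $\calC$ is), so by the definition of the modulus of a virtual annulus,
\[\Mod(\tau_A^{-1}(I)) = \Mod\bigl(\tau_C^{-1}(\pi^{-1}(I)\cap\Sigma_C)\bigr)^{1/N}.\]
Combining with the true-case formula applied to $C$ and $F_C$, I obtain
\[\Mod(A) = \Mod(C)^{1/N} = \prod_{I'\in\calI_C}\Mod(\tau_C^{-1}(I'))^{1/N} = \prod_{I\in\calI}\Mod(\tau_A^{-1}(I)),\]
which is the desired identity.

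The only delicate point is the bookkeeping with the exponent $1/N$ and the verification that the number of connected components of $\pi^{-1}(\tau_A^{-1}(I))$ is exactly $N$ (and that they form a single Galois orbit). Once the homeomorphism $\Sigma_C\simeq\Sigma_A$ of Lemma~\ref{lem:skeletonvirtual} and the compatibility of the retractions from Proposition~\ref{prop:retractionvirtualannulus} are in hand, everything else is elementary.
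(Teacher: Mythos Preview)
Your argument is correct. The paper itself gives no proof of this lemma (note the \qed immediately following the statement), treating it as a direct consequence of the explicit description of skeleta and moduli together with Lemma~\ref{lem:skeletonvirtual} and Proposition~\ref{prop:retractionvirtualannulus}; your write-up is precisely the natural way to fill in those omitted details.
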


\section{Lengths of intervals}\label{sec:lengths}

In this section, we show that intervals inside~$\aank$ may be endowed with a canonical (exponential) length. To start with, we define the useful notion of branch at a point.

\begin{notation}
For $\alpha \in \wka$ and $r\in \R_{>0}$, we set 
\[ D^-_{\infty}(\alpha,r) := \{x\in \aank : |(T-\alpha)(x)| > r\}.\]
\end{notation}

\begin{lemma}\label{lem:cccomplement}
Let $x \in \aanwka$.

If $x$ is of type~1 or~4, then $\aanwka - \{x\}$ is connected. 

If $x =\eta_{\alpha,r}$, with $\alpha\in \wka$ and $r \in \R_{>0} - |k^\times|$, then the connected components of $\aanwka - \{x\}$ are $D^-(\alpha,r)$ and $D^-_{\infty}(\alpha,r)$.

If $x =\eta_{\alpha,r}$, with $\alpha\in \wka$ and $r \in |k^\times|$, then the connected components of $\aanwka - \{x\}$ are $D^-_{\infty}(\alpha,r)$ and the discs of the form $D^-(\beta,r)$ with $|\beta-\alpha| \le r$. 
\end{lemma}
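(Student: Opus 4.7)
The plan is to treat the three cases separately, in each case exhibiting an explicit disjoint decomposition of $\aanwka - \{x\}$ into open subsets and verifying that every piece is connected.

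For $x$ of type~1 or~4, I will argue purely tree-theoretically. By Corollary~\ref{cor:ordermin}, such an $x$ is minimal for the ordering $\le$. Given any $y,z \in \aanwka - \{x\}$, Corollary~\ref{cor:pathwka} and the description of the unique path in the proof of Corollary~\ref{cor:ppcm} tell us that the injective path $[y,z]$ lies inside $I_y \cup I_z$. If $x$ were in $I_y$, then $x \ge y$; by minimality of $x$ this would force $x = y$, contradicting $y \ne x$. The same holds with $z$ in place of $y$, so $x \notin [y,z]$, and $[y,z]$ is already a path in $\aanwka - \{x\}$. Hence $\aanwka - \{x\}$ is path-connected.

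For $x = \eta_{\alpha,r}$ of type~3 (i.e.\ $r \notin |k^\times|$), Lemma~\ref{lem:closeddisctype3} gives the partition $D^+(\alpha,r) = \{x\} \sqcup D^-(\alpha,r)$, whence
\[\aanwka - \{x\} = D^-(\alpha,r) \sqcup D^-_\infty(\alpha,r).\]
Both subsets are open by the very definition of the topology on $\aanwka$. The disc $D^-(\alpha,r)$ is connected by Proposition~\ref{prop:discsconnected}. For $D^-_\infty(\alpha,r)$, I observe that for every $y$ in it, $|T-\alpha|_y > r$, so $y \in A^-(\alpha,r,R)$ as soon as $R > |T-\alpha|_y$; this exhibits $D^-_\infty(\alpha,r)$ as the nested union $\bigcup_{R > r} A^-(\alpha,r,R)$ of connected (Proposition~\ref{prop:discsconnected}) open annuli, hence connected.

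For $x = \eta_{\alpha,r}$ of type~2 (i.e.\ $r \in |k^\times|$), the key step is a version of Lemma~\ref{lem:closeddisctype2} centered at $\alpha$ with radius $r$: choosing $c \in k$ with $|c| = r$, the change of variable $T \mapsto (T-\alpha)/c$ induces a homeomorphism $D^+(\alpha,r) \simtoo D^+(0,1)$ sending $\eta_{\alpha,r}$ to $\eta_{0,1}$ and each $D^-(\beta,r)$ (with $|\beta - \alpha| \le r$) to some $D^-(u,1)$ for $u \in \tilde{\wka}$, so Lemma~\ref{lem:closeddisctype2} yields
\[\aanwka - \{x\} = D^-_\infty(\alpha,r) \sqcup \bigsqcup_{\beta} D^-(\beta,r),\]
where $\beta$ runs over a set of representatives of the open subdiscs of $D^+(\alpha,r)$ of radius $r$. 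Each open disc is connected by Proposition~\ref{prop:discsconnected}, and $D^-_\infty(\alpha,r)$ is connected by the same nested-union argument as in the type~3 case. All pieces being disjoint open connected sets, they are the connected components.

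The main obstacle is the type~2 case, and specifically the transfer of Lemma~\ref{lem:closeddisctype2} from the unit disc $D^+(0,1)$ to the general closed disc $D^+(\alpha,r)$ with $r \in |k^\times|$. It is not deep, but one must either check that the linear change of variable above is an automorphism of $\aanwka$ respecting both the open-subdisc decomposition and the set $D^-_\infty$, or rerun the proof of Lemma~\ref{lem:closeddisctype2} verbatim with $\alpha$ and $r$ in place of $0$ and $1$ (using that every irreducible polynomial over $\wka$ is linear to reduce to the case $P = T - \gamma$).
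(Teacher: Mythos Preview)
Your proof is correct and follows essentially the same approach as the paper: the type~1/4 case uses the minimality from Corollary~\ref{cor:ordermin} together with the fact that $I_y \cup I_z$ is a connected set avoiding~$x$; the type~3 case combines Lemma~\ref{lem:closeddisctype3} with the observation that $D^-_\infty(\alpha,r)$ is a nested union of open annuli; and the type~2 case reduces to Lemma~\ref{lem:closeddisctype2} by a linear change of coordinates. The only cosmetic difference is that the paper phrases the type~1/4 argument via the connected set $I_y \cup I_{y'}$ directly rather than via the path $[y,z]$, and in the type~2 case simply says ``up to a change of variables, we may assume $x=\eta_{0,1}$'' without spelling out the map.
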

\begin{proof}
Assume that~$x$ is of type~1 or~4. Let $y,y' \in \aanwka - \{x\}$. By Corollary~\ref{cor:ppcm}, $I_{y} \cup I_{y'}$ is a connected subset of~$\aanwka$ containing~$y$ and~$y'$. By Corollary~\ref{cor:ordermin}, it does not contain~$x$. It follows that $\aanwka - \{x\}$ is connected.

Assume that $x =\eta_{\alpha,r}$, with $\alpha\in \wka$ and $r \in \R_{>0} - |k^\times|$. By Proposition~\ref{prop:discsconnected}, $D^-(\alpha,r)$ is connected. Since $D^-_{\infty}(\alpha,r)$ may be written as an increasing union of open annuli, we deduce from Proposition~\ref{prop:discsconnected}) that it is connected too. The result now follows from Lemma~\ref{lem:closeddisctype3}.

Assume that~$x$ is of type~2. Up to a change of variables, we may assume that $x = \eta_{0,1}$. As before, we deduce from Proposition~\ref{prop:discsconnected} that $D^-_{\infty}(0,1)$ and the discs of the form $D^-(\beta,1)$ with $|\beta| \le 1$ are connected. The result now follows from Lemma~\ref{lem:closeddisctype2}.
\end{proof}

\begin{remark}
We have $D^-(\beta,r) = D^-(\beta',r)$ if, and only if, $|\beta-\beta'| < r$.
\end{remark}

\begin{definition}\label{def:branchatx}
Let $x \in \aank$. Let $y,z \in \aank - \{x\}$. We say that the intervals $(x,y]$ and $(x,z]$ are $x$-equivalent if $(x,y] \cap (x,z] \ne \emptyset$.

An $x$-equivalence class of intervals $(x,y]$, with $x \ne y$, is called a \emph{branch at~$x$}. We denote by~$\calB_{x}$ the set of branches at~$x$.
\end{definition}

\begin{remark}\label{rem:equivalentintervals}
If $(x,y]$ and $(x,z]$ are equivalent, if follows from the unique path-connectedness of~$\aank$ (see Proposition~\ref{prop:pathk}) that there exists $t \in \aank - \{x\}$ such that $(x,t] = (x,y] \cap (x,z]$. 
\end{remark}

\begin{lemma}\label{lem:BC}
Let $x \in \aank$. Denote by~$\calC_{x}$ the set of connected component of~$\aank - \{x\}$. 

For each $y \in \aank - \{x\}$, denote by~$C_{x}(y)$ the connected component of~$\aank - \{x\}$ containing~$y$. The map
\[\begin{array}{cccc} 
C \colon & \calB_{x} & \too & \calC_{x}\\
& (x,y] & \mapstoo & C_{x}(y)  
\end{array}\]
is well-defined and bijective.
\end{lemma}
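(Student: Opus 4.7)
The plan is to prove the two nontrivial properties (well-definedness and bijectivity) using only the tree-like behavior of~$\aank$ established in Proposition~\ref{prop:pathk}, together with local path-connectedness. Well-definedness and surjectivity are essentially formal; injectivity is the step that really uses unique path-connectedness.

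For well-definedness, I would first note that the path $[x,y]$ is homeomorphic to a closed interval, so $(x,y]$ is homeomorphic to a half-open interval, in particular connected, and contained in $\aank - \{x\}$. If $(x,y] \cap (x,z] \ne \emptyset$, then $(x,y] \cup (x,z]$ is a connected subset of~$\aank - \{x\}$ containing both $y$ and~$z$, so $y$ and~$z$ lie in the same connected component and $C_x(y)=C_x(z)$. Surjectivity is then immediate: given $C \in \calC_x$, pick any $y \in C$; since $(x,y]$ is a connected subset of~$\aank - \{x\}$ meeting~$C$, it lies entirely in~$C$, so the branch of $(x,y]$ is sent to~$C$.

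For injectivity, suppose $y, z \in C$ for some component $C \in \calC_x$ but that $(x,y]$ and $(x,z]$ are not $x$-equivalent, i.e.\ $[x,y] \cap [x,z] = \{x\}$. Concatenating the reversed arc $[x,y]$ with $[x,z]$ then yields an injective path from~$y$ to~$z$; by the uniqueness part of Proposition~\ref{prop:pathk}, this must be $[y,z]$, so $x \in [y,z]$. To derive a contradiction, I use that $\aank$ is Hausdorff (Lemma~\ref{lem:Hausdorff}), so $\aank - \{x\}$ is open, and that $\aank$ is locally path-connected (Proposition~\ref{prop:discsconnected}), so the component $C$ is path-connected. Hence there is a continuous path from~$y$ to~$z$ in~$C$, hence avoiding~$x$. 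Applying the classical theorem that in a Hausdorff space every path between two distinct points contains an embedded arc between them, I obtain an injective path from~$y$ to~$z$ lying in $\aank - \{x\}$; uniqueness of $[y,z]$ forces this arc to coincide with $[y,z]$, contradicting $x \in [y,z]$.

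The main obstacle is precisely this last step in the injectivity argument: passing from the existence of \emph{some} path from~$y$ to~$z$ in $\aank - \{x\}$ to the fact that the unique injective path $[y,z]$ itself avoids~$x$. To keep the exposition self-contained one might prefer not to invoke the general ``arcs inside paths'' theorem; a viable alternative is to lift the question to~$\aanwka$, where the explicit tree structure of Corollaries~\ref{cor:Ix}--\ref{cor:Ixclosed} shows directly that removing a point of an arc $[y',z']$ separates its endpoints in $\aanwka$, and then to descend via the quotient map $\pi_{\wka/k}$ using Corollary~\ref{cor:piwkak} and the image-of-arcs analysis already used in the proof of Proposition~\ref{prop:pathk}.
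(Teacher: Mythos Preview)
Your proof is correct and follows essentially the same route as the paper's: well-definedness via connectedness of $(x,y]\cup(x,z]$, obvious surjectivity, and injectivity by showing that if the branches are inequivalent then $x\in[y,z]$, whence $y$ and $z$ lie in different components. The paper simply asserts the last implication (``It follows that $y$ and $z$ belong to different connected components of $\aank-\{x\}$'') without further comment, whereas you are more careful and justify it via the arc-in-path theorem (or a lift to $\aanwka$); this extra care is warranted but does not change the overall strategy.
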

\begin{proof}
Let $y \in \aank - \{x\}$. For each $t \in (x,y]$, the interval $[t,y]$ is connected and does not contain~$x$, hence $C_{x}(t) = C_{x}(y)$. It then follows from Remark~\ref{rem:equivalentintervals} that $C((x,y))$ only depends on the equivalence class of~$(x,y)$. In other words, $C$~is well-defined.

Let $y,z \in \aank - \{x\}$ such that $(x,y]$ is not equivalent to~$(x,z]$. It follows that $[y,x] \cup [x,z]$ is an injective path from~$y$ to~$z$. Since, by Proposition~\ref{prop:pathk}, $\aank$ is uniquely path-connected, the unique injective path $[y,z]$ from~$y$ to~$z$ contains~$x$. It follows that~$y$ and~$z$ belong to different connected components of $\aank - \{x\}$. This proves the injectivity of~$C$.

Finally, the surjectivity of~$C$ is obvious.
\end{proof}

\begin{lemma}\label{lem:branchskeleton}
Let $x \in \aank$ be a point of type~2 or~3. Let $y\in \aank - \{x\}$. Then, there exists~$z$ in $(x,y)$ such that, for each $t \in (x,z)$, the interval $(x,t)$ is the skeleton of a virtual open annulus.
\end{lemma}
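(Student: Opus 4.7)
The plan is to lift to $\aanwka$, build the required open annulus there, and descend via a Galois-averaging argument. Fix a preimage $x' \in \pi_{\wka/k}^{-1}(x)$, which is again of type~2 or~3 by definition of the type, write $x' = \eta_{\alpha,r}$ with $\alpha \in \wka$ and $r > 0$, and fix a preimage $y' \in \pi_{\wka/k}^{-1}(y)$. Analysis of the path $[x',y']$ via $I_{x'} \cup I_{y'}$ (Corollaries~\ref{cor:Ix} and~\ref{cor:ppcm}) shows that there exist $\gamma \in \wka$ with $\eta_{\gamma,r} = x'$ and $\eps > 0$ (to be shrunk later) such that the initial segment of $[x',y']$ from $x'$ is either $\{\eta_{\alpha,u} : r \le u \le r+\eps\}$ (``up'' case, when $x' \vee y' > x'$) or $\{\eta_{\gamma,u} : r-\eps \le u \le r\}$ (``down'' case, when $y' \le x'$). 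In either case this segment minus $\{x'\}$ is the skeleton of an open annulus $A'$ over $\wka$, namely $A^-(\alpha,r,r+\eps)$ or $A^-(\gamma,r-\eps,r)$ respectively.

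The core step is to show that, for $\eps$ small enough, $A := \pi_{\wka/k}(A')$ is a virtual open annulus in $\aank$. Let $G := \Gal(k^s/k)$, let $H \subseteq G$ be the stabilizer of $x'$, and let $H_b \subseteq H$ be the stabilizer of the branch of $A'$ at $x'$. Then $[G:H]$ is finite by Lemma~\ref{lem:finitefiber}, and $[H:H_b]$ is finite because the branches at $x'$ are classified by elements of $\widetilde{\wka} \cup \{\infty\}$ (via Lemmas~\ref{lem:closeddisctype3} and~\ref{lem:closeddisctype2} after rescaling $x'$ to $\eta_{0,1}$), and $\widetilde{\wka}/\tilde k$ is algebraic so every such element has finite $G$-orbit. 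Choose $\eps$ so that (i) $A'$ is disjoint from every $G$-conjugate of $x'$ other than $x'$ itself, and (ii) in the down case, $r - \eps \ge \max_{\sigma \in H_b}|\sigma(\gamma) - \gamma|$; this maximum is achieved and strictly less than $r$ because $H_b$ is compact and every $\sigma \in H_b$ satisfies $\sigma(D^-(\gamma,r)) = D^-(\gamma,r)$, forcing $|\sigma(\gamma)-\gamma| < r$. Using the elementary criterion that $A^-(\gamma_1,\rho_1,\rho_2) = A^-(\gamma_2,\rho_1,\rho_2)$ iff $|\gamma_1-\gamma_2| \le \rho_1$, one then verifies that $\sigma(A') = A'$ for $\sigma \in H_b$, that $\sigma(A')$ and $A'$ sit in distinct connected components of $\aanwka \setminus \{x'\}$ for $\sigma \in H \setminus H_b$, and that $\sigma(A')$ is disjoint from $A'$ for $\sigma \notin H$ by condition~(i). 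Hence $G \cdot A'$ is a disjoint union of finitely many open annuli, so $A$ is a virtual open annulus, and by Lemma~\ref{lem:skeletonvirtual} its skeleton is $(x,z)$ where $z := \pi_{\wka/k}(z')$ for the non-$x'$ endpoint $z'$ of $\Sigma_{A'}$. A final shrinking of $\eps$ ensures $z \in (x,y)$.

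Finally, for any $t \in (x,z) = \Sigma_A$, the subset $(x,t)$ is an open sub-interval of $\Sigma_A$, so by Proposition~\ref{prop:retractionvirtualannulus} the preimage $\tau_A^{-1}((x,t))$ is a virtual open annulus with skeleton $(x,t)$, as required. The main obstacle is the Galois-descent argument: ensuring that $G$-translates of $A'$ are either equal to $A'$ or disjoint from it hinges on the compactness of $H_b$ (to bound $|\sigma(\gamma)-\gamma|$ uniformly below $r$ in the down case) and on the finiteness of $[H:H_b]$ (via algebraicity of $\widetilde{\wka}/\tilde k$).
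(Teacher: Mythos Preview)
Your proof is correct and follows the same overall strategy as the paper: lift to $\aanwka$, build an open annulus $A'$ whose skeleton is an initial segment of $[x',y']$, and descend by showing that the Galois translates of $A'$ are either equal to $A'$ or disjoint from it. The difference lies in how the descent is organised.

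The paper applies Lemma~\ref{lem:finitefiber} directly to the annulus~$A'$ to get that its Galois orbit is finite, and then shrinks so that distinct conjugates are disjoint. You instead decompose the finiteness into two pieces, $[G:H]$ and $[H:H_b]$, arguing the latter via the classification of branches by residue classes and the algebraicity of $\widetilde{\wka}/\tilde k$. This works, but note that the ``rescaling to $\eta_{0,1}$'' is a coordinate change over~$\wka$, not over~$k$, so the induced $H$-action on $\widetilde{\wka}$ is a twist of the standard one; you should either pick the centre and scaling factor in~$k^a$ and pass to a finite-index subgroup, or simply observe that $D^-(\gamma,r)$ itself has finite Galois orbit by Lemma~\ref{lem:finitefiber}, which gives $[H:H_b]<\infty$ immediately. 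Your compactness argument bounding $\sup_{\sigma\in H_b}|\sigma(\gamma)-\gamma|$ strictly below~$r$ is a point the paper leaves implicit, so your treatment of the ``down'' case is actually more careful here.

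Your final step, invoking Proposition~\ref{prop:retractionvirtualannulus} to obtain the sub-annuli with skeleta $(x,t)$, is a clean alternative to the paper's approach of checking the equal-or-disjoint condition separately for every~$t'$.
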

\begin{proof}
Let $x' \in \pi^{-1}_{\wka/k}(x)$. Let $y' \in \pi^{-1}_{\wka/k}(y)$. The image of the path~$[x',y']$ by~$\pi_{\wka/k}$ is a path between~$x$ and~$y$. It follows that, up to changing~$x'$ and~$y'$, we may assume that $\pi_{\wka/k}$ restricts to a bijection between~$[x',y']$ and~$[x,y]$. 

By Lemma~\ref{lem:BC} and the explicit description of the connected components of~$\aanwka - \{x'\}$ from Lemma~\ref{lem:cccomplement}, there exists~$z'$ in $(x,y')$ such that, for each $t' \in (x,z')$, the interval $(x,t')$ is the skeleton of a open annulus. 

By Lemma~\ref{lem:finitefiber}, the orbit of any open annulus under the action of~$\Gal(k^s/k)$ is finite. It follows that, up to choosing~$z'$ closer to~$x$, we may assume that, for each $t' \in (x,z')$, the interval $(x,t')$ is the skeleton of a open annulus, all of whose conjugates either coincide with it or are disjoint from it. The image of such an open annulus by~$\pi_{\wka/k}$ is a virtual open annulus, and the result follows.
\end{proof}

As a consequence, we obtain the following result, which is the key-point to define lengths of intervals.

\begin{lemma}
Let $x,y \in \aank$ be points of type~2 or~3. Then, there exists a finite subset~$F$ of~$(x,y)$ such that each connected component of $(x,y) - F$ is the skeleton of a virtual open annulus.
\qed
\end{lemma}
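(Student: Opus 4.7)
The strategy is to exploit compactness of the path $[x,y]$ together with Lemma~\ref{lem:branchskeleton}, which already produces local annular skeleta at every type~2 or~3 point.

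First I would check that every point of $[x,y]$ has type~2 or~3. The endpoints do by hypothesis, so assume $t\in(x,y)$. Since the branches of $t$ toward $x$ and toward $y$ are distinct (by unique path-connectedness, Proposition~\ref{prop:pathk}), Lemma~\ref{lem:BC} forces $\aank\setminus\{t\}$ to have at least two connected components. Suppose for contradiction that $t$ had type~1 or~4. Then every preimage $t'\in\pi_{\wka/k}^{-1}(t)$ would share this type, and the combination of Lemmas~\ref{lem:cccomplement} and~\ref{lem:BC} shows that each such $t'$ is a leaf of the tree $\aanwka$, i.e.\ admits a single branch. A leaf cannot be an interior point of any injective path (else it would have at least two branches), so for any $a,b\in\aanwka\setminus\pi_{\wka/k}^{-1}(t)$ the unique path $[a,b]$ of Corollary~\ref{cor:pathwka} avoids $\pi_{\wka/k}^{-1}(t)$. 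Hence $\aanwka\setminus\pi_{\wka/k}^{-1}(t)$ is path-connected, and its image $\aank\setminus\{t\}$ under the continuous quotient map $\pi_{\wka/k}$ (Corollary~\ref{cor:piwkak}) is connected, a contradiction.

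Next, for each $t\in[x,y]$ I would apply Lemma~\ref{lem:branchskeleton} in each direction of $[x,y]$ present at $t$ (two directions if $t$ is interior, one if $t\in\{x,y\}$). This yields an open neighborhood $V_t=(t^-,t^+)\cap[x,y]$ of $t$ in $[x,y]$ such that each nonempty half $(t^-,t)$ and $(t,t^+)$ is the skeleton of a virtual open annulus. By compactness of $[x,y]$ the cover $\{V_t\}_{t\in[x,y]}$ admits a finite subcover $V_{t_1},\ldots,V_{t_n}$; set
\[F:=\bigl(\{t_1,\ldots,t_n\}\cup\{t_1^-,t_1^+,\ldots,t_n^-,t_n^+\}\bigr)\cap(x,y).\]
For any connected component $J$ of $(x,y)\setminus F$, pick $p\in J$ and an index $j$ with $p\in V_{t_j}$: since $J$ is connected and misses $\{t_j^-,t_j,t_j^+\}$, it must lie entirely in $(t_j^-,t_j)$ or in $(t_j,t_j^+)$, both of which are skeleta of virtual open annuli $A_j$. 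Proposition~\ref{prop:retractionvirtualannulus} then gives that $\tau_{A_j}^{-1}(J)$ is itself a virtual open annulus, with skeleton exactly~$J$.

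The only real obstacle is the first step---ruling out type~1 and type~4 interior points on the path; once this is in hand the rest is a routine compactness-plus-refinement argument.
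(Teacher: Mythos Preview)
Your argument is correct and follows exactly the approach the paper intends: the lemma is stated as an immediate consequence of Lemma~\ref{lem:branchskeleton} (the paper's proof is literally a bare \qed), and you have faithfully unpacked the compactness argument and the use of Proposition~\ref{prop:retractionvirtualannulus} to pass to subintervals.

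One small remark: your first step, while correct, is more elaborate than necessary. Instead of arguing by contradiction via connectedness of $\aank\setminus\{t\}$, you can lift directly: choose $x',y'\in\aanwka$ over $x,y$ so that $\pi_{\wka/k}$ restricts to a bijection $[x',y']\to[x,y]$ (as in the proof of Proposition~\ref{prop:pathk}); by Corollaries~\ref{cor:Ixexplicit} and~\ref{cor:ppcm} the interior of $[x',y']$ consists entirely of points $\eta_{\alpha,s}$ with $s>0$, hence of type~2 or~3, and this descends. But your route via Lemmas~\ref{lem:cccomplement} and~\ref{lem:BC} is perfectly valid.
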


\begin{definition}
Let $x,y \in \aank$ be points of type~2 or~3. Let~$F$ be a finite subset of~$(x,y)$ such that each connected component of $(x,y) - F$ is the skeleton of a virtual open annulus. Let~$\calI$ be the set of connected components of $(x,y) - F$ and, for each $J \in \calJ$, denote by~$A_{J}$ the virtual open annulus with skeleton~$J$.

We define the \emph{(exponential) length} of~$(x,y)$ to be
\[ \ell((x,y)) := \prod_{J \in \calJ} \Mod(A_{J}) \in [1,+\infty).\]
It is independent of the choices, by Lemma~\ref{lem:ccannuli}.
\end{definition}

\begin{definition}
Let~$I$ be an interval inside~$\aank$ that is not a singleton. We define the \emph{(exponential) length} of $I$ to be 
\[\ell(I) := \sup(\{ \ell((x,y)) : x,y \in I \textrm{ of type } 2 \textrm{ or } 3\})  \in [1,+\infty].\]  
\end{definition}

\begin{example}\label{ex:leta}
Let $\alpha \in k$ and $r\in \R_{>0}$ with $r \le |\alpha|$. Then, we have 
\[ \ell([\eta_{1},\eta_{\alpha,r}]) = \ell((\eta_{1},\eta_{\alpha,r})) =
\begin{cases}
\frac 1 r & \textrm{if } |\alpha| \le 1;\\[3pt]
\frac {|\alpha|} r  &\textrm{if } |\alpha| \ge 1.
\end{cases}\]
In particular, we always have $\ell([\eta_{1},\eta_{\alpha,r}]) \ge 1/r$.
\end{example}

\begin{lemma}\label{lem:length}
Let~$I$ be an interval in~$\aank$. We have $\ell(I) = +\infty$ if, and only if, the closure of~$I$ contains a point of type~1.

Let $I_{1},I_{2}$ be intervals in~$\aank$ such that $I = I_{1} \cup I_{2}$ and $I_{1}\cap I_{2}$ is either empty or a singleton. Then, we have 
\[\ell(I) = \ell(I_{1}) \, \ell(I_{2}).\]
 
\qed
\end{lemma}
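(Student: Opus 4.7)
The plan is to treat the two assertions separately: first the characterization of infinite length via type-1 points in the closure, then the multiplicativity under concatenation, which will follow directly from Lemma~\ref{lem:ccannuli} once the meeting point of $I_1$ and $I_2$ is identified as necessarily of type~2 or type~3.

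For the $(\Leftarrow)$ direction of the first claim, I would suppose $x \in \overline I$ is of type~1 (excluding the trivial singleton case) and pick a type-2 or type-3 point $y_0 \in I$, whose existence follows from the fact that interior points of a nontrivial interval in the tree $\aank$ cannot be of type~1 or type~4 (those being leaves of the tree, by Corollary~\ref{cor:ordermin} together with Corollary~\ref{cor:piwkak}). Unique path-connectedness (Proposition~\ref{prop:pathk}) implies $[y_0, x) \subset I$; choose a sequence of type-2/3 points $y_n \in [y_0, x)$ with $y_n \to x$, so that $r(y_n) \to r(x) = 0$. Lifting along $\pi_{\wka/k}$ via Corollary~\ref{cor:Ix}, the path $[y_0, y_n]$ contains the skeleton of an open annulus whose modulus grows like $r(y_0)/r(y_n) \to +\infty$; passing back to $\aank$ only extracts a fixed root (the size of a Galois orbit), still divergent. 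Hence $\ell(I) = +\infty$.

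For the converse, I would argue contrapositively, interpreting the closure in $\pank$ (so that $\infty$ counts as a type-1 point); the hypothesis then forces $I$ to be bounded and $\overline I$ compact in $\aank$ with all endpoints of type~2, 3 or~4. The radius function $r$ is then bounded above by compactness and below by some $\rho > 0$, since type-4 points have strictly positive radius. The core estimate: for any type-2/3 points $x, y \in I$, taking lifts $x', y' \in \aanwka$ and decomposing $[x', y'] = [x', x' \vee y'] \cup [x' \vee y', y']$ via Corollary~\ref{cor:ppcm}, Lemma~\ref{lem:ccannuli} combined with the explicit modulus formula on rays gives $\ell_{\wka}([x', y']) = r(x' \vee y')^2/(r(x')r(y')) \le (\sup_{\overline I} r)^2/\rho^2$; since $\ell_{\aank} \le \ell_{\wka}$ on any such path (virtual moduli being fixed roots of $\wka$-moduli), the supremum is finite.

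For the multiplicativity, I first argue that if $I_1 \cap I_2 = \{p\}$ then $p$ is of type~2 or type~3: being an interior point of $I = I_1 \cup I_2$, it has at least two branches in the tree, but type-1 and type-4 points are leaves. Then for any type-2/3 points $x \in I_1$, $y \in I_2$ the unique path $[x, y]$ passes through $p$, and Lemma~\ref{lem:ccannuli} applied to a finite set containing $p$ yields $\ell([x, y]) = \ell([x, p]) \, \ell([p, y])$; taking suprema gives $\ell(I) \ge \ell(I_1)\,\ell(I_2)$, while pairs entirely within one $I_i$ only contribute $\le \ell(I_i)$, which combined with the factorization gives the reverse inequality. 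The disjoint case $I_1 \cap I_2 = \emptyset$ reduces to this: the gap point $p$ belongs to exactly one of the $I_i$, and adding or removing a single endpoint does not alter $\ell$, since it is a supremum over type-2/3 points only. The main obstacle I expect is clarifying the closure convention in the first claim and rigorously comparing $\aank$-lengths to $\wka$-lengths on long paths approaching type-1 points; the remaining ingredients are reductions to Lemma~\ref{lem:ccannuli}, the tree structure, and explicit modulus computations.
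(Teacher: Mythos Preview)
The paper gives no proof (the lemma carries only a \qed), so I assess your plan directly.

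Your reading of the closure as taken in $\pank$ is correct and necessary: the ray $\{\eta_r : r \ge 1\}$ has infinite length but is closed in $\aank$ with no type-1 points, so without counting $\infty$ as a type-1 endpoint the forward implication would fail. Your contrapositive for $(\Rightarrow)$ is sound: on a compact interval with endpoints of type~2,~3 or~4 the radius is bounded away from~0 and from above, and the formula $\ell_{\wka}([x',y']) = r(x'\vee y')^2/(r(x')r(y'))$ together with $\ell_k \le \ell_{\wka}$ gives a uniform bound. The multiplicativity argument is also correct; the observation that the junction point $p$ must be of type~2 or~3 (being interior to the tree) is exactly the right ingredient, and the reduction of the disjoint case is fine since adjoining a single endpoint does not change the supremum.

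The genuine gap is in $(\Leftarrow)$, precisely at the step you flagged as the main obstacle. The assertion ``passing back to $\aank$ only extracts a fixed root (the size of a Galois orbit)'' is not justified and, as written, is false. If the type-1 point $x$ is the image of a \emph{transcendental} $\alpha \in \wka$, then the fibre of $\pi_{\wka/k}$ over $\pi(\eta_{\alpha,s})$ has size $n(s)$ equal to the number of distinct discs $D^+(\sigma(\alpha),s)$ for $\sigma$ ranging over $\Gal(k^s/k)$, and $n(s)$ is typically \emph{unbounded} as $s \to 0$. Hence the exponents $1/n_j$ in the virtual moduli along the path are not bounded below by a fixed positive constant, and divergence of $\ell_{\wka}$ does not transfer automatically to $\ell_k$. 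One can salvage part of the argument: for any $\beta \in k^a$ with $|\beta-\alpha| < s_0$, one has $n(t) \le |\Gal\cdot\beta|$ on $t \in (|\beta-\alpha|,s_0)$, yielding $\ell_k \ge (s_0/|\beta-\alpha|)^{1/|\Gal\cdot\beta|}$; but showing this can be made arbitrarily large is a genuine Diophantine approximation statement about $\alpha$, not a bookkeeping detail. Note that in the paper's only use of this direction (the proof of Lemma~\ref{lem:extensionofautomorphisms}), one first reduces to $k$ algebraically closed, where every $n_j = 1$ and your argument goes through verbatim; you may want to make that reduction explicit.
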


\section{Variation of rational functions}\label{sec:variation}

In this section, for every rational function $F \in k(T)$, we study the variation of~$|F|$ on~$\aank$. We will explain that it is controlled by a finite subtree of~$\aank$ and investigate metric properties.

\begin{notation}
Let $x\in \aank$. We set 
\[ I_{x} := \pi_{\wka/k}(I_{x'}),\]
for $x' \in \pi^{-1}_{\wka/k}(x)$. 

By Remark~\ref{rem:sigmaIx}, this does not depend on the choice of~$x'$.
\end{notation}

As in the case of an algebraically closed base field, $I_{x}$ may be thought of as a path from~$x$ to~$\infty$. 

\begin{proposition}
Let $F \in k(T) -\{0\}$. Let~$Z$ be the set rigid points of~$\aank$ that are zeros or poles of~$F$. Set 
\[ I_{Z} := \bigcup_{z\in Z} I_{z}.\]
Then $|F|$ is locally constant on $\aank - I_{Z}$.
\end{proposition}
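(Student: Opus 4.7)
The strategy is to lift the problem to $\aanwka$, where $F$ factors into linear factors, and then reduce everything to studying a single function of the form $|T-\alpha|$. Set $\pi := \pi_{\wka/k} \colon \aanwka \to \aank$. By Corollary~\ref{cor:piwkak}, $\pi$ is continuous, open, and surjective, and since $F$ has coefficients in $k$ we have $|F(\pi(y))| = |F(y)|$ for every $y \in \aanwka$. Consequently, $|F|$ is locally constant on an open subset $V \subseteq \aank$ if and only if $|F|$ is locally constant on $\pi^{-1}(V) \subseteq \aanwka$. Over $\wka$, write $F = c \prod_{j=1}^n (T - \alpha_j)^{m_j}$ with $c \in k^\times$, $\alpha_j \in k^a$, and $m_j \in \Z \setminus \{0\}$. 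The Galois-stable finite set $\{\alpha_1,\dotsc,\alpha_n\}$ is precisely $\pi^{-1}(Z)$, and by definition of the paths $I_z$ (together with Remark~\ref{rem:sigmaIx}) one checks $\pi^{-1}(I_Z) = \bigcup_{j=1}^n I_{\alpha_j}$.

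The core step is the claim that, for each $\alpha \in \wka$, the function $y \mapsto |T - \alpha|_y$ is locally constant on $\aanwka - I_\alpha$. Fix such a $y$ and set $r := |T - \alpha|_y$; note that $r > 0$ since otherwise $y = \alpha \in I_\alpha$. By Corollary~\ref{cor:ppcm} the join $p := y \vee \alpha$ exists, and combining Corollary~\ref{cor:Ixexplicit} with Lemma~\ref{lem:orderetaar} one identifies $p = \eta_{\alpha,r}$. Since $y \notin I_\alpha$ we have $y \neq p$, while $y \le p$ gives $y \in D^+(\alpha,r) - \{p\}$. If $r \notin |k^\times|^\Q$, then Lemma~\ref{lem:closeddisctype3} yields $D^+(\alpha,r) - \{p\} = D^-(\alpha,r)$, whence $|T-\alpha|_y < r$, a contradiction. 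Hence $r \in |k^\times|^\Q$, $p$ is of type~2, and by Lemma~\ref{lem:closeddisctype2} (applied after a linear change of coordinates bringing $\alpha$ to $0$ and $r$ to $1$), $y$ lies in an open disc $D^-(\beta, r)$ with $|\beta - \alpha| \le r$. This disc cannot coincide with $D^-(\alpha,r)$ (otherwise $|T-\alpha|_y < r$), which forces $|\beta - \alpha| = r$. Now for every $z \in D^-(\beta,r)$, we have $|T - \beta|_z < r = |\beta - \alpha|$, and the equality case of the ultrametric inequality gives
\[ |T - \alpha|_z = |(T - \beta) + (\beta - \alpha)|_z = |\beta - \alpha| = r. \]
Thus $|T - \alpha|$ is constantly equal to $r$ on the open neighborhood $D^-(\beta,r)$ of~$y$.

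To finish, observe that $|F| = |c| \prod_{j=1}^n |T - \alpha_j|^{m_j}$ is a finite product of functions each locally constant on $\aanwka - I_{\alpha_j}$, hence locally constant on the intersection $\aanwka - \bigcup_j I_{\alpha_j} = \aanwka - \pi^{-1}(I_Z)$. Pushing down through the open surjection~$\pi$ then gives the stated local constancy on $\aank - I_Z$.

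The principal technical point is the tree-theoretic argument in the middle paragraph: identifying $p = y \vee \alpha$ explicitly, ruling out the type~3 branching configuration, and producing the concrete open disc on which $|T-\alpha|$ is constant. Once this is in hand, the extension-of-scalars reduction at each end is essentially bookkeeping.
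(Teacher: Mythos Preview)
Your proof is correct and follows essentially the same route as the paper: reduce to~$\wka$, factor~$F$ into linear polynomials, and for a single factor $T-\alpha$ use the decomposition of $D^+(\alpha,r)-\{\eta_{\alpha,r}\}$ from Lemmas~\ref{lem:closeddisctype3} and~\ref{lem:closeddisctype2} to produce an open disc $D^-(\beta,r)$ with $|\beta-\alpha|=r$ on which $|T-\alpha|$ is constant. The only cosmetic difference is that the paper argues component-wise (taking a connected component~$C$ of $\aanwka - I_\alpha$ and identifying it with such a disc) while you argue pointwise (fixing~$y$ and producing the disc as a neighborhood via the join $y\vee\alpha$); the underlying computation is identical.
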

\begin{proof}
One immediately reduces to the case where the base field is~$\wka$. Since~$\wka$ is algebraically closed, $F$ may be written as a quotient of products of linear polynomials. It follows that is is enough to prove the results for linear polynomials.

Let $\alpha \in \wka$ and let us prove the result for $F = T-\alpha$. Let~$C$ be a connected component of $\aank - I_{\alpha}$. Let~$\eta$ in the closure of~$C$. It belongs to~$I_{\alpha}$, hence is of the form~$\eta_{\alpha,r}$ for $r\in \R_{\ge 0}$.

Recall that discs are connected, by Proposition~\ref{prop:discsconnected}. By Lemma~\ref{lem:closeddisctype3}, the case $r\notin |k^\times|$ leads to a contradiction. It follows that $r\in |k^\times|$. Performing an appropriate change of variables and using Lemma~\ref{lem:closeddisctype2}, we deduce that there exists $\beta \in \wka$ with $|\alpha-\beta|=r$ such that $C = D^-(\beta,r)$. For each $x\in C$, we have
\[|(T-\alpha)(x)| = |(T-\beta)(x) + (\beta-\alpha)| = r,\]
because of the non-Archimedean triangle inequality. The results follows.
\end{proof}

Our next step is to describe the behaviour of~$|F|$ on~$I_{Z}$ using metric data. 
Recall that, for each $x\in \aank$, we denote by~$\calB_{x}$ the set of branches at~$x$ (see Definition~\ref{def:branchatx}).

\begin{definition}
Let $x \in \aank$ and $b\in \calB_{x}$. Let~$E$ be a set. A \emph{map $f \colon b \to E$} is the data of a non-empty subset~$\calI$ of representatives of~$b$ and a family of maps $(f_{I} \colon I \to E)_{I \in \calI}$ such that, for each $I,J \in \calI$, $f_{I}$ and~$f_{J}$ coincide on~$I\cap J$.

Let~$I$ be a representative of~$b$. We say that \emph{$f$ is defined on~$I$} if $I$ belongs to~$\calI$. In this case, we usually write $f \colon I \to E$ instead of $f_{I} \colon I \to E$.
\end{definition}

Note that a map $f\colon I \to E$ defined on some representative~$I$ of~$b$ naturally gives rise to a map $f\colon b \to E$.

%
%

\begin{definition}
Let $x \in \aank$ and $b\in \calB_{x}$. Let $f \colon b \to \R_{\ge 0}$. 
Let $N \in \Z$.

We say that $f$ is \emph{monomial along~$b$ of exponent~$N$}  if there exists a representative~$(x,y]$ of~$b$ such that $f$~is defined on~$(x,y]$ and
\[\forall z \in (x,y], \forall t \in (x,z], \ f(z) = f(t) \, \ell([t,z])^N.\]
We then set 
\[\mu_{b}(f) := N.\] 
We say that $f$ is \emph{constant along~$b$} if it is monomial along~$b$ of exponent~0.
\end{definition}

\begin{remark}\label{rem:loglinear}
Written additively, the last condition becomes
\[\forall z \in (x,y], \forall t \in (x,z], \ \log(f(z)) = \log(f(t)) + N \log(\ell([t,z])).\]
This explains why, in the literature, such maps are often referred to as \emph{log-linear} and $N$ denoted by~$\partial_{b} \log(f)$.
\end{remark}

Let $x \in \aank$ and $x' \in \pi^{-1}_{\wka/k}(x)$. Let $b'\in \calB_{x'}$. It follows from Lemmas~\ref{lem:branchskeleton} and~\ref{lem:skeletonvirtual} that, for each small enough representative $(x',y']$ of~$b'$, $\pi_{\wka/k}$ induces a homeomorphism from $(x',y']$ to $(x,\pi_{\wka/k}(y')]$. This property allows to define the image of the branch~$b'$.

\begin{definition}
Let $x \in \aank$ and $x' \in \pi^{-1}_{\wka/k}(x)$. Let $b'\in \calB_{x'}$. The \emph{image of the branch~$b'$ by~$\pi_{\wka/k}$} is the branch
\[ \pi_{\wka/k}(b') := (x,\pi_{\wka/k}(y')] \in \calB_{x},\]
for a small enough representative $(x',y']$ of~$b'$.
\end{definition}

\begin{lemma}
Let $x \in \aank$ and $x' \in \pi^{-1}_{\wka/k}(x)$. Let $b\in \calB_{x}$. For each $b\in \calB_{x}$, there exists $b' \in \calB_{x'}$ such that $\pi_{\wka/k}(b') = b$. The set of such~$b'$'s is finite and $\Gal(k^s/k)$ acts transitively on it.
\end{lemma}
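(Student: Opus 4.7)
The plan is to separate according to the type of $x$ and, in the nontrivial cases, to reduce everything to the analysis of a virtual open annulus. If $x$ has type 1 or 4, then every preimage $x'$ has the same type, and Lemma~\ref{lem:cccomplement} combined with Lemma~\ref{lem:BC} shows that $\calB_{x'}$ has exactly one element; the same holds for $\calB_x$ since removing a leaf of the tree $\aank$ keeps it connected. The three assertions then reduce to the $\Gal(k^s/k)$-transitivity of $\pi_{\wka/k}^{-1}(x)$ supplied by Corollary~\ref{cor:piwkak}.

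Assume now $x$ has type 2 or 3. By Lemma~\ref{lem:branchskeleton} I may choose a representative $(x, y]$ of $b$ such that $(x, y)$ is the skeleton of a virtual open annulus $A$ whose two boundary points in $\aank$ are exactly $x$ and $y$. By Lemma~\ref{lem:finitefiber}, $\pi_{\wka/k}^{-1}(A)$ decomposes as a finite disjoint union $A_1 \sqcup \cdots \sqcup A_n$ of open annuli, and by Lemma~\ref{lem:skeletonvirtual} each $\Sigma_{A_i}$ maps homeomorphically onto $\Sigma_A$. Consequently each $A_i$ has exactly two boundary points in $\aanwka$: one, which I denote $x_i$, lying above $x$, and one lying above $y$. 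Set $\calA_{x'} := \{A_i : x_i = x'\}$.

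The key step is to identify the set of branches at $x'$ projecting to $b$ with $\calA_{x'}$. A small enough representative $(x', w]$ of any such branch is connected and contained in $\pi_{\wka/k}^{-1}(A)$, hence lies in a single component $A_j$, and since $x'$ is then a boundary point of $A_j$ mapping to $x \ne y$, this forces $x_j = x'$; conversely each $A_i \in \calA_{x'}$ determines such a branch via the end of its skeleton at $x'$. Finiteness is then immediate from $\calA_{x'} \subseteq \{A_1, \ldots, A_n\}$, and existence follows because, for any $A_i$, the point $x_i$ lies in $\pi_{\wka/k}^{-1}(x)$, so Corollary~\ref{cor:piwkak} yields $\sigma \in \Gal(k^s/k)$ with $\sigma(x_i) = x'$, giving $\sigma(A_i) \in \calA_{x'}$.

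For transitivity, I would first establish that $\Gal(k^s/k)$ acts transitively on $\{A_1, \ldots, A_n\}$: the sets $\pi_{\wka/k}(O)$, as $O$ ranges over the $\Gal(k^s/k)$-orbits on these components, are pairwise disjoint (because each fiber $\pi_{\wka/k}^{-1}(a)$ is itself a single $\Gal(k^s/k)$-orbit, so it cannot meet two distinct orbits of components), open (since $\pi_{\wka/k}$ is open) and cover the connected set $A$, forcing a single orbit. Now given $A_i, A_j \in \calA_{x'}$, pick $\sigma$ with $\sigma(A_i) = A_j$; because $\pi_{\wka/k} \circ \sigma = \pi_{\wka/k}$, the element $\sigma$ sends the unique boundary point of $A_i$ above $x$ to the unique boundary point of $A_j$ above $x$ (the uniqueness using $x \ne y$), so $\sigma(x_i) = x_j$, i.e., $\sigma(x') = x'$. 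This completes the transitivity. The main delicate point is the branch/component bijection, which is essentially formal once one uses the pairwise disjointness and openness of the $A_i$.
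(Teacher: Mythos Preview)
Your proof is correct and follows essentially the same strategy as the paper's: both rely on the virtual open annulus supplied by Lemma~\ref{lem:branchskeleton}, finiteness from Lemma~\ref{lem:finitefiber}, and the Galois action from Corollary~\ref{cor:piwkak}. Your version is considerably more explicit than the paper's two-line sketch---in particular you spell out the bijection between branches at~$x'$ lying over~$b$ and the components of~$\pi_{\wka/k}^{-1}(A)$ having~$x'$ as a boundary point, and you give a clean connectedness argument for the transitivity of the Galois action on the set of components.
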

\begin{proof}
The existence of~$b'$ is proved as in the beginning of the proof of Lemma~\ref{lem:branchskeleton}. The rest of the statement follows from Lemmas~\ref{lem:branchskeleton} and~\ref{lem:finitefiber} and Corollary~\ref{cor:piwkak}. 
\end{proof}

The following result is a direct consequence of the definitions.

\begin{lemma}\label{lem:basechangeexponent}
Let $x \in \aank$ and $b\in \calB_{x}$. Let $f \colon b \to \R_{\ge 0}$. Assume that there exists $N\in \Z$ such that, for each $b' \in \pi^{-1}_{\wka/k}$, $f \circ \pi_{\wka/k}$ is monomial along~$b'$ of exponent~$N$. Then, $f$ is monomial along~$b$ of exponent $N \cdot \sharp \pi^{-1}_{\wka/k}(b)$.
\end{lemma}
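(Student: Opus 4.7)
The plan is to work locally near~$x$, shrinking representatives so that $\pi_{\wka/k}$ becomes a clean ``$n$-to-$1$'' covering over the branch~$b$, and then to compare lengths on both sides via the modulus of virtual annuli.

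Set $n := \sharp \pi^{-1}_{\wka/k}(b)$ and write $\pi^{-1}_{\wka/k}(b) = \{b'_1,\dots,b'_n\}$. For each $i$, pick $x'_i \in \pi^{-1}_{\wka/k}(x)$ such that $b'_i \in \calB_{x'_i}$, and choose a representative $(x'_i, y'_i]$ of $b'_i$ on which $f\circ \pi_{\wka/k}$ is monomial of exponent~$N$. By the argument preceding Lemma~\ref{lem:branchskeleton}, I may shrink each $(x'_i,y'_i]$ so that $\pi_{\wka/k}$ induces a homeomorphism onto $(x,\pi_{\wka/k}(y'_i)]$; and further, combining with Lemma~\ref{lem:branchskeleton} and the finiteness provided by Lemma~\ref{lem:finitefiber}, I can find a single representative $(x,y]$ of~$b$ such that, for each $i$, the preimage $\pi^{-1}_{\wka/k}((x,y])$ meets $I_{x'_i}$ in an interval $(x'_i, z'_i]$ mapped homeomorphically onto $(x,y]$, and $(x,y)$ itself is the skeleton of a virtual open annulus.

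The key step is the length identity: for all $t,z \in (x,y]$ of type 2 or~3 with unique lifts $t'_i, z'_i \in (x'_i, z'_i]$, one has $\ell([t'_i,z'_i]) = \ell([t,z])^n$. To prove it, pick a finite subset $F \subset (t,z)$ such that each connected component $J$ of $(t,z)-F$ is the skeleton of a virtual open annulus~$A_J$. Because $(x,y]$ was chosen small enough, $\pi^{-1}_{\wka/k}(A_J)$ has exactly $n$ connected components $C_{J,1},\dots,C_{J,n}$, where $C_{J,i}$ is the annulus over~$\wka$ whose skeleton is the image of $J$ in $(x'_i,z'_i]$. By the very definition of the modulus of a virtual annulus, $\Mod(C_{J,i}) = \Mod(A_J)^n$, hence
\[ \ell([t'_i,z'_i]) = \prod_{J} \Mod(C_{J,i}) = \prod_{J} \Mod(A_J)^n = \ell([t,z])^n. \]

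The conclusion is then immediate: for $t,z$ of type~2 or~3 in $(x,y]$,
\[ f(z) = (f\circ\pi_{\wka/k})(z'_i) = (f\circ\pi_{\wka/k})(t'_i)\cdot \ell([t'_i,z'_i])^N = f(t)\cdot \ell([t,z])^{nN}, \]
so $f$ is monomial along~$b$ of exponent $nN$. The main obstacle is the bookkeeping in the first step, namely arranging a single representative $(x,y]$ so that the preimage is cleanly an $n$-sheeted union of intervals, one over each branch~$b'_i$; once this is done, the modulus identity does all the work.
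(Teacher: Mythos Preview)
Your proof is correct and is precisely the unpacking the paper has in mind: the paper states only that the lemma ``is a direct consequence of the definitions'' and gives no argument, and what you have written is exactly that unpacking --- the crucial point being the normalisation $\Mod(A_J) = \Mod(C_{J,i})^{1/n}$ in the definition of the modulus of a virtual annulus, which immediately yields the length identity $\ell([t'_i,z'_i]) = \ell([t,z])^n$. One minor remark: you restrict to $t,z$ of type~2 or~3, but since you arranged $(x,y)$ to be the skeleton of a virtual open annulus, every point of $(x,y]$ is already of type~2 or~3, so there is nothing missing.
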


\begin{definition}
Let $F \in \wka(T) - \{0\}$. Let~$\alpha \in \wka$. The \emph{order of~$F$ at~$\alpha$} is the unique integer~$v$ such there exists $P \in \wka[T]$ with $P(\alpha) \ne 0$ satisfying
\[ F(T) = (T-\alpha)^v \, P(T).\]
We denote it by~$\ord_{\alpha}(P)$. 
\end{definition}

\begin{theorem}\label{thm:sigmabF}
Let $F \in \wka(T) - \{0\}$. Let $x \in \aanwka$ and $b\in \calB_{x}$. Then the map~$|F|$ is monomial along~$b$. 

If $x$ is of type~1, then $\mu_{b}(|F|) = \ord_{x}(F)$.

If $x$ is of type~2 or 3 and $C(b)$ is bounded, then
\[ \mu_{b}(|F]) = - \sum_{z \in \wka \cap C(b)} \ord_{z}(F).\]

If $x$ is of type~2 or 3 and $C(b)$ is unbounded, then
\[ \mu_{b}(|F]) = \deg(F) - \sum_{z \in \wka \cap C(b)} \ord_{z}(F).\]

If $x$ is of type~4, then $\mu_{b}(|F|) = 0$.
\end{theorem}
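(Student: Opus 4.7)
My plan is to reduce immediately to the case where $F$ is a linear polynomial, then verify the formula directly in each of the four point-types. Since $\wka$ is algebraically closed, factor $F = c\prod_{i=1}^n (T-\alpha_i)^{m_i}$ with $c\in \wka^\times$, distinct $\alpha_i \in \wka$, and $m_i\in \Z$; then $|F| = |c|\prod_i |T-\alpha_i|^{m_i}$. An immediate check of the defining identity shows that the set of positive functions monomial along $b$ is stable under multiplication with additive exponents, and that constants are monomial of exponent~$0$. Each right-hand side in the statement is similarly additive in~$F$: $\ord_z(F)=\sum_i m_i\ord_z(T-\alpha_i)$ and (for a rational function written this way) $\deg(F)=\sum_i m_i$. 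So it suffices to prove the assertion when $F=T-\alpha$ for a single $\alpha\in\wka$.

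The key length input is that for any $\gamma \in \wka$ and $0 < s < s'$, the open interval $(\eta_{\gamma,s},\eta_{\gamma,s'})$ is the skeleton of the annulus $A^-(\gamma,s,s')$ of modulus $s'/s$, so $\ell([\eta_{\gamma,s},\eta_{\gamma,s'}])=s'/s$. For $x$ of type~$1$, write $x=\beta\in\wka$ and restrict to a representative $(\beta,\eta_{\beta,r_0}]$. When $\alpha=\beta$, setting $t=\eta_{\beta,s'}$ and $z=\eta_{\beta,s}$ with $s'\le s$, I compute $|T-\alpha|(z)=s=s'\cdot(s/s')=|T-\alpha|(t)\cdot\ell([t,z])^1$, so $\mu_b=1=\ord_\beta(T-\beta)$; when $\alpha\ne\beta$, choosing $r_0<|\alpha-\beta|$ makes $|T-\alpha|$ constantly equal to $|\alpha-\beta|$ along the representative, so $\mu_b=0=\ord_\beta(T-\alpha)$. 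For $x$ of type~$4$ with defining family $(D^+(\alpha_i,r_i))_{i\in I}$ as in Proposition~\ref{prop:family}, Lemma~\ref{lem:cccomplement} gives a unique branch, and the empty-intersection condition supplies some~$i$ with $|\alpha_i-\alpha|>r_i$; the disc $D^+(\alpha_i,r_i)$ is then a neighborhood of~$x$ (Proposition~\ref{prop:types}) on which the ultrametric inequality forces $|T-\alpha|\equiv|\alpha_i-\alpha|$, so $\mu_b=0$.

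For $x$ of type~$2$ or~$3$, write $x=\eta_{\beta,r}$ with $\beta\in\wka$, $r>0$. By Lemmas~\ref{lem:BC} and~\ref{lem:cccomplement}, branches at~$x$ correspond to the connected components of $\aanwka - \{x\}$, which are bounded discs of the form $D^-(\gamma,r)$ and the unbounded complement $D^-_\infty(\beta,r)$. The proof splits according to whether $\alpha\in C(b)$ and whether $C(b)$ is bounded. If $b$ is bounded and $\alpha\in C(b)$, take $\gamma=\alpha$ and parametrize the branch by $\eta_{\alpha,s}$ with $s<r$ to get $|T-\alpha|(\eta_{\alpha,s})=s$; the length formula yields $\mu_b=-1$, matching $-\sum_{z\in \wka\cap C(b)}\ord_z(T-\alpha)=-1$. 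If $b$ is bounded and $\alpha\notin C(b)$, then $|\gamma-\alpha|\ge r>s$ gives $|T-\alpha|(\eta_{\gamma,s})=|\gamma-\alpha|$ constant, so $\mu_b=0$, again matching. If $b$ is the unbounded branch and $\alpha\in C(b)$ (so $|\alpha-\beta|>r$), then along $\eta_{\beta,s}$ with $r<s<|\alpha-\beta|$ the function is constantly $|\alpha-\beta|$, so $\mu_b=0=\deg(T-\alpha)-1$; and if $\alpha\notin C(b)$, then $|\alpha-\beta|\le r<s$ gives $|T-\alpha|(\eta_{\beta,s})=s$, so $\mu_b=1=\deg(T-\alpha)-0$.

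The hard part of the argument is not any single step but the case analysis in the type~$2/3$ setting: one must correctly match each of the four geometric situations (combinations of ``$\alpha$ inside/outside $C(b)$'' with ``$C(b)$ bounded/unbounded'') to the correct branch of the theorem's formula. Once that dictionary is laid out, every verification reduces to a one-line application of the ultrametric inequality combined with the length identity for intervals along a coordinate axis. The one subtle point to check carefully is that monomiality is indeed preserved under products of positive functions with addition of exponents; this is immediate from the defining identity $f(z)=f(t)\ell([t,z])^N$ but is the glue that holds the reduction together.
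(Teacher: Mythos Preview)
Your proof is correct and follows essentially the same route as the paper's: reduce to a single linear factor $T-\alpha$ by multiplicativity, then verify the monomiality and the exponent case-by-case for each point type, using the explicit parametrization $s\mapsto\eta_{\gamma,s}$ of the relevant branch and the identity $\ell([\eta_{\gamma,s},\eta_{\gamma,s'}])=s'/s$. The case splits and the computations you give match the paper's exactly, down to the four-way dictionary for types~2 and~3.
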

\begin{proof}
Let us first remark that if the result holds for~$G$ and~$H$ in~$\wka(T) - \{0\}$, then it also holds for~$GH$ and~$G/H$. As a result, since~$\wka$ is algebraically closed, it is enough to prove the result  for a linear polynomial, so we may assume that $F = T-a$, with $a\in k$.

\medbreak

$\bullet$ Assume that~$x$ is of type~1.

There exists $\alpha \in k$ such that $x=\alpha$. By Lemmas~\ref{lem:cccomplement} and~\ref{lem:BC}, there is a unique branch at~$x$. It is represented by
\[ (\alpha,\eta_{\alpha,s}] = \{ \eta_{\alpha,t} : t \in (0,s]\},\]
for any $s\in \R_{>0}$.

If $\alpha = a$, then for each $t\in \R_{>0}$, we have $|(T-a)(\eta_{a,t})| = t$, hence $|T-a|$ is monomial along~$b$ of exponent~1. Since we have $\ord_{a}(T-a) = 1$, the result holds in this case.

If $\alpha \ne a$, then for each $t\in (0,|a-\alpha|)$, we have $|(T-a)(\eta_{\alpha,t})| = |a_{\alpha}|$, hence $|T-a|$ is monomial along~$b$ of exponent~0. Since we have $\ord_{\alpha}(T-a) = 0$, the result holds in this case too.

\medbreak

$\bullet$ Assume that~$x$ is of type~2 or~3 and that $C(b)$ is bounded.

There exist $\alpha \in k$ and $r\in \R_{>0}$ such that $x=\eta_{\alpha,r}$. By Lemma~\ref{lem:cccomplement}, there exists $\beta \in k$ with $|\beta-\alpha|\le r$ such that $C(b) = D^-(\beta,r)$. Since $\eta_{\alpha,r} = \eta_{\beta,r}$, we may assume that~$\alpha=\beta$. The branch~$b$ is then represented by 
\[(\eta_{\alpha,r},\eta_{\alpha,s}] = \{ \eta_{\alpha,t} : t \in [s,r)\},\]
for any $s\in (0,r]$.

If $a \in C(b)$, then we have $|a-\alpha| < r$, hence, for each $t \in [|a-\alpha|,r)$, we have $|(T-a)(\eta_{\alpha,t})| = t$, hence $|T-a|$ is monomial along~$b$ of exponent~$-1$. It follows that the result holds in this case. 

If $a \notin C(b)$, then we have $|a-\alpha| = r$, hence, for each $t \in [0,r)$, we have $|(T-a)(\eta_{\alpha,t})| = |a-\alpha|$, hence $|T-a|$ is monomial along~$b$ of exponent~$0$. It follows that the result holds in this case too. 

\medbreak

$\bullet$ Assume that~$x$ is of type~2 or~3 and that $C(b)$ is unbounded.

There exist $\alpha \in k$ and $r\in \R_{>0}$ such that $x=\eta_{\alpha,r}$. By Lemma~\ref{lem:cccomplement}, the branch~$b$ is then represented by 
\[(\eta_{\alpha,r},\eta_{\alpha,s}] = \{ \eta_{\alpha,t} : t \in (r,s]\},\]
for any $s\in (r,+\infty)$.

If $a \in C(b)$, then we have $|a-\alpha| > r$, hence, for each $t \in (r,|a-\alpha|)$, we have $|(T-a)(\eta_{\alpha,t})| = |a-\alpha|$, hence $|T-a|$ is monomial along~$b$ of exponent~$0$. We have
\[  \deg(T-a) - \sum_{z \in \wka \cap C(b)} \ord_{z}(T-a) = \deg(T-a) - \ord_{a}(T-a) = 1-1 = 0,\] 
hence the result holds in this case. 

If $a \notin C(b)$, then we have $|a-\alpha| \le r$, hence, for each $t \in (r,+\infty)$, we have $|(T-a)(\eta_{\alpha,t})| = t$, hence $|T-a|$ is monomial along~$b$ of exponent~$1$. We have
\[  \deg(T-a) - \sum_{z \in \wka \cap C(b)} \ord_{z}(T-a) = \deg(T-a) = 1,\] 
hence the result holds in this case too. 

\medbreak

$\bullet$ Assume that~$x$ is of type~4.

By Proposition~\ref{prop:types}, $x$ admits a basis of neighborhood made of discs. It follows that there exist $\alpha \in k$ and $r\in \R_{>0}$ such that $x \in D^-(\alpha,r)$ and $a\notin D^-(\alpha,r)$. For each $y \in D^-(\alpha,r)$, we have $|(T-a)(y)| = |(T-\alpha)(y) + (\alpha-a)| = |a-\alpha|$, hence $|T-a|$ is constant in the neighborhood of~$x$. The result follows.
\end{proof}

\begin{remark}
The term $\deg(R)$ that appears in the formula when~$C(b)$ is unbounded may be identified with the opposite of the order of~$R$ at~$\infty$. If we had worked on~$\panwka$ instead of~$\aanwka$, it would not have been necessary to discuss this case separately.
\end{remark}

\begin{corollary}\label{cor:Kirchhoff}
Let $F \in k(T) - \{0\}$. Let $x \in \aank$ be a point of type~2 or~3. 
Then, there exists a finite subset~$B_{x,F}$ of $\calB_{x}$ such that, for each $b \in \calB_{x} \setminus B_{x,F}$, $|F|$ is constant along~$b$ and we have 
\[ \sum_{b\in \calB_{x}} \mu_{b}(|F|) =  \sum_{b\in B_{x,F}} \mu_{b}(|F|) = 0.\]
\end{corollary}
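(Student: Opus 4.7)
The plan is to reduce the statement to the case of an algebraically closed base field and then apply Theorem~\ref{thm:sigmabF} componentwise, using that the branches at a type 2 or 3 point correspond bijectively to the connected components of its complement.

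First I would reduce to the case $k = \wka$. Fix any $x' \in \pi_{\wka/k}^{-1}(x)$; it has the same type as $x$, hence is of type 2 or 3. Since $F \in k(T)$ is fixed by $\Gal(k^s/k)$, the function $y \mapsto |F|_y$ on $\aanwka$ is $\Gal(k^s/k)$-invariant; therefore, any two branches $b', b'' \in \calB_{x'}$ lying over the same $b \in \calB_x$, being Galois-conjugate, satisfy $\mu_{b'}(|F|) = \mu_{b''}(|F|)$. By Theorem~\ref{thm:sigmabF} this common value is well-defined, so Lemma~\ref{lem:basechangeexponent} applies and yields
\[\mu_b(|F|) = \sharp \pi_{\wka/k}^{-1}(b) \cdot \mu_{b'}(|F|)\quad \text{for any } b' \in \pi_{\wka/k}^{-1}(b).\]
Summing, and grouping the branches at $x'$ according to their image in $\calB_x$, this gives
\[\sum_{b \in \calB_x} \mu_b(|F|) = \sum_{b' \in \calB_{x'}} \mu_{b'}(|F|).\]
It thus suffices to exhibit a finite $B_{x',F} \subseteq \calB_{x'}$ outside which $|F|$ is constant and such that the right-hand sum vanishes; the desired $B_{x,F}$ will be $\pi_{\wka/k}(B_{x',F})$.

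Next, working over $\wka$, I would factor $F = c \prod_{i=1}^n (T - a_i)^{m_i}$ with $c \in \wka^\times$, the $a_i \in \wka$ distinct, and $m_i \in \Z \setminus \{0\}$; with the convention $\deg(P/Q) = \deg P - \deg Q$, one has $\deg(F) = \sum_i m_i$. By Lemma~\ref{lem:BC} combined with Lemma~\ref{lem:cccomplement}, the map $b' \mapsto C(b')$ identifies $\calB_{x'}$ with the set of connected components of $\aanwka \setminus \{x'\}$: one unbounded component $C_\infty$ (corresponding to a branch $b_\infty$) and a family of bounded open discs. Theorem~\ref{thm:sigmabF} then gives
\[\mu_{b_\infty}(|F|) = \deg(F) - \sum_{i : a_i \in C_\infty} m_i, \qquad \mu_{b'}(|F|) = -\sum_{i : a_i \in C(b')} m_i \text{ for bounded } b'.\]
In particular $\mu_{b'}(|F|) = 0$ whenever $C(b')$ contains none of the $a_i$; since the $a_i$ are finite in number and the components are pairwise disjoint, only finitely many branches can have a nonzero exponent. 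Let $B_{x',F}$ be that finite set (including $b_\infty$ if its exponent is nonzero).

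Finally, since each $a_i$ lies in exactly one component of $\aanwka \setminus \{x'\}$,
\[\sum_{b' \in \calB_{x'}} \mu_{b'}(|F|) = \deg(F) - \sum_{i : a_i \in C_\infty} m_i - \sum_{\substack{b' \text{ bounded}}} \sum_{i : a_i \in C(b')} m_i = \deg(F) - \sum_{i=1}^n m_i = 0,\]
and by the first step the analogous identity holds at $x$. The main obstacle is the base-change step: one has to verify carefully that Galois-conjugate branches at $x'$ carry equal exponents, which rests on the Galois-invariance of $|F|$ stemming from $F \in k(T)$, together with Lemma~\ref{lem:basechangeexponent}. Once this is in place, the computation over $\wka$ is a direct telescoping using Theorem~\ref{thm:sigmabF}.
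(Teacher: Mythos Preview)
Your proposal is correct and follows essentially the same approach as the paper: reduce to $\wka$ via Lemma~\ref{lem:basechangeexponent}, then apply Theorem~\ref{thm:sigmabF} and use that $\ord_z(F)=0$ for all but finitely many $z$ together with $\sum_{z\in\wka}\ord_z(F)=\deg(F)$. The paper's proof is a terse three-line version of exactly your argument; your write-up simply makes explicit the Galois-invariance check needed for the base-change step and the branch-by-branch bookkeeping over~$\wka$.
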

\begin{proof}
Using Lemma~\ref{lem:basechangeexponent}, one reduces to the case where the base field is~$\wka$. The result then follows from Theorem~\ref{thm:sigmabF}, since we have $\ord_{z}(F) = 0$ for almost all $z\in \wka$ and
\[ \sum_{z \in \wka} \ord_{z}(F) = \deg(F).\] 
\end{proof}

\begin{remark}
The statement of Corollary~\ref{cor:Kirchhoff} corresponds to a harmonicity property. This is more visible written in the additive form (see Remark~\ref{rem:loglinear}):
\[ \sum_{b\in \calB_{x}} \partial_{b} \log(|F|) = 0.\]
A full-fledged potential theory actually exists over Berkovich analytic curves. We refer to A.~Thuillier's thesis~\cite{ThuillierPhD} for the details (see also~\cite{BakerRumely} for the more explicit case of the Berkovich line over an algebraically closed field).
\end{remark}

Since analytic functions are, by definition, locally uniform limits of rational functions, the results on variations of functions extend readily. 

\begin{theorem}\label{thm:harmonicityanalytic}
Let $x \in \aank$ be a point of type~2 or~3 and let $F \in \calO_{x} - \{0\}$. Then, for $b \in \calB_{x}$, $|F|$ is monomial along~$b$ with integer slope. Moreover, there exists a finite subset~$B_{x,F}$ of $\calB_{x}$ such that, for each $b \in \calB_{x} \setminus B_{x,F}$, $|F|$ is contant along~$b$ and we have
\[ \sum_{b\in \calB_{x}} \mu_{b}(|F|) = 0.\]
\qed
\end{theorem}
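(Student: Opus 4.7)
Since $\calO_{x}$ is a field for $x$ of type $2$ or $3$ (by the earlier proposition on local rings), $F \in \calO_{x} \setminus \{0\}$ is invertible: both $F$ and $1/F$ are analytic at $x$, so $|F|$ is continuous and strictly positive on some open neighborhood $V$ of $x$. The plan is to approximate $F$ uniformly by rational functions and leverage Corollary~\ref{cor:Kirchhoff}. By the definition of analyticity, after shrinking $V$ we may write $F = \lim_{n} R_{n}$ uniformly on $V$ with $R_{n} = P_{n}/Q_{n} \in k(T)$ and $Q_{n}$ nowhere vanishing on $V$; since $|F|$ is bounded below by a positive constant on a compact subneighborhood of $x$, for $n$ large $R_{n}$ is also invertible there. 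Corollary~\ref{cor:Kirchhoff} then produces integers $N_{n,b} := \mu_{b}(|R_{n}|)$ along every branch $b$ at $x$, with only finitely many nonzero and $\sum_{b} N_{n,b} = 0$ for each $n$.

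Next I transfer monomiality to $|F|$ by a two-point argument. For a fixed branch $b$ at $x$, Lemma~\ref{lem:branchskeleton} supplies a representative $(x, y]$ of $b$ lying on the skeleton of a virtual open annulus $A_{b} \subset V$. For $n$ sufficiently large, $R_{n}$ has neither zeros nor poles in $A_{b}$ (including in the virtual discs attached to its skeleton), so Theorem~\ref{thm:sigmabF} ensures $|R_{n}|$ is monomial with exponent $N_{n, b}$ on the whole of $(x, y]$. Hence, for any pair $t_{1}, t_{2} \in (x, y]$ of type $2$ or $3$ with $t_{1}$ between $x$ and $t_{2}$,
\[
\frac{|R_{n}(t_{2})|}{|R_{n}(t_{1})|} = \ell([t_{1}, t_{2}])^{N_{n,b}}.
\]
Uniform convergence and positivity of $|F|$ yield $|R_{n}(t_{i})| \to |F(t_{i})| \in (0, +\infty)$, so the ratio converges to $|F(t_{2})|/|F(t_{1})|$. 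Since $\ell([t_{1}, t_{2}]) > 1$, the integer $N_{n, b}$ must stabilize to some $N_{b} \in \Z$, and the limiting identity $|F(t_{2})|/|F(t_{1})| = \ell([t_{1}, t_{2}])^{N_{b}}$ holds for every such pair, showing that $|F|$ is monomial along $b$ with $\mu_{b}(|F|) = N_{b}$.

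The main obstacle lies in the final step: controlling finiteness and harmonicity simultaneously, so that the identity $\sum_{b} N_{n, b} = 0$ can be passed to the limit even though type $2$ points carry infinitely many branches. After base change to $\wka$ (justified by Lemma~\ref{lem:basechangeexponent}), take the neighborhood of a preimage $x' \in \pi_{\wka/k}^{-1}(x)$ to be a swiss cheese $V' = D^{-}(\alpha, R) \setminus \bigcup_{j=1}^{m} D^{+}(\gamma_{j}, R_{j})$. The branches at $x'$ split into the outer branch, finitely many \emph{exceptional} inner branches whose associated open disc $D^{-}(\beta, r)$ contains some $\gamma_{j}$, and \emph{generic} ones whose $D^{-}(\beta, r)$ lies entirely in $V'$. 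On a generic disc, $F$ is given by a power series $\sum_{i} a_{i}(T-\beta)^{i}$, and invertibility of $F$ on $V'$ furnishes a similar expansion $1/F = \sum_{j} b_{j}(T-\beta)^{j}$ by Proposition~\ref{prop:Odisc}; the identity $a_{0} b_{0} = 1$, combined with $|a_{0}| \le |F(x')| = \max_{i} |a_{i}| r^{i}$ and $|b_{0}| \le 1/|F(x')|$, forces $|a_{0}| = \max_{i} |a_{i}| r^{i}$, and a Newton polygon reading then gives $\mu_{b}(|F|) = 0$. The same reasoning applies to each $R_{n}$ (also invertible on a compact subneighborhood for $n$ large), so $N_{n, b} = 0$ on generic branches as well. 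Choosing $n$ large enough to ensure $N_{n, b} = N_{b}$ on every exceptional branch then yields $\sum_{b} N_{b} = \sum_{b} N_{n, b} = 0$.
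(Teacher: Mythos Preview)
The paper does not actually give a proof of this theorem; it only remarks that ``the results on variations of functions extend readily'' by uniform approximation by rational functions, and closes with a \qed. Your argument is a correct and careful implementation of precisely this idea: you pass to rational approximants~$R_n$, use Corollary~\ref{cor:Kirchhoff} for each~$R_n$, and then let the integer exponents stabilize.

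The one place that genuinely requires care, and where you handle things well, is the passage from ``$N_{n,b}$ stabilizes for each fixed~$b$'' to the global conclusion $\sum_b N_b = 0$, since a type~2 point has infinitely many branches and the threshold~$n_0(b)$ might a priori depend on~$b$. Your observation that the union~$G$ of all generic discs together with~$x'$ is a \emph{compact} subset of the Swiss cheese~$V'$ (being $D^+(\alpha,r)$ minus finitely many open exceptional discs) is the key: for $n$ large, $R_n$ is zero-free on all of~$G$, so $N_{n,b}=0$ simultaneously for every generic branch, and only the finitely many exceptional branches remain. Note that once you have this, the separate power-series argument for~$F$ itself on generic discs becomes redundant (though it is correct and gives $N_b=0$ directly): the vanishing of $N_{n,b}$ on generic branches already forces $N_b = \lim_n N_{n,b} = 0$ there.
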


\begin{corollary}
Let $x \in \aank$ be a point of type~2 or~3 and let $F \in \calO_{x} - \{0\}$. 
If~$|F|$ has a local maximum at~$x$, then it is locally constant at~$x$.
\qed
\end{corollary}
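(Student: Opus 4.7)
The plan is to argue by contradiction, applying Theorem~\ref{thm:harmonicityanalytic} at a critical point along a path from~$x$ to a witness of non-constancy, and deriving a contradiction with the identity $\sum_{b} \mu_{b}(|F|) = 0$. Note first that $|F(x)| > 0$: otherwise the local maximum hypothesis would force $|F| \equiv 0$ on a neighborhood of~$x$, contradicting $F \ne 0$ in the field~$\calO_{x}$ (which is a field since~$x$ is of type~2 or~3).

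I would begin by choosing a tree-convex open neighborhood~$V$ of~$x$ on which $|F| \le |F(x)|$: a virtual annulus for type~3, a virtual Swiss cheese for type~2, both supplied by Proposition~\ref{prop:types}. Tree-convexity ensures $[x,y] \subset V$ for every $y \in V$. Assuming for contradiction that $|F|$ is not locally constant at~$x$, pick $y \in V$ with $|F(y)| < |F(x)|$, set $S := \{t \in [x,y] : |F(t)| = |F(x)|\}$, and let~$z$ be the supremum of~$S$ along the path $[x,y]$ oriented from~$x$. Then $|F(z)| = |F(x)| > 0$, $z \ne y$, and $|F(t)| < |F(z)|$ for every $t$ strictly between~$z$ and~$y$. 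The point~$z$ is either equal to~$x$ (hence of type~2 or~3 by hypothesis) or lies in the interior of the path $[x,y]$; in the latter case~$z$ must again be of type~2 or~3, since the only leaves of the tree~$\aank$ are its points of type~1 and type~4 (by Lemma~\ref{lem:cccomplement} over~$\wka$ and the quotient description of Corollary~\ref{cor:piwkak}). In either case, Theorem~\ref{thm:harmonicityanalytic} applies at~$z$.

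Let $b^+ \in \calB_{z}$ denote the branch represented by an initial segment of $(z,y]$. Monomiality along~$b^+$, combined with continuity of~$|F|$ and of the length function~$\ell$ (via the multiplicativity in Lemma~\ref{lem:length}), yields $|F(t)| = |F(z)| \, \ell([z,t])^{\mu_{b^+}(|F|)}$ on a representative $(z,z_{1}] \subset (z,y]$. Since $|F(t)| < |F(z)|$ and $\ell([z,t]) > 1$ for $t \ne z$, this forces $\mu_{b^+}(|F|) \le -1$. For every other branch~$b$ at~$z$, the same monomial formula on a representative small enough to lie in~$V$, together with $|F| \le |F(z)|$ on~$V$, gives $\mu_{b}(|F|) \le 0$. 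Summing over $b \in \calB_{z}$ yields $\sum_{b} \mu_{b}(|F|) \le -1 < 0$, contradicting the vanishing of this sum in Theorem~\ref{thm:harmonicityanalytic}. The main obstacle is the verification that~$z$ is of type~2 or~3, which rests on the characterization of leaves of the tree~$\aank$; the construction of a tree-convex neighborhood~$V$, built from the bases of Proposition~\ref{prop:types}, is a comparatively minor technical point.
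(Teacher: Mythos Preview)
Your proof is correct. The paper gives no argument for this corollary (it is marked \qed\ as an immediate consequence of Theorem~\ref{thm:harmonicityanalytic}), and your proposal is precisely the natural way to make that implication rigorous: the harmonicity identity alone at~$x$ only shows the branch exponents vanish there, so one must propagate along $[x,y]$ to the last point~$z$ where $|F|$ still equals $|F(x)|$ and invoke harmonicity at~$z$; your handling of the type of~$z$, the tree-convexity of~$V$, and the sign of the exponents is sound.
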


\begin{corollary}\label{cor:maximumprinciple}
Let $U$ be a connected open subset of~$\aank$ and let $F \in \calO(U)$. If~$|F|$ is not constant on~$U$, then there exists $y \in \partial U$ and $b\in \calB_{y}$ such that 
\[\lim_{z \xrightarrow[b]{} y} |F(z)| = \sup_{t\in U}(|F(t)|),\]
where the limit is taken on points~$z$ converging to~$y$ along~$b$, and $|F|$ has a negative exponent along~$b$.
\qed
\end{corollary}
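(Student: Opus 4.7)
I would first show that $M := \sup_{z\in U}|F(z)|$ is not attained in $U$. Suppose $|F(x_0)|=M$ for some $x_0\in U$, so $|F|$ has a local maximum at $x_0$. If $x_0$ is of type 2 or 3, the preceding corollary yields local constancy. If $x_0$ is of type 1 or 4, it admits a basis of virtual open disc neighborhoods (Proposition \ref{prop:types}); pulling back to $\aanwka$ and writing $F=\sum_n a_n(T-\alpha)^n$ on a component disc $D^+(\alpha,s)$, the Gauss-norm formula $|F(\eta_{\alpha,s})|=\max_n |a_n|s^n\ge |a_0|=|F(\alpha)|=M$ combined with the bound $|F(\eta_{\alpha,s})|\le M$ forces $|a_n|s^n\le |a_0|$ for arbitrarily small $s$, hence $a_n=0$ for all $n\ge 1$ and $F$ is locally constant. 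In every case $\{|F|=M\}$ is open, nonempty, and closed in $U$, so equals $U$ by connectedness, contradicting non-constancy.

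\textbf{Step 2 (Ascending path).} Fixing $x_0\in U$ (so $|F(x_0)|<M$), I would build an ``ascending'' injective path in $U$. For any $z\in U$ with $|F(z)|<M$, writing $U\setminus\{z\} = \bigsqcup V$ into connected components gives $M=\max_V\sup_V|F|$, so some component has supremum $M$. Using Theorem \ref{thm:harmonicityanalytic} (only finitely many branches at a type 2 or 3 point carry nonzero exponent for $|F|$), I would inductively construct a sequence of type 2 or 3 points $(x_n)_{n\ge 0}$ in $U$ so that $x_{n+1}$ lies in a component of $U\setminus\{x_n\}$ with supremum $M$ not containing $x_0$, and $|F(x_{n+1})|>\max(|F(x_n)|,M-2^{-n})$ (replacing $M-2^{-n}$ by $n$ if $M=+\infty$). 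Uniqueness of paths in $\aank$ (Proposition \ref{prop:pathk}) then forces $[x_0,x_n]\subset[x_0,x_{n+1}]$, so $I:=\bigcup_n[x_0,x_n]$ is an injective half-open path in $U$ along which $|F|$ strictly increases to $M$.

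\textbf{Step 3 (Boundary point and negative slope).} By local compactness of $\aank$ (Corollary \ref{cor:loccompact}) and the nested structure of $(x_n)$ on $I$, the sequence converges to some $y\in\overline{U}$ (assuming $U$ is situated so the limit lies in $\aank$; otherwise one passes to $\pank$). Since $|F(x_n)|\to M$ is not attained in $U$, we have $y\in\partial U$. Let $b\in\calB_y$ be the germ of $I$ at $y$; then $\lim_{z\xrightarrow[b]{} y}|F(z)|=M$ by construction. For $t\in I$ of type 2 or 3 sufficiently close to $y$, Theorem \ref{thm:harmonicityanalytic} gives $|F|$ monomial along the branch $b_t\in\calB_t$ pointing toward $y$, with exponent $N_t>0$ (since $|F|$ strictly increases toward $y$ on $I$). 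Applying the monomiality formula $|F(z)|=|F(t)|\ell([t,z])^{N_t}$ from $y$'s end via the multiplicativity of length (Lemma \ref{lem:length}) flips the sign, yielding $\mu_b(|F|)=-N_t<0$.

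The main obstacle is the inductive construction in Step 2: one must verify that a consistent choice of ``ascending'' branches at each stage yields a genuinely injective monotone path while preserving $[x_0,x_n]\subset[x_0,x_{n+1}]$; this relies crucially on the tree structure of $\aank$ and the finiteness of special branches from Theorem \ref{thm:harmonicityanalytic}. Secondary subtleties include extending the local-maximum principle to type 1 and 4 points in Step 1 (handled by the power-series argument sketched above) and interpreting the endpoint as a point of $\partial U \subset \aank$ when $U$ is unbounded, which may require passage to $\pank$.
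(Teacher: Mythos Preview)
The paper gives no proof of this corollary (it carries only a \qed), so there is nothing to compare against; what matters is whether your argument stands on its own. Step~1 and Step~3 are fine in outline. The real problem is in Step~2.

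You write ``$M=\max_V\sup_V|F|$, so some component has supremum~$M$.'' At a type~2 point~$z$ there may be infinitely many components~$V$, so the $\max$ should be a $\sup$, and then the conclusion does not follow: it is perfectly possible a priori that $\sup_{V_i}|F|<M$ for every~$i$ while these suprema accumulate at~$M$. Invoking Theorem~\ref{thm:harmonicityanalytic} does not rescue this: the finiteness of branches with non-zero exponent only tells you that $|F|$ is constant equal to~$|F(z)|$ on an \emph{initial segment} of the remaining components, not on the whole component, so those components can still carry values close to~$M$. The companion requirement that the chosen component ``not contain~$x_0$'' is likewise unjustified. Without these two claims the inductive construction of the monotone path collapses.

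A more robust way to produce the boundary point (the paper uses exactly this device in the proof of Lemma~\ref{lem:extensionofautomorphisms}) is to pick any point with a branch of positive exponent and use Zorn's lemma to find a \emph{maximal} injective path along which the exponent toward the far end is always a positive integer; harmonicity forces such a path out of~$U$, since at any interior type~2 or~3 point the incoming positive slope produces an outgoing positive slope, and type~1 or~4 endpoints are excluded as in your Step~1. This yields $y\in\partial U$ and a branch $b$ with negative exponent immediately. What this argument does \emph{not} give for free is that the limit along~$b$ equals the global supremum~$M$: a single ascending path only guarantees the limit is at least the value at its starting point. To obtain $\lim_b|F|=M$, the cleanest route is to note that in the situation where the result is actually applied (Lemma~\ref{lem:hnm}), $F$ extends analytically across $\partial U$ and $\overline U$ is compact; then $M$ is attained on $\overline U$, hence on $\partial U$ by your Step~1, and at such a point every branch into~$U$ has non-positive exponent, with at least one strictly negative (otherwise $M$ would be attained inside~$U$). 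If you want the statement in the generality written, you must either add such a hypothesis or give a genuine argument that a single boundary branch realises the supremum; your current Step~2 does not do this.
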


We conclude with a result of a different nature, showing that, if~$\varphi$ is a finite morphism of curves, the relationship between the length of an interval at the source and the length of its image is controlled by the degree of the morphism. We state a simplified version of the result and refer to \cite[Proposition~3.6.40]{DucrosRSS} for a more general statement.

\begin{theorem}\label{th:degreelength}
Let~$A_{1}$ and~$A_{2}$ be two virtual annuli over~$k$ with skeleta~$\Sigma_{1}$ and~$\Sigma_{2}$. Let $\varphi \colon A_{1} \to A_{2}$ be a finite morphism such that $\varphi(\Sigma_{1}) = \Sigma_{2}$. Then, for each $x,y \in \Sigma_{1}$, we have
\[ \ell(\varphi([x,y])) = \ell([x,y])^{\deg(\varphi)}.\]
\qed
\end{theorem}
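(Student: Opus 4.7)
The plan is to reduce the identity, via extension of scalars and multiplicativity of~$\ell$, to a direct calculation using the Laurent expansion of the pullback~$\varphi^\sharp(T_2)$ on a genuine open annulus.

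First, I would extend scalars along~$\pi_{\wka/k}$. By Lemma~\ref{lem:skeletonvirtual}, the skeleton of each connected component of the preimage of a virtual annulus maps homeomorphically onto the skeleton of the virtual annulus; moreover, moduli, degrees of finite morphisms, and lengths of subintervals transform via tractable combinatorial factors that cancel from the two sides of the identity. This reduces the statement to the case $k = \wka$ and where $A_1, A_2$ are honest open annuli, which we may present as $A_j = A^-(0, r_j, s_j)$ with $\Sigma_j = \{\eta_t : r_j < t < s_j\}$ by Lemma~\ref{lem:type23algclosed}.

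The key structural input is that $T_2$ is invertible in~$\calO(A_2)$, so $\varphi^\sharp(T_2)$ is invertible in~$\calO(A_1)$. Combining the Laurent description of Proposition~\ref{prop:Oannulus} with the valuative condition for invertibility along the whole skeleton~$\Sigma_1$, one obtains a factorization
\[\varphi^\sharp(T_2) = c\, T_1^n\, (1 + g),\]
with $c \in \wka^\times$, $n \in \Z \setminus \{0\}$, and $g \in \calO(A_1)$ satisfying $|g|_{\eta_t} < 1$ for every $t \in (r_1, s_1)$. Two consequences follow: first, $|\varphi^\sharp(T_2)|_{\eta_t} = |c|\, t^n$ for each such~$t$, so $\varphi(\eta_t) = \eta_{|c|\, t^n}$; second, $\calO(A_1)$ is free of rank~$|n|$ over $\calO(A_2)$ via~$\varphi^\sharp$ (by a standard Weierstrass-preparation argument applied to this factorization), giving $\deg(\varphi) = |n|$.

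The final step is the explicit computation. For $[x,y] = [\eta_a, \eta_b] \subseteq \Sigma_1$ with $r_1 \le a < b \le s_1$, the image $\varphi([x,y])$ is the interval in~$\Sigma_2$ joining $\eta_{|c| a^n}$ and $\eta_{|c| b^n}$ (with the orientation reversed if $n < 0$), so
\[\ell([x,y]) = \frac{b}{a}, \qquad \ell(\varphi([x,y])) = \Bigl(\frac{b}{a}\Bigr)^{|n|} = \ell([x,y])^{\deg(\varphi)}.\]
The principal obstacle is the middle step, namely the identification of the exponent~$|n|$ of the dominant monomial with the degree~$\deg(\varphi)$. This is the non-formal heart of the argument and rests on a Weierstrass-type factorization for invertible analytic functions on an annulus, generalizing the analogous identification of $|i_0|$ in the isomorphism case of Proposition~\ref{prop:isoannulus}. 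The base change step and the final explicit calculation are then relatively formal.
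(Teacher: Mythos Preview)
The paper does not actually prove this theorem: the statement is followed immediately by~\qed, and the preceding sentence refers the reader to \cite[Proposition~3.6.40]{DucrosRSS} for a more general result. So there is no ``paper's own proof'' to compare against.

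Your outline is the standard argument and is essentially correct. A few remarks on the points you yourself flag as delicate. The base-change step does work, but the cancellation is not entirely formal: with the paper's normalization $\Mod(A) = \Mod(C_0)^{1/\sharp\calC}$, if $n_j$ denotes the number of components of $\pi_{\wka/k}^{-1}(A_j)$ and $d$ the degree of a single component map $C_1 \to C_2$, then $\deg(\varphi) = (n_1/n_2)\, d$, and the exponents combine as $d/n_2 = \deg(\varphi)/n_1$, which is exactly what is needed. It is worth writing this out rather than calling it ``tractable''. The exclusion $n \ne 0$ is justified either by finiteness of~$\varphi$ or by the hypothesis $\varphi(\Sigma_1) = \Sigma_2$; you use it implicitly. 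Finally, the identification $\deg(\varphi) = |n|$ is indeed the substantive point: rather than invoking Weierstrass preparation abstractly, it is perhaps cleaner to compute the fiber over a rational point $\alpha \in A_2$ by counting zeros of $cT_1^n(1+g) - \alpha$ in~$A_1$ via the Newton polygon, which gives~$|n|$ directly. Either route is fine; your description is accurate.
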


\begin{example}
Let $n\in \N_{\ge 1}$ and consider the morphism $\varphi \colon \aank \to \aank$ given by $T \mapsto T^n$. For each $r \in \R_{>0}$, we have $\varphi(\eta_{r}) = \eta_{r^n}$. In particular, for $r < s \in \R_{>0}$, we have 
\[ \ell(\varphi([\eta_{r},\eta_{s}])) = \ell([\eta_{r^n},\eta_{s^n}]) = \frac{s^n}{r^n} =  \ell([\eta_{r},\eta_{s}])^n.\] 
\end{example}

\vskip1cm

\begin{center}
\textbf{\Large{Part II: Berkovich curves and Schottky uniformization}}
\end{center}
\addcontentsline{toc}{chapter}{\textbf{II. Berkovich curves and Schottky uniformization}}
\refstepcounter{part}\label{part:curves}

\medbreak


\section{The Berkovich projective line and M\"obius transformations}\label{sec:BerkprojMobius}

\subsection{Affine Berkovich spaces}\label{sec:affineBerkovichspaces}

We generalize the constructions of Part~\ref{part:A1k}, replacing $k[T]$ by an arbitrary $k$-algebra of finite type. Our reference here is \cite[Section~1.5]{Berkovich90}.

\begin{definition}
Let $A$ be $k$-algebra of finite type. The \emph{Berkovich spectrum} $\Spec^\an(A)$ of~$A$ is the set of multiplicative seminorms on~$A$ that induce the given absolute value~$\va$ on~$k$.
\end{definition}

As in Definition~\ref{def:Hx}, we can associate a completed residue field~$\scrH(x)$ to each point~$x$ of~$\Spec^\an(A)$. As in Section~\ref{sec:topology}, we endow $\Spec^\an(A)$ with the coarsest topology that makes continuous the maps of the form
\[x \in \Spec^\an(A) \mapstoo |f(x)| \in \R\]
for $f \in A$. Properties similar to that of the Berkovich affine line still hold in this setting: the space $\Spec^\an(A)$ is countable at infinity, locally compact and locally path-connected.

We could also define a sheaf of function on $\Spec^\an(A)$ as in Definition~\ref{def:analyticfunction}\footnote{Note however that the ring of global sections is always reduced, so that we only get the right notion when~$A$ is reduced. The proper construction involves defining first the space $\AA^{n,\an}_{k} := \Spec^\an(k[T_{1},\dotsc,T_{n}])$, then open subsets of it, and then closed analytic subsets of the latter, as we usually proceed for analytifications in the complex setting.}
 with properties similar to that of the usual complex analytic spaces.
 
\begin{lemma}\label{lem:fonctorialitySpec}
Each morphism of $k$-algebras $\varphi \colon A \to B$ induces a continuous map of Berkovich spectra
\[\begin{array}{cccc}
\Spec^\an(\varphi) \colon & \Spec^\an(B) & \too & \Spec^\an(A)\\
& \va_{x} & \mapstoo &  |\varphi(\wc)|_{x} 
\end{array}.
\]
\end{lemma}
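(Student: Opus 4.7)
The plan is straightforward: verify first that the prescription defines a point of $\Spec^\an(A)$, i.e., a multiplicative seminorm on $A$ extending $\va$ on $k$, and then observe that continuity reduces immediately to the defining property of the topology on the target.

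For the well-definedness, I will fix $x \in \Spec^\an(B)$ with associated multiplicative seminorm $\va_x$ on $B$, and set $|f|_y := |\varphi(f)|_x$ for $f \in A$. The three defining axioms of a multiplicative seminorm (non-Archimedean triangle inequality, multiplicativity, and compatibility with $\va$ on $k$) are then all inherited from $\va_x$ by transport through the ring morphism $\varphi$. For instance, $|f_1 + f_2|_y = |\varphi(f_1) + \varphi(f_2)|_x \le \max(|\varphi(f_1)|_x,|\varphi(f_2)|_x) = \max(|f_1|_y,|f_2|_y)$; the multiplicativity is analogous; and for $\alpha \in k$, since $\varphi$ is a morphism of $k$-algebras, $\varphi(\alpha) = \alpha$, whence $|\alpha|_y = |\alpha|_x = |\alpha|$. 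Thus $y$ is a legitimate element of $\Spec^\an(A)$, and the map $\Spec^\an(\varphi)$ is well defined.

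For continuity, recall that the topology on $\Spec^\an(A)$ is the coarsest one making all evaluation maps $z \mapsto |f(z)|$, for $f \in A$, continuous. So it suffices to check that for each $f \in A$, the composition
\[ x \in \Spec^\an(B) \longmapsto |f(\Spec^\an(\varphi)(x))| = |\varphi(f)|_x \in \R \]
is continuous. But this is exactly the evaluation map on $\Spec^\an(B)$ associated to the element $\varphi(f) \in B$, and it is continuous by the very definition of the topology on $\Spec^\an(B)$. This concludes the argument.

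There is no real obstacle: both statements are essentially unwindings of definitions, entirely parallel to Remark~\ref{rem:topologyk} and the topological discussion of Section~\ref{sec:topology} applied in the more general setting of $k$-algebras of finite type. The only thing worth being a bit careful about is that the compatibility of $\varphi$ with the $k$-structure is what ensures that the pulled-back seminorm still restricts to the fixed absolute value on $k$.
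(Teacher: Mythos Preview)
Your proof is correct. The paper does not give a proof of this lemma at all (it is stated without proof, the result being considered immediate from the definitions), and your argument is exactly the natural verification one would supply: check the seminorm axioms by transport along the $k$-algebra morphism, then reduce continuity to the generating evaluation maps.
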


Let us do the example of a localisation morphism.

\begin{notation}
Let~$A$ be a $k$-algebra of finite type and let $f\in A$. We set
\[D(f) := \{x \in \Spec^\an(A) \mid f(x) \ne 0\}.\]
It is an open subset of $\Spec^\an(A)$.
\end{notation}

\begin{lemma}\label{lem:localisation}
Let~$A$ be a $k$-algebra of finite type and let $f\in A$. The map $\Spec^\an(A[1/f]) \to \Spec^\an(A)$ induced by the localisation morphism $A \to A[1/f]$  induces a homeomorphism onto its image $D(f)$.
\qed
\end{lemma}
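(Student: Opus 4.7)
The plan is to verify the four properties (image contained in $D(f)$, injectivity, surjectivity onto $D(f)$, and homeomorphism onto the image) in sequence, each following from unwinding the definitions.

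First I would check that the map lands in $D(f)$. If $y$ is a multiplicative seminorm on $A[1/f]$ extending $\va$ on $k$, then from $f \cdot f^{-1} = 1$ and multiplicativity we get $|f|_y \ne 0$, so the restriction to $A$ lies in $D(f)$. Injectivity is equally immediate: any extension $|\wc|_y$ of a given seminorm $|\wc|_x$ on $A$ to $A[1/f]$ must satisfy $|a/f^n|_y = |a|_x / |f|_x^n$, so the values on $A$ determine those on $A[1/f]$.

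Next, for surjectivity, given $x \in D(f)$ I would define $|a/f^n|_y := |a|_x / |f|_x^n$ and check the seminorm axioms. Well-definedness is the first nontrivial point: if $a/f^n = b/f^m$ in $A[1/f]$, then there exists $\ell \ge 0$ with $f^\ell(af^m - bf^n) = 0$ in $A$, so $|f|_x^\ell \,|af^m - bf^n|_x = 0$, and since $|f|_x \ne 0$ we conclude $|af^m - bf^n|_x = 0$. To deduce $|af^m|_x = |bf^n|_x$ from this, I need the ultrametric inequality for the seminorm~$|\wc|_x$ on~$A$, which I would establish by the standard binomial trick: for each $n$, multiplicativity gives $|a+b|_x^n = |(a+b)^n|_x \le (n+1)\max(|a|_x,|b|_x)^n$ (using $|\binom{n}{k}| \le 1$ because $\va$ on~$k$ is non-Archimedean), and letting $n \to \infty$ yields the ultrametric inequality. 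Multiplicativity of~$|\wc|_y$ is automatic, and its ultrametric inequality follows from the same property for~$|\wc|_x$ via the computation
\[|a/f^n + b/f^m|_y = \frac{|af^m + bf^n|_x}{|f|_x^{n+m}} \le \max\!\left(\frac{|a|_x}{|f|_x^n}, \frac{|b|_x}{|f|_x^m}\right).\]

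Finally, for the topological statement: continuity of the map is given by Lemma~\ref{lem:fonctorialitySpec}. For continuity of the inverse (which, together with the bijectivity just shown, gives the homeomorphism onto $D(f)$), I would note that by the very definition of the topology on $\Spec^\an(A[1/f])$, it suffices to check that for every $g \in A[1/f]$ the function $x \mapsto |g|_y$ is continuous on $D(f)$. Writing $g = a/f^n$ with $a \in A$, this function equals $x \mapsto |a|_x / |f|_x^n$, which is continuous on~$D(f)$ since $|f|_x$ does not vanish there. The main obstacle I anticipate is the well-definedness step in the construction of the inverse seminorm, since it hinges on the (standard but not entirely trivial) observation that a multiplicative seminorm extending a non-Archimedean absolute value is itself non-Archimedean.
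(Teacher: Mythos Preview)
Your proof is correct and complete. The paper itself provides no proof of this lemma (the \qed immediately following the statement indicates it is left to the reader as a standard fact), so your detailed verification---including the careful treatment of well-definedness and the derivation of the ultrametric inequality from multiplicativity---goes well beyond what the paper supplies.
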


\subsection{The Berkovich projective line}\label{sec:defP1}

In this section, we explain how to construct the Berkovich projective line over~$k$. It can be done, as usual, by gluing upside-down two copies of the affine line $\aank$ along $\aank-\{0\}$. We refer to \cite[Section~2.2]{BakerRumely}
for a definition in one step reminiscent of the ``$\textrm{Proj}$'' construction from algebraic geometry.

To carry out the construction of the Berkovich projective line more precisely, let us introduce some notation. We consider, as before, the Berkovich affine line $X := \aank$ with coordinate~$T$, \emph{i.e.} $\Spec^\an(k[T])$. By Lemma~\ref{lem:localisation}, its subset $U := \aank - \{0\} = D(T)$ may be identified with $\Spec^\an(k[T,1/T])$. 

We also consider another Berkovich affine line~$X'$ with coordinate~$T'$ and identify its subset $U' := X'-\{0\}$ with $\Spec^\an(k[T',1/T'])$. 

By Lemma~\ref{lem:fonctorialitySpec}, the isomorphism $k[T',1/T'] \xrightarrow[]{\sim} k[T,1/T]$ sending~$T'$ to~$1/T$ induces an isomorphism $\iota \colon U \xrightarrow[]{\sim} U'$.

\begin{definition}
The Berkovich projective line~$\pank$ is the space obtained by gluing the Berkovich affine lines~$X$ and~$X'$ along their open subsets~$U$ and~$U'$ via the isomorphim~$\iota$.

We denote by~$\infty$ the image in~$\pank$ of the point~0 in~$X'$.
\end{definition}

The basic topological properties of~$\pank$ follow from that of~$\aank$.

\begin{proposition}
We have $\pank = \aank \cup\{\infty\}$.

The space~$\pank$ is Hausdorff, compact, uniquely path-connected and locally path-connected.
\qed
\end{proposition}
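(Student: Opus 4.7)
The plan is to unpack the gluing construction and verify the four topological properties in turn. First, I would note that $\pank$ is formed by taking the disjoint union of~$X$ and~$X'$ (each a copy of~$\aank$) and gluing them along $U$ and $U'$ via the isomorphism~$\iota$. By construction, $\pank \setminus X$ consists of the single point $\infty = 0_{X'}$, which yields $\pank = \aank \cup \{\infty\}$ directly.

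For Hausdorffness and compactness, I would use the two covering charts. To separate $x \in \aank$ from~$\infty$, I would pick $r > |T(x)|$ and verify that the open disc $D^-(0,r) \subset X$ and the complement of $D^+(0,r)$ in~$\pank$ are disjoint open sets containing these two points; the key point is that this complement is open in~$\pank$, which reduces via the gluing to the fact that its trace on~$X'$ is the open disc $D^-(0,1/r)$ in the $X'$-chart. Separating two points inside~$\aank$ is immediate from Lemma~\ref{lem:Hausdorff}. For compactness, I would observe that $\pank$ is covered by the two closed unit discs: any point other than~$\infty$ with $|T|>1$ satisfies $|T'| = 1/|T| < 1$ in the other coordinate, so the two closed discs together exhaust $\pank$, and each is compact by Lemma~\ref{lem:discs}.

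For local path-connectedness and path-connectedness, I would combine the fact that $\aank$ enjoys both properties (Proposition~\ref{prop:discsconnected}) with the observation that~$\infty$ admits a basis of open disc neighborhoods in the $X'$-chart, each path-connected. A path from~$0$ to~$\infty$ can be built by extending the continuous map $r \mapsto \eta_{0,r}$ from Lemma~\ref{lem:path} by a suitable reparametrization; continuity at the endpoint~$\infty$ follows by reading the path in $X'$-coordinates, where it becomes $\eta'_{0,1/r}$ and converges to $0_{X'} = \infty$ as $r\to +\infty$, again by Lemma~\ref{lem:path}.

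The main obstacle is unique path-connectedness. My plan is to isolate the key local fact that~$\infty$ has only \emph{one branch}: the basis of neighborhoods $V_\epsilon = D^-(0,\epsilon)$ in the $X'$-chart has connected punctured versions $V_\epsilon \setminus \{\infty\}$. I would prove this by pulling back along~$\pi_{\wka/k}$ and applying Lemma~\ref{lem:cccomplement} together with Corollary~\ref{cor:piwkak}, using that~$\infty$ is $k$-rational and therefore has a single preimage in~$\aanwka$. The same argument shows that $X' \setminus \{\infty\}$ is path-connected. From this I would argue that no injective path in~$\pank$ can have~$\infty$ as an \emph{interior} point: such a path would, on a small time interval around the passage through~$\infty$, give an injective path in $X' \cong \aank$ from some~$p$ to some~$q$ in $X' \setminus \{\infty\}$ passing through~$\infty$; but the path-connectedness of $X' \setminus \{\infty\}$ would also yield an injective path from~$p$ to~$q$ avoiding~$\infty$, contradicting the unique path-connectedness of~$\aank$ (Proposition~\ref{prop:pathk}). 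Once this is established, any injective path between two points of~$\aank$ must stay in~$\aank$ and is unique by Proposition~\ref{prop:pathk}; an injective path ending at~$\infty$ decomposes into a tail inside some~$V_\epsilon$, uniquely determined in $X' \cong \aank$ by Proposition~\ref{prop:pathk}, and an initial segment in~$\aank$, uniquely determined there, so the whole path is unique.
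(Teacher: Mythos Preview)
The paper does not prove this statement: it is marked with a bare \qed, indicating it is meant as an immediate consequence of the corresponding facts for~$\aank$ (Lemmas~\ref{lem:Hausdorff} and~\ref{lem:discs}, Propositions~\ref{prop:discsconnected} and~\ref{prop:pathk}). Your sketch therefore supplies strictly more than the paper does, and the route you take is the natural one.

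Your argument is essentially correct, but two steps in the unique path-connectedness part deserve tightening. First, the phrase ``path-connectedness of $X'\setminus\{\infty\}$ would also yield an injective path from~$p$ to~$q$ avoiding~$\infty$'' is not quite self-justifying: path-connectedness gives a path, not an injective one. The clean fix is to observe directly that in $X'\cong\aank$ the unique injective path $[p,q]$ cannot pass through the type~1 point~$0_{X'}$, because type~1 points are minimal for~$\le$ (Corollary~\ref{cor:ordermin}) and so never occur in the interior of an arc between two other points. Second, the final decomposition into ``a tail inside some~$V_\varepsilon$'' and ``an initial segment in~$\aank$'' does not by itself give uniqueness, since two distinct injective paths to~$\infty$ might enter~$V_\varepsilon$ at different points. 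A cleaner way to finish: by the symmetric argument, $0$ cannot be an interior point of an injective path either, so if $x\neq 0$ the whole path from~$x$ to~$\infty$ lies in~$X'$ and is unique there by Proposition~\ref{prop:pathk}; for $x=0$, $y=\infty$, restrict any such path to $(0,1]$ to obtain an injective path in~$X'$ ending at~$0_{X'}$ and escaping every bounded disc at the other end, which forces its image to be $\{\eta'_{0,r}:r\ge 0\}$ and hence determines the path.
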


For $x,y \in \pank$, we denote by $[x,y]$ the unique injectif path between~$x$ and~$y$.

\subsection{M\"obius transformations}


Let us recall that, in the complex setting, the group $\PGL_{2}(\C)$ acts on~$\PP^1(\C)$ \emph{via} M\"obius transformations. More precisely, to an invertible matrix $A = \begin{pmatrix} a & b\\ c & d \end{pmatrix}$, one associates the automorphism
\[\gamma_{A} \colon z \in \PP^1(\C) \mapstoo \frac{az+b}{cz+d} \in \PP^1(\C)\]
with the usual convention that, if $c\ne 0$, then $\gamma_{A}(\infty) = a/c$ and $\gamma_{A}(-d/c) = \infty$, and, if $c=0$, then $\gamma_{A}(\infty) = \infty$.

\medbreak

We would like to define an action of $\PGL_{2}(k)$ on~$\pank$ similar to the complex one.  Let $A := \begin{pmatrix} a & b\\ c & d \end{pmatrix} \in \GL_{2}(k)$.

First note that we can use the same formula as above to associate to~$A$ an automorphism~$\gamma_{A}$ of the set of rational points $\pank(k)$. 

It is actually possible to deal with all the points this way. Indeed, let $x\in \pank - \pank(k)$. In Section~\ref{sec:classification}, we have associated to~$x$ a character $\chi_{x} \colon k[T] \to \scrH(x)$. Since~$x$ is not a rational point, $\chi_{x}(T)$ does not belong to~$k$, hence the quotient $(a \chi_{x}(T) + b)/(c \chi_{x}(T) +d)$
makes sense. 
We can then define $\gamma_{A}(x)$ as the element of~$\aank$ associated to the character 
\[P(T) \in k[T] \mapsto P\Big(\frac{a \chi_{x}(T) + b}{c \chi_{x}(T) +d}\Big) \in \scrH(x).\] 

\medbreak

This construction can also be made in a more algebraic way. By Lemmas~\ref{lem:fonctorialitySpec} and~\ref{lem:localisation}, the morphism of $k$-algebras
\[P(T) \in k[T] \mapsto P\Big(\dfrac{aT+b}{cT+d}\Big) \in k\Big[T,\dfrac1{cT+d}\Big]\]
induces a map $\gamma_{A,1} \colon \aank - \{-\frac d c\} \to \aank \subseteq \pank$ (with the convention that $-d/c = \infty$ if $d=0$).

Similarly, the morphism of $k$-algebras
\[Q(U) \in k[T'] \mapsto Q\Big(\dfrac{c+dT'}{a+bT'}\Big) \in k\Big[T',\dfrac1{a+bT'}\Big]\]
induces a map $\gamma_{A,2} \colon \pank - \{0,-\frac b a\} \to \pank$ (with the convention that $-b/a = \infty$ if $a=0$).

A simple computation shows that the maps $\gamma_{A,1}$ and~$\gamma_{A,2}$ are compatible with the isomorphism~$\iota$ from Section~\ref{sec:defP1}. Note that we always have $- \frac d c \ne - \frac b a$. If $ad\ne 0$, it follows that we have $\big(\aank - \{-\frac c d\} \big) \cup \big(\pank - \{0,-\frac b a\} \big) = \pank$, so the two maps glue to give a global map 
\[\gamma_{A} \colon \pank\to\pank.\]

We let the reader handle the remaining cases by using appropriate changes of variables.

\begin{notation}
For $a,b,c,d \in k$ with $ad-bc\ne 0$, we denote by $\begin{bmatrix} a & b\\ c & d \end{bmatrix}$ the image in $\PGL_{2}(k)$ of the matrix  $\begin{pmatrix} a & b\\ c & d \end{pmatrix}$.

From now on, we will identify each element $A$ of~$\PGL_{2}(k)$ with the associated automorphism~$\gamma_{A}$ of~$\pank$.
\end{notation}

\begin{lemma}\label{lem:disctodisc}
The image of a closed (resp. open) disc of~$\pank$ by a M\"obius transformation is a closed (resp. open) disc.
\end{lemma}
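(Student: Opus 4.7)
The plan is to reduce the assertion to the three generators of $\PGL_2(k)$ and to verify it directly on each. Since every element of $\PGL_{2}(k)$ is a composition of translations $T \mapsto T+b$, homotheties $T \mapsto aT$ with $a \in k^\times$, and (when the element does not fix $\infty$) the inversion $\sigma \colon T \mapsto 1/T$, it suffices to handle these three cases separately. Here ``closed (resp.\ open) disc in $\pank$'' should be understood as either a closed (resp.\ open) disc in $\aank$ or the complement in $\pank$ of an open (resp.\ closed) disc in $\aank$; the statement to be proved is that this class is preserved.

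First, I would dispatch the affine transformations. For a translation by~$b$, the functoriality of Lemma~\ref{lem:fonctorialitySpec} shows that the corresponding automorphism sends the seminorm $\va_x$ to $P\mapsto |P(T+b)|_x$, so that $|T-\alpha|_{\gamma(x)} = |T-(\alpha-b)|_x$ for every polynomial; one reads off immediately that $D^\pm(\alpha,r)$ is mapped to $D^\pm(\alpha-b,r)$. The same kind of direct calculation shows that a homothety of ratio~$a$ sends $D^\pm(\alpha,r)$ to $D^\pm(a\alpha,|a|r)$.

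The main case is the inversion $\sigma$, which is an involution of $\pank$ swapping~$0$ and~$\infty$. Given a closed disc $D^+(\alpha,r)$ of~$\aank$, I distinguish two subcases. If $|\alpha|>r$, then $0\notin D^+(\alpha,r)$; applying the ultrametric triangle inequality inside~$\scrH(x)$ to $T = (T-\alpha)+\alpha$ one obtains $|T(x)|=|\alpha|$ for every $x\in D^+(\alpha,r)$, whence
\[
\left|\tfrac{1}{T}(x) - \tfrac{1}{\alpha}\right| \;=\; \frac{|(T-\alpha)(x)|}{|\alpha|\,|T(x)|} \;\le\; \frac{r}{|\alpha|^2}.
\]
Thus $\sigma(D^+(\alpha,r)) \subseteq D^+(1/\alpha,\,r/|\alpha|^2)$; the reverse inclusion follows from applying the same argument to this target disc (whose center has absolute value $1/|\alpha| > r/|\alpha|^2$, so $0$ does not belong to it either) together with the fact that $\sigma$ is an involution. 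If on the other hand $|\alpha|\le r$, then by Lemma~\ref{lem:etaalphar} one has $D^+(\alpha,r)=D^+(0,r)$, and
\[
\sigma(D^+(0,r)) \;=\; \{ y\in\pank \,:\, |T(y)|\ge 1/r\} \;\cup\;\{\infty\} \;=\; \pank \setminus D^-(0,1/r),
\]
which is a closed disc of $\pank$ containing $\infty$. The case of a disc in $\pank$ containing~$\infty$ (i.e.\ the complement in $\pank$ of an open disc of~$\aank$) is treated by applying $\sigma$ first and invoking what has just been proved. The case of an open disc is entirely parallel, exchanging strict and non-strict inequalities.

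The only delicate point is the passage, inside the computation for $\sigma$, from the rational-point intuition to arbitrary points of $D^+(\alpha,r)$: one has to make sure that the identity $|T(x)|=|\alpha|$ holds for \emph{every} $x$ of the disc, not only for the rigid ones. This however is a direct application of the ultrametric triangle inequality in the field~$\scrH(x)$, using that $|(T-\alpha)(x)|\le r<|\alpha|$. Everything else is elementary manipulation; no further machinery is required.
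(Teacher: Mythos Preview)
Your proof is correct and complete. It differs from the paper's argument: the paper extends scalars to an algebraically closed field, uses that every matrix is then similar to an upper triangular one, and concludes that ``up to changing coordinates'' the transformation is affine (a homothety or a translation), for which the claim is obvious. You instead stay over~$k$ and use the Bruhat-type decomposition of an arbitrary element of $\PGL_2(k)$ as a product of translations, homotheties, and the inversion $z\mapsto 1/z$, verifying the claim for each generator by a direct computation in~$\scrH(x)$.

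Your route is more explicit and avoids a subtle point in the paper's phrasing: the ``change of coordinates'' that triangularizes~$A$ is itself a M\"obius transformation, so invoking it to reduce to the affine case already presupposes that such transformations preserve discs. Your generator-by-generator argument sidesteps this circularity entirely, and yields as a by-product the concrete formula of Lemma~\ref{lem:inversiondisc}. The paper's route, once this point is addressed, is slightly quicker conceptually.

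One harmless slip: from $|T-\alpha|_{\gamma(x)}=|T-(\alpha-b)|_x$ you conclude that $D^\pm(\alpha,r)$ is \emph{mapped to} $D^\pm(\alpha-b,r)$; but this identity actually says that the \emph{preimage} of $D^\pm(\alpha,r)$ under~$\gamma$ is $D^\pm(\alpha-b,r)$, so the image is $D^\pm(\alpha+b,r)$. This does not affect the conclusion.
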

\begin{proof}
Let $A \in \GL_{2}(k)$. We may extend the scalars, hence assume that $k$ is algebraically closed. In this case, $A$~is similar to an upper triangular matrix. In other words, up to changing coordinates of~$\pank$, we may assume that $A$~is upper triangular. The transformation~$\gamma_{A}$ is then of the form
\[\gamma_{A} \colon z \in \pank \mapsto \alpha z \in \pank\]
or 
\[\gamma_{A} \colon z \in \pank \mapsto  z + \alpha\in \pank\]
for some $\alpha\in k$. In both cases, the result is clear.
\end{proof}

\subsection{Loxodromic transformations and Koebe coordinates}\label{sec:Koebe}

\begin{definition}
A matrix in $\GL_{2}(k)$ is said to be \emph{loxodromic} if its eigenvalues in~$k^a$ have distinct absolute values.

A M\"obius transformation is said to be \emph{loxodromic} if some (or equivalently every) representative is.
\end{definition}

\begin{lemma}\label{lem:abcdloxodromic}
Let $a,b,c,d \in k$ with $ad-bc\ne 0$ and set $A := \begin{pmatrix} a & b\\ c & d \end{pmatrix} \in \GL_{2}(k)$. Then $A$ is loxodromic if, and only if, we have $|ad-bc| < |a+d|^2$.
\end{lemma}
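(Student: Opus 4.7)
The plan is to reduce the statement to a direct computation with the eigenvalues of $A$, using the non-Archimedean triangle inequality in the strong form (which guarantees that $|x+y|=\max(|x|,|y|)$ whenever $|x|\ne|y|$). Recall that, by the hypothesis recalled at the beginning of the paper, the absolute value~$\va$ on~$k$ extends uniquely to~$k^a$, so expressions like $|\lambda_i|$ for eigenvalues of~$A$ make sense unambiguously.

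The first step is to pass to the eigenvalues. The characteristic polynomial of~$A$ is
\[\chi_A(X) = X^2 - (a+d)X + (ad-bc),\]
so its roots $\lambda_1,\lambda_2\in k^a$ satisfy $\lambda_1+\lambda_2 = a+d$ and $\lambda_1\lambda_2 = ad-bc$. By definition, $A$ is loxodromic if and only if $|\lambda_1|\ne|\lambda_2|$.

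The second step is to show the equivalence. If $|\lambda_1|\ne|\lambda_2|$, say $|\lambda_1|>|\lambda_2|$ after relabelling, then the strong triangle inequality gives $|a+d| = |\lambda_1+\lambda_2| = |\lambda_1|$, so
\[|a+d|^2 = |\lambda_1|^2 > |\lambda_1|\,|\lambda_2| = |ad-bc|.\]
Conversely, if $|\lambda_1|=|\lambda_2|$, then the (ordinary) triangle inequality gives $|a+d| = |\lambda_1+\lambda_2| \le |\lambda_1|$, hence
\[|a+d|^2 \le |\lambda_1|^2 = |\lambda_1|\,|\lambda_2| = |ad-bc|.\]
Combining both implications yields: $A$ is loxodromic $\iff |ad-bc|<|a+d|^2$.

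There is no real obstacle here: the only subtlety is to invoke the existence and uniqueness of the extension of~$\va$ to~$k^a$ (so that the eigenvalues have well-defined absolute values), and to notice that the non-Archimedean nature of~$\va$ is precisely what makes the implication ``$|\lambda_1|\ne|\lambda_2|\Rightarrow |a+d|=\max(|\lambda_1|,|\lambda_2|)$'' sharp — this is what produces a strict inequality on one side and a non-strict one on the other, cleanly matching the statement.
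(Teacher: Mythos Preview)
Your proof is correct and follows essentially the same approach as the paper: both arguments pass to the eigenvalues $\lambda_1,\lambda_2\in k^a$, use $\lambda_1+\lambda_2=a+d$ and $\lambda_1\lambda_2=ad-bc$, and then apply the strong form of the non-Archimedean triangle inequality to compare $|a+d|^2$ with $|ad-bc|$ according to whether $|\lambda_1|=|\lambda_2|$ or not. Your added remarks on the unique extension of~$\va$ to~$k^a$ are a welcome clarification but do not change the substance.
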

\begin{proof}
Let~$\lambda$ and~$\lambda'$ be the eigenvalues of~$A$ in~$k^a$. We may assume that $|\lambda| \le |\lambda'|$. 

If we have $|\lambda| = |\lambda'|$, then we have 
\[|a+d|^2 = |\lambda+\lambda'|^2 \le |\lambda'|^2 = |\lambda| \, |\lambda'| = |ad-bc|.\]
Conversely, if we have $|\lambda| < |\lambda'|$, then we have 
\[|a+d|^2 = |\lambda+\lambda'|^2 = |\lambda'|^2 > |\lambda| \, |\lambda'| = |ad-bc|.\]
\end{proof}

%
%
%
%

Let $A\in \PGL_{2}(k)$ be a loxodromic M\"obius transformation. 

Fix some representative~$B$ of~$A$ in~$\GL_{2}(k)$. Denote by~$\lambda$ and~$\lambda'$ its eigenvalues in~$k^a$. We may assume that $|\lambda| < |\lambda'|$. The characteristic polynomial~$\chi_{B}$ of~$B$ cannot be irreducible over~$k$, since otherwise its roots in~$k^a$ would have the same absolute values. It follows that~$\lambda$ and~$\lambda'$ belong to~$k$. Set $\beta := \lambda/\lambda' \in k^{\circ\circ}$.

The eigenspace of~$B$ associated to the eigenvalue~$\lambda$ (resp.~$\lambda'$) is a line in~$k^2$. Denote by~$\alpha$ (resp.~$\alpha'$) the corresponding element in~$\PP^1(k)$.

\begin{definition}
The elements $\alpha, \alpha' \in \PP^1(k)$ and $\beta \in k^{\circ\circ}$ depend only on~$A$ and not on the chosen representative. They are called the \emph{Koebe coordinates} of~$A$. 
\end{definition}

There exists a M\"obius transformation $\varepsilon \in \mathrm{PGL}_2(k)$ such that $\varepsilon(0)=\alpha$ and $\varepsilon(\infty)=\alpha'$. The M\"obius transformation $\varepsilon^{-1}A \varepsilon$ now has eigenspaces corresponding to 0 and~$\infty$ in~$\PP^1(k)$ and the associated automorphism of~$\pank$ is
\[\gamma_{\varepsilon^{-1}A \varepsilon} \colon z\in \pank \mapsto \beta z \in \pank.\]
We deduce that~0 and~$\infty$ are respectively the attracting and repelling fixed points of~$\gamma_{\varepsilon^{-1}A \varepsilon}$ in~$\pank$. It follows that~$\alpha$ and~$\alpha'$ are respectively the attracting and repelling fixed points of~$\gamma_{A}$ in~$\pank$.

%
%

\medbreak

It follows from the same argument that the Koebe coordinates determine uniquely the M\"obius transformation $A$. In fact, given $\alpha,\alpha', \beta \in k$ with $\alpha\ne \alpha'$ and $0<|\beta|<1$, the M\"obius transformation that has these elements as Koebe coordinates is given explicitly by 
\begin{equation}\label{eq:LoxodromicMatrix}
M(\alpha, \alpha', \beta) =  \left[
\begin{matrix} 
\alpha-\beta \alpha' & (\beta-1)\alpha\alpha' \\
1-\beta & \beta\alpha-\alpha'
\end{matrix}\right],
 \end{equation}
 
In an analogous way, whenever $\infty \in \pank$ is an attracting or repelling point of a loxodromic M\"obius transformation, we can recover the latter as: 
\begin{equation}\label{eq:LoxodromicMatrix2}
M(\alpha, \infty, \beta) =  \left[
\begin{matrix} 
\beta & (1-\beta)\alpha \\
0 & 1
\end{matrix}\right] \; \mbox{ or } \;
M(\infty, \alpha', \beta) = \left[
\begin{matrix} 
1 & (\beta-1)\alpha' \\
0 & \beta
\end{matrix}\right].
\end{equation}

\begin{remark}\label{rem:nonloxodromic}
Let $A\in \PGL_{2}(k)$ be a M\"obius transformation that is not loxodromic. Then, extending the scalars to~$\wka$ and possibly changing the coordinates, the associated automorphism of~$\panwka$ is a homothety
\[z\in \panwka \mapsto \beta z \in \panwka \textrm{ with } |\beta|=1\] 
or a translation
\[z\in \panwka \mapsto z + b \in \panwka.\]
Note that those automorphisms have several fixed points in $\panwka$ ($\eta_{r}$ with $r\ge 0$ in the first case and $r\ge |b|$ in the second). It follows that~$A$ itself also has infinitely many fixed points in~$\pank$.

\end{remark}

%
%
%

%

\section{Berkovich $k$-analytic curves}\label{sec:kanal}

\subsection{Berkovich $\AA^1$-like curves}\label{sec:A1like}

In this section we go one step further the study of affine and projective lines, by introducing a class of curves that ``locally look like the affine line'', and see that there are interesting examples of curves belonging to this class. 


A much more general theory of $k$-analytic curves exists but it will be discussed only briefly in this text in Section \ref{sec:smoothcurves}, in the case of smooth curves.
For more on this topic, the standard reference is \cite[Chapter~4]{Berkovich90}. The most comprehensive account to-date can be found in A.~Ducros' book project \cite{DucrosRSS}, while deeper discussions of specific aspects are contained in the references in the Appendix \ref{app:Berkurves} of the present text.


\begin{definition}
A \emph{$k$-analytic $\AA^1$-like curve} is a locally ringed space in which every point admits an open neighborhood isomorphic to an open subset of~$\aank$.
\end{definition}

It follows from the explicit description of bases of neighborhoods of points of~$\aank$ (see Proposition~\ref{prop:types}) that each $k$-analytic $\AA^1$-like curve admits a covering by virtual open Swiss cheeses.
By local compactness, such a covering can always be found locally finite. 
It can be refined into a partition (no longer locally finite) consisting of simpler pieces.

\begin{theorem}\label{thm:triangulation}
Let~$X$ be a $k$-analytic $\AA^1$-like curve. Then, there exist
\begin{enumerate}
\item a locally finite set~$S$ of type~2 points of~$X$;
\item a locally finite set~$\calA$ of virtual open annuli of~$X$;
\item a set~$\calD$ of virtual open discs of~$X$
\end{enumerate}
such that $S \cup \calA \cup \calD$ is a partition of~$X$.
\end{theorem}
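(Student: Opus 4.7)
The plan is to construct the partition locally on a suitable cover of~$X$ by virtual open Swiss cheeses and then to glue. Since~$X$ is $\AA^1$-like, Proposition~\ref{prop:types} (combined with descent along~$\pi_{\wka/k}$) shows that every point of~$X$ admits a basis of neighborhoods consisting of virtual open Swiss cheeses, and Corollary~\ref{cor:loccompact} shows that $\aank$ is locally compact, hence so is~$X$. I would therefore begin by extracting a locally finite cover $\{U_i\}_{i \in I}$ of~$X$ by relatively compact virtual open Swiss cheeses.

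Next, for each~$U_i$ I produce a finite canonical triangulation. Pulling back along~$\pi_{\wka/k}$, every connected component~$C$ of $\pi_{\wka/k}^{-1}(U_i)$ is a genuine Swiss cheese $D^-(\alpha,r)\setminus\bigsqcup_{j} D^+(\beta_j,s_j)$ in~$\aanwka$. Its canonical skeleton, defined as the complement inside~$C$ of all open discs contained in~$C$, is a finite tree whose vertices are type~2 points (the branching points), whose open edges are skeleta of open sub-annuli, and whose complement in~$C$ is a disjoint union of open discs. By Lemma~\ref{lem:finitefiber}, the group $\Gal(k^s/k)$ permutes these pieces with finite orbits, so descending along~$\pi_{\wka/k}$ (via Corollary~\ref{cor:piwkak}) yields a finite set $S_i \subset U_i$ of type~2 points together with a partition of~$U_i$ into $S_i$, finitely many virtual open annuli, and a family of virtual open discs.

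I would then set $S := \bigcup_i S_i$, which is locally finite by local finiteness of the cover, and define $\mathcal{A}$ and~$\mathcal{D}$ as the sets of connected components of $X - S$ which are virtual open annuli, respectively virtual open discs. The remaining content of the theorem is the claim that every connected component of $X - S$ is of one of these two types, and that~$\mathcal{A}$ is locally finite.

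The main obstacle is precisely this last verification: because Swiss cheeses from neighbouring~$U_i$ may overlap in complicated ways, the naive union of their triangulations can leave components of $X - S$ that are themselves Swiss-cheese-like. I would resolve this by iteratively enlarging each~$S_i$: whenever a type~2 point of some~$S_j$ lies inside a component of $U_i - S_i$, I add to $S_i$ the finitely many type~2 branching points needed to separate them, using the structural facts that removing a type~2 point from a virtual open disc produces virtual open discs together with a single virtual open annulus, and that removing a type~2 point from a virtual open annulus produces two smaller virtual open annuli together with virtual open discs (both deduced from Lemma~\ref{lem:cccomplement} and Galois descent via Corollary~\ref{cor:piwkak}). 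Since the cover is locally finite, each~$U_i$ meets only finitely many neighbours, so the refinement terminates with each~$S_i$ still finite. Local finiteness of~$\mathcal{A}$ then follows: each annulus of~$\mathcal{A}$ meeting a given compact set meets it inside one of the finitely many~$U_i$'s touching that compact set, and inside each~$U_i$ it appears as one of the finitely many edges of the refined triangulation.
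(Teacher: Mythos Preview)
Your proposal follows essentially the same route as the paper: cover~$X$ by a locally finite family of virtual open Swiss cheeses, produce a finite triangulation of each, and then merge them. The paper's own proof is a two-sentence sketch that relegates the merging to ``a combinatorial argument that is not difficult but quite lengthy'', so you are in fact supplying more detail than the authors do. One small caveat worth flagging: your second structural fact---that removing a type~2 point from a virtual open annulus yields two smaller annuli plus discs---is only correct when the point lies on the skeleton; if the foreign point~$p \in S_j$ sits off the skeleton of~$U_i$, removing it alone leaves a Swiss-cheese component, so during the refinement you must also add to~$S_i$ the retraction of~$p$ onto the current skeleton (this is presumably what you intend by ``the finitely many type~2 branching points needed to separate them'').
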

\begin{proof}
Each virtual open Swiss cheese may be written as a union of finitely many points of type~2, finitely many virtual open annuli and some virtual open discs (as in Example~\ref{ex:triangulationSwisscheese} below). By a combinatorial argument that is not difficult but quite lengthy, the covering so obtained can be turned into a partition.
\end{proof}

\begin{definition}\label{def:skeleton}
Let~$X$ be a $k$-analytic $\AA^1$-like curve. A partition $\calT = (S,\calA,\calD)$ of~$X$ satisfying the properties (i), (ii), (iii) of Theorem~\ref{thm:triangulation} is called a \emph{triangulation} of~$X$. The locally finite graph naturally arising from the set
\[\Sigma_{\calT} := S \cup \bigcup_{A\in \calA} \Sigma_{A}\] 
is called the \emph{skeleton} of~$\calT$. It is such that $X - \Sigma_{\calT}$ is a disjoint union of virtual open discs.

A triangulation $\calT$ is said to be \emph{finite} if the associated set $S$ is finite.
If this is the case, then $\Sigma_{\calT}$ is a finite graph.
By the results of Section~\ref{sec:lengths}, for each triangulation~$\calT$, $\Sigma_{\calT}$ may be naturally endowed with a metric structure. 
\end{definition}

%
%
%

\begin{remark}
It is more usual to define a triangulation as the datum of the set~$S$ only. 
Note that~$S$ determines uniquely~$\calA$ and~$\calD$ since their elements are exactly the connected components of~$X-S$, so our change of convention is harmless. 
\end{remark}

\begin{example}\label{ex:triangulationSwisscheese}
Consider the curve 
\[X := D^-(0,1) - (D^+(a,r) \cup D^+(b,r))\] 
for $r \in (0,1)$ and $a,b\in k$ with $|a|,|b| <1$, $|a-b|>r$. Set 
\[S := \{\eta_{a,|a-b|}\},\] 
\[\calA := \{A^-(a,|a-b|,1), A^-(a,r,|a-b|), A^-(b,r,|a-b|)\}\]
and 
\[\calD := \{D^-(u,|a-b|) : u \in k, |u-a|=|u-b| = |a-b|\}.\] 
Then, the triple $\calT := (S,\calA,\calD)$ is a triangulation of~$X$. The associated skeleton is a finite tree with three (half open) edges.
\end{example}

\begin{figure}[h]

\includegraphics[scale=.45]{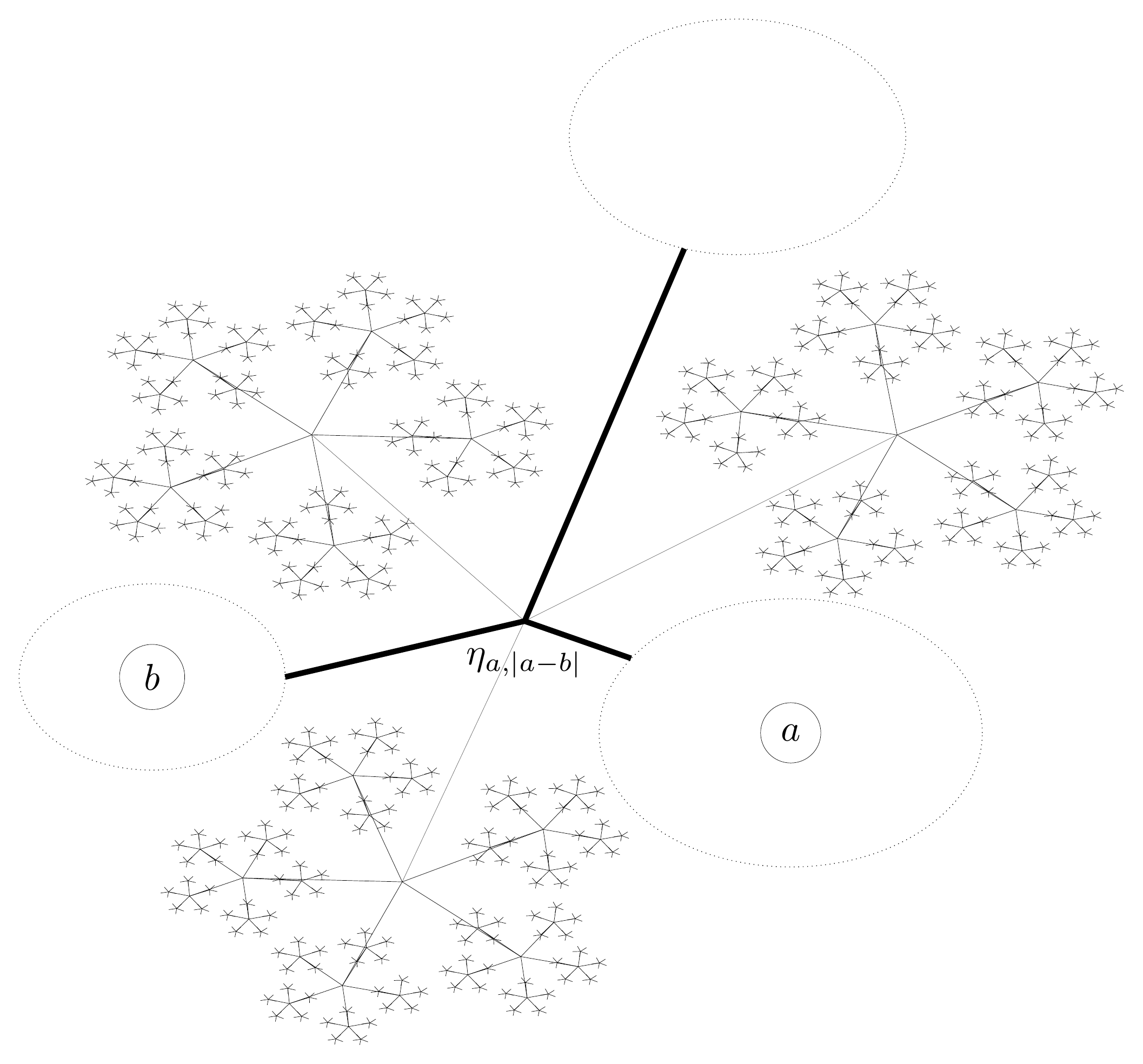}

\caption{The Swiss cheese $X$ described in Example \ref{ex:triangulationSwisscheese}. Its skeleton $\Sigma_X$ is the union of the three edges in evidence.}
\end{figure}

\begin{proposition}\label{prop:retraction}
Let $X$ be a connected $\AA^1$-like curve.
Let $\calT = (S,\calA,\calD)$ be a triangulation of~$X$ such that $S \neq \emptyset$ or $\calA \neq \emptyset$. 

There exists a canonical deformation retraction $\tau_{\calT} \colon X \to \Sigma_{\calT}$. Its restriction to any virtual open annulus $A\in \calA$ induces the map~$\tau_{A}$ from Proposition~\ref{prop:retractionvirtualannulus} and its restriction to any connected component~$D$ of~$A-\Sigma_{A}$ (which is a virtual open disc) induces the map~$\tau_{D}$ from Proposition~\ref{prop:retractionvirtualdisc}.

In particular, for each $\eta \in \Sigma_{A}$, the set $\tau_{\calT}^{-1}(\eta)$ is a virtual flat closed annulus.
\qed
\end{proposition}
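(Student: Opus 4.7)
The plan is to assemble $\tau_\calT$ piecewise from the retractions provided by Propositions~\ref{prop:retractionvirtualdisc} and~\ref{prop:retractionvirtualannulus} on each cell of the triangulation, and then to verify that the result is a continuous deformation retraction. First I would observe that $\Sigma_\calT$ is closed in $X$ (using local finiteness of $S \cup \calA$ together with the local description of skeleta of annuli from Lemma~\ref{lem:skeletonvirtual}) and that $X \setminus \Sigma_\calT$ is precisely the disjoint union of the virtual open discs $D \in \calD$ together with the connected components of $A \setminus \Sigma_A$ for $A \in \calA$, these latter themselves being virtual open discs by Proposition~\ref{prop:retractionvirtualannulus}. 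For each such virtual open disc, its unique boundary point in $X$ lies in $\Sigma_\calT$: this is automatic for the discs inside an annulus, while for $D \in \calD$ the partition property forces $\eta_D$ to lie in $S$ or in the skeleton of some $A \in \calA$.

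Next I would define $\tau_\calT \colon X \to \Sigma_\calT$ by the identity on $\Sigma_\calT$, by the map $\tau_A$ of Proposition~\ref{prop:retractionvirtualannulus} on each $A \in \calA$, and by the constant map with value $\eta_D$ on each $D \in \calD$. The associated homotopy $H \colon [0,1] \times X \to X$ is defined by the analogous three-way rule, using on each $D \in \calD$ the canonical homotopy of $\tau_D$ extended to $D \cup \{\eta_D\}$. The compatibility of the prescriptions is automatic since $\tau_A$ restricts to the identity on $\Sigma_A \subseteq \Sigma_\calT$, and distinct cells of the partition meet only at their boundaries, where the prescriptions agree.

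The delicate step, and the main obstacle, is verifying continuity of $\tau_\calT$ and of $H$ at points of $\Sigma_\calT$, in particular at the points of $S$ where several cells meet. Here I would use Proposition~\ref{prop:types}, its base-change version through Corollary~\ref{cor:piwkak}, and the local finiteness of $\calT$, to obtain at each $\eta \in \Sigma_\calT$ a neighborhood basis whose members are of the form $\tau_\calT^{-1}(V)$ for small open neighborhoods $V$ of $\eta$ in $\Sigma_\calT$, each assembled from the finitely many cells of $\calT$ incident to $\eta$ together with the (infinitely many, but all retracting to $V$) virtual open discs hanging off them. Continuity then follows from the continuity of each $\tau_A$ and $\tau_D$, and the deformation retraction property from the fact that all the local homotopies fix $\Sigma_\calT$ pointwise throughout $[0,1]$.

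For the last assertion, fix $\eta \in \Sigma_A$ with $A \in \calA$. I would argue that no $D \in \calD$ retracts to $\eta$: such a $D$ would have its boundary $\eta_D = \eta$ inside the open set $A$, contradicting disjointness of the cells of $\calT$. Therefore $\tau_\calT^{-1}(\eta) = \tau_A^{-1}(\eta)$, which is a virtual flat closed annulus by the last part of Proposition~\ref{prop:retractionvirtualannulus}.
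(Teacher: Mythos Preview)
The paper does not give a proof of this proposition: the \qed appears immediately after the statement, with no proof block. Your approach is exactly the natural one implicit in the paper's setup---glue the retractions $\tau_A$ and $\tau_D$ from Propositions~\ref{prop:retractionvirtualannulus} and~\ref{prop:retractionvirtualdisc} along the partition $\calT$, check continuity at the seams using local finiteness, and read off the fiber description from the corresponding statement for annuli---and it is correct.

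One small remark on the continuity step: rather than constructing a neighborhood basis of the form $\tau_\calT^{-1}(V)$ (which is slightly circular since $\tau_\calT$ is the map whose continuity you are proving), it is cleaner to argue that for any $x \in X$ the homotopy track $t \mapsto H(t,x)$ lies entirely on the segment $[x,\tau_\calT(x)]$, and then to observe (via the local $\aank$-structure and Proposition~\ref{prop:types}) that every point of $\Sigma_\calT$ has a basis of open neighborhoods $U$ in $X$ that are saturated for this segment relation, i.e.\ $x \in U$ implies $[x,\tau_\calT(x)] \subseteq U$. This gives continuity of the full homotopy $H$ at once.
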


\begin{definition}\label{def:skeletonX}
Let~$X$ be a $k$-analytic $\AA^1$-like curve. The \emph{skeleton} of~$X$ is the complement of all the virtual open discs contained in~$X$. We denote it by~$\Sigma_{X}$.
\end{definition}

\begin{remark}\label{rem:skeletonX}
Let~$X$ be a $k$-analytic $\AA^1$-like curve. It is not difficult to check that we have
\[\Sigma_X = \bigcap_{\calT } \Sigma_{\calT},\] 
for $\calT$ ranging over all triangulations of~$X$. In particular, $\Sigma_{X}$ is a locally finite metric graph (possibly empty).

Assume that~$X$ is connected and that $\Sigma_{X}$ is non-empty. Then there exists a triangulation~$\calT_{0}$ of~$X$ such that $\Sigma_{X} = \Sigma_{\calT_{0}}$. In particular, there is a canonical deformation retraction $\tau_{X} \colon X \to \Sigma_{X}$.
\end{remark}

\subsection{Arbitrary smooth curves}\label{sec:smoothcurves}

It goes beyond the scope of this survey to develop the full theory of Berkovich analytic curves. 
We only state in this section a few definitions and general facts, to which we would like to refer later.

\begin{definition}\label{def:smoothcurve}
A \emph{smooth $k$-analytic curve} is a locally ringed space $X$ that is locally isomorphic to an open subset of a Berkovich spectrum of the form~$\Spec^\an(A)$, where~$A$ is the ring of functions on a smooth affine algebraic curve over~$k$.
\end{definition}

For each smooth $k$-analytic curve~$X$ and each complete valued extension~$K$ of~$k$, one may define the \emph{base-change}~$X_{K}$ of~$X$ to~$K$, by replacing each $\Spec^\an(A)$ by $\Spec^\an(A\otimes_{k}K)$ in its definition. It is a smooth $K$-analytic curve and there is a canonical projection morphism $\pi_{K/k} \colon X_{K} \to X$. The analogues of Proposition~\ref{prop:piGalois} and Corollary~\ref{cor:piwkak} hold in this more general setting.

\begin{example}
For each complete valued extension~$K$ of~$k$, the base-change of~$\aank$ to~$K$ is $\AA^{1,\an}_{K}$.
\end{example}

If one starts with a smooth algebraic curve~$\calX$ over~$k$, one may cover it by curves of the form $\Spec(A)$, with~$A$ as in Definition~\ref{def:smoothcurve} above, and then glue the corresponding analytic spaces~$\Spec^\an(A)$ to get a smooth $k$-analytic curve, called the \emph{analytification} of~$\calX$, and denoted by~$\calX^\an$.

\begin{example}
The analytification of~$\AA^1_{k}$ is~$\aank$.
\end{example}

As in the complex case, smooth compact $k$-analytic curves are automatically algebraic.

\begin{theorem}\label{thm:smoothcompact}
Let~$X$ be a smooth compact $k$-analytic curve. Then, there exists a projective smooth algebraic curve over~$k$ such that $X = \calX^\an$.
\end{theorem}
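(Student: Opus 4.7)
The strategy is to reduce $X$ to a finite combinatorial object, use it to build a formal model over~$k^\circ$, and then invoke formal GAGA to algebraize. As a preliminary step, I would use compactness together with Theorem~\ref{thm:triangulation} to extract a \emph{finite} triangulation $\calT=(S,\calA,\calD)$ of $X$, in which both $S$ and~$\calA$ are finite. The associated skeleton $\Sigma_\calT$ is then a finite metric graph onto which $X$ deformation retracts (Proposition~\ref{prop:retraction}), and $X\setminus\Sigma_\calT$ is a disjoint union of virtual open discs $D\in\calD$, each attached to $\Sigma_\calT$ along a single boundary branch.

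The second step is to use this triangulation to construct a proper formal model $\frakX$ of $X$ over $k^\circ$. Each type-2 point $s\in S$ has a completed residue field $\scrH(s)$ whose residue field $\widetilde{\scrH(s)}$ is, up to a finite extension of $\tilde k$, the function field of an irreducible smooth curve $C_s$, which will be an irreducible component of the special fiber. The finitely many virtual annuli in $\calA$, which glue the points of $S$ along their boundary branches in~$\Sigma_\calT$, correspond to ordinary double points connecting the components $C_s$, while the virtual discs in $\calD$ correspond to smooth closed points on the special fiber. Gluing this data yields a proper formal scheme $\frakX$ over $k^\circ$ whose special fiber $\frakX_s$ is a proper (hence projective) curve over $\tilde k$ and whose generic fiber is canonically identified with $X$.

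The final step is algebraization: by Grothendieck's formal existence theorem, applied to $\frakX$ (whose special fiber is projective), $\frakX$ is the formal completion of a projective $k^\circ$-scheme $\calX$, whose generic fiber $\calX_k$ is a projective algebraic curve over $k$. The smoothness of $X$ then forces $\calX_k$ to be smooth, and the usual comparison between analytification and formal completion yields $\calX_k^\an \simeq X$.

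The principal obstacle is the rigorous construction of $\frakX$: one must verify that the local gluing data provided by the triangulation descends to a bona fide formal scheme, and handle the case of a non-algebraically closed $k$ by Galois descent \emph{via} Corollary~\ref{cor:piwkak} (since the components $C_s$ may \emph{a priori} only be defined over a finite extension of $\tilde k$, and the reduction of points of type~2 to curves over~$\tilde k$ is cleanest after base change to~$\wka$, cf.\ Lemma~\ref{lem:type23algclosed}). Smoothness of $X$ also enters non-trivially here: it is precisely what guarantees that the local models attached to annuli are nodal singularities and that the local models attached to discs contribute smooth points, which is what we need in order to produce $\frakX$ with the correct shape before algebraizing.
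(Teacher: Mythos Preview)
The paper does not actually prove Theorem~\ref{thm:smoothcompact}: it is stated without proof in Section~\ref{sec:smoothcurves}, and the authors remark at the end of that section that, apart from the deep Theorem~\ref{thm:triangulationsmooth}, the results there ``are rather standard applications of the general theory of curves.'' So there is no paper proof to compare against; one can only assess whether your outline is sound on its own.

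Your outline has a genuine gap at the very first step. You invoke Theorem~\ref{thm:triangulation}, but that result is stated and proved only for $k$-analytic $\AA^1$-like curves. A general smooth compact $k$-analytic curve need not be $\AA^1$-like: it can (and typically does) contain type~2 points of positive stable genus, which have no neighborhood isomorphic to an open subset of~$\aank$ (see Definition~\ref{def:residuecurve} and Example~\ref{ex:residuecurve}). The correct triangulation input for arbitrary smooth curves is Theorem~\ref{thm:triangulationsmooth}, which the paper explicitly flags as ``deep and difficult'' and equivalent in strength to the semi-stable reduction theorem (Remark~\ref{rem:triangulationmodel}). Once you appeal to that, your formal-model-plus-formal-GAGA strategy is essentially the dictionary of Remark~\ref{rem:triangulationmodel} run in reverse, and it does work; but you are then deducing a comparatively elementary algebraicity statement from semi-stable reduction, which is a far heavier theorem and in many treatments is itself proved only after one knows the curve is algebraic. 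A more self-contained route, and the one usually meant by ``standard,'' is to produce an ample line bundle directly (or to pass to an arbitrary admissible formal model via Raynaud's theory, without needing it to be semi-stable) and then apply GAGA for projective space or Grothendieck's existence theorem.
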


The invariants we have defined so far for the Berkovich affine line~$\aank$ have natural counterparts for smooth $k$-analytic curves. Let $X$ be a smooth $k$-analytic curve. For each point~$x\in X$, the completed residue field~$\scrH(x)$ is the completion of a finitely generated extension of~$k$ of transcendence degree less than or equal to~1. We may then define integers $s(x)$ and $t(x)$ such that $s(x)+t(x) \le 1$ and the type of~$x$, as we did in the case of~$\aank$ (see Definition~\ref{def:typesofpoints}). 

If~$x$ is of type~2, then, by the equality case in Abhyankar's inequality (see Theorem~\ref{thm:Abhyankar}), the group $|\scrH(x)^\times| / |k^\times|$ is finitely generated, hence finite, and the field extension $\widetilde{\scrH(x)}/\tilde k$ is finitely generated.

\medbreak

Let us fix the definition of genus of an algebraic curve.

\begin{definition}
Let $F$ be a field and let $C$ be a projective curve over~$F$, \textit{i.e.} a connected normal projective scheme of finite type over~$F$ of dimension~1. 

If $F$ is algebraically closed, then $C$ is smooth, and we define the \emph{geometric genus of $C$} to be 
\[g(C) := \dim_{F} H^0(C,\Omega_{C}).\]

In general, let $\bar F$ be an algebraic closure of~$F$. Let~$C'$ be the normalization of a connected component of $C\times_{F} \bar F$. It is a projective curve over~$\bar F$ and we define the \emph{geometric genus of $C$} to be 
\[g(C) := g(C').\]
It does not depend on the choice of~$C'$.
\end{definition}

\begin{definition}\label{def:residuecurve}
Let~$X$ be a smooth $k$-analytic curve and let $x\in X$ be a point of type~2. 

The \emph{residue curve} at~$x$ is the unique (up to isomorphism) projective curve~$\scrC_{x}$ over $\tilde{k}$ with function field~$\widetilde{\scrH(x)}$. The \emph{genus of~$x$} is the geometric genus of~$\scrC_{x}$. We denote it by~$g(x)$.

The \emph{stable genus of~$x$}, is the genus of any point~$x'$ over~$x$ in~$X_{\wka}$. We denote it by~$g_\st(x)$. It does not depend on the choice of~$x'$.
\end{definition}

\begin{example}\label{ex:residuecurve}
Let $\alpha\in k$ and $r\in |k^\times|^\Q$. By Example~\ref{ex:typeetaalphar}, the residue curve at the point~$\eta_{\alpha,r}$ in~$\aank$ is the projective line~$\PP^1_{\tilde k}$ over~$\tilde k$. In particular, we have $g(\eta_{\alpha,r}) = 0$. 

By Lemma~\ref{lem:type23algclosed}, any point of type~2 in~$\aank$ (hence in any $k$-analytic $\AA^1$-like curve) has stable genus~0.
\end{example}

The fact that the stable genus does not need to coincide with the genus is what motivates our definition. Let us give an example of this phenomenon.

\begin{remark}
Let $p\ge 5$ be a prime number. 
Consider the affine analytic plane~$\AA^{2,\an}_{\Q_{p}}$ with coordinates $x,y$. Let~$X$ be the smooth $\Q_{p}$-analytic curve inside~$\AA^{2,\an}_{\Q_{p}}$ given by the equation $y^2 = x^3 + p$ and let $\pi \colon X \to \AA^{1,\an}_{\Q_{p}}$ be the projection onto the first coordinate~$x$. 

The fiber $\pi^{-1}(\eta_{0,|p|^{-1/3}})$ contains a unique point, that we will denote by~$a$. One may check that~$\widetilde{\scrH(a)}$ is a purely transcendental extension of~$\F_{p}$ generated by the class~$u$ of~$p x^3$ (which coincides with the class of $p y^2$):
\[ \widetilde{\scrH(a)} \simeq \F_{p}(u).\]
In particular, we have $\scrC_{a} = \PP^1_{\F_{p}}$ and $g(a)=0$.

Let us now extend the scalars to the field~$\C_{p}$, whose residue field is an algebraic closure~$\overline{\F}_{p}$ of~$\F_{p}$. Let~$b$ be the unique point of~$X_{\C_{p}}$ over~$a$. The field $\widetilde{\scrH(b)}$ is now generated by the class~$v$ of~$p^{-1/3} x$ and the class~$w$ of~$p^{-1/2} y$:
\[ \widetilde{\scrH(b)} \simeq \overline{\F}_{p}(v)[w]/(v^3 - w^2 +1).\]
In particular, $\scrC_{b}$ is an elliptic curve over~$\overline{\F_{p}}$, and we have $g_{\st}(a) = g(b) =1$.
\end{remark}

We always have an inequality between genus and stable genus.

\begin{lemma}
Let~$X$ be a smooth $k$-analytic curve and let $x\in X$ be a point of type~2. Then, we have $g(x) \le g_{\st}(x)$.
\end{lemma}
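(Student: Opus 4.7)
The plan is to exhibit a finite dominant morphism $\varphi \colon \scrC_{x'} \to C$ of smooth projective curves over $\overline{\tilde k}$ with $g(C) = g(x)$; the desired inequality $g_\st(x) \ge g(x)$ will then follow from the standard fact that a finite dominant morphism between smooth projective curves over an algebraically closed field cannot decrease the genus.

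First I would fix a point $x' \in X_{\wka}$ lying above~$x$, so that by definition $g_\st(x) = g(\scrC_{x'})$. The base-change morphism $\pi_{\wka/k}$ makes $\scrH(x')$ into a complete valued extension of~$\scrH(x)$, and it is also, tautologically, an isometric extension of~$\wka$. Passing to residue fields, I obtain two canonical embeddings into $\widetilde{\scrH(x')}$: one of $\widetilde{\scrH(x)}$ and one of $\widetilde{\wka}$. The latter equals the algebraic closure $\overline{\tilde k}$ of~$\tilde k$, by the standard computation of the residue field of the completion of an algebraic closure of a complete non-Archimedean field.

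Next, inside $\widetilde{\scrH(x')}$, I would consider the compositum $L := \widetilde{\scrH(x)} \cdot \overline{\tilde k}$. Because $x$ and $x'$ are both of type~2, the equality case of Abhyankar's inequality (Theorem~\ref{thm:Abhyankar}) ensures that $\widetilde{\scrH(x)}/\tilde k$ and $\widetilde{\scrH(x')}/\overline{\tilde k}$ are finitely generated of transcendence degree~1. Therefore $L/\overline{\tilde k}$ is finitely generated of transcendence degree~1 as well, and $\widetilde{\scrH(x')}/L$ is a finite algebraic extension. Furthermore, the ring $\widetilde{\scrH(x)} \otimes_{\tilde k} \overline{\tilde k}$ is integral over the field~$\widetilde{\scrH(x)}$, hence zero-dimensional, so the kernel of its surjection onto~$L$ is a minimal prime. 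Consequently $L$ is isomorphic to the function field of a connected component of the normalisation of $\scrC_x \times_{\tilde k} \overline{\tilde k}$, and the smooth projective $\overline{\tilde k}$-curve $C$ with function field $L$ satisfies $g(C) = g(x)$ by the very definition of geometric genus.

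Finally, the inclusion $L \hookrightarrow \widetilde{\scrH(x')}$ translates into the finite dominant morphism $\varphi \colon \scrC_{x'} \to C$, and a standard application of Riemann--Hurwitz (combined, in positive characteristic, with the fact that Frobenius preserves the genus, to handle a possible purely inseparable factor of~$\varphi$) yields $g(\scrC_{x'}) \ge g(C)$, that is, $g_\st(x) \ge g(x)$. The main obstacle, and really the heart of the argument, is the identification of~$L$ with the function field of a component of the geometric base change; once the transcendence-degree computation and the zero-dimensionality of $\widetilde{\scrH(x)} \otimes_{\tilde k} \overline{\tilde k}$ are in place, the genus inequality is a well-known fact from the theory of curves.
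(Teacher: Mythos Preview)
Your proposal is correct and is essentially the same argument as the paper's, recast in field-theoretic language. The paper constructs the morphism $\scrC_{x'}\to \scrC_x\times_{\tilde k}\bar{\tilde k}$ geometrically from the embedding $\widetilde{\scrH(x)}\hookrightarrow\widetilde{\scrH(x')}$, factors through the normalization~$\widetilde C$ of a connected component, and concludes by Riemann--Hurwitz; your compositum~$L$ is exactly the function field of that component, so the two routes are the same via the curve/function-field correspondence.
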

\begin{proof}
Let~$x'$ be a point of~$X_{\wka}$ over~$x$. By definition, the residue curve~$\scrC_{x}$ at~$x$ is defined over~$\tilde k$ and the residue curve~$\scrC_{x'}$ at~$x'$ is defined over an algebraic closure~$\bar{\tilde{k}}$ of~$\tilde k$.

The projection morphism $\pi_{\wka/k} \colon X_{\wka} \to X$ induces an isometric embedding $\scrH(x) \to \scrH(x')$, hence an embedding $\widetilde{\scrH(x)} \to \widetilde{\scrH(x')}$. It follows that we have a morphism $\scrC_{x'} \to \scrC_{x}$, hence a morphism $\varphi \colon \scrC_{x'} \to \scrC_{x} \times_{\tilde{k}} \bar{\tilde{k}}$. Its image is a connected component~$C$ of $\scrC_{x} \times_{\tilde{k}} \bar{\tilde{k}}$. The morphism $\varphi$ factors through~$C$, and even through the normalization~$\widetilde C$ of~$C$. By definition, we have $g(\widetilde C) = g(x)$ and $g(\scrC_{x'}) = g_{\st}(x)$. The result now follows from the Riemann-Hurwitz formula. 
\end{proof}

\begin{proposition}\label{prop:bijrescurve}
Let~$X$ be a smooth $k$-analytic curve and let $x\in X$ be a point of type~2. 
There is a natural bijection between the closed points of the residue curve~$\scrC_{x}$ at~$x$ and the set of directions emanating from~$x$ in~$X$. 
\qed
\end{proposition}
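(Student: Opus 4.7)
The plan is to attach to each branch at $x$ a ``slope'' valuation which descends to a discrete valuation of the function field $\widetilde{\scrH(x)}$ of $\scrC_x$, and then to establish bijectivity by reduction to the explicit $\aank$-case treated in Part~I. Given $b \in \calB_x$, Theorem~\ref{thm:harmonicityanalytic} tells us that every non-zero germ $f \in \calO_{X,x}$ makes $|f|$ monomial along $b$ with integer slope $\mu_b(|f|) \in \Z$, and $\mu_b(|\cdot|)$ is additive on products. I would set $v_b(f) := -\mu_b(|f|)$ and verify, using that for $f, g \in \calO_{X,x}$ with $|f(x)| = |g(x)| = 1$ and the same reduction one has $|f-g|(y) < |f(y)|$ for $y$ close to $x$ along $b$, that $v_b$ depends only on $\tilde f \in \widetilde{\scrH(x)}$ on the subset of unit elements. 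Extending multiplicatively and using triviality on $k^\circ$ then yields a nontrivial discrete rank-one valuation on $\widetilde{\scrH(x)}$ that is trivial on $\tilde k$, that is, a closed point $P_b \in \scrC_x$. This defines $\Phi(b) := P_b$.

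Because $s(x) = 1$, I may choose $T \in \calO_{X,x}$ with $|T(x)| = 1$ and $\tilde T$ transcendental over $\tilde k$; this yields a morphism $\varphi : X \to \aank$ sending $x$ to $\eta_{0,1}$ after a translation. Invoking the local structure of smooth $k$-analytic curves (see Section~\ref{sec:smoothcurves} and \cite{DucrosRSS}), I may assume after shrinking $X$ around $x$ that $\varphi$ is finite and étale outside $x$, and the inclusion $\tilde k(\tilde T) \hookrightarrow \widetilde{\scrH(x)}$ produces a finite morphism $\bar\varphi : \scrC_x \to \PP^1_{\tilde k} = \scrC_{\eta_{0,1}}$. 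Functoriality of $\mu_b(|\cdot|)$ implies that $\Phi$ commutes with $\varphi$ and $\bar\varphi$, so the proof reduces to the $\aank$-case. There Lemmas~\ref{lem:cccomplement} and~\ref{lem:closeddisctype2} identify branches at $\eta_{0,1}$ with $\PP^1(\tilde k) = \scrC_{\eta_{0,1}}(\tilde k)$, and Theorem~\ref{thm:sigmabF} applied to the linear polynomials $T - \alpha$ shows that $\Phi$ is the identity under this identification. A Galois descent argument via Corollary~\ref{cor:piwkak} (and its analogue for $X$) removes the assumption that $k$ be algebraically closed, and a fiber-by-fiber matching of branches over each branch of $\aank$ with closed points of $\scrC_x$ above each closed point of $\PP^1_{\tilde k}$ finishes the proof.

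The hardest step is precisely this fiber-matching: over each branch of $\aank$ at $\eta_{0,1}$, the number of branches of $X$ at $x$ must equal the number of closed points of $\scrC_x$ above the corresponding closed point of $\PP^1_{\tilde k}$, counted with appropriate multiplicities. This compatibility reflects the deeper functoriality of the residue curve and branch structure of a type~2 point under finite morphisms---a piece of the structure theory of Berkovich curves that is not developed in Part~I (cf.\ the ramified refinement of Theorem~\ref{th:degreelength}) and which one must import from the general theory. Once granted, the bijection follows by the bookkeeping sketched above.
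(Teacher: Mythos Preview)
The paper does not actually prove this proposition: note the \qed\ placed immediately after the statement. Section~\ref{sec:smoothcurves} is explicitly a list of ``definitions and general facts'' quoted without proof, and Proposition~\ref{prop:bijrescurve} is one of them. So there is no argument in the paper to compare your proposal against.

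That said, your sketch is the standard route to this result and is correct in outline. Two remarks. First, you invoke Theorem~\ref{thm:harmonicityanalytic} for germs $f\in\calO_{X,x}$ on an arbitrary smooth curve~$X$, but as stated in the paper that theorem is only for~$\aank$; you need either to first produce the finite morphism $\varphi\colon X\to\aank$ and pull the statement back, or to import the harmonicity result in full generality from~\cite{ThuillierPhD} or~\cite{DucrosRSS}. Second, you are right that the fiber-matching over each branch of~$\aank$ is the crux, and you correctly identify it as the compatibility of branch-counting with the degree of the residual morphism $\bar\varphi\colon\scrC_x\to\PP^1_{\tilde k}$. This is exactly the content that the paper does not develop and defers to the literature; once one has it (e.g.\ \cite[\S4]{Berkovich90} or \cite[\S3.6, \S4.2]{DucrosRSS}), your argument goes through.
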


\begin{example}
Assume that $k$ is algebraically closed. For $X = \aank$ and~$x=\eta_{1}$, the result of Proposition~\ref{prop:bijrescurve} follows from Lemma~\ref{lem:closeddisctype2}.
\end{example}


The structure of smooth $k$-analytic curves is well understood. 

\begin{theorem}\label{thm:triangulationsmooth}
Every smooth $k$-analytic curve admits a triangulation in the sense of Theorem~\ref{thm:triangulation}. 
\end{theorem}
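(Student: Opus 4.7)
The plan is to bootstrap from Theorem~\ref{thm:triangulation} for $\AA^1$-like curves by first reducing to the compact case, treating that case via semistable reduction, and then descending and gluing. I would first reduce to producing a finite triangulation on each smooth connected compact curve. Indeed, every smooth $k$-analytic curve $X$ is locally compact and countable at infinity, so it may be exhausted by relatively compact opens. Locally, by Definition~\ref{def:smoothcurve}, $X$ looks like an open subset of $\Spec^\an(A)$ for $A$ the affine coordinate ring of a smooth algebraic curve, which embeds into the analytification of its smooth projective compactification. Thus triangulations of compact smooth curves restrict to triangulations of such local pieces, and a combinatorial gluing argument based on the local structure from Proposition~\ref{prop:retraction} (controlling what happens in the annuli incident to the boundary) assembles them into a triangulation of all of $X$.

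Suppose now that $X$ is compact, smooth, and connected. By Theorem~\ref{thm:smoothcompact}, we may write $X = \calX^\an$ for a smooth projective curve $\calX$ over $k$. The heart of the argument is the semistable reduction theorem: after replacing $k$ by a suitable finite Galois extension $k'/k$, the curve $\calX_{k'}$ admits a semistable model $\mathfrak{X}$ over ${k'}^\circ$. The reduction map $\mathrm{sp} \colon \calX^\an_{k'} \to \mathfrak{X}_{\tilde{k'}}$ has the following key structure, which is exactly what a triangulation requires: the preimage of each generic point of the special fiber is a single type~2 point of $\calX^\an_{k'}$, with residue curve isomorphic to the normalization of the corresponding component; the preimage of a smooth closed point is an open disc; and the preimage of a node is an open annulus. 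Letting $S'$ be the finite set of preimages of generic points, and $\calA'$, $\calD'$ the sets of preimages of nodes and of smooth closed points respectively, this yields a finite triangulation of $\calX^\an_{k'}$.

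Finally, I would descend this triangulation to $X$ using the $\Gal(k'/k)$-action, following the pattern of Proposition~\ref{prop:piGalois} applied to smooth curves rather than to the affine line. The Galois group permutes $S'$, $\calA'$, and $\calD'$ with finite orbits (in the spirit of Lemma~\ref{lem:finitefiber}); the image of $S'$ in $X$ is a finite set $S$ of type~2 points, and the orbits in $\calA'$ and $\calD'$ descend to virtual annuli and virtual discs on $X$ by the quotient description analogous to Corollary~\ref{cor:piwkak}. Together with the gluing from the first step, this produces the required triangulation in general. The main obstacle, by a wide margin, is the semistable reduction theorem itself: it is a deep algebro-geometric input, while the remaining steps are essentially bookkeeping and Galois descent. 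A purely analytic alternative would instead show directly that the set of points of positive stable genus in a smooth compact curve is finite and that its complement is $\AA^1$-like (using Proposition~\ref{prop:bijrescurve} to control the local geometry), after which Theorem~\ref{thm:triangulation} applies verbatim; but establishing this finiteness analytically is essentially equivalent to semistable reduction and is where the genuine work lies.
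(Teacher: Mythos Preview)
The paper does not prove Theorem~\ref{thm:triangulationsmooth}. It is stated without proof, and the surrounding text explicitly calls it ``deep and difficult'' and says it will not be needed, since the $\AA^1$-like case (Theorem~\ref{thm:triangulation}) suffices for the paper's purposes. For a proof the paper points to Ducros~\cite{DucrosRSS}, and in Remark~\ref{rem:triangulationmodel} it records the link with the semi-stable reduction theorem of Bosch--L\"utkebohmert, noting that a semi-stable model over~$k^\circ$ yields a triangulation in which~$S$, $\calA$, and~$\calD$ correspond to irreducible components, nodes, and smooth points of the special fiber.

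Your proposal is therefore not being compared against a proof in the paper, but it does match the strategy the paper alludes to: the reduction--and--descent route via semi-stable reduction is exactly the classical argument, and your identification of semi-stable reduction as the genuine input (with the rest being bookkeeping and Galois descent) is accurate. One small comment: your first reduction step (from arbitrary smooth curves to compact ones by exhaustion and gluing) is more delicate than you suggest, since arbitrary smooth $k$-analytic curves need not be quasi-algebraic and the gluing of triangulations across overlaps is not purely combinatorial; Ducros's purely analytic approach in~\cite{DucrosRSS} handles this uniformly without passing through algebraization. But as a high-level sketch of why the theorem holds, your proposal is sound and consonant with what the paper indicates.
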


%

The result of Proposition~\ref{prop:retraction} also extends. If~$\calT$ is a non-empty triangulation of a smooth connected $k$-analytic curve~$X$, then there is a canonical deformation retraction of~$X$ onto the skeleton~$\Sigma_{\calT}$ of~$\calT$, which is a locally finite metric graph. We may also define the skeleton of~$X$ as in Definition~\ref{def:skeletonX}, and it satisfies the properties of Remark~\ref{rem:skeletonX}.


\begin{remark}\label{rem:triangulationmodel}
With this purely analytic formulation, Theorem~\ref{thm:triangulationsmooth} is due to A.~Ducros, who provided a purely analytic proof in~\cite{DucrosRSS}. It is very closely related to the semi-stable reduction theorem of S. Bosch and W. L\"utkebohmert (see~\cite{BoschLuetkebohmert85}): for each smooth $k$-analytic curve~$X$, there exists a finite extension~$\ell/k$ such that~$X_{\ell}$ admits a model over~$\ell^\circ$ whose special fiber is a semi-stable curve over~$\tilde \ell$, that is, it is reduced and its singularities are at worst double nodes. 

If a smooth $k$-analytic curve~$X$ admits a semi-stable model over~$k^\circ$, then we may associate to it a triangulation of~$X$. The points of~$S$, $\calA$ and~$\calD$ then correspond respectively to the irreducible components, the singular points and the smooth points of the special fiber of the model. Moreover, the genus of a point of~$S$ (which, in this case, coincides with its stable genus) is equal to the genus of the corresponding component. We refer to \cite[Theorem~4.3.1]{Berkovich90} for more details.

In the other direction, it is always possible to associate a model over~$k^\circ$ to a triangulation of~$X$, but it may fail to be semi-stable in general. The reader may consult \cite[\S 6.3 and \S 6.4]{DucrosRSS} for general results.

\end{remark}

%


\begin{definition}\label{def:genus}
Assume that~$k$ is algebraically closed. Let~$X$ be a smooth connected $k$-analytic curve. 
We define the \emph{genus of~$X$} to be
\[ g(X) :=b_{1}(X) + \sum_{x\in X^{(2)}} g(x),\]
where~$b_{1}(X)$ is the first Betti number of~$X$ and~$X^{(2)}$ the set of type~2 points of~$X$.


If $k$ is arbitrary, we define the genus of a smooth geometrically connected $k$-analytic curve~$X$ to be the genus of~$X_{\wka}$. 
\end{definition}

This notion of genus is compatible with the one defined in the algebraic setting.

\begin{theorem}\label{thm:genus}
For each smooth geometrically connected projective algebraic curve $\calX$ over~$k$, we have 
\[ g(\calX) = g(\calX^\an).\]
\end{theorem}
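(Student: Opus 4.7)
The plan is to reduce to the case where the base field is algebraically closed and then exploit the correspondence between triangulations and semi-stable models to identify both quantities with the arithmetic genus of a semi-stable curve.

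First, I would reduce to the case where $k$ is algebraically closed and complete. The algebraic genus $g(\calX)$ is defined by base change to an algebraic closure of~$k$, and the analytic genus $g(\calX^\an)$ is defined by base change to~$\wka$; since $(\calX_{\wka})^\an = (\calX^\an)_{\wka}$, both quantities are invariant under the replacement $k \leadsto \wka$. So we may assume $k$ is algebraically closed.

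Next, since $\calX$ is projective, $\calX^\an$ is a smooth, compact, connected $k$-analytic curve, so by Theorem~\ref{thm:triangulationsmooth} it admits a finite triangulation $\calT = (S,\calA,\calD)$. By Proposition~\ref{prop:retraction} (extended to arbitrary smooth curves as indicated in Section~\ref{sec:smoothcurves}), $\calX^\an$ deformation-retracts onto the finite metric graph $\Sigma_{\calT}$, hence $b_1(\calX^\an) = b_1(\Sigma_{\calT})$. I would then show that only the type~2 points in~$S$ contribute to the genus sum: any type~2 point $x \in \calX^\an \setminus S$ lies in some virtual open disc $D \in \calD$ or open annulus $A \in \calA$, which, since $k$ is algebraically closed, is an actual open disc or open annulus embedded in~$\pank$; by Example~\ref{ex:residuecurve}, every type~2 point of such a piece has residue curve~$\PP^1_{\tilde k}$ and therefore $g(x) = 0$. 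Hence
\[
g(\calX^\an) \;=\; b_1(\Sigma_{\calT}) \;+\; \sum_{x \in S} g(x).
\]

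Third, I would bring in the correspondence with semi-stable models recalled in Remark~\ref{rem:triangulationmodel}: upon suitably refining $\calT$, it arises from a semi-stable model $\frakX$ of $\calX$ over $k^\circ$ whose special fiber $\frakX_s$ has irreducible components indexed by the points of $S$, with the geometric genus of the component attached to $x \in S$ equal to $g(x)$, and whose dual graph is canonically identified with $\Sigma_{\calT}$ (the annuli of $\calA$ joining two points of $S$ correspond to the double points). The classical formula for the arithmetic genus of a semi-stable curve then yields
\[
p_a(\frakX_s) \;=\; \sum_{x \in S} g(x) \;+\; b_1(\Sigma_{\calT}).
\]
Since $\frakX \to \Spec(k^\circ)$ is flat and proper with smooth projective generic fiber $\calX$, the arithmetic genus is constant in the family, so $p_a(\frakX_s) = p_a(\calX) = g(\calX)$ (here using smoothness to identify arithmetic and geometric genus on the generic fiber). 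Combining the two displayed equalities gives $g(\calX) = g(\calX^\an)$.

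The main obstacle is the third step: carefully producing a semi-stable model out of a triangulation in a context where $k = \wka$ is not discretely valued, and matching the combinatorics (components, nodes, dual graph, component-wise genera) of the special fiber with the analytic data $(S, \calA, \Sigma_{\calT}, g(x))$. Once this dictionary is set up cleanly, the computation of $p_a(\frakX_s)$ and its constancy in the flat family are standard.
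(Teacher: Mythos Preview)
The paper does not actually prove Theorem~\ref{thm:genus}. It is stated without proof in Section~\ref{sec:smoothcurves}, and the paragraph following it explains that, apart from Theorem~\ref{thm:triangulationsmooth}, the results of that section ``are rather standard applications of the general theory of curves''; the reader is implicitly referred to \cite[Chapter~4]{Berkovich90} and \cite{DucrosRSS}. So there is no in-paper proof to compare against.

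That said, your outline is the standard argument and is essentially correct. A small comment on the logical order in your third step: you start from an arbitrary triangulation~$\calT$ and then say ``upon suitably refining~$\calT$, it arises from a semi-stable model''. Remark~\ref{rem:triangulationmodel} warns that going from a triangulation to a model need not produce something semi-stable in general. The cleaner route, once you have reduced to $k$ algebraically closed, is to invoke the semi-stable reduction theorem directly to obtain a semi-stable model~$\frakX$ of~$\calX$ over~$k^\circ$, and then take~$\calT$ to be the triangulation \emph{associated to}~$\frakX$ as in the first direction of Remark~\ref{rem:triangulationmodel}. With that choice the dictionary (components $\leftrightarrow$ points of~$S$ with matching genera, nodes $\leftrightarrow$ annuli, dual graph $\leftrightarrow$ $\Sigma_{\calT}$) is given to you, and the rest of your argument goes through verbatim: the nodal-curve genus formula gives $p_a(\frakX_s) = b_1(\Sigma_{\calT}) + \sum_{x\in S} g(x)$, and flatness plus properness of~$\frakX \to \Spec(k^\circ)$ yields $p_a(\frakX_s) = g(\calX)$.
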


Let us finally comment that, among the results that are presented here, Theorem~\ref{thm:triangulationsmooth} is deep and difficult, but we will not need to use it since an easier direct proof is available for $k$-analytic $\AA^1$-like curves (see Theorem~\ref{thm:triangulation}). The others are rather standard applications of the general theory of curves.

\subsection{Mumford curves}

Let us now return to~$\AA^1$-like curves over~$k$. A special kind of such curves is obtained by asking for the existence of an open covering made of actual open Swiss cheeses over~$k$ rather than virtual ones.
Recall that open Swiss cheeses over~$k$ are defined as the complement of closed discs in an open disc over $k$.

\begin{definition}\label{def:Mumfordcurve}
A connected, compact $k$-analytic ($\AA^1$-like) curve~$X$ is called a \emph{$k$-analytic Mumford curve} if every point $x \in X$ has a neighborhood that is isomorphic to an open Swiss cheese over~$k$.
\end{definition}

\begin{remark}
Such a curve is automatically projective algebraic by Theorem~\ref{thm:smoothcompact}.
\end{remark}



The following proposition relates the definition of a $k$-analytic Mumford curve with the existence of a triangulation of a certain type, and therefore with the original algebraic definition given by Mumford in \cite{Mumford72}.
Its proof uses some technical notions that were not fully presented in the first sections of this text, but we believe that the result of the proposition is important enough to deserve to be fully included for completeness.

\begin{proposition}\label{prop:Mumfordtriangulation}
Let $X$ be a compact $k$-analytic curve.

If $g(X)=0$, then $X$ is a $k$-analytic Mumford curve if and only if $X$ is isomorphic to~$\pank$.

If $g(X) \ge 1$, then $X$ is a $k$-analytic Mumford curve if and only if there exists a triangulation $(S,\calA,\calD)$ of $X$ such that the points of $S$ are of stable genus~0 and the elements of~$\calA$ are open annuli.
\end{proposition}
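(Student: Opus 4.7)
The plan is to treat the two genus cases separately, then within the second case split into the two implications.

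\emph{Case $g(X) = 0$.} To see that $\pank$ is a Mumford curve, cover it by its two natural affine charts. Every point of $\aank$ admits an open neighborhood isomorphic to an open Swiss cheese over $k$: this reduces, via a suitable M\"obius transformation in $\PGL_2(k)$, to the case of points of the form $\eta_{\alpha,r}$ with $\alpha\in k$, for which small pieces $D^-(\alpha,r_2)\setminus\bigcup_i D^+(\alpha_i,r_i)$ over $k$ are neighborhoods by Proposition~\ref{prop:types}. Conversely, any Mumford curve of genus $0$ contains an open disc over $k$, hence a $k$-rational point; by Theorem~\ref{thm:smoothcompact} together with Theorem~\ref{thm:genus} such an $X$ is the analytification of a smooth projective geometrically connected curve $\calX$ of genus~$0$ with a rational point, so $\calX\cong\PP^1_k$ and $X\cong\pank$.

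\emph{Case $g(X)\ge 1$, direction $(\Rightarrow)$.} Using compactness, extract a finite cover of $X$ by open Swiss cheeses over $k$. Each of these has an obvious triangulation (as in Example~\ref{ex:triangulationSwisscheese}) in which the annuli are genuine open annuli over $k$ and the type~$2$ points, being type~$2$ points of $\aank$, have stable genus $0$ by Example~\ref{ex:residuecurve}. A patching argument on overlaps, adding finitely many type~$2$ points when needed to reconcile the partitions, yields a global triangulation $(S,\calA,\calD)$ of $X$ with the required properties.

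\emph{Case $g(X)\ge 1$, direction $(\Leftarrow)$.} Fix such a triangulation and let $x\in X$. If $x$ lies in $A\in\calA$, a sub-annulus of $A$ over $k$ provides the desired Swiss cheese neighborhood. If $x=s\in S$: since $\calA$ is made of open annuli over $k$ and $\calD$ consists of virtual open discs, every point of $A\in\calA$ or $D\in\calD$ is locally isomorphic to a point of $\aank$, hence every type~$2$ point of $X$ has stable genus $0$ (Example~\ref{ex:residuecurve}); the formula $g(X)=b_1(\Sigma_\calT)+\sum g(x)$ therefore reduces to $g(X)=b_1(\Sigma_\calT)\ge 1$, so $\Sigma_\calT$ is a connected graph containing a cycle, and $s$ is adjacent to some $A\in\calA$. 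An explicit computation at the type~$2$ endpoint of an open annulus $A^-(\alpha,r_1,r_2)$ with $\alpha\in k$ shows that the tangent direction into $A$ corresponds, via Proposition~\ref{prop:bijrescurve}, to a $\tilde k$-rational closed point of $\scrC_s$. Combined with $g_\st(s)=0$ (which gives $\scrC_s$ geometric genus $0$), this identifies $\scrC_s\cong\PP^1_{\tilde k}$ as curves \emph{over $\tilde k$}. From this identification one constructs an open Swiss cheese neighborhood of $s$ over $k$ by lifting enough $\tilde k$-rational points of $\scrC_s$ to analytic coordinates, cutting out a disc over $k$ minus finitely many closed discs over $k$ around the directions of adjacent pieces. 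Finally, for $x$ in a virtual open disc $D\in\calD$ with boundary point $\eta_D\in\Sigma_\calT$, shrinking the Swiss cheese neighborhood of $\eta_D$ already constructed produces a suitable neighborhood of $x$.

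\emph{Main obstacle.} The substantive step is the last construction in the $(\Leftarrow)$ argument: passing from the algebraic identification $\scrC_s\cong\PP^1_{\tilde k}$ to a genuine analytic isomorphism between a neighborhood of $s$ and an open Swiss cheese over $k$. This rests on the local structure theory of smooth $k$-analytic curves at type~$2$ points, equivalently on the existence of a local semi-stable formal model over $k^\circ$ whose special fiber has a $\PP^1_{\tilde k}$-component at $s$; the latter exists precisely because $\scrC_s\cong\PP^1_{\tilde k}$ \emph{over $\tilde k$} and not merely after base change, as discussed in Remark~\ref{rem:triangulationmodel}. Extracting the Swiss cheese chart from this model then amounts to lifting a coordinate on the $\PP^1_{\tilde k}$-component.
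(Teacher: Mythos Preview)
Your genus-$0$ converse has a real gap. The assertion that a Mumford curve contains an open disc over $k$ (hence a $k$-rational point) is not immediate: an open Swiss cheese over $k$ need not contain any $k$-rational point. For instance, over $\Qp$ the annulus $A^-(0,1,p) = D^-(0,p)\setminus D^+(0,1)$ is a Swiss cheese over $\Qp$ containing no $\Qp$-rational point, since $|\Qp^\times|=p^{\Z}$ misses the interval $(1,p)$; consequently it contains no open disc over~$\Qp$ either. The paper closes this gap by an extension argument: starting from an open annulus $A$ over $k$ inside $X$, the Mumford property at a boundary point of its skeleton yields a strictly larger annulus over~$k$; iterating and passing to the union produces an annulus over~$k$ of infinite modulus, one of whose boundary points is then forced to be of type~1, hence $k$-rational.

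For the $(\Leftarrow)$ direction when $g(X)\ge 1$, your route through $\scrC_s\cong\PP^1_{\tilde k}$ is sound in spirit, but the passage from that residue identification to an honest Swiss-cheese chart over $k$ requires local structure theory (lifting a coordinate from the special fibre of a local model) not developed in the text, as you yourself flag. The paper sidesteps this entirely by a compactification trick: for $s\in S$ (after refining so that every $A\in\calA$ has two distinct endpoints), let $U_s$ be the star of $s$, namely $\{s\}$ together with all adjacent annuli and virtual discs; then cap off each adjacent annulus at its far end by gluing on a closed disc over~$k$. The resulting curve $V_s$ is compact, hence projective by Theorem~\ref{thm:smoothcompact}; since $s$ has stable genus~$0$ one computes $g((V_s)_{\wka})=0$, so $(V_s)_{\wka}\cong\panwka$ by Theorem~\ref{thm:genus}, and since $V_s$ has $k$-rational points in the glued discs, $V_s\cong\pank$. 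Then $U_s\subset V_s\cong\pank$ is visibly a Swiss cheese over~$k$, and every point of~$X$ lies in some such $U_s$. This is both shorter and uses only the global algebraization results already available, avoiding the local obstacle you identify.
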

\begin{proof}
$\bullet$ Assume that $g(X)=0$. If $X$ is isomorphic to~$\pank$, then it is obviously a Mumford curve. 

Conversely, assume that $X$ is a $k$-analytic Mumford curve. By Theorems~\ref{thm:smoothcompact} and~\ref{thm:genus}, it is isomorphic to the analytification of a projective smooth algebraic curve over~$k$. Therefore, to prove that it is isomorphic to~$\pank$, it is enough to prove that it has a $k$-rational point.

By assumption, $X$ contains an open Swiss cheese over~$k$. In particular, it contains an open annulus~$A$ over~$k$. Let~$x$ be a boundary point of the skeleton of~$A$. By assumption, $x$ has a neighborhood that is isomorphic to an open Swiss cheese over~$k$. It follows that~$A$ is contained in a strictly bigger annulus~$A'$ whose skeleton contains strictly contains that of~$A$. Arguing this way (possibly considering the union of all the annuli and applying the argument again), we show that~$X$ contains an open annulus over~$k$ of infinite modulus. At least one of its boundary points is a $k$-rational point, and the result follows. 

\medbreak

$\bullet$ Assume that $g(X) \ge 1$. If $X$ is a $k$-analytic Mumford curve, then it may be covered by finitely many Swiss cheeses over~$k$. The result follows from the fact that every Swiss cheese over~$k$ admits a triangulation $(S,\calA,\calD)$ such that the points of $S$ are of stable genus~0 and the elements of~$\calA$ are annuli.

Conversely, assume that there exists a triangulation $(S,\calA,\calD)$ of~$X$ satisfying the properties of the statement. Since $g(X) \ge 1$, we have $\calA \ne \emptyset$. Up to adding a point of~$S$ in the skeleton of each element of~$\calA$,
we may assume that all the elements of~$\calA$ have two distinct endpoints in~$X$.

Let $x\in S$. Denote by~$\calD_{x}$ (resp.~$\calA_{x}$) the set of elements of~$\calD$ (resp. $\calA$) that have~$x$ as an endpoint and set
\[ U_{x} := \{x\} \cup \bigcup_{D \in \calD_{x}} D \cup  \bigcup_{A \in \calA_{x}} A.\]
It is an open neighborhood of~$x$ in~$X$. Let us now enlarge~$U_{x}$ in the following way: for each $A \in \calA_{x}$, we paste a closed disc at the extremity of~$A$ that is different from~$x$. The resulting curve~$V_{x}$ is compact, hence the analytification of a projective smooth algebraic curve over~$k$, by Theorem~\ref{thm:smoothcompact}. Since~$x$ is of stable genus~0, the genus of the base-change $(V_{x})_{\wka}$ of~$V_{x}$ to~$\wka$ is~0. By Theorem~\ref{thm:genus}, we deduce that $(V_{x})_{\wka}$ is isomorphic to~$\panwka$. Since~$V_{x}$ contains $k$-rational points (inside the pasted discs, for instance), $V_{x}$ itself is isomorphic to~$\pank$. We deduce that~$U_{x}$ is a Swiss cheese over~$k$. 

Since any point of~$X$ has a neighborhood that is of the form~$U_{x}$ for some $x\in S$, it follows that~$X$ is a Mumford curve.
\end{proof}

\begin{remark}
If $X$ is a compact $k$-analytic curve and $k$ is algebraically closed, then Proposition~\ref{prop:Mumfordtriangulation} shows that the following properties are equivalent:
\begin{enumerate}
\item $X$ is a Mumford curve;
\item $X$ is an $\AA^1$-like curve;
\item the points of type~2 of~$X$ are all of genus~0.
\end{enumerate}
\end{remark}

\begin{remark}\label{rmk:algMumford}
Using the correspondence between triangulations and semi-stable models (see Remark~\ref{rem:triangulationmodel}), the result of Proposition \ref{prop:Mumfordtriangulation} says that $k$-analytic Mumford curves are exactly those for which there exists a semi-stable model over~$k^\circ$ whose special fiber consists of projective lines over $\tilde k$, intersecting transversally in $\tilde k$-rational points. 
This last condition is how algebraic Mumford curves are defined in Mumford's paper \cite{Mumford72}.

\end{remark}

\begin{corollary}\label{cor:Mumfordgenus}
Let $X$ be a $k$-analytic Mumford curve and $\calT$ be a triangulation of $X$.
Then the following quantities are equal:
\begin{enumerate} 
\item the genus of~$X$;
\item the cyclomatic number of the skeleton~$\Sigma_{\calT}$;
\item the first Betti number of~$X$.
\end{enumerate}
\end{corollary}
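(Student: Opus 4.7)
The plan is to prove (ii) = (iii) directly and then establish (i) = (ii) for one special triangulation, using the deformation-retraction argument of the first step to propagate the equality to all triangulations.

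For (ii) = (iii): since $X$ is compact, $\calT$ is finite, and $\Sigma_\calT$ is a finite connected graph. Proposition \ref{prop:retraction} provides a canonical deformation retraction $\tau_\calT \colon X \to \Sigma_\calT$, so $X$ is homotopy equivalent to $\Sigma_\calT$ and $b_1(X) = b_1(\Sigma_\calT)$, which for a finite connected graph coincides with its cyclomatic number. In particular, this common value depends only on $X$ and not on the chosen triangulation.

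For (i) = (ii): I would combine the correspondence between triangulations and semistable models recalled in Remark \ref{rem:triangulationmodel} with the characterization of Mumford curves given in Remark \ref{rmk:algMumford}. Choosing a triangulation $\calT'$ associated to a semistable model $\frakX$ over $k^\circ$ whose special fiber consists of $\tilde k$-rational copies of $\PP^1$ meeting transversally at $\tilde k$-rational nodes, the dual graph of $\frakX_s$ identifies with $\Sigma_{\calT'}$. The classical arithmetic genus formula for semistable curves expresses the genus of the smooth generic fiber as the sum of the geometric genera of the components plus the cyclomatic number of the dual graph; since every component has genus zero and Theorem \ref{thm:genus} identifies the algebraic genus of the generic fiber with the analytic genus $g(X)$, this yields $g(X) = c(\Sigma_{\calT'})$. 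By the independence established in the first step, $c(\Sigma_{\calT'}) = c(\Sigma_\calT)$, which concludes the proof.

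The main obstacle I foresee is the appeal to external algebraic-geometric inputs—the semistable reduction theorem and the arithmetic genus formula for semistable curves—both of which are not developed in the paper but are standard; the bridge between the algebraic and analytic viewpoints is precisely what Remark \ref{rem:triangulationmodel} provides, and no further case analysis on the triangulation is needed thanks to the independence of the cyclomatic number of $\Sigma_\calT$ from the choice of $\calT$.
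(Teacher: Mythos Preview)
Your argument for (ii) = (iii) via the deformation retraction of Proposition~\ref{prop:retraction} is exactly what the paper does, and your observation that this makes the cyclomatic number independent of~$\calT$ is the right structural point.

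For (i) = (ii), however, you take a genuinely different route. You go through algebraic geometry: Remark~\ref{rmk:algMumford} furnishes a semistable model with $\PP^1$-components, and you invoke the classical formula $p_a(\frakX_s) = \sum g(C_i) + c(\text{dual graph})$ together with flatness and Theorem~\ref{thm:genus} to identify the analytic genus with the cyclomatic number. This is correct, but it imports the semistable-reduction machinery and the arithmetic-genus computation from outside the paper. The paper instead stays entirely within the analytic framework: it base-changes to~$\wka$, checks by hand that the chosen triangulation lifts to a triangulation of~$X_{\wka}$ with homeomorphic skeleton (using that the annuli in~$\calA$ are genuine annuli over~$k$, so their preimages are connected), and then reads off the equality directly from Definition~\ref{def:genus}, since all type~2 points of~$X_{\wka}$ have genus~0. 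Your approach is shorter to state but relies on heavier external input; the paper's is more self-contained and in keeping with its stated goal of giving purely analytic proofs.
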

\begin{proof}
We may assume that~$\calT = (S,\calA,\calD)$ satisfies the conclusions of Proposition~\ref{prop:Mumfordtriangulation}. We will assume that~$\calA \ne \emptyset$, the other case being dealt with similarly. Consider the base-change morphism $\pi_{\wka/k} \colon X_{\wka} \to X$. By assumption, every element~$A$ of~$\calA$ is an annulus over~$k$, hence its preimage $\pi_{\wka/k}^{-1}(A)$ is an annulus over~$\wka$. In particular, $\pi_{\wka/k}$ induces a homeomorphism between the skeleton of $\pi_{\wka/k}^{-1}(A)$ and that of~$A$. Since each point of~$S$ lies at the boundary of the skeleton of an element of~$\calA$, we deduce that each point of~$S$ has exactly one preimage by~$\pi_{\wka}$.


It follows that the set $\calT' = (S',\calA',\calD')$ of~$X_{\wka}$, where 
\begin{itemize}
\item $S'$ is the set of preimages of the elements of~$S$ by~$\pi_{\wka/k}$; 
\item $\calA'$ is the set of preimages of the elements of~$\calA$ by~$\pi_{\wka/k}$;
\item $\calD'$ is the set of connected components of the preimages of the elements of~$\calD$ by~$\pi_{\wka/k}$
\end{itemize}
is a triangulation of~$X_{\wka}$ and, moreover, that $\pi_{\wka/k}$ induces a homeomorphism between the skeleta~$\Sigma_{\calT'}$ and~$\Sigma_{\calT}$. In particular, their cyclomatic numbers are equal.

Since~$X$ is a Mumford curve, all the points of type~2 of the curve~$X_{\wka}$ are of genus~0, hence the genus of~$X_{\wka}$ coincides with its first Betti number, hence with the cyclomatic number of~$\Sigma_{\calT'}$, by Proposition~\ref{prop:retraction}. The equality between (i) and (ii) follows. 

The equality between (ii) and (iii) follows from Proposition~\ref{prop:retraction} again.
\end{proof}

%
%
%
%
%


\section{Schottky groups}\label{sec:Schottky}

Let $(k,\va)$ be a complete valued field. Some of the material of this section is adapted from Mumford \cite{Mumford72}, Gerritzen and van der Put \cite{GerritzenPut80} and Berkovich \cite[Section~4.4]{Berkovich90}. 

\subsection{Schottky figures}\label{sec:Schottkyfigures}

Let $g\in \N_{\ge 1}$.

\begin{definition}\label{def:Schottkyfigure}
Let $\gamma_{1},\dotsc,\gamma_{g} \in \PGL_{2}(k)$. 
Let $\calB = (D^+(\gamma_{i}^\eps), 1\le i\le g, \eps \in \{\pm1\})$ be a family of pairwise disjoint closed discs in~$\pank$.
For each $i\in\{1,\dotsc,g\}$ and $\eps \in \{-1,1\}$, set 
\[D^-(\gamma_{i}^\eps) := \gamma_{i}^\eps (\PP^1_{k} - D^+(\gamma_{i}^{-\eps})).\]

We say that~$\calB$ is a \emph{Schottky figure} adapted to $(\gamma_{1},\dotsc,\gamma_{g})$ if, for each $i\in\{1,\dotsc,g\}$ and $\eps \in \{-1,1\}$, $D^-(\gamma_{i}^\eps)$ is a maximal open disc inside $D^+(\gamma_{i}^\eps)$.
\end{definition}

\begin{figure}
\includegraphics[scale=.5]{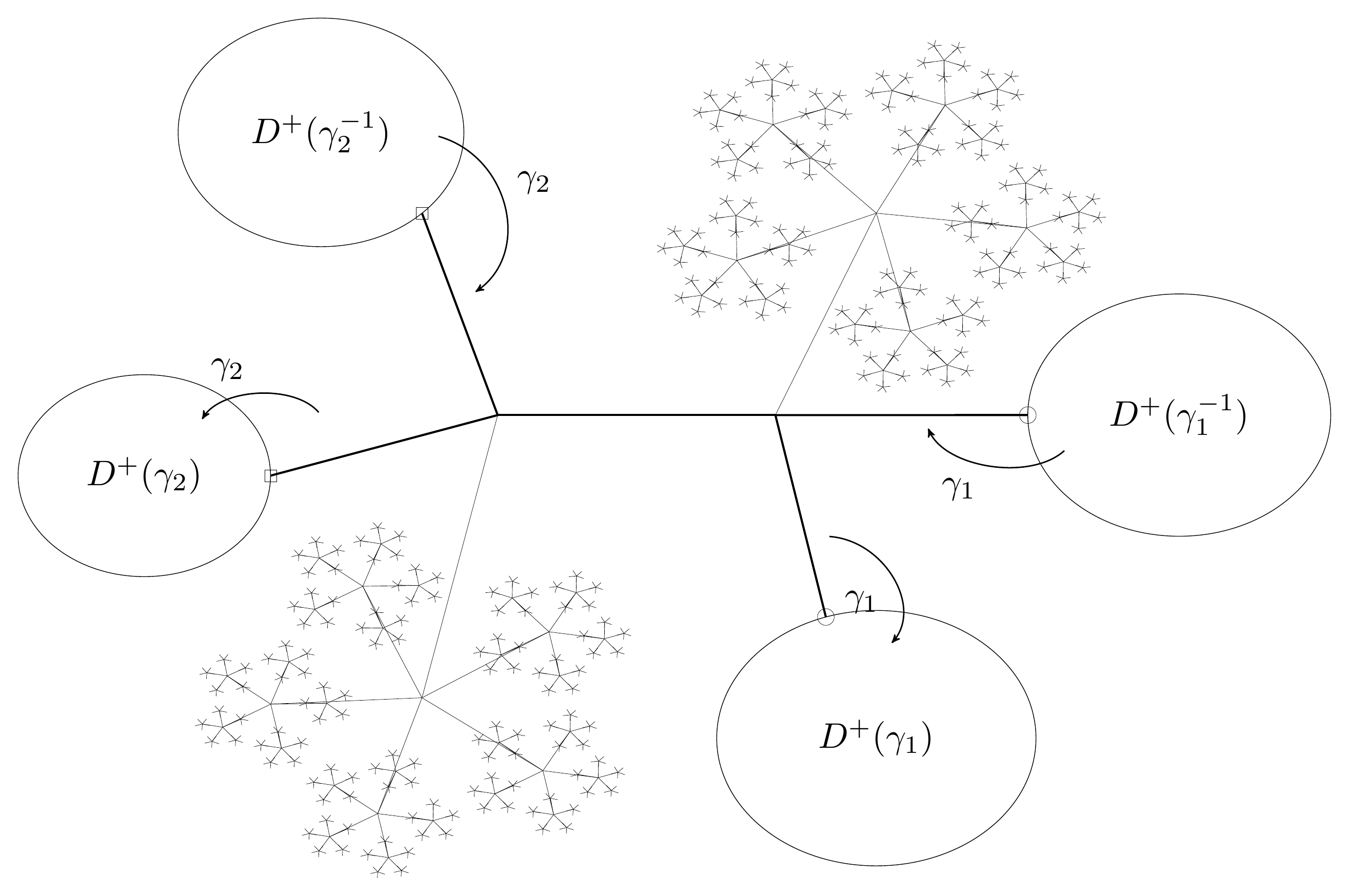}

\caption{A Schottky figure adapted to a pair $(\gamma_1, \gamma_2)$.}
\end{figure}

\begin{remark}\label{rem:Schottkyfigure}
Let $i\in \{1,\dotsc,g\}$. It follows from Remark~\ref{rem:nonloxodromic} that $\gamma_{i}$ is loxodromic. Moreover, denoting by~$\alpha_{i}$ and~$\alpha'_{i}$ the attracting and repelling fixed points of~$\gamma_{i}$ respectively, we have
\[\alpha'_{i} \in D^-(\gamma_{i}^{-1}) \textrm{ and } \alpha_{i} \in D^-(\gamma_{i}).\] 
The result is easily proven for $\gamma =  \left[
\begin{matrix} 
1 & 0\\
0 & q
\end{matrix}\right]$ and one may reduce to this case by choosing a suitable coordinate on~$\pank$.
\end{remark}

For the rest of the section, we fix $\gamma_{1},\dotsc,\gamma_{g} \in \PGL_{2}(k)$ and a Schottky figure adapted to $(\gamma_{1},\dotsc,\gamma_{g})$, with the notation of Definition~\ref{def:Schottkyfigure}.

\begin{notation}
For $\sigma\in \{-,+\}$, we set 
\[F^\sigma := \PP^1_{k} - \bigcup_{\substack{1\le i\le g \\ \eps=\pm1}} D^{-\sigma}(\gamma_{i}^\eps).\]
\end{notation}

Note that, for $\gamma \in\{\gamma_{1}^{\pm1},\dotsc,\gamma_{g}^{\pm1}\}$, $D^+(\gamma)$ is the unique disc that contains~$\gamma (F^+)$ among those defining the Schottky figure.


\begin{remark}\label{rem:skeletonF+}
The sets $F^-$ and $F^+$ are open and closed Swiss cheeses respectively.

Denote by $\partial F^+$ the boundary of~$F^+$ in~$\pank$. It is equal to the set of boundary points of the $D^+(\gamma_{i}^{\pm 1})$'s, for $i \in \{1,\dotsc,g\}$. The skeleton~$\Sigma_{F^+}$ of~$F^+$ is the convex envelope of~$\partial F^+$, that is to say the minimal connected graph containing~$\partial F^+$, or 
\[ \Sigma_{F^+} = \bigcup_{x,y \in \partial F^+} [x,y].\]
The skeleton~$\Sigma_{F^-}$ of~$F^-$ satisfies
\[\Sigma_{F^-} = \Sigma_{F^+} \cap F^- = \Sigma_{F^+} - \partial F^+.\]
\end{remark}

Set $\Delta := \{\gamma_{1},\dotsc,\gamma_{g}\}$. Denote by~$F_{g}$ the abstract free group with set of generators~$\Delta$ and by~$\Gamma$ the subgroup of $\PGL_{2}(k)$ generated by~$\Delta$. 
The existence of a Schottky figure for the $g$-tuple $(\gamma_{1},\dotsc,\gamma_{g})$ determines important properties of the group $\Gamma$.
In fact, we have a natural morphism $\varphi\colon F_{g} \to \Gamma$ inducing an action of~$F_{g}$ on~$\pank$.
We now define a disc in~$\PP^1_{k}$ associated with every elements of~$F_g$. As usual, we will identify these elements with the words over the alphabet $\Delta^{\pm} := \{\gamma^{\pm1}_{1},\dotsc,\gamma_{g}^{\pm1}\}$.

\begin{notation}
For a non-empty reduced word $w=w'\gamma$ over~$\Delta$ and $\sigma \in \{-,+\}$, we set
\[D^\sigma(w) := w' \,D^\sigma(\gamma).\]
\end{notation}

\begin{lemma}\label{lem:wordF}
Let $u$ be a non-empty reduced word over~$\Delta^\pm$. Then we have $u F^+ \subseteq D^+(u)$.

Let $v$ be a non-empty reduced word over~$\Delta^\pm$. If there exists a word~$w$ over~$\Delta^\pm$ such that $u = vw$, then we have $u F^+ \subseteq D^+(u) \subseteq D^+(v)$. If, moreover, $u\ne v$, then we have $D^+(u) \subseteq D^-(v)$.

Conversely, if we have $D^+(u) \subseteq D^+(v)$, then there exists a word~$w$ over~$\Delta^\pm$ such that $u = vw$.
\end{lemma}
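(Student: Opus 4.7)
The strategy is a ping-pong argument whose essential input is two base-case inclusions for letters of $\Delta^{\pm}$. First, for any $\gamma \in \Delta^{\pm}$, I want $\gamma F^+ \subseteq D^+(\gamma)$; unraveling the definition $D^-(\gamma^{-1}) = \gamma^{-1}(\PP^1_{k} - D^+(\gamma))$ gives $\gamma^{-1} D^+(\gamma) = \PP^1_{k} - D^-(\gamma^{-1})$, which contains $F^+$ by construction, so applying $\gamma$ yields the claim. Second, for $\delta, \delta' \in \Delta^{\pm}$ with $\delta \ne (\delta')^{-1}$, I want $\delta' D^+(\delta) \subseteq D^-(\delta')$; by the same rearrangement this is equivalent to $D^+(\delta) \subseteq \PP^1_{k} - D^+((\delta')^{-1})$, which holds because the discs of the Schottky figure are pairwise disjoint and the condition $\delta \ne (\delta')^{-1}$ ensures that $D^+(\delta)$ and $D^+((\delta')^{-1})$ are distinct members of the figure.

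Given these, the forward direction is an immediate iteration. For $uF^+ \subseteq D^+(u)$, write $u = u''\delta$ where $\delta$ is the last letter, so that by definition $D^+(u) = u'' D^+(\delta)$; then $uF^+ = u''(\delta F^+) \subseteq u'' D^+(\delta) = D^+(u)$ by the first base case. For the inclusion $D^+(vw) \subseteq D^+(v)$, I first treat the one-letter extension $u = v\delta$: writing $v = v'\delta'$, reduced-ness forces $\delta \ne (\delta')^{-1}$, and the second base case applied under translation by $v'$ gives $D^+(u) = v'\delta' D^+(\delta) \subseteq v' D^-(\delta') = D^-(v)$. Iterating over the letters of $w = w_{1}\cdots w_{r}$ and chaining the inclusions $D^+(v_{j}) \subseteq D^-(v_{j-1}) \subseteq D^+(v_{j-1})$ with $v_{j} := v w_{1} \cdots w_{j}$, I conclude $D^+(u) \subseteq D^-(v) \subseteq D^+(v)$ whenever $w$ is non-empty, while the case $w$ empty is trivial.

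For the converse, assume $D^+(u) \subseteq D^+(v)$ and let $p$ be the longest common prefix of $u$ and $v$, writing $u = p u_{0}$ and $v = p v_{0}$. I will show that $v_{0}$ is empty, so that $v$ is a prefix of $u$ and $w := u_{0}$ works. Suppose for contradiction that $v_{0}$ is non-empty. If $u_{0}$ is also non-empty, then by maximality of $p$ the words $u_{0}$ and $v_{0}$ begin with different letters $\delta \ne \delta'$; the forward direction applied to the prefixes $p\delta$ of $u$ and $p\delta'$ of $v$ gives $D^+(u) \subseteq D^+(p\delta) = p\,D^+(\delta)$ and $D^+(v) \subseteq p\,D^+(\delta')$, and since $D^+(\delta)$ and $D^+(\delta')$ are distinct discs of the Schottky figure they are disjoint, hence so are $D^+(p\delta)$ and $D^+(p\delta')$, contradicting $D^+(u) \subseteq D^+(v)$. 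If instead $u_{0}$ is empty, then $v$ properly extends $u$, so the forward direction yields $D^+(v) \subseteq D^-(u) \subsetneq D^+(u)$, again contradicting $D^+(u) \subseteq D^+(v)$.

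The main obstacle is essentially notational: once the sharp base case $\delta' D^+(\delta) \subseteq D^-(\delta')$ for $\delta \ne (\delta')^{-1}$ is isolated, both directions follow routinely, provided one tracks prefixes carefully and uses at the right moments both the pairwise disjointness of the Schottky discs and the strict inclusion $D^-(\gamma) \subsetneq D^+(\gamma)$.
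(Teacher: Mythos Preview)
Your proof is correct and follows essentially the same ping-pong strategy as the paper: the same base cases $\gamma F^+ \subseteq D^+(\gamma)$ and $\delta' D^+(\delta) \subseteq D^-(\delta')$ for $\delta \ne (\delta')^{-1}$, iterated over letters. The only cosmetic difference is in the converse, where the paper peels off first letters by induction on $|v|$ while you argue directly via the longest common prefix; both reduce to the same disjointness of $D^+(\delta)$ and $D^+(\delta')$ for distinct letters.
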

\begin{proof}
Write in a reduced form $u = u'\gamma$ with $\gamma\in \Delta^\pm$. We have $\gamma F^+ \subseteq D^+(\gamma)$, by definition. Applying~$u'$, it follows that $u F^+ \subseteq D^+(u)$. 

\medbreak

Assume that there exists a word~$w$ such that $u = vw$ and let us prove that $D^+(u) \subseteq D^+(v)$. We first assume that~$v$ is a single letter. We will argue by induction on the length~$|u|$ of~$u$. If $|u|=1$, then $u=v$ and the result is trivial. If~$|u|\ge 2$, denote by~$\delta$ the first letter of~$w$. By induction, we have $D^+(w) \subseteq D^+(\delta)$. Since $\delta \ne v^{-1}$, we also have $D^+(\delta)\subseteq \PP^1_{k} - D^+(v^{-1})$. The result follows by applying~$v$.

Let us now handle the general case. Write in a reduced form $v=v' \gamma$ with $\gamma\in \Delta^\pm$. By the former case, we have $D^+(\gamma w) \subseteq D^+(\gamma)$ and $D^+(\gamma w) \subseteq D^-(\gamma)$ if $w$ is non-empty. The result follows by applying~$v'$. 

\medbreak

Assume that we have $D^+(u) \subseteq D^+(v)$. We will prove that there exists a word~$w$ such that $u = vw$ by induction on~$|v|$. Write in reduced forms $u=\gamma u'$ and $v=\delta v'$. By the previous result, we have $D^+(u) \subseteq D^+(\gamma)$ and $D^+(v) \subseteq D^+(\delta)$, hence $\gamma = \delta$. If $|v|=1$, this proves the result. If $|v|\ge 2$, then we deduce that we have $D^+(u') \subseteq D^+(v')$, hence, by induction, there exists a word~$w$ such that $u' = v'w$. It follows that $u=vw$.
\end{proof}

\begin{proposition}\label{prop:Gammafree}
The morphism~$\varphi$ is an isomorphism and the group~$\Gamma$ is free on the generators $\gamma_{1},\dotsc,\gamma_{g}$.
\end{proposition}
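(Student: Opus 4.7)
My plan is to prove injectivity of $\varphi$ by the classical ping-pong argument, using the Schottky figure to show that any non-trivial reduced word moves the fundamental domain~$F^+$ off itself. Surjectivity is immediate, since $\Gamma$ is defined as the subgroup of~$\PGL_2(k)$ generated by the $\gamma_i$'s.

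The key step is to take a non-empty reduced word $w = \delta_1 \delta_2 \cdots \delta_n$ over~$\Delta^\pm$ (with $\delta_i \in \Delta^\pm$) and show that $\varphi(w) \neq \mathrm{id}$. I would split into two cases. If $n \geq 2$, then applying Lemma~\ref{lem:wordF} with $u = w$ and $v = \delta_1$ (so that $u = v \cdot (\delta_2 \cdots \delta_n)$ and $u \neq v$) gives
\[
\varphi(w)(F^+) \subseteq D^+(w) \subseteq D^-(\delta_1).
\]
But $D^-(\delta_1)$ is one of the open discs $D^-(\gamma_i^\eps)$ whose union is precisely the complement of~$F^+$ in~$\pank$. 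Hence $\varphi(w)(F^+) \cap F^+ = \emptyset$, and in particular $\varphi(w)$ cannot be the identity. If $n=1$, then $w = \gamma_i^{\pm 1}$ for some $i$, and Remark~\ref{rem:Schottkyfigure} tells us that $\gamma_i$ is loxodromic; in particular $\varphi(w) = \gamma_i^{\pm 1} \neq \mathrm{id}$.

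Combining the two cases, $\ker(\varphi)$ is trivial, so $\varphi \colon F_g \to \Gamma$ is an isomorphism and $\Gamma$ is free on the generators $\gamma_1,\dotsc,\gamma_g$.

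I do not expect any serious obstacle: all the combinatorial work has already been packaged into Lemma~\ref{lem:wordF}, and the only subtlety is making sure to use the sharper conclusion ``$D^+(u) \subseteq D^-(v)$ when $u \neq v$'' (rather than just $D^+(u) \subseteq D^+(v)$), since $F^+$ is only guaranteed to be disjoint from the \emph{open} discs $D^-(\gamma_i^\eps)$, not from the closed discs $D^+(\gamma_i^\eps)$. The $n=1$ case, which the ping-pong conclusion does not directly cover (one would only get $D^+(\delta_1) \subseteq D^+(\delta_1)$, not $\subseteq D^-(\delta_1)$), is handled separately via the loxodromic condition.
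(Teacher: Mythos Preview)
Your proof is correct and follows essentially the same ping-pong approach as the paper, relying on Lemma~\ref{lem:wordF}. The paper's version is slightly more streamlined: it observes uniformly that for any non-empty reduced word~$w$ one has $wF^+ \subseteq D^+(w)$, and since $F^+ \not\subseteq D^+(w)$ (the $2g$ closed discs are pairwise disjoint, so $F^+$ always contains points outside any given one), it follows that $wF^+ \ne F^+$ --- no case split on the length and no appeal to the loxodromic condition are needed.
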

\begin{proof}
If $w$ is a non-empty word, then the previous lemma ensures that $wF^+ \ne F^+$. The result follows. 
\end{proof}

As a consequence, we now identify~$\Gamma$ with~$F_g$ and express the elements of~$\Gamma$ as words over the alphabet~$\Delta^\pm$. 
In particular, we allow us to speak of the length of an element~$\gamma$ of~$\Gamma$, that we denote by~$|\gamma|$. Set
\[O_{n} :=  \bigcup_{|\gamma|\le n} \gamma F^+.\]
Since the complement of~$F^+$ is the disjoint union of the open disks $D^-(\gamma)$ with $\gamma \in \Delta^\pm$, it follows from the description of the action that, for each $n\ge 0$, we have 
\[\pank - O_{n} = \bigsqcup_{|w|= n+1} D^-(w).\]
It follows from Lemma~\ref{lem:wordF} that, for each $n\ge 0$, $O_{n}$ is contained in the interior of~$O_{n+1}$. We set 
\[O := \bigcup_{n\ge 0} O_{n} = \bigcup_{\gamma\in\Gamma} \gamma F^+.\]

We now compute the orbits of discs under homographies of~$\pank$.
Set $\iota := \begin{bmatrix} 0&1\\1&0 \end{bmatrix} \in \PGL_{2}(k)$. It corresponds to the map $z \mapsto 1/z$ on~$\pank$. The first result follows from an explicit computation.

\begin{lemma}\label{lem:inversiondisc}
Let $\alpha\in k^\times$ and $\rho \in [0,|\alpha|)$.
Then, we have $\iota\big(D^+(\alpha,\rho)\big) = D^+\left(\frac{1}{\alpha}, \frac{\rho}{|\alpha|^2}\right)$.
\qed
\end{lemma}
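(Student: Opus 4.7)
The plan is to identify $\iota\bigl(D^+(\alpha,\rho)\bigr)$ as a closed disc via Lemma~\ref{lem:disctodisc}, then pin down its center and radius using one rational point and the image of the boundary point.

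First I would check that $D^+(\alpha,\rho)$ is disjoint from $0 \in \aank$. Indeed, for any $y \in D^+(\alpha,\rho)$ we have $|T-\alpha|_y \le \rho < |\alpha|$, hence by the non-Archimedean triangle inequality $|T|_y = |\alpha| \ne 0$. Thus $D^+(\alpha,\rho)$ lies in the domain $\aank - \{0\}$ on which $\iota$ is a homeomorphism (with image $\pank - \{0,\infty\}$). By Lemma~\ref{lem:disctodisc}, $\iota\bigl(D^+(\alpha,\rho)\bigr)$ is a closed disc of~$\pank$; since it does not contain~$\infty$, it is a closed disc of~$\aank$, say $D^+(\mu,r)$ for some $\mu \in k$ and $r \ge 0$.

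Next, the $k$-rational point $\alpha$ lies in $D^+(\alpha,\rho)$ and satisfies $\iota(\alpha) = 1/\alpha$, so $1/\alpha \in D^+(\mu,r)$. By Lemma~\ref{lem:etaalphar}, this forces $D^+(\mu,r) = D^+(1/\alpha,r)$, so it only remains to compute~$r$.

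For this I would track the unique boundary point. Since $\iota$ is a homeomorphism of~$\pank$, it sends $\eta_{\alpha,\rho}$ (the boundary of $D^+(\alpha,\rho)$, by Lemma~\ref{lem:discs}) to the boundary of $D^+(1/\alpha,r)$, which is $\eta_{1/\alpha,r}$. Recall (Section~\ref{sec:defP1}) that $\iota$ corresponds to the $k$-algebra morphism $P(T) \mapsto P(1/T)$ on suitable localisations, so for any rational function $R$ defined at~$\eta_{\alpha,\rho}$,
\[ |R|_{\iota(\eta_{\alpha,\rho})} = |R(1/T)|_{\eta_{\alpha,\rho}}. \]
Applying this with $R = T - 1/\alpha$ (which makes sense since $0 \notin D^+(\alpha,\rho)$ means $T$ is invertible at $\eta_{\alpha,\rho}$) and using multiplicativity of~$\va_{\eta_{\alpha,\rho}}$ on~$k(T)$, we get
\[ |T - 1/\alpha|_{\iota(\eta_{\alpha,\rho})} = \Bigl|\tfrac{1}{T} - \tfrac{1}{\alpha}\Bigr|_{\eta_{\alpha,\rho}} = \frac{|\alpha - T|_{\eta_{\alpha,\rho}}}{|\alpha|\,|T|_{\eta_{\alpha,\rho}}} = \frac{\rho}{|\alpha|\cdot |\alpha|} = \frac{\rho}{|\alpha|^2}, \]
where we used $|T-\alpha|_{\eta_{\alpha,\rho}}=\rho$ and $|T|_{\eta_{\alpha,\rho}}=\max(|\alpha|,\rho)=|\alpha|$. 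On the other hand $|T-1/\alpha|_{\eta_{1/\alpha,r}} = r$ by Example~\ref{ex:genericpointdisc}, so $r = \rho/|\alpha|^2$, as claimed. The edge case $\rho=0$ reduces to $\iota(\alpha)=1/\alpha$ and is immediate.

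The only mildly delicate step is the seminorm computation in the last paragraph, which requires unwinding how the pullback through $T \mapsto 1/T$ acts on the multiplicative seminorm~$\va_{\eta_{\alpha,\rho}}$; everything else is a direct application of Lemmas~\ref{lem:etaalphar}, \ref{lem:discs}, and~\ref{lem:disctodisc}.
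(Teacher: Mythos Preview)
Your proof is correct. The only point worth double-checking is that the seminorm $\va_{\eta_{\alpha,\rho}}$ really extends multiplicatively to $k(T)$ (so that the computation of $|1/T-1/\alpha|_{\eta_{\alpha,\rho}}$ is legitimate); this is fine for $\rho>0$ since $\va_{\eta_{\alpha,\rho}}$ is an absolute value by Lemma~\ref{lem:etaalphar}, and you handle $\rho=0$ separately.

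The paper does not give a proof at all: it simply records the statement with a \qed, noting beforehand that ``the first result follows from an explicit computation''. The intended explicit computation is presumably the direct equivalence
\[
|T-\alpha|_x \le \rho \iff \Bigl|T-\tfrac{1}{\alpha}\Bigr|_{\iota(x)} \le \frac{\rho}{|\alpha|^2},
\]
which one checks in one line once one observes (as you did) that $|T|_x = |\alpha|$ on $D^+(\alpha,\rho)$, and symmetrically $|T|_{\iota(x)} = 1/|\alpha|$ on $D^+(1/\alpha,\rho/|\alpha|^2)$. Your route is slightly more conceptual: you invoke Lemma~\ref{lem:disctodisc} to know in advance that the image is \emph{some} closed disc, and then identify its center and radius by tracking one rational point and the boundary point. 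This buys you a template that would work for any M\"obius transformation, at the cost of relying on Lemma~\ref{lem:disctodisc} (whose proof passes through an extension of scalars). The paper's direct computation is shorter and entirely self-contained for this particular~$\iota$.
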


\begin{lemma}\label{lem:radiusdisc}
Let $r>0$ and let $\gamma =
\begin{bmatrix} 
a & b \\
c & d
\end{bmatrix}$ in $\PGL_{2}(k)$ such that $\gamma \big( D^+(0,r) \big) \subseteq \aank$. Then, we have $|d|>r |c|$ and $\gamma \big( D^+(0,r) \big) = D^+\left(\frac{b}{d}, \frac{|ad-bc|\, r}{|d|^2}\right) $.
\end{lemma}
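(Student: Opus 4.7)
The plan is to reduce the computation to the inversion formula of Lemma~\ref{lem:inversiondisc}, after decomposing the Möbius transformation into elementary pieces.

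First, I would establish the inequality $|d| > r|c|$. If $c = 0$, this is automatic since $d \ne 0$ (as $ad - bc \ne 0$). If $c \ne 0$, then the unique preimage of $\infty$ under $\gamma$ is $-d/c$, and the hypothesis $\gamma(D^+(0,r)) \subseteq \aank$ forces $-d/c \notin D^+(0,r)$, i.e., $|d/c| > r$. Note that the strict inequality is crucial and comes precisely from the fact that $D^+(0,r)$ is closed.

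Next, I treat the case $c = 0$, where $\gamma(z) = (a/d)\,z + b/d$ is an affine transformation. It sends $D^+(0,r)$ bijectively onto $D^+(b/d, |a/d|\,r) = D^+\!\bigl(b/d, |ad - bc|\,r/|d|^2\bigr)$, since $bc = 0$.

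For the main case $c \ne 0$, I would use the partial fraction decomposition
\[
\gamma(z) \;=\; \frac{a}{c} \,+\, \frac{bc - ad}{c\,(cz+d)},
\]
which writes~$\gamma$ as a composition of an affine map $z \mapsto cz + d$, the inversion $\iota$, a homothety by $(bc-ad)/c$, and a translation by $a/c$. For $z \in D^+(0,r)$ we have $cz + d \in D^+(d, |c|\,r)$, and because $|d| > |c|\,r$ this closed disc does not contain~$0$, so Lemma~\ref{lem:inversiondisc} applies and gives $1/(cz+d) \in D^+(1/d,\; |c|\,r/|d|^2)$. Multiplying by $(bc-ad)/c$ and translating by $a/c$ yields a closed disc centered at
\[
\frac{a}{c} + \frac{bc - ad}{cd} \;=\; \frac{ad + bc - ad}{cd} \;=\; \frac{b}{d},
\]
with radius $|ad-bc|\,r/|d|^2$, as required. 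Since each of the three elementary maps (affine transformation by a nonzero scalar, inversion away from~$0$, translation) is a bijection on the relevant discs, the inclusion obtained is actually an equality.

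There is no genuine obstacle: the argument is a routine bookkeeping of centers and radii through the partial fraction decomposition. The only subtle point is ensuring the strict inequality $|d| > r|c|$, so that $0 \notin D^+(d, |c|\,r)$ and Lemma~\ref{lem:inversiondisc} is applicable.
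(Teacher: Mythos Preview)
Your proof is correct and follows essentially the same approach as the paper: both separate the case $c=0$ from $c\ne 0$, obtain $|d|>r|c|$ from $-d/c\notin D^+(0,r)$, and then reduce to Lemma~\ref{lem:inversiondisc} via the partial fraction decomposition of~$\gamma$. The only cosmetic difference is that the paper writes the decomposition as $\frac{a}{c} - \frac{ad-bc}{c^2}\cdot\frac{1}{T+d/c}$ (inverting $D^+(d/c,r)$) whereas you write it as $\frac{a}{c} + \frac{bc-ad}{c(cz+d)}$ (inverting $D^+(d,|c|r)$), but these are equivalent.
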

\begin{proof}
Let us first assume that $c=0$. Then, we have $d\ne 0$, so the inequality $|d|>r |c|$ holds, and~$\gamma$ is affine with ratio~$a/d$. The result follows. 

Let us now assume that $c\ne 0$. In this case, we have $\gamma^{-1}(\infty) = -\frac{d}{c}$, which does not belong to~$D(0,r)$ if, and only if, $|d| >r |c|$. Note that we have the following equality in~$k(T)$:
\[\frac{aT+b}{cT+d} = \frac{a}{c} - \frac{ad-bc}{c^2}\, \frac{1}{T+\frac{d}{c}}.\]
By Lemma~\ref{lem:inversiondisc}, there exist $\beta \in k$ and $\sigma>0$ such that $\iota \big( D^+(\frac{d}{c},r) \big) = D^+(\beta,\sigma)$. Then, we have $\gamma \big( D^+(0,r) \big) = D^+(\frac{a}{c} - \frac{ad-bc}{c^2}\,\beta, \big|\frac{ad-bc}{c^2}\big|\,\sigma)$ and the result follows from an explicit computation.
\end{proof}

\begin{lemma}\label{lem:quotient}
Let $D' \subseteq D$ be closed discs in~$\aank$. Let $\gamma \in \PGL_{2}(k)$ such that $\gamma D' \subseteq \gamma D \subseteq \A^{1,\an}_{k}$. Then, we have
\[\frac{\textup{radius of } \gamma D'}{\textup{radius of } \gamma D} = \frac{\textup{radius of } D'}{\textup{radius of } D}.\]
\end{lemma}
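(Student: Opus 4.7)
The plan is to reduce to the case of concentric discs centered at~$0$ and then apply Lemma~\ref{lem:radiusdisc} twice, taking a quotient.

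First, I will exploit the non-Archimedean fact that any point of a closed disc is a center. Write $D = D^+(\alpha,r)$ and $D' = D^+(\alpha',r')$. Since $D' \subseteq D$, we have $\alpha' \in D$, hence $D = D^+(\alpha',r)$. So I may assume $D$ and $D'$ share the common center~$\alpha'$. Next, I conjugate by the translation $\tau \colon z \mapsto z + \alpha'$ (which preserves radii), replacing $\gamma$ by $\gamma' := \gamma \circ \tau$; this reduces the problem to the case $D = D^+(0,r)$ and $D' = D^+(0,r')$ with $0 \le r' \le r$, while the hypothesis $\gamma D \subseteq \aank$ becomes $\gamma' D^+(0,r) \subseteq \aank$ and the radii of $\gamma D$ and $\gamma D'$ coincide with those of $\gamma' D$ and $\gamma' D'$.

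Now I apply Lemma~\ref{lem:radiusdisc}. Writing $\gamma' = \begin{bmatrix} a & b \\ c & d \end{bmatrix}$, the assumption $\gamma' D^+(0,r) \subseteq \aank$ forces $|d| > r|c|$ and gives
\[ \gamma' D = D^+\!\Big(\tfrac{b}{d},\, \tfrac{|ad-bc|\,r}{|d|^2}\Big). \]
Since $r' \le r$, we also have $|d| > r|c| \ge r'|c|$, so the lemma applies to $D^+(0,r')$ as well and yields
\[ \gamma' D' = D^+\!\Big(\tfrac{b}{d},\, \tfrac{|ad-bc|\,r'}{|d|^2}\Big). \]
Taking the ratio of the two radii, the factor $|ad-bc|/|d|^2$ cancels and the result
\[ \frac{\text{radius of } \gamma D'}{\text{radius of } \gamma D} = \frac{r'}{r} = \frac{\text{radius of } D'}{\text{radius of } D} \]
follows.

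There is essentially no obstacle here beyond bookkeeping: the only subtlety is the initial reduction to concentric discs, which relies on the non-Archimedean feature that any point of a disc serves as a center. Once that reduction is made, Lemma~\ref{lem:radiusdisc} does the arithmetic for us, and the invariance of the ratio becomes transparent because both numerator and denominator carry the same multiplicative distortion $|ad-bc|/|d|^2$.
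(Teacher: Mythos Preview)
Your proof is correct and follows essentially the same route as the paper: translate so that both discs are centered at~$0$ (using that any point of a non-Archimedean disc is a center), then apply Lemma~\ref{lem:radiusdisc} to each disc and cancel the common factor $|ad-bc|/|d|^2$. The paper phrases the reduction slightly differently, picking an arbitrary $k$-rational point of~$D'$ to translate to~$0$, but the content is identical.
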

\begin{proof}
Let~$p$ be a $k$-rational point in~$D'$ and let~$\tau$ be the translation sending~$p$ to~0. Up to changing~$D$ into~$\tau D$, $D'$ into~$\tau D'$, $\gamma$ into~$\gamma \tau^{-1}$ and $\gamma'$ into~$\gamma' \tau^{-1}$, we may assume that~$D$ and~$D'$ are centered at~0. The result then follows from Lemma~\ref{lem:radiusdisc}.
\end{proof}

\begin{proposition}\label{prop:radiusn}
Assume that $\infty \in F^-$. Then, there exist $R>0$ and $c \in (0,1)$ such that, for each $\gamma \in \Gamma-\{\id\}$, $D^+(\gamma)$ is a closed disc of radius at most $R \,c^{|\gamma|}$. 
\end{proposition}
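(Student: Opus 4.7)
My plan is to bound $\rho(w):=\operatorname{radius}(D^+(w))$ by a geometric decay in $|w|$, combining a uniform one-step contraction with a telescoping identity that comes out of Lemma~\ref{lem:quotient}.

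The key one-step input is the following contraction: for any $\gamma,\delta\in\Delta^\pm$ with $\delta\ne\gamma^{-1}$, the closed discs $D^+(\gamma^{-1})$ and $D^+(\delta)$ are disjoint, so $D^+(\delta)\subseteq \PP^1_k-D^+(\gamma^{-1})$, and $\gamma$ maps the latter isomorphically onto $D^-(\gamma)$. Thus $\gamma D^+(\delta)$ is a closed disc contained in $D^-(\gamma)$. Since $D^-(\gamma)$ is a \emph{maximal} open sub-disc of $D^+(\gamma)$, the two have the same radius; an elementary ultrametric check shows that $D^+(\alpha,r')\subsetneq D^-(\beta,s)$ forces $r'<s$, so
\[
c_L(\gamma,\delta):=\frac{\operatorname{radius}(\gamma D^+(\delta))}{\operatorname{radius}(D^+(\gamma))}<1.
\]
There are only finitely many admissible pairs, so $c:=\max_{(\gamma,\delta)} c_L(\gamma,\delta)\in(0,1)$.

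Next I would derive a recursion for $\rho$. For a reduced word $w=w_1w_2\cdots w_n$ with $n\geq 2$, Lemma~\ref{lem:wordF} gives $D^+(w_2\cdots w_n)\subseteq D^+(w_2)$. Because $w_1\ne w_2^{-1}$ and the pole $w_1^{-1}(\infty)$ lies in $D^+(w_1^{-1})$ (which is disjoint from $D^+(w_2)$ by the Schottky-figure hypothesis), $w_1$ sends $D^+(w_2)$ into $\aank$. Applying Lemma~\ref{lem:quotient} to the nested pair $D^+(w_2\cdots w_n)\subseteq D^+(w_2)$ under $w_1$ yields
\[
\rho(w_1\cdots w_n)=\rho(w_1w_2)\,\frac{\rho(w_2\cdots w_n)}{\rho(w_2)}.
\]
Iterating on the tail $w_2\cdots w_n$ and telescoping the intermediate $\rho(w_k)$'s gives
\[
\rho(w_1\cdots w_n)=\frac{\prod_{k=1}^{n-1}\rho(w_kw_{k+1})}{\prod_{k=2}^{n-1}\rho(w_k)}=\rho(w_1)\prod_{k=1}^{n-1}c_L(w_k,w_{k+1}).
\]

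Bounding each factor by $c$ and using $\rho(w_1)\le r_{\max}:=\max_{\delta\in\Delta^\pm}\operatorname{radius}(D^+(\delta))$ yields $\rho(w)\le r_{\max}\,c^{|w|-1}$ for every nontrivial reduced $w$; setting $R:=r_{\max}/c$ gives the stated bound $\rho(w)\le R\,c^{|w|}$. The main obstacle is making the first step airtight, namely obtaining a genuinely \emph{strict} radius inequality from the inclusion $\gamma D^+(\delta)\subseteq D^-(\gamma)$, but once one reads off that $D^-(\gamma)$ has the same radius as $D^+(\gamma)$ it reduces to the ultrametric lemma above; the rest of the argument is formal bookkeeping applied to Lemma~\ref{lem:quotient}.
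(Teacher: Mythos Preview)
Your proof is correct and follows essentially the same approach as the paper: both arguments use Lemma~\ref{lem:quotient} to show that, for letters $\gamma,\delta$ with $\delta\ne\gamma^{-1}$, the ratio $\rho(\gamma\delta)/\rho(\gamma)$ is a fixed constant in $(0,1)$ depending only on the pair $(\gamma,\delta)$, and then telescope along the word. The paper phrases this as an inductive bound obtained by appending letters on the right, while you peel letters from the left and write out the exact product formula $\rho(w)=\rho(w_1)\prod_{k=1}^{n-1} c_L(w_k,w_{k+1})$; the two are equivalent and yield the same constants $R$ and $c$.
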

\begin{proof}
Let $\delta,\delta' \in \Delta^\pm$ such that $\delta' \ne \delta^{-1}$. By Lemma~\ref{lem:wordF}, we have $D^+(\delta' \delta) \subset D^-(\delta') \subseteq D^+(\delta')$. 
Set 
\[c_{\delta,\delta'} := \frac{\text{radius of } f_{\delta,\delta'}(D^+(\delta' \delta))}{\text{radius of } f_{\delta,\delta'}(D^+(\delta'))} \in (0,1).\]

For each $\gamma\in \Gamma$ such that $\gamma\delta'$ is a reduced word, by Lemma~\ref{lem:quotient}, we have
\[ \frac{\text{radius of } D^+(\gamma\delta'\delta)}{\text{radius of } D^+(\gamma\delta')} = \frac{\text{radius of } \gamma f_{\delta,\delta'}^{-1}f_{\delta,\delta'}(D^+(\delta'\delta))}{\text{radius of } \gamma f_{\delta,\delta'}^{-1} f_{\delta,\delta'}(D^+(\delta'))}  \le c_{\delta,\delta'}.\]

Set 
\[R := \max (\{\text{radius of } D^+(\gamma) \mid \gamma \in \Delta^\pm\})\]
and 
\[c := \max (\{c_{\gamma,\gamma'} \mid \gamma,\gamma' \in \Delta^\pm, \gamma' \ne \gamma^{-1}\}).\]
By induction, for each $\gamma \in \Gamma-\{\id\}$, we have 
\[\text{radius of } D^+(\gamma) \le R\, c^{|\gamma|}.\]
\end{proof}


\begin{corollary}\label{cor:loxodromic}
Every element of $\Gamma - \{\id\}$ is loxodromic.
\end{corollary}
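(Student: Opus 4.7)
My plan is to show that every non-identity element $\gamma \in \Gamma$ strictly contracts the closed disc $D^{+}(\gamma)$ into itself, and then to conclude by noting that non-loxodromic M\"obius transformations are isometries in suitable coordinates and hence cannot exhibit such contraction.

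First I would reduce to the case where $\gamma$ is \emph{cyclically reduced} as a word over~$\Delta^{\pm}$, meaning that its first letter $\alpha_{1}$ is not the inverse of its last letter $\alpha_{n}$. This is legitimate because loxodromy is invariant under conjugation (Lemma~\ref{lem:abcdloxodromic} depends only on the absolute values of the eigenvalues of a representative matrix), and if $\gamma=\alpha_{1}\cdots\alpha_{n}$ is not cyclically reduced then the conjugate $\alpha_{1}^{-1}\gamma\alpha_{1}$ reduces to a word of length $n-2$; iterating terminates at a non-trivial cyclically reduced conjugate since $\gamma\ne\id$.

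Next I would verify, directly from the definition of $D^{+}(w)$ and Lemma~\ref{lem:wordF}, that under cyclic reducedness the concatenation $\gamma\cdot\gamma$ is already a reduced word, so $\gamma^{2}$ admits $\gamma$ as a proper prefix, and therefore $D^{+}(\gamma^{2})\subseteq D^{-}(\gamma)\subsetneq D^{+}(\gamma)$. A routine unraveling of the definitions also yields $\gamma\cdot D^{+}(\gamma)=D^{+}(\gamma^{2})$, and combining the two gives
\[ \gamma\cdot D^{+}(\gamma)\;=\;D^{+}(\gamma^{2})\;\subseteq\;D^{-}(\gamma)\;\subsetneq\;D^{+}(\gamma), \]
so $\gamma$ strictly contracts the closed disc $D^{+}(\gamma)$ into itself.

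Finally I would argue by contradiction: if $\gamma$ were not loxodromic, then by Remark~\ref{rem:nonloxodromic}, after base change to~$\wka$ and a suitable coordinate change in $\PGL_{2}(\wka)$, $\gamma$ would act on $\panwka$ as either a unit homothety $z\mapsto\beta z$ with $|\beta|=1$ or a translation $z\mapsto z+b$. Both of these act as isometries of $\panwka$ in the chosen coordinate: the image of any closed disc is a closed disc with the same radius parameter, so two nested closed discs of equal radius must coincide, and no closed disc can be strictly contracted. Since $D^{+}(\gamma)$ is a closed disc (Lemma~\ref{lem:disctodisc}) and remains so after base change, this contradicts the previous step, and hence $\gamma$ must be loxodromic. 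The main obstacle is the middle step, where one must patiently unwind the definition of $D^{\pm}(w)$ so as to turn the group-theoretic condition of cyclic reducedness into the geometric condition of strict contraction via Lemma~\ref{lem:wordF}.
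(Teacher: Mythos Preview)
Your argument is correct and takes a genuinely different route from the paper's. The paper first extends scalars so that $\infty\in F^{-}$, then invokes Proposition~\ref{prop:radiusn} to show that the radii of the discs $\gamma^{n}(D^{+}(\gamma))$ tend to~$0$, and concludes via Remark~\ref{rem:nonloxodromic}. By contrast, you bypass Proposition~\ref{prop:radiusn} entirely: after reducing to a cyclically reduced conjugate (which the paper does not need to do), you extract a single strict containment $\gamma\,D^{+}(\gamma)\subsetneq D^{+}(\gamma)$ from Lemma~\ref{lem:wordF}, and then observe that the normal forms in Remark~\ref{rem:nonloxodromic} preserve radii of closed discs, so cannot realise any strict contraction. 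Your approach is more elementary and qualitative, needing only Lemma~\ref{lem:wordF} and the fact that two nested non-Archimedean closed discs of the same radius coincide; the paper's approach is quantitative and reuses a radius estimate that is independently useful later. One small point worth making explicit in your write-up: after base change and coordinate change the image of $D^{+}(\gamma)$ might be a disc containing~$\infty$, in which case ``same radius'' should be read as ``complement has the same radius''; the contradiction goes through in the same way.
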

\begin{proof}
In order to prove the result, we may extend the scalars. As a result, we may assume that $F^- \cap \pank(k) \ne \emptyset$, hence up to changing coordinates, that $\infty \notin F^-$. Let $\gamma \in \Gamma - \{\id\}$. By Proposition~\ref{prop:radiusn} the radii of the discs $\gamma^n(D^+(\gamma))$ tend to~0 when $n$~tends to~$\infty$, which forces~$\gamma$ to be loxodromic, by Remark~\ref{rem:nonloxodromic}.
\end{proof}

\begin{corollary}\label{cor:intersectiondiscs}
Let $w = (w_{n})_{\ne 0}$ be a sequence of reduced words over~$\Delta^\pm$ such that the associated sequence of discs $(D^+(w_{n}))_{n\ge 0}$ is strictly decreasing. Then, the intersection $\bigcap_{n\ge 0} D^+(w_{n})$ is a single $k$-rational point~$p_{w}$. Moreover, the discs $D^+(w_{n})$ form a basis of neighborhoods of~$p_{w}$ in~$\pank$.
\end{corollary}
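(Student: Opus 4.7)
The plan is to show that the radii of the nested discs $D^+(w_n)$ tend to~$0$, forcing their intersection to be a single point, which will turn out to be $k$-rational by Galois descent. The main technical difficulty is that Proposition~\ref{prop:radiusn} requires the hypothesis $\infty \in F^-$, which in general only holds after a base change and a coordinate change.

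First, I would invoke Lemma~\ref{lem:wordF}: since $(D^+(w_n))_n$ is strictly decreasing, each $w_n$ is a proper prefix of $w_{n+1}$, so $|w_n| < |w_{n+1}|$ and $|w_n| \to \infty$. Next, since $F^-$ is a non-empty open Swiss cheese it contains rigid points, so there exists a finite extension $K/k$ and a point $\alpha \in F^- \cap \PP^1(K)$. I would then choose $\phi \in \PGL_2(K)$ with $\phi(\alpha) = \infty$ and conjugate: the tuple $(\phi \gamma_i \phi^{-1})_i$ admits $\phi(\calB)$ as an adapted Schottky figure over~$K$, and in this new coordinate, $\infty$ lies in the corresponding~$F^-$. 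Word length is preserved by conjugation, so Proposition~\ref{prop:radiusn} applied in this new coordinate yields constants $R>0$ and $c \in (0,1)$ such that $\mathrm{radius}(\phi(D^+(w_n))) \le R\, c^{|w_n|} \to 0$.

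Over $K$, a strictly decreasing nested sequence of closed discs with radii tending to~$0$ has a Cauchy sequence of centers, so by completeness of $K$ the intersection is a single $K$-rational point~$\tilde q$. Setting $\tilde p_w := \phi^{-1}(\tilde q) \in \PP^1(K)$, I would obtain $\bigcap_n D^+(w_n)_K = \{\tilde p_w\}$. Since each $D^+(w_n)$ is defined over~$k$, this intersection is $\Gal(K/k)$-stable, hence the singleton $\tilde p_w$ is Galois-fixed and therefore descends to a $k$-rational point $p_w$ with $\bigcap_n D^+(w_n) = \{p_w\}$ in~$\pank$. For the basis of neighborhoods statement, the ultrametric triangle inequality forces $\phi(D^+(w_n)) = D^+(\tilde q, \rho_n)$ with $\rho_n \to 0$, and such discs form a basis of neighborhoods of the type~$1$ point~$\tilde q$; transporting back through $\phi^{-1}$ and pushing down via the open quotient map $\pi_{K/k}$ (Proposition~\ref{prop:piGalois}) yields the analogous statement over~$k$.

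The hard part is the coordinate and base change: one must verify that conjugating a Schottky figure by $\phi \in \PGL_2(K)$ produces again an adapted Schottky figure for the conjugated generators and that the length function is preserved, so that Proposition~\ref{prop:radiusn} can be legitimately applied in the new coordinate. Once this bookkeeping is done, the remaining ingredients, nested compactness, completeness of~$K$, and Galois descent through~$\pi_{K/k}$, combine formally.
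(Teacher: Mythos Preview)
Your proposal is correct and follows essentially the same route as the paper: pass to a finite extension $K$ where $F^-$ acquires a rational point, change coordinates so that $\infty \in F^-$, invoke Proposition~\ref{prop:radiusn} to make the radii shrink to zero, and descend. The only variation is in the $k$-rationality step: you argue by Galois invariance of $\bigcap_n D^+(w_n)_K$ (so you should take $K/k$ Galois, a point you omitted), whereas the paper observes that $p_w$ is the limit of the $k$-rational centers of the $D^+(w_n)$ and uses that $\PP^1(k)$ is closed by completeness of~$k$.
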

\begin{proof} 
Let~$k_{0}$ be a finite extension of~$k$ such that $F^- \cap \PP^1(k_{0}) \ne \emptyset$. Consider the projection morphism $\pi_{0} \colon \pana{k_{0}} \to \pank$. For each $i\in\{1,\dotsc,g\}$, $\gamma_{i}$ may be identified with an element~$\gamma_{i,0}$ in $\PGL_{2}(k_{0})$. 
The family $(\pi_{0}^{-1}(D^-(\gamma_{i}^{\pm1}), 1\le i\le g, \eps=\pm1)$ is a Schottky figure adapted to $(\gamma_{1,0},\dotsc,\gamma_{g,0})$. We will denote with a subscript~0 the associated sets: $F^-_{0}$, $D^+_{0}(w)$, etc. Note that these sets are all equal to the preimages of the corresponding sets by~$\pi_{0}$. 

Up to changing coordinates on~$\pana{k_{0}}$, we may assume that $\infty \in F_{0}^-$. The sequence of discs $(D_{0}^+(w_{n}))_{n\ge 0}$ is strictly decreasing, so by Lemma~\ref{lem:wordF}, the length of~$w_{n}$ tends to~$\infty$ when~$n$ goes to~$\infty$ and, by Proposition~\ref{prop:radiusn}, the radius of~$D_{0}^+(w_{n})$ tends to~0 when~$n$ goes to~$\infty$. It follows that $\bigcap_{n\ge 0} D_{0}^+(w_{n})$ is a single point~$p_{w,0}$ of type 1 and that the discs $D_{0}^+(w_{n})$ form a basis of neighborhood of~$p_{w,0}$ in~$\pana{k_{0}}$.

Set $p_{w} := \pi_{0}(p_{w,0})$. It follows from the results over~$k_{0}$ that $\bigcap_{n\ge 0} D^+(w_{n}) =\{p_{w}\}$ and that the discs $D^+(w_{n})$ form a basis of neighborhoods of~$p_{w}$ in~$\pank$. 

It remains to show that~$p_{w}$ is $k$-rational. Note that~$p_{w}$ belongs to the closure of~$\PP^1(k)$, since it is the limit of the centers of the $D^+(w_{n})$'s. Since~$k$ is complete, $\PP^1(k)$ is closed in $\PP^1(\wka)$ and the result follows.
\end{proof}

\begin{corollary}\label{cor:limittype1}
The set~$O$ is dense in~$\pank$ and its complement is contained in~$\PP^1(k)$. \qed
\end{corollary}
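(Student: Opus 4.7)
The plan is to exploit the explicit description of the complement $\pank - O_n = \bigsqcup_{|w|=n+1} D^-(w)$ recorded just before the statement, combined with Corollary~\ref{cor:intersectiondiscs}. First I would fix a point $x \in \pank - O$. Since $x \notin O_n$ for every $n\ge 0$, and the sets $D^-(w)$ with $|w|=n+1$ are pairwise disjoint, there is for each $n$ a unique reduced word $w_n$ of length $n+1$ with $x \in D^-(w_n) \subseteq D^+(w_n)$. The chain inclusions $\pank - O_{n+1} \subseteq \pank - O_n$, together with disjointness inside each level, force $D^-(w_{n+1}) \subseteq D^-(w_n)$; by the converse part of Lemma~\ref{lem:wordF} this implies $w_{n+1} = w_n \delta_n$ for some $\delta_n \in \Delta^\pm$, so the sequence $(D^+(w_n))_{n\ge 0}$ is strictly decreasing.

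Applying Corollary~\ref{cor:intersectiondiscs} to this chain yields a $k$-rational point $p_w$ with $\bigcap_{n\ge 0} D^+(w_n) = \{p_w\}$, and the $D^+(w_n)$ form a basis of neighborhoods of $p_w$ in $\pank$. Since $x \in D^+(w_n)$ for all $n$, we must have $x = p_w \in \PP^1(k)$; this gives the inclusion $\pank - O \subseteq \PP^1(k)$.

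For density, I would argue as follows. The points of $O$ are trivially in $\overline{O}$, so it suffices to check that each $x \in \pank - O$ lies in $\overline{O}$. With the notation above, let $U$ be any open neighborhood of $x$. Since the $D^+(w_n)$ form a basis of neighborhoods of $x$, we have $D^+(w_n) \subseteq U$ for $n$ large enough. But $D^+(w_n) \supseteq w_n F^+$ (by the first assertion of Lemma~\ref{lem:wordF}), and $w_n F^+ \subseteq O$ is nonempty, so $U \cap O \ne \emptyset$. Hence $x \in \overline{O}$, and $O$ is dense in $\pank$.

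There is essentially no obstacle here: both assertions are immediate consequences of Corollary~\ref{cor:intersectiondiscs} once one uses the disjoint-union description of $\pank - O_n$ to manufacture the strictly decreasing sequence of discs. The only point requiring a moment's care is verifying that the $w_n$'s chosen at successive levels are compatible (that $w_{n+1}$ extends $w_n$), which is handled by the converse direction of Lemma~\ref{lem:wordF}.
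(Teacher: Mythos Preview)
Your proposal is correct and follows exactly the route the paper intends: the statement is marked \qed\ with no proof given, precisely because both claims fall out immediately from Corollary~\ref{cor:intersectiondiscs} and the disjoint-union description $\pank - O_n = \bigsqcup_{|w|=n+1} D^-(w)$, just as you outline. One tiny nitpick: to justify that $w_{n+1}$ extends $w_n$, it is slightly cleaner to use the \emph{forward} direction of Lemma~\ref{lem:wordF} (writing $w_{n+1}=w'\gamma$ with $|w'|=n+1$ gives $D^+(w_{n+1})\subseteq D^-(w')$, and disjointness at level $n+1$ forces $w'=w_n$) rather than the converse, which is stated for $D^+$ rather than $D^-$; but this is exactly the ``moment's care'' you already flagged.
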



\begin{definition}\label{def:limit}
We say that a point $x\in \pank$ is a \emph{limit point} if there exist $x_{0} \in \pank$ and a sequence $(\gamma_{n})_{n\ge 0}$ of distinct elements of~$\Gamma$ such that $\lim_{n\to\infty} \gamma_{n}(x_{0}) = x$.

The \emph{limit set}~$L$ of~$\Gamma$ is the set of limit points of~$\Gamma$.
\end{definition}

Let us add a short reminder on proper group actions.

\begin{definition}[\protect{\cite[III, \S4, D\'efinition~1]{BourbakiTG14}}]\label{def:proper}
We say that the action of a topological group~$G$ on a topological space~$E$ is \emph{proper} if 
the map
\[\begin{array}{ccc}
\Gamma \times E &\to & E \times E\\
(\gamma,x) &\mapsto & (x,\gamma\cdot x)
\end{array}\] 
is proper.
\end{definition}

\begin{proposition}[\protect{\cite[III, \S4, Propositions~3 and~7]{BourbakiTG14}}]\label{prop:proper}
Let~$G$ be a locally compact topological group and~$E$ be a Hausdorff topological space. Then, the action of~$G$ on~$E$ is proper if, and only if, for every $x, y \in E$, there exist neighborhoods~$U_{x}$ and~$U_{y}$ of~$x$ and~$y$ respectively such that the set $\{\gamma\in \Gamma \mid \gamma U_{x} \cap U_{y} \ne\emptyset\}$ is relatively compact (that is to say finite, if $G$ is discrete).

In this case, the quotient space $\Gamma\backslash E$ is Hausdorff.
\qed
\end{proposition}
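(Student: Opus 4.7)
The plan is to unwind the definition of properness of the orbit map
\[\theta \colon G \times E \to E \times E, \quad (\gamma, x) \mapsto (x, \gamma \cdot x),\]
and reduce the equivalence to a statement about the projection of $\theta^{-1}$ onto the factor $G$. Since $G$ is locally compact and $E$ is Hausdorff (hence so is $G \times E$), properness of $\theta$ is equivalent to the compactness of $\theta^{-1}(K)$ for every compact $K \subseteq E \times E$. Throughout, I would exploit that a compact neighborhood basis of any point is available in $E$: this requires a mild local-compactness assumption on $E$, which is harmless in our applications, or else can be bypassed by replacing compact neighborhoods with products of the form $\overline{V} \times \overline{W}$ with $V, W$ open and relatively compact (an argument which works as soon as $E$ is locally compact; in the fully general Bourbaki statement one works directly with closed subsets, but the idea is the same).

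For the forward direction, assuming $\theta$ is proper, fix $(x,y) \in E \times E$ and pick relatively compact open neighborhoods $U_x \ni x$, $U_y \ni y$ with compact closures. Then $\theta^{-1}(\overline{U_x} \times \overline{U_y})$ is a compact subset of $G \times E$, so its image under $\pr_1 \colon G \times E \to G$ is a compact subset of $G$. This image contains $\{\gamma \in G : \gamma U_x \cap U_y \neq \emptyset\}$, which is therefore relatively compact.

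For the backward direction, given a compact $C \subseteq E \times E$, I would cover $C$ by finitely many products $V_i \times W_i$ where each $V_i, W_i$ is open, $\overline{V_i}$ and $\overline{W_i}$ are compact, and the set $F_i := \{\gamma \in G : \gamma \overline{V_i} \cap \overline{W_i} \neq \emptyset\}$ is relatively compact in $G$ (such a cover exists by the hypothesis and local compactness). Then
\[\theta^{-1}(C) \subseteq \bigcup_i \theta^{-1}(\overline{V_i} \times \overline{W_i}) \subseteq \bigcup_i \bigl(\overline{F_i} \times \overline{V_i}\bigr),\]
and the right-hand side is a finite union of compact subsets of $G \times E$. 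Since $\theta$ is continuous and $E \times E$ Hausdorff, $\theta^{-1}(C)$ is closed in this compact union, hence compact.

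For the Hausdorff quotient statement, consider two distinct orbits $G \cdot x \neq G \cdot y$. Apply the local criterion to obtain open neighborhoods $U_x, U_y$ of $x, y$ with $K := \overline{\{\gamma \in G : \gamma U_x \cap U_y \neq \emptyset\}}$ compact. For each $\gamma \in K$ we have $\gamma x \neq y$, so by Hausdorffness of $E$ and continuity of the action, there exist open neighborhoods $V_\gamma \ni x$ and $W_\gamma \ni y$, and an open neighborhood $\Omega_\gamma \ni \gamma$ in $G$, such that $\Omega_\gamma \cdot V_\gamma \cap W_\gamma = \emptyset$. By compactness of $K$, extract a finite subcover $\Omega_{\gamma_1}, \ldots, \Omega_{\gamma_n}$ and set $U'_x := U_x \cap \bigcap_i V_{\gamma_i}$, $U'_y := U_y \cap \bigcap_i W_{\gamma_i}$. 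Then no $\gamma \in G$ sends $U'_x$ into a set meeting $U'_y$, so the saturations $G \cdot U'_x$ and $G \cdot U'_y$ are disjoint open sets in $E$ whose images in $G \backslash E$ are disjoint open neighborhoods of the two orbits. The main obstacle will be the backward direction, where the cover-and-bound argument demands a little care to ensure the compactifying sets $\overline{F_i} \times \overline{V_i}$ actually trap $\theta^{-1}(C)$; the rest is bookkeeping.
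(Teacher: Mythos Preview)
The paper does not give its own proof of this proposition: it is stated with a citation to Bourbaki and closed immediately with \qed. So there is nothing in the paper to compare your argument against.

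On its own merits, your argument is correct provided $E$ is locally compact, and you are upfront about this. Both your forward direction (choosing relatively compact $U_x, U_y$) and your backward direction (covering $C$ by products $V_i \times W_i$ with compact closures) rely on the existence of compact neighborhoods in $E$; the Hausdorff-quotient argument is fine as written. Since in this paper $E$ is always an open subset of $\pank$, which is locally compact, your simplification is indeed harmless here, as you note.

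If you wanted to match the generality of the Bourbaki statement (only $E$ Hausdorff), the forward direction would need to be redone: properness in Bourbaki's sense is defined via universal closedness rather than ``preimages of compacts are compact,'' and without local compactness of $E$ you cannot produce the compact set $\overline{U_x}\times\overline{U_y}$ to feed into $\theta^{-1}$. One instead argues directly with closedness of $\theta$ and compactness of fibers. But this refinement is not needed for anything in the paper.
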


%

We denote by~$C$ the set of points $x\in \pank$ that admit a neighborhood~$U_{x}$ satisfying $\{\gamma \in \Gamma \mid \gamma U_{x} \cap U_{x} \ne\emptyset\} =\{\id\}$. 
The set~$C$ is an open subset of~$\pank$ and the quotient map $C \to \Gamma\backslash C$ 
is a local homeomorphism. In particular, the topological space $\Gamma\backslash C$ 
is naturally endowed with a structure of analytic space \textit{via} this map.

\begin{theorem}\label{thm:actionproper}
We have $O = C = \pank - L$. Moreover, the action of~$\Gamma$ on~$O$ is free and proper and the quotient $\Gamma\backslash O$ is a Mumford curve of genus~$g$. 

Set $X := \Gamma\backslash O$ and denote by $p \colon O \to X$ the quotient map. Let $\Sigma_{O}$, $\Sigma_{F^+}$ and $\Sigma_{X}$ denote the skeleta of~$O$, $F^+$ and~$X$ respectively. Then, $\Sigma_{O}$ is the trace on~$O$ of the convex envelope of~$L$:
\[\Sigma_{O} = O \cap \bigcup_{x,y \in L} [x,y]\]
and we have
\[ p^{-1}(\Sigma_{X}) = \Sigma_{O} \textrm{ and } p(\Sigma_{O}) = p(\Sigma_{F^+}) = \Sigma_{X}. \]
\end{theorem}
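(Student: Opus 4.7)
The plan is to establish the set-theoretic chain $O = C = \pank - L$ first, then deduce the analytic structure and Mumford property of the quotient, and finally identify the skeleta. The inclusion $\pank - O \subseteq L$ comes quickly: by Corollary~\ref{cor:limittype1}, every $x \in \pank - O$ lies in $\pank - O_n$ for all $n$, hence in a unique $D^-(w_n)$ with $w_n$ of length $n+1$. The $w_n$ fit into a strictly decreasing sequence of closed discs with intersection $\{x\}$ by Corollary~\ref{cor:intersectiondiscs}; since $w_n F^+ \subseteq D^+(w_n)$ and the $D^+(w_n)$ form a basis of neighborhoods of $x$, picking any $y \in F^+$ yields $w_n \cdot y \to x$, witnessing $x \in L$.

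For the reverse direction $O \subseteq C$ (and freeness of the action), the key combinatorial input is that if $\gamma \in \Gamma - \{\id\}$ has first letter $\delta$, then Lemma~\ref{lem:wordF} places $\gamma F^+ \subseteq D^+(\gamma) \subseteq D^+(\delta)$, while $F^+ \cap D^+(\delta) = D^+(\delta) \setminus D^-(\delta)$ meets $\gamma F^+$ in at most a sub-Swiss-cheese disjoint from $F^-$. Given $x \in O$, I would choose $n$ with $x \in O_n \subseteq \mathrm{int}(O_{n+1})$ and a neighborhood $U$ of $x$ inside $O_{n+1}$ small enough that its closure meets only finitely many translates $\gamma F^+$; the nesting properties of Lemma~\ref{lem:wordF} together with the shrinking estimate of Proposition~\ref{prop:radiusn} then force $\{\gamma \in \Gamma : \gamma U \cap U \neq \emptyset\} = \{\id\}$, giving $x \in C$ and freeness. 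Conversely $C \cap L = \emptyset$ is standard: if $x = \lim \gamma_n(y)$ for distinct $\gamma_n$, then every neighborhood $U$ of $x$ satisfies $\gamma_n \gamma_m^{-1} U \cap U \neq \emptyset$ infinitely often, contradicting the defining condition of $C$. Combining, $O = C = \pank - L$, and Proposition~\ref{prop:proper} upgrades the stabilizer condition to properness.

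Setting $X := \Gamma \backslash O$, the free proper action makes $p \colon O \to X$ a local analytic isomorphism onto a Hausdorff space, and compactness of $X$ follows from $p(F^+) = X$: any $y \in \gamma F^+$ satisfies $\gamma^{-1}y \in F^+$. To verify that $X$ is a Mumford curve, I would apply Proposition~\ref{prop:Mumfordtriangulation} by exhibiting a triangulation: $\Sigma_{F^+}$ (Remark~\ref{rem:skeletonF+}) is a finite tree with distinguished endpoints $\eta(\gamma_i^{\pm 1})$, and the identifications $\gamma_i \cdot \eta(\gamma_i^{-1}) = \eta(\gamma_i)$ paste this tree into a finite graph $\Sigma_X$ of cyclomatic number $g$. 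Take $S$ to be the image of the interior type-$2$ vertices of $\Sigma_{F^+}$ (which have stable genus $0$, inherited from their origin in $\pank$), $\calA$ the images of the open edges of $\Sigma_{F^+}$ (which are honest $k$-annuli because $F^+$ is a Swiss cheese over $k$), and $\calD$ the images of the open disc components of $F^+ \setminus \Sigma_{F^+}$. This triangulation satisfies the Mumford criterion, and Corollary~\ref{cor:Mumfordgenus} then gives $g(X) = g$.

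For the skeleta identities, $\Sigma_O = \bigcup_{\gamma \in \Gamma} \gamma \Sigma_{F^+}$, since these $\Gamma$-translates glue into a locally finite subgraph of $O$ whose complement is a disjoint union of virtual open discs (the translates of the disc components of $F^+ \setminus \Sigma_{F^+}$). This coincides with $O \cap \bigcup_{x,y \in L}[x,y]$: every geodesic between two limit points $p_w, p_{w'} \in L$ runs through the translates of $\Sigma_{F^+}$ attached to the common and divergent parts of $w$ and $w'$, and conversely every edge of a translate of $\Sigma_{F^+}$ lies on some such geodesic. The equalities $p^{-1}(\Sigma_X) = \Sigma_O$ and $p(\Sigma_O) = p(\Sigma_{F^+}) = \Sigma_X$ then follow from the $\Gamma$-invariance of $\Sigma_O$ and the fact that $F^+$ is a fundamental domain for the action on $O$. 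The main obstacle I anticipate is the clean verification that $O \subseteq C$ at the boundary points $\gamma \cdot \eta(\gamma_i^{\pm 1})$, where controlling which translates can meet a small neighborhood requires a careful marriage of Lemma~\ref{lem:wordF} and Proposition~\ref{prop:radiusn}; once that is secured, the remaining assertions reduce to topological bookkeeping.
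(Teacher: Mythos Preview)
Your overall architecture matches the paper's, but there is one genuine gap. The claim that ``Proposition~\ref{prop:proper} upgrades the stabilizer condition to properness'' is not correct: the defining condition of~$C$ only controls $\{\gamma : \gamma U_x \cap U_x \neq \emptyset\}$ for a \emph{single} point~$x$, whereas properness (via Proposition~\ref{prop:proper}) requires finiteness of $\{\gamma : \gamma U_x \cap U_y \neq \emptyset\}$ for every \emph{pair} $x,y$, and these are not equivalent in general. The paper closes this with a separate one-line argument you already have the ingredients for: given $x,y \in O$, pick~$n$ with both in the interior of~$O_n$; then Lemma~\ref{lem:wordF} forces any $\gamma$ with $\gamma O_n \cap O_n \neq \emptyset$ to have length at most~$2n+1$.

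For $O \subseteq C$ at the boundary points~$\eta(\gamma_i^{\pm1})$, which you flag as the obstacle, the paper's device is cleaner than the neighborhood-shrinking you sketch: take the single open set $U := F^+ \cup \bigcup_{\delta \in \Delta^\pm} \delta F^- = \pank - \bigsqcup_{|w|=2} D^+(w)$. Lemma~\ref{lem:wordF} gives $\{\gamma : \gamma U \cap U \neq \emptyset\} = \{\id\} \cup \Delta^\pm$ directly. Since stabilizers are trivial and this set is finite, each $x \in U$ admits a smaller neighborhood moved off itself by every element of~$\Delta^\pm$, hence by all of~$\Gamma - \{\id\}$. No radius estimates are needed.

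Two smaller divergences. For $\Sigma_O = O \cap \bigcup_{x,y \in L}[x,y]$, the paper bypasses your word-combinatorics on geodesics by invoking Proposition~\ref{prop:aan-S}: the complement in~$\pank$ of the convex hull of~$L$ is a union of open discs, so it cannot meet~$\Sigma_O$, and conversely no point on a path between two limit points lies in a virtual open disc inside~$O$. For the Mumford property, the paper works straight from Definition~\ref{def:Mumfordcurve} (showing $p_{|F^-}$ and $p$ restricted to annuli around $D^+(\delta) - D^-(\delta)$ are injective onto $k$-Swiss cheeses and $k$-annuli), whereas you go through Proposition~\ref{prop:Mumfordtriangulation}; both work, but in your version remember that the images of the identified endpoints $\eta(\gamma_i),\eta(\gamma_i^{-1})$ must also be put into~$S$.
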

\begin{proof}
Let $x \in L$. By definition, there exists $x_{0} \in \pank$ and a sequence $(\gamma_{n})_{n\ge 0}$ of distinct elements of~$\Gamma$ such that $\lim_{n\to\infty} \gamma_{n}(x_{0}) = x$. Assume that $x\in F^+$. Since $F^+$ is contained in the interior of~$O_{1}$, there exists $N\ge 0$ such that $\gamma_{N}(x_{0}) \in O_{1}$, hence we may assume that $x_{0} \in O_{1}$. Lemma~\ref{lem:wordF} then leads to a contradiction. It follows that~$L$ does not meet~$F^+$, hence, by $\Gamma$-invariance, $L$ is contained in $\pank - O$.

Let $y\in \pank - O$. By definition, there exists a sequence $(w_{n})_{n\ge 0}$ of reduced words over~$\Delta^\pm$ such that, for each $n\ge 0$, $|w_{n}|\ge n$ and $y \in D^-(w_{n})$. Let~$y_{0} \in F^-$. By Lemma~\ref{lem:wordF}, for each $n\ge 0$, we have $w_{n}(y_{0}) \in D^-(w_{n})$ and the sequence of discs $(D^+(w_{n}))_{n\ge 0}$ is strictly decreasing. By Corollary~\ref{cor:intersectiondiscs}, $(w_{n}(y_{0}))_{n\ge 0}$ tends to~$y$, hence $y\in L$. It follows that $\pank - O = L$.

Set 
\[U := F^+ \cup \bigcup_{\gamma \in \Delta^\pm} \gamma F^- = \pank - \bigsqcup_{|\gamma|=2} D^+(\gamma).\] 
It is an open subset of~$\pank$ and it follows from the properties of the action (see Lemma~\ref{lem:wordF}) that we have $\{\gamma\in \Gamma \mid \gamma U \cap U \ne \emptyset\} = \{\id\} \cup \Delta^\pm$. Using the fact that the stabilizers of the points of~$U$ are trivial, we deduce that $U\subseteq C$. Letting~$\Gamma$ act, it follows that $O \subseteq C$. Since no limit point may belong to~$C$, we deduce that this is actually an equality.

\medbreak

We have already seen that the action is free on~$O$. Let us prove that it is proper. Let $x, y \in O$. There exists $n\ge 0$ such that~$x$ and~$y$ belong to the interior of~$O_{n}$. By Lemma~\ref{lem:wordF}, the set $\{\gamma\in\Gamma \mid \gamma O_{n} \cap O_{n} \ne \emptyset\}$ is made of elements of length at most~$2n+1$. In particular, it is finite. We deduce that the action of~$\Gamma$ on~$O$ is proper.

\medbreak

The compact subset~$F^+$ of~$\pank$ contains a point of every orbit of every element of~$O$. It follows that $\Gamma\backslash O$ is compact. 
The set~$F^-$ is an open $k$-Swiss cheese and the map~$p$ is injective on it, which implies that $p_{|F^-}$ induces an isomorphism onto its image. In addition, one may check that each subset of the form $D^+(\gamma) - D^-(\gamma)$ for $\gamma  \in\{\gamma_{1}^{\pm1},\dotsc,\gamma_{g}^{\pm1}\}$ is contained in an open $k$-annulus on which~$p$ is injective.
It follows that any element of~$\Gamma\backslash O$ has a neighborhood isomorphic to a $k$-Swiss cheese, hence $\Gamma\backslash O$ is a Mumford curve.


\medbreak

Set $\Sigma := O \cap \bigcup_{x,y \in L} [x,y]$. It is clear that no point of~$\Sigma$ is contained in a virtual open disc inside~$O$, hence $\Sigma \subseteq \Sigma_{O}$. It follows from Proposition~\ref{prop:aan-S} that $\pank - \Sigma$ is a union of virtual open discs, hence $\Sigma_{O} \cap (\pank - \Sigma) = \emptyset$. We deduce that $\Sigma_{O} = \Sigma$. Note that it follows that $\Sigma_{F^+} = \Sigma_{O} \cap F^+$.

Let $x\in O - \Sigma_{O}$. Then $x$ is contained in a virtual open disc inside~$O$. Assume that there exists $\gamma \in \Gamma$ such that $x \in \gamma F^-$. Then, the said virtual open disc is contained in~$\gamma F^-$. Since $p_{|\gamma F^-}$ induces an isomorphism onto its image, $p(x)$ is contained in a virtual open disc in~$X$, hence $p(x) \notin \Sigma_{X}$. As above, the argument may be adapted to handle all the points of $O - \Sigma_{O}$. It follows that $p^{-1}(\Sigma_{X}) \subseteq \Sigma_{O}$.

Let $x \in \Sigma_{O}$. In order to show that $p(x) \in \Sigma_{X}$, we may replace $x$ by $\gamma(x)$ for any $\gamma\in \Gamma$, hence assume that $x \in F^+ \cap \Sigma_{O} = \Sigma_{F^+}$. From the explicit description of the action of~$\Gamma$ on~$F^+$, we may describe precisely the behaviour of~$p$ on $\Sigma_{F^+} = \Sigma_{F^-} \cup \partial F^+$: it is injective on~$\Sigma_{F^-}$ and identifies pairs of points in~$\partial F^+$. It follows that $p(x)$ belongs to a injective loop inside~$X$ and Remark~\ref{rem:skeletonX} then ensures that~$p(x) \in \Sigma_{X}$.
The results about the skeleta follow directly.

\medbreak

It remains to prove that the genus of $X = \Gamma\backslash O$ is equal to~$g$. The arguments above show that $\Sigma_{X} \simeq \Gamma\backslash \Sigma_{F^+}$ is a graph with cyclomatic number~$g$. The result now follows from Corollary~\ref{cor:Mumfordgenus}.
\end{proof}

\begin{figure}[ht]
\centering
    \begin{subfigure}[b]{0.44\textwidth}
\includegraphics[scale=.3]{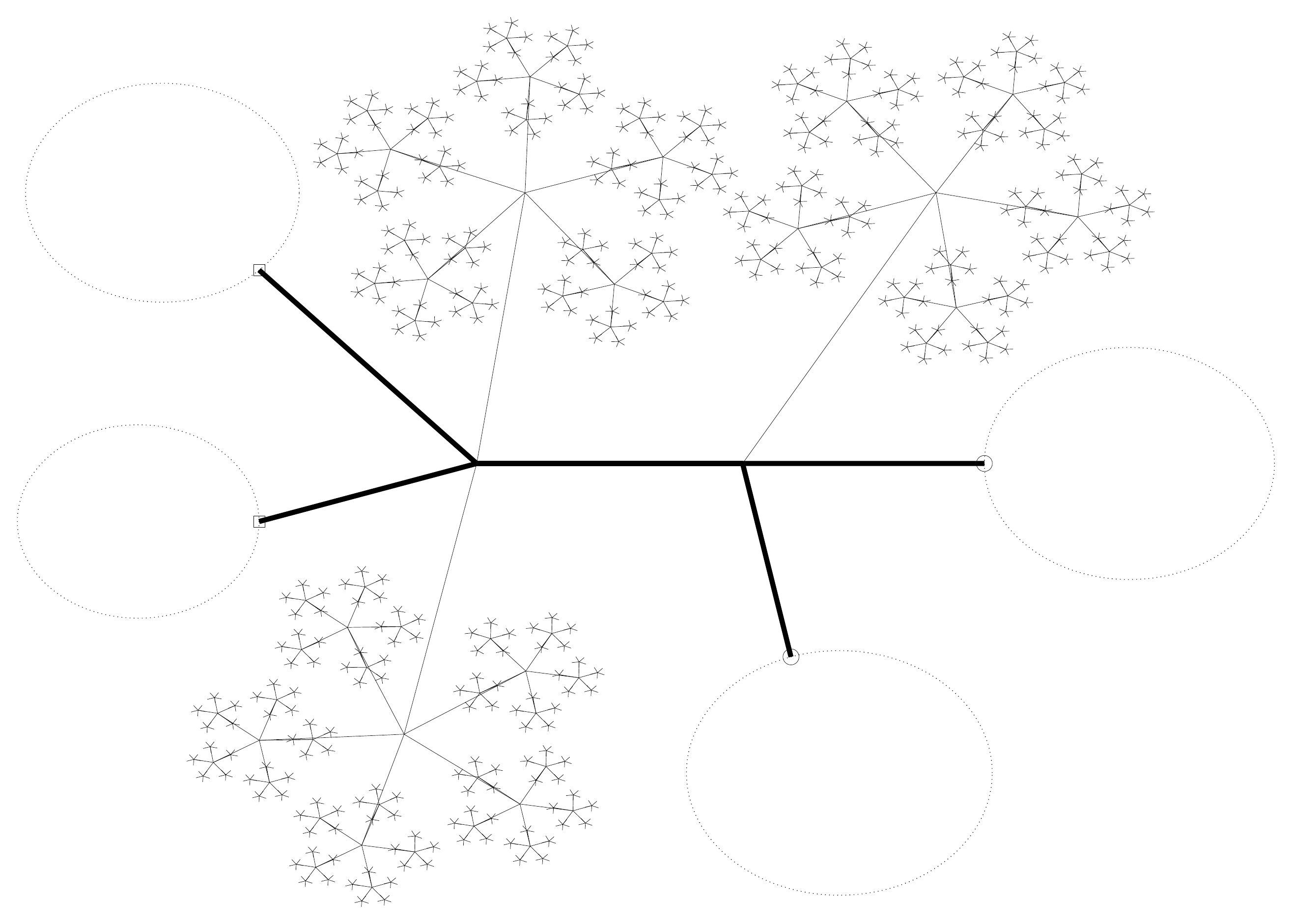}
    \end{subfigure}
    \hspace{1.5cm}
    \begin{subfigure}[b]{0.44\textwidth}
\includegraphics[scale=.3]{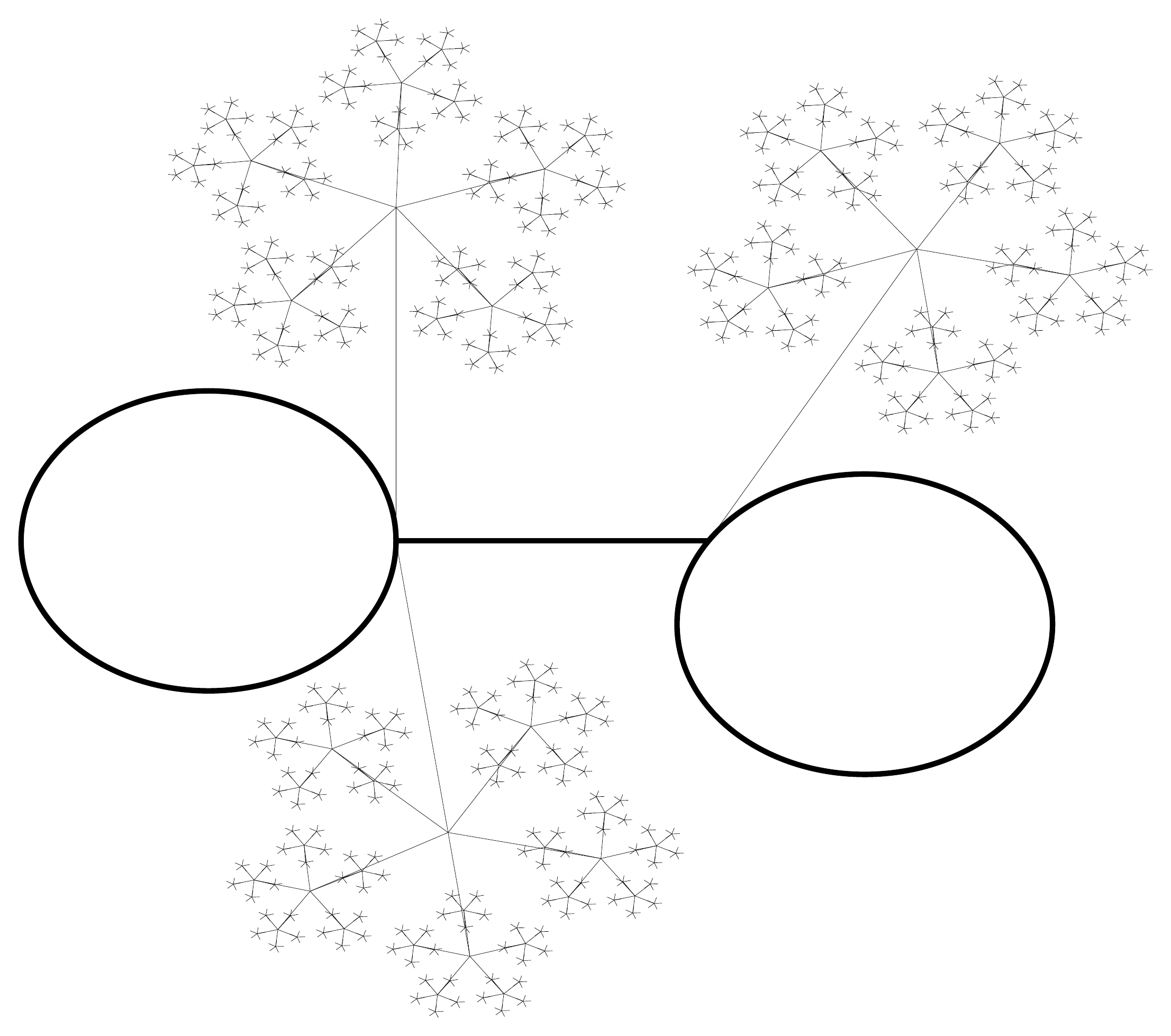}
    \end{subfigure}

 \caption{The closed fundamental domain $F^+$ (on the left) of the Schottky group $\Gamma$ is a Swiss cheese. The group $\Gamma$ identifies the ends of the skeleton $\Sigma_{F^+}$, so that the corresponding Mumford curve (on the right) contains the finite graph $\Sigma_X$.}
 \label{fig:Test}
\end{figure}

\begin{example}[Tate curves]
If $g=1$ in the theory above, one starts with the datum of an element $\gamma \in \PGL_2(k)$ and of two disjoint closed discs $D^+(\gamma)$  and $D^+(\gamma^{-1})$ in such a way that
$\gamma(\pank - D^+(\gamma^{-1}))$ is a maximal open disc inside $D^+(\gamma)$.
Since $\gamma$ is loxodromic, up to conjugation it is represented by a matrix of the form $\begin{bmatrix} q & 0 \\ 0 & 1 \end{bmatrix}$ for some $q \in k$ satisfying $0<|q|<1$.
In other words, up to a change of coordinate in $\pank$, the transformation $\gamma$ is the multiplication by $q$ and hence the limit set $L$ consists only of the two points $0$ and $\infty$.
The quotient curve obtained from applying Theorem \ref{thm:actionproper} is an elliptic curve, whose set of $k$-points is isomorphic to the multiplicative group $k^\times/ q^\Z$.
\end{example}

\begin{remark}\label{rem:Indra}
It follows from Theorem~\ref{thm:actionproper} and Corollary~\ref{cor:intersectiondiscs} that each point in the limit set may be described as the intersection of a nested sequence of discs of the form $\bigcap_{n \ge 0} D^+(w_{n})$, for a sequence of words~$w_{n}$ whose lengths tend to infinity. This is a rather concrete description, that could easily be implemented to any precision on a computer. The complex version of this idea gave rise to beautiful pictures in~\cite{IndrasPearls}.

Actually, we highly recommend the whole book~\cite{IndrasPearls} to the reader. It starts with the example of a complex Schottky group with two generators in a very accessible way and then presents a large amount of advanced material in a carefully explained way, with an original and colorful terminology, enriched with many pictures. Among the subjects covered are the Hausdorff dimension of the limit set (``fractal dust''), the degeneration of the notion of Schottky groups when the discs in the Schottky figures become tangent (``kissing Schottky groups''), etc. We believe that it is worth investigating those questions in the non-Archimedean setting too. In particular, finding a way to draw meaningful non-Archimedean pictures would certainly be very rewarding.
\end{remark}

\subsection{Group-theoretic version}\label{sec:groupSchottky}

We now give the general definition of Schottky group over~$k$ and explain how it relates to the geometric situation considered in the previous sections. As regards proper action, recall Definition~\ref{def:proper} and Proposition~\ref{prop:proper}.

\begin{definition}\label{def:SchottkyGroup}
A subgroup~$\Gamma$ of $\PGL_2(k)$ is said to be a \emph{Schottky group over~$k$} if
\begin{enumerate}
\item it is free and finitely generated;
\item all its non-trivial elements are loxodromic;
\item there exists a non-empty $\Gamma$-invariant connected open subset of~$\pank$ on which the action of~$\Gamma$ is free and proper.
\end{enumerate}

\end{definition}


\begin{remark}\label{rem:discrete}
Schottky groups are discrete subgroups of~$\PGL_{2}(k)$. Indeed any element of~$\PGL_{2}(k)$ that is close enough to the identity has both eigenvalues of absolute value~1, hence cannot be loxodromic.
\end{remark}

\begin{remark}

There are other definitions of Schottky groups in the literature. L.~Gerritzen and M.~van der Put use a slightly different version of condition~(iii) (see \cite[I (1.6)]{GerritzenPut80}). This is due to the fact that they work in the setting of rigid geometry, where the space consists only of our rigid points. We chose to formulate our definition this way in order to take advantage of the nice topological properties of Berkovich spaces and make it look closer to the definition used in complex geometry.

D.~Mumford considered a more general setting where $k$ is the fraction field of a complete integrally closed noetherian local ring and he requires only properties~(i) and~(ii) in his definition of Schottky group (see \cite[Definition~1.3]{Mumford72}). The intersection with our setting consists of the complete discretely valued fields~$k$.

However, when $k$ is a local field, all the definitions coincide (see \cite[I (1.6.4)]{GerritzenPut80} and Section~\ref{sec:localfields}).
\end{remark}

Schottky groups arise naturally when we have Schottky figures as in Section~\ref{sec:Schottkyfigures}. Indeed, the following result follows from Proposition~\ref{prop:Gammafree}, Corollary~\ref{cor:loxodromic} and Theorem~\ref{thm:actionproper}.

\begin{proposition}\label{prop:geometrygroup}
Let~$\Gamma$ be a subgroup of $\PGL_{2}(k)$ generated by finitely many elements $\gamma_{1},\dotsc,\gamma_{g}$. If there exists a Schottky figure adapted to $(\gamma_{1},\dotsc,\gamma_{g})$, then~$\Gamma$ is a Schottky group.
\qed
\end{proposition}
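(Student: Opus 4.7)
The plan is to verify the three defining conditions of Definition~\ref{def:SchottkyGroup} in turn, each time invoking a result already proven for groups arising from Schottky figures.

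For condition~(i), Proposition~\ref{prop:Gammafree} asserts that the natural map from the free group on $g$ abstract generators to $\Gamma$, sending the $i$-th generator to $\gamma_i$, is an isomorphism. Thus $\Gamma$ is finitely generated and free of rank $g$. For condition~(ii), Corollary~\ref{cor:loxodromic} gives exactly the statement that every non-trivial element of $\Gamma$ is loxodromic.

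For condition~(iii), I would exhibit explicitly the set $O = \bigcup_{\gamma \in \Gamma} \gamma F^+$ introduced in Section~\ref{sec:Schottkyfigures}. It is a $\Gamma$-invariant open subset of~$\pank$, non-empty since it contains $F^+$. The freeness and properness of the action of~$\Gamma$ on~$O$ are the content of Theorem~\ref{thm:actionproper}. The only point not already recorded is the connectedness of $O$, which I would prove by showing by induction on $n$ that the finite unions $O_n = \bigcup_{|\gamma| \le n} \gamma F^+$ are connected. Each translate $\gamma F^+$ is connected because $F^+$ is a closed Swiss cheese (Remark~\ref{rem:skeletonF+}), and two consecutive translates $\gamma F^+$ and $\gamma\delta F^+$ (with $\gamma\delta$ a reduced word) share the unique boundary point of the disc $\gamma D^+(\delta)$: indeed this point lies in $F^+$ and is fixed by no element sending it outside~$F^+$, and on the other side it lies in $\delta F^+$ because, from the description $\delta F^+ = \PP^1_k - \bigcup_{i,\epsilon} \delta D^-(\gamma_i^\epsilon)$, the boundary point of $D^+(\delta)$ is not in any of the discs $\delta D^-(\gamma_i^\epsilon)$. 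Translating by $\gamma$ gives the desired overlap, and connectedness of $O$ follows from that of the increasing union of connected sets $O_n$.

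There is no significant obstacle: the proof is a straightforward bookkeeping exercise assembling Proposition~\ref{prop:Gammafree}, Corollary~\ref{cor:loxodromic}, and Theorem~\ref{thm:actionproper}. The only substantive additional observation, namely that $O$ is connected, is a direct consequence of how $O$ is constructed by iteratively attaching translates of the connected closed Swiss cheese $F^+$ along boundary points of the discs in the Schottky figure.
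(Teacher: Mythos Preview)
Your proposal is correct and follows exactly the paper's approach: the paper simply records that the result follows from Proposition~\ref{prop:Gammafree}, Corollary~\ref{cor:loxodromic}, and Theorem~\ref{thm:actionproper}, and marks it with a \qed. You have unpacked precisely these three citations into the three conditions of Definition~\ref{def:SchottkyGroup}.

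The one point you flag as needing additional work, namely the connectedness of~$O$, is left implicit in the paper. Your inductive argument is valid (though the sentence about the shared boundary point is slightly garbled: what you want to say is that the boundary point of $D^+(\delta)$ lies in~$F^+$ by construction, and also in $\delta F^+$ because $\delta$ sends the boundary point of $D^+(\delta^{-1})$, itself in~$F^+$, to the boundary point of $D^+(\delta)$). An alternative route, closer to the paper's toolkit, is to invoke Corollary~\ref{cor:limittype1}: since $\pank - O = L$ is contained in~$\PP^1(k)$, it consists entirely of type~1 points, and removing end-points from the real tree~$\pank$ cannot disconnect it.
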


We now turn to the proof of the converse statement.

\begin{lemma}\label{lem:annuli}
Let~$\gamma$ be a loxodromic M\"obius transformation. Let~$A$ and~$A'$ be disjoint virtual flat closed annuli. Denote by~$I$ the open interval given as the interior of the path joining their boundary points. Assume that $\gamma A_{1} = A_{2}$ and $\gamma I \cap I = \emptyset$. For $\eps \in \{\emptyset,'\}$, denote by~$D^\eps$ the connected component of $\pank - A^\eps$ that does not meet~$I$. Then, for $\eps\in \{\emptyset,'\}$, $A^\eps$ is a flat closed annulus, $D^\eps$ is an open disc, $E^\eps := D^\eps \cup A^\eps$ is a closed disc and we have
\[
\gamma D = \pank - E' \textrm{ and } \gamma E = \pank - D'.
\]
\end{lemma}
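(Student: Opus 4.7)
The plan is to first establish the transformation identities $\gamma D = \pank - E'$ and $\gamma E = \pank - D'$ via a branch-analysis argument, and then upgrade the virtual structure assertions. To set up, each virtual flat closed annulus $A^\eps$ has a unique boundary point $\eta_{A^\eps}$ in $\pank$ (the image of the Galois orbit of boundary points of the components of $\pi_{\wka/k}^{-1}(A^\eps)$), and $\pank - A^\eps$ has exactly two connected components; call $D^\eps$ the one disjoint from $I$ and $D_*^\eps$ the other. Since $\pank$ is uniquely path-connected and $A^\eps$ is convex in the tree, the open arc $I$ avoids both $A$ and $A'$. Combined with $A \cap A' = \emptyset$ and the fact that $I$ limits to $\eta_{A'} \in A'$, the set $I \cup A'$ is connected in $\pank - A$, so $A' \subseteq D_*$.

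The core argument is then a branch analysis at $\eta_{A'}$. Both $I$ (which ends at $\eta_{A'}$) and $\gamma I$ (which starts at $\gamma \eta_A = \eta_{A'}$) remain in $\pank - A'$ near $\eta_{A'}$, since $\gamma I \cap A' = \gamma(I \cap A) = \emptyset$. At $\eta_{A'}$, exactly two branches enter $\pank - A'$, corresponding to its two components $D'$ and $\pank - E'$. The hypothesis $\gamma I \cap I = \emptyset$ forces $I$ and $\gamma I$ to depart along different branches (otherwise they would overlap in a small initial sub-arc of the shared branch), and since $I \subseteq \pank - E'$ by the very definition of $D'$, we obtain $\gamma I \subseteq D'$. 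As $\gamma$ is a homeomorphism permuting the two components of $\pank - A$ onto those of $\pank - A'$, the inclusions $\gamma I \subseteq \gamma D_*$ and $\gamma I \subseteq D'$ force $\gamma D_* = D'$, hence $\gamma D = \pank - E'$; then $\gamma E = \gamma D \cup \gamma A = (\pank - E') \cup A' = \pank - D'$.

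The hard part will be upgrading the virtual assertions, showing that $A^\eps$ is an honest flat closed annulus, $D^\eps$ an honest open disc, and $E^\eps$ an honest closed disc. The strategy is to exploit Koebe coordinates for the loxodromic $\gamma$ (Section~\ref{sec:Koebe}): after conjugating by an element of $\PGL_2(k)$, we may take $\gamma(T) = qT$ with $q \in k$ and $0 < |q| < 1$. The explicit action $\eta_{a, r} \mapsto \eta_{qa, |q|r}$, combined with $\gamma \eta_A = \eta_{A'}$ and the disjointness $\gamma I \cap I = \emptyset$, forces the boundary points to be of the form $\eta_{a, r}$ with $a \in k$ and $r \in |k^\times|^\Q$, ruling out a genuinely virtual configuration in which the preimage in $\panwka$ would split as a Galois orbit of several flat closed annuli. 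With this coordinate information in hand, Lemma~\ref{lem:disctodisc} then gives the required descriptions of $A^\eps$, $D^\eps$, $E^\eps$ as honest (flat closed) annuli and (open/closed) discs over~$k$. The main subtlety is carefully combining the $k$-rationality of the Koebe fixed points with the Galois action on the preimages to exclude the virtual case.
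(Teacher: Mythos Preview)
Your branch argument for the identities $\gamma D = \pank - E'$ and $\gamma E = \pank - D'$ is correct and is essentially a reformulation of the paper's path argument: both exploit the tree structure of~$\pank$ and the hypothesis $\gamma I \cap I = \emptyset$ to see that $\gamma$ must send the component $\pank - E$ of $\pank - A$ to the component~$D'$ of $\pank - A'$. The paper phrases this by taking $z \in \pank - E$, observing that the path $[\eta,z]$ meets~$I$, and concluding that its image $[\eta',\gamma(z)]$ cannot meet~$I$, forcing $\gamma(z) \in D'$; your version, via the two branches at~$\eta_{A'}$ entering $\pank - A'$, is the same idea in different clothing.

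Where you diverge from the paper is in the ``upgrade from virtual to genuine'' step, and here your proposed route is both more complicated and not fully justified. You propose to conjugate to $\gamma(T)=qT$ and then argue that the explicit action on points of the form~$\eta_{a,r}$, together with $\gamma\eta_A=\eta_{A'}$ and $\gamma I\cap I=\emptyset$, forces $\eta_A$ to have a $k$-rational centre. But this implication is not explained, and it is not clear it holds as stated: merely knowing that multiplication by~$q$ sends~$\eta_A$ to~$\eta_{A'}$ does not by itself prevent both from being images of nontrivial Galois orbits. You yourself flag this as ``the main subtlety'', which is a sign that the argument is not yet complete.

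The paper's approach to this step is much shorter and avoids all of this. Once the identities $\gamma D = \pank - E'$ and $\gamma E = \pank - D'$ are in hand, one observes that $D \subseteq \pank - E' = \gamma D$ and $D' \subseteq \pank - E = \gamma^{-1}D'$, so iterating shows that~$D$ and~$D'$ each contain one of the two fixed points of~$\gamma$. Since~$\gamma$ is loxodromic, these fixed points are $k$-rational (Section~\ref{sec:Koebe}), and a virtual open disc containing a $k$-rational point is automatically a genuine open disc over~$k$ (any $k$-rational point serves as a centre). The statements about~$A^\eps$ and~$E^\eps$ then follow immediately. Note that this is really the same $k$-rationality of the Koebe fixed points that you invoke, but used directly rather than through a coordinate change and a Galois-orbit analysis; you should replace your second half with this argument.
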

\begin{proof}
For each $\eps \in \{\emptyset,'\}$, $D^\eps$ and~$E^\eps$ are respectively a virtual open disc and a virtual closed disc. Note that the set $\pank - A^\eps$ has two connected components, namely $D^\eps$ and $\pank - E^\eps$, and that the latter contains~$I$.

Since~$\gamma$ is an automorphism, it sends the connected component~$\pank-E$ of $\pank - A$ to a connected component~$C$ of $\pank - \gamma A = \pank - A'$. Denote by~$\eta$ and~$\eta'$ the boundary points of~$A$ and~$A'$. Let $z \in \pank - E$. The unique path $[\eta,z]$ between~$\eta$ and~$z$ then meets~$I$. Its image is the unique path $[\eta',\gamma(z)]$ between $\gamma(\eta) = \eta'$ and $\gamma(z)$. If $\gamma(z) \notin E'$, then this path meets~$I$, contradicting the assumption $\gamma I \cap I = \emptyset$. We deduce that $\gamma(z) \in E'$, hence that $C = D'$. It follows that we have 
\[
\gamma D = \pank - E' \textrm{ and } \gamma E = \pank - D',
\]
as wanted. 

In particular, $D$ and~$D'$ contain respectively the attracting and repelling fixed point of~$\gamma$. Since those points are $k$-rational, we deduce that~$D$ and~$D'$ are discs. The rest of the result follows.
\end{proof}

\begin{theorem}\label{thm:groupfigure}
Let~$\Gamma$ be a Schottky group over~$k$. Then, there exists a basis~$\beta$ of~$\Gamma$ and a Schottky figure~$\calB$ that is adapted to~$\beta$. 
\end{theorem}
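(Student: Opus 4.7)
My plan is to exploit the action of $\Gamma$ on $\pank$ to locate a suitable $\Gamma$-invariant real sub-tree, then extract the Schottky figure by applying Lemma~\ref{lem:annuli} to pairs of virtual flat closed annuli arising from a ``cutting'' procedure on the quotient graph. Let $\mathbb{T} \subseteq \pank$ denote the union of the axes (the paths between the two fixed points) of all non-trivial elements of $\Gamma$. Since each such axis joins two $k$-rational type-$1$ points and $\Gamma$ permutes the axes, $\mathbb{T}$ is a $\Gamma$-invariant real sub-tree on which every non-trivial $\gamma \in \Gamma$ acts as a non-trivial translation along its own axis; in particular, the action of $\Gamma$ on $\mathbb{T}$ is free. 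Combining this with the properness hypothesis in Definition~\ref{def:SchottkyGroup}(iii), one shows that $\Gamma$ acts properly on a suitable $\Gamma$-invariant open thickening of $\mathbb{T}$ in $\pank$.

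The next step is to verify that the quotient $\overline{\mathbb{T}} := \Gamma \backslash \mathbb{T}$ is a finite metric graph whose first Betti number equals~$g$. Finiteness (cocompactness of the action) follows from properness together with control on how the fixed-point set clusters at the ``ends'' of $\overline{\mathbb{T}}$, while $b_{1}(\overline{\mathbb{T}}) = g$ comes from the identification $\pi_{1}(\overline{\mathbb{T}}) \cong \Gamma$, which is free of rank~$g$ acting freely on the tree $\mathbb{T}$. Choose a spanning tree of $\overline{\mathbb{T}}$: its complement consists of exactly $g$ edges, and in the interior of each I pick a point $\overline{p}_{i}$ of type~$2$ or~$3$ (such points are dense). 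Lift each $\overline{p}_{i}$ to a point $p_{i} \in \mathbb{T}$ and define $\gamma_{i} \in \Gamma$ to be the unique element with $p_{i}$ on its axis such that $\gamma_{i} p_{i}$ is the $\Gamma$-translate of $p_{i}$ immediately adjacent along this axis. Covering-space theory applied to $\overline{\mathbb{T}}$ then ensures that $(\gamma_{1}, \ldots, \gamma_{g})$ is a basis of $\Gamma$.

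Finally, I would apply Lemma~\ref{lem:annuli} to each pair $(p_{i}, \gamma_{i} p_{i})$. Choose a virtual flat closed annulus $A_{i}$ at $p_{i}$ small enough that the $2g$ annuli $A_{1}, \gamma_{1} A_{1}, \ldots, A_{g}, \gamma_{g} A_{g}$ lie in pairwise disjoint neighborhoods of the $2g$ boundary points of a fundamental subtree $F \subset \mathbb{T}$ (possible by properness of the action). Setting $A_{i}' := \gamma_{i} A_{i}$, the open interval $I_{i} \subset \mathbb{T}$ between the boundary points of $A_{i}$ and $A_{i}'$ lies on the axis of $\gamma_{i}$ and satisfies $\gamma_{i} I_{i} \cap I_{i} = \emptyset$, because $\gamma_{i}$ translates the axis by the length of $I_{i}$. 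Lemma~\ref{lem:annuli} then yields closed discs $E_{i}$ and $E_{i}'$ with $\gamma_{i} E_{i} = \pank - D_{i}'$; setting $D^{+}(\gamma_{i}^{-1}) := E_{i}$ and $D^{+}(\gamma_{i}) := E_{i}'$ produces the desired Schottky figure, and the open discs $D^{-}(\gamma_{i}^{\pm 1})$ are maximal open sub-discs because their boundary points $p_{i}$ and $\gamma_{i} p_{i}$ are of type~$2$ or~$3$. The main obstacle is guaranteeing pairwise disjointness of the $2g$ closed discs: this forces the annuli $A_{i}$ to be placed at the boundary of the fundamental subtree $F$ so that each $E_{i}$ occupies a distinct branch of $\mathbb{T} \setminus F$, and the full proof must carry out this branch-by-branch analysis using the tree structure of $\mathbb{T}$.
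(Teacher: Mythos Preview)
Your overall strategy---pass to a quotient graph of Betti number~$g$, cut~$g$ edges to obtain a basis $(\gamma_1,\dots,\gamma_g)$, and then invoke Lemma~\ref{lem:annuli} on suitable pairs of flat closed annuli---is exactly the paper's. The difference is in how you reach the quotient graph, and your route introduces avoidable gaps.

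The paper does not build the axis-tree~$\mathbb{T}$ at all. Instead it takes the open set~$U$ supplied directly by Definition~\ref{def:SchottkyGroup}(iii), observes that~$U$ is simply connected (being a connected subset of the tree~$\pank$), and forms $X:=\Gamma\backslash U$. This is immediately an $\AA^1$-like curve with $\pi_1(X)\cong\Gamma$, so its skeleton~$\Sigma_X$ (which exists by Theorem~\ref{thm:triangulation}) is a locally finite graph of cyclomatic number~$g$. One then chooses~$g$ disjoint simple loops in~$\Sigma_X$, picks a non-branch point~$x_i$ on each, and takes~$A_i:=\tau^{-1}(x_i)$; a compact lift~$Y$ of $X-\bigcup_i A_i$ to~$U$ carries two boundary annuli~$B_i,B_i'$ for each~$i$ with $\gamma_i B_i=B_i'$, and Lemma~\ref{lem:annuli} finishes.

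Your detour through~$\mathbb{T}$ forces you to prove several things the paper gets for free. First, the action of~$\Gamma$ on~$\mathbb{T}$ is \emph{not} free as stated: the endpoints of each axis are fixed points of~$\gamma$, so you must excise them. Second, you assert~$\mathbb{T}$ is connected (a subtree), but a union of axes in a tree is not obviously connected; this is a standard fact about minimal invariant subtrees but it needs an argument. Third, and most seriously, Definition~\ref{def:SchottkyGroup}(iii) only hands you \emph{some} open~$U$, and you need $\mathbb{T}\setminus\{\text{fixed points}\}\subseteq U$ to import properness. This is in fact true (for any $x\in U$, the $\Gamma$-invariance and connectedness of~$U$ force $\bigcup_n[\gamma^n x,\gamma^{n+1}x]\subseteq U$, and this union contains the axis of~$\gamma$), but it is exactly the kind of step you should not leave to the reader. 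Finally, the disjointness of the~$2g$ closed discs---which you correctly flag as the crux---is handled in the paper not by a branch-by-branch analysis but by the single observation that the interior~$Y'$ of the lift satisfies $\gamma Y'\cap Y'=\emptyset$ for every $\gamma\neq\id$; this is precisely the hypothesis $\gamma_i I_i\cap I_i=\emptyset$ of Lemma~\ref{lem:annuli}, applied simultaneously for all~$i$.

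In short: drop~$\mathbb{T}$, work directly with $X=\Gamma\backslash U$ and its skeleton, and the gaps close themselves.
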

\begin{proof}
By assumption, there exists a non-empty $\Gamma$-invariant connected open subset~$U$ of~$\pank$ on which the action of~$\Gamma$ is free and proper. The quotient $X := \Gamma\backslash U$ is then an $\aank$-like curve in the sense of Section~\ref{sec:A1like}. Since~$U$ is a connected subset of~$\pank$, it is simply connected, hence the fundamental group~$\pi_{1}(X)$ of~$X$ is isomorphic to~$\Gamma$. Since~$X$ is finitely generated, the topological genus~$g$ of~$X$ is finite.

%
%
%

Fix a skeleton~$\Sigma$ of~$X$ and consider the associated retraction $\tau \colon X \to \Sigma$. Fix $g$~elements $\gamma_{1},\dotsc,\gamma_{g}$ of~$\Gamma$ corresponding to disjoint simple loops in~$\Sigma_{X}$. Note that $\gamma_{1},\dotsc,\gamma_{g}$ is a basis of~$\Gamma$.


For each $i\in \{1,\dotsc,g\}$, pick a point $x_{i} \in \alpha_{i}$ that is not a branch point of~$\Sigma$. Its preimage by the retraction $A_{i} := \tau^{-1}(x_{i})$ is then a virtual flat closed annulus. 

Let~$Y'$ be an open subset of~$U$ such that the morphism $Y' \to X$ induced by the quotient is an isomorphism onto $X - \bigcup_{1\le i\le g}$. We extend it to a compact lift~$Y$ of~$X$ in~$U$ by adding, for each $i\in \{1,\dotsc,g\}$, two virtual flat annuli~$B_{i}$ and~$B'_{i}$ that are isomorphic preimages of~$A_{i}$. Up to switching the names, we may assume that $\gamma_{i} B_{i} = B'_{i}$.

Let $i\in \{1,\dotsc,g\}$. The complement of $B_{i}$ (resp. $B'_{i}$) has two connected components. Let us denote by~$D^-(\gamma_{i})$ (resp. $D^-(\gamma_{i}^{-1})$) the one that does not meet~$Y$. It is a virtual open disc. We set $D^+(\gamma_{i}^{-1}) = D^-(\gamma_{i}^{-1})\cup B_{i}$ and $D^+(\gamma_{i}) = D^-(\gamma_{i})\cup B'_{i}$. 

By construction of~$Y'$, for each $\gamma \in \Gamma - \{\id\}$, we have $\gamma Y' \cap Y' = \emptyset$. It now follows from Lemma~\ref{lem:annuli} that the family $(D^+(\gamma_{i}^\sigma), 1\le i\le g, \sigma = \pm)$ is a Schottky figure adapted to $(\gamma_{1},\dotsc,\gamma_{g})$.

\end{proof}

\begin{remark}\label{rem:free}
The fact that~$\Gamma$ is free is actually not used in the proof of Theorem~\ref{thm:groupfigure}. As a result, Proposition~\ref{prop:geometrygroup} shows that it is a consequence of the other properties appearing in the definition of a Schottky group. It could also be deduced from the fact that the fundamental group of a Berkovich curve (which is the same as that of its skeleton) is free.
\end{remark}

\subsection{Twisted Ford discs}

We can actually be more precise about the form of the discs in the Schottky figure from Theorem~\ref{thm:groupfigure}. To do so, we introduce some terminology.

\begin{definition}
Let $\gamma = \begin{bmatrix} 
a & b \\
c & d
\end{bmatrix} \in \PGL_2(k)$, with $c\ne 0$, be a loxodromic M\"obius transformation
and let $\lambda \in \R_{>0}$. We call open and closed \emph{twisted Ford discs} associated to $(\gamma,\lambda)$ the sets 
\[D_{(\gamma, \lambda)}^- := \Big\{z\in k \ \Big|\ \lambda |\gamma'(z)|=\lambda \frac{|ad-bc|}{|cz+d|^2} > 1\Big\}\]
and
\[D_{(\gamma, \lambda)}^+ := \Big\{z\in k \ \Big|\  \lambda |\gamma'(z)|=\lambda \frac{|ad-bc|}{|cz+d|^2} \ge 1\Big\}.\]
\end{definition}

\begin{lemma}\label{lem:explicitisometricdiscs}
Let $\alpha, \alpha',\beta \in k$ with $\alpha\ne \alpha'$ and $|\beta|<1$ and let $\lambda \in \R_{>0}$. Set $\gamma := M(\alpha, \alpha', \beta) = \begin{bmatrix} a&b\\c&d \end{bmatrix}$. The twisted Ford discs $D_{(\gamma, \lambda)}^-$ and $D_{(\gamma, \lambda)}^+$ have center 
\[\frac{\alpha' - \beta \alpha}{1-\beta} = -\frac d c\]
and radius 
\[\rho=\frac{(\lambda|\beta|)^{1/2}|\alpha-\alpha'|}{|1-\beta|} = \frac{(\lambda \,|ad-bc|)^{1/2}}{|c|}.\]
In particular, $\alpha' \in D^-_{(\gamma,\lambda)}$ if, and only if, $|\beta| < \lambda$.

The twisted Ford discs $D_{(\gamma^{-1}, \lambda^{-1})}^-$ and $D_{(\gamma^{-1}, \lambda^{-1})}^+$ have center 
\[\frac{\alpha - \beta \alpha'}{1-\beta} = \frac a c\]
and radius $\rho' =\rho/\lambda$. 

In particular, $\alpha \in D^-_{(\gamma^{-1},\lambda^{-1})}$ if, and only if, $|\beta| < \lambda^{-1}$.
\qed
\end{lemma}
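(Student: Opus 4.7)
The proof is a direct computation carried out in four short stages. The only genuinely non-obvious step is step (1), the calculation of the determinant $ad-bc$ for the matrix $M(\alpha,\alpha',\beta)$; everything else is either a rewriting of the defining inequality or a plug-and-chug verification.

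First I would unfold the defining inequality. Writing $\gamma = \begin{bmatrix} a & b \\ c & d \end{bmatrix}$, the condition $\lambda|ad-bc|/|cz+d|^2 \geq 1$ is equivalent to $|cz+d| \leq (\lambda|ad-bc|)^{1/2}$, i.e. to
\[
\Big| z - \Big(-\tfrac{d}{c}\Big)\Big| \leq \frac{(\lambda|ad-bc|)^{1/2}}{|c|},
\]
and similarly for the strict inequality. Hence $D^{\pm}_{(\gamma,\lambda)}$ is the closed/open disc of center $-d/c$ and radius $(\lambda|ad-bc|)^{1/2}/|c|$.

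Next I would specialize. Reading off from \eqref{eq:LoxodromicMatrix}, we have $a=\alpha-\beta\alpha'$, $b=(\beta-1)\alpha\alpha'$, $c=1-\beta$, $d=\beta\alpha-\alpha'$, so
\[
-\frac{d}{c} = \frac{\alpha'-\beta\alpha}{1-\beta},
\]
which is the claimed center. For the radius one computes
\[
ad-bc = (\alpha-\beta\alpha')(\beta\alpha-\alpha') + (1-\beta)^2\alpha\alpha' = \beta(\alpha-\alpha')^2,
\]
after expanding and collecting terms, which gives $|ad-bc| = |\beta|\,|\alpha-\alpha'|^2$ and hence the radius
\[
\rho = \frac{(\lambda|\beta|)^{1/2}\,|\alpha-\alpha'|}{|1-\beta|}.
\]
The condition $\alpha'\in D^-_{(\gamma,\lambda)}$ now reduces to the scalar inequality
\[
\Big|\alpha'-\frac{\alpha'-\beta\alpha}{1-\beta}\Big| = \frac{|\beta|\,|\alpha-\alpha'|}{|1-\beta|} < \rho,
\]
which simplifies to $|\beta|<(\lambda|\beta|)^{1/2}$, i.e.\ $|\beta|<\lambda$.

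Finally, for $\gamma^{-1}$, I would not re-apply~\eqref{eq:LoxodromicMatrix} (which requires $|\beta|<1$), but simply invert the matrix in $\PGL_2(k)$ and use the formulas just obtained. After clearing the common scalar $\beta(\alpha-\alpha')^2$, one gets
\[
\gamma^{-1} = \begin{bmatrix} \beta\alpha-\alpha' & (1-\beta)\alpha\alpha' \\ \beta-1 & \alpha-\beta\alpha' \end{bmatrix},
\]
whose entries $(a',b',c',d')$ satisfy $-d'/c' = (\alpha-\beta\alpha')/(1-\beta) = a/c$ (the claimed center) and $a'd'-b'c' = ad-bc = \beta(\alpha-\alpha')^2$. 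The radius of $D^{\pm}_{(\gamma^{-1},\lambda^{-1})}$ is therefore $(\lambda^{-1}|\beta|)^{1/2}|\alpha-\alpha'|/|1-\beta| = \rho/\lambda$, and the same substitution as above, now with $\alpha$ in place of $\alpha'$ and $\lambda^{-1}$ in place of $\lambda$, yields $\alpha\in D^-_{(\gamma^{-1},\lambda^{-1})}$ iff $|\beta|<\lambda^{-1}$. The only step that is even mildly delicate is the algebraic identity $ad-bc = \beta(\alpha-\alpha')^2$; once that is in hand, all the other assertions follow by inspection.
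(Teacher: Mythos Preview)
Your proof is correct and is exactly the direct computation the paper has in mind; indeed the paper omits the proof entirely (the lemma ends with \qed), implicitly treating it as a routine verification. Your identification of the key identity $ad-bc=\beta(\alpha-\alpha')^2$ and your handling of $\gamma^{-1}$ via the matrix inverse in $\PGL_2(k)$ (rather than via $M(\alpha',\alpha,\beta^{-1})$, which would fail since $|\beta^{-1}|>1$) are both sound.
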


\begin{lemma}
Let $\gamma \in \PGL_2(k)$ be a loxodromic M\"obius transformation that does not fix~$\infty$ and let $\lambda\in \R_{>0}$. Then, we have $\gamma(D_{(\gamma, \lambda)}^+)= \pank - {D}_{(\gamma^{-1}, \lambda^{-1})}^-$.
\end{lemma}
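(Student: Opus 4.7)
The plan is to use the decomposition of $\gamma$ as a composition of elementary transformations and track the disc through each step.

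First, I would apply Lemma~\ref{lem:explicitisometricdiscs} to identify the discs explicitly. Since $\gamma$ does not fix $\infty$, writing $\gamma = M(\alpha,\alpha',\beta)$ in Koebe form, the explicit formula~\eqref{eq:LoxodromicMatrix} gives $c = 1-\beta \ne 0$, and so $\gamma^{-1}(\infty) = -d/c$ is the center of $D^+_{(\gamma,\lambda)}$, while $\gamma(\infty) = a/c$ is the center of $D^-_{(\gamma^{-1},\lambda^{-1})}$. Denote by $\rho := (\lambda|ad-bc|)^{1/2}/|c|$ the radius of $D^+_{(\gamma,\lambda)}$, so that $D^-_{(\gamma^{-1},\lambda^{-1})}$ has radius $\rho/\lambda$.

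Next, I would factor $\gamma$ using the classical identity
\[
\gamma(z) \;=\; \frac{a}{c} \;-\; \frac{ad-bc}{c^{2}}\cdot\frac{1}{z+\tfrac{d}{c}},
\]
so that $\gamma = \tau_{2}\circ \mu \circ \iota \circ \tau_{1}$, where $\tau_{1}(z) = z + d/c$, $\iota(z) = 1/z$, $\mu(z) = -\frac{ad-bc}{c^{2}}z$, and $\tau_{2}(z) = z + a/c$. Then I would track the image disc step by step:
\begin{itemize}
\item $\tau_{1}$ sends $D^{+}_{(\gamma,\lambda)}$ to $D^{+}(0,\rho)$, the closed disc of radius~$\rho$ centered at the origin.
\item Applying $\iota$ to $D^{+}(0,\rho)$: since $0 = \iota^{-1}(\infty)$ lies in (in fact, is the center of) this disc, the image contains $\infty$ and equals $\pank - D^{-}(0,\rho^{-1})$. (This is the analogue of Lemma~\ref{lem:inversiondisc} for a disc containing~$0$; it follows immediately from $|w|_{\iota(x)} = |T|_{x}^{-1}$ on the punctured disc, together with the fact that~$\iota$ is a homeomorphism sending the unique boundary point~$\eta_{0,\rho}$ to $\eta_{0,\rho^{-1}}$.)
\item The homothety~$\mu$, of ratio $(ad-bc)/c^{2}$, sends this set to $\pank - D^{-}(0,|ad-bc|/(|c|^{2}\rho))$; a direct computation using $\rho = (\lambda|ad-bc|)^{1/2}/|c|$ shows $|ad-bc|/(|c|^{2}\rho) = \rho/\lambda$.
\item Finally $\tau_{2}$ shifts the center from~$0$ to~$a/c$, yielding $\pank - D^{-}(a/c,\rho/\lambda) = \pank - D^{-}_{(\gamma^{-1},\lambda^{-1})}$.
\end{itemize}

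The only subtle point is the second bullet: the description of~$\iota$ on a disc centered at~$0$ is not literally covered by Lemma~\ref{lem:inversiondisc}, so one must verify it separately. This is straightforward, but it is where care is needed because the image is no longer a standard closed disc but rather the complement of an open disc (reflecting the fact that $\gamma$ sends the pole of its inverse to~$\infty$). Once this is done, the chain of equalities gives the claimed identity.
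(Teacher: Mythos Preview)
Your proof is correct, but it takes a different route from the paper's argument. You decompose $\gamma$ as $\tau_{2}\circ\mu\circ\iota\circ\tau_{1}$ and track the disc geometrically through each elementary transformation, appealing to Lemma~\ref{lem:explicitisometricdiscs} for the explicit centers and radii. The paper instead works directly with the defining inequality of the twisted Ford disc: from the algebraic identity $(-c\gamma(z)+a)(cz+d)=ad-bc$ (which amounts to the chain rule $(\gamma^{-1})'(\gamma(z))\cdot\gamma'(z)=1$) one gets immediately
\[
\lambda\,\frac{|ad-bc|}{|cz+d|^{2}}\ge 1
\quad\Longleftrightarrow\quad
\lambda^{-1}\,\frac{|ad-bc|}{|{-}c\gamma(z)+a|^{2}}\le 1,
\]
and since $\gamma^{-1}=\begin{bmatrix} d & -b\\ -c & a\end{bmatrix}$, the right-hand side is exactly the condition $\gamma(z)\notin D^{-}_{(\gamma^{-1},\lambda^{-1})}$. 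This one-line argument avoids any explicit computation of centers and radii and makes the result transparent: it is nothing more than a reformulation of $|\gamma'(z)|\cdot|(\gamma^{-1})'(\gamma(z))|=1$. Your approach, by contrast, has the advantage of being entirely self-contained once the decomposition of a M\"obius map is known, and it makes the geometry of the image disc concrete; but it requires the separate verification of how $\iota$ acts on a disc containing~$0$, which (as you note) is not literally covered by Lemma~\ref{lem:inversiondisc}.
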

\begin{proof}
Let us write $\gamma = 
\begin{bmatrix} 
a & b \\
c & d
\end{bmatrix}$. Since~$\gamma$ does not fix~$\infty$, we have $c\neq 0$. Let $K$ be a complete valued extension of~$k$ and let $z\in K$. We have $|-c\gamma(z)+a| \, |cz+d| = |ad-bc|$, hence 
\[ z\in D_{(\gamma, \lambda)} \iff \lambda \frac{|ad-bc|}{|cz+d|^2}\geq 1 \iff \lambda^{-1}\frac{|ad-bc|}{|-c\gamma(z)+a|^2}\leq 1. \]
Since we have $\gamma^{-1} =
\begin{bmatrix} 
d & -b \\
-c & a
\end{bmatrix}$,
the latter condition describes precisely the complement of $D^-_{(\gamma^{-1}, \lambda^{-1})}$.
\end{proof}

\begin{lemma}\label{lem:pairFord}
Let $\gamma \in \PGL_2(k)$ be a loxodromic M\"obius transformation. 
Let $D^+(\gamma)$ and $D^+(\gamma^{-1})$ be disjoint closed discs in~$\pank$. Set 
\[D^-(\gamma) := \gamma(\pank - D^+(\gamma^{-1})) \textrm{ and } D^-(\gamma^{-1}) := \gamma^{-1}(\pank - D^+(\gamma)).\]
Assume that $D^-(\gamma)$ and $D^-(\gamma^{-1})$ are maximal open discs inside $D^+(\gamma)$ and $D^+(\gamma^{-1})$ respectively and that they are contained in~$\aank$.

Then, there exists $\lambda\in \R_{>0}$ such that, for each $\sigma \in \{-,+\}$, we have 
\[D^\sigma(\gamma) = D^\sigma_{\gamma,\lambda} \textrm{ and } D^\sigma(\gamma^{-1}) = D^\sigma_{\gamma^{-1},\lambda^{-1}}.\]
\end{lemma}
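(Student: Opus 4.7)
The plan is to identify the twisted Ford discs via their $k$-rational centers $\gamma^{\pm 1}(\infty)$ and then to fix $\lambda$ by matching radii.

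To begin, I would exploit the hypothesis $D^-(\gamma),D^-(\gamma^{-1})\subseteq \aank$ together with the density of these open discs in $D^+(\gamma)$, $D^+(\gamma^{-1})$ (granted by the maximality assumption): neither closed disc can contain $\infty$, so $\gamma(\infty)\neq\infty$ and, writing $\gamma=\begin{bmatrix} a & b \\ c & d \end{bmatrix}$, we have $c\neq 0$. This is exactly what makes all four twisted Ford discs well-defined. The defining formulas also give for free that $\gamma(\infty)=a/c\in D^-(\gamma)\subseteq D^+(\gamma)$ and $\gamma^{-1}(\infty)=-d/c\in D^+(\gamma^{-1})$: these two $k$-rational points are the centers of the twisted Ford discs of $\gamma$ and of $\gamma^{-1}$ by Lemma~\ref{lem:explicitisometricdiscs}.

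Next I would choose $\lambda$. By Lemma~\ref{lem:explicitisometricdiscs}, the family $\{D^+_{\gamma,\lambda}\}_{\lambda>0}$ consists of closed discs all sharing the common $k$-point $-d/c$, with common radius $\sqrt{\lambda|ad-bc|}/|c|$, a continuous strictly increasing bijection from $(0,+\infty)$ onto itself. I would pick the unique $\lambda$ for which this radius equals the radius of the Schottky closed disc containing $-d/c$. Since in the ultrametric setting any point of a closed disc may serve as its center, matching a common point and a common radius forces the two closed discs to coincide as sets; the equality then descends to their maximal open subdiscs, each being determined by its specified interior $k$-point together with its radius.

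Transferring through $\gamma$ handles the remaining pair. Combining the defining identity $\gamma(\pank-D^+(\gamma^{-1}))=D^-(\gamma)$ with the formula $\gamma(D^+_{\gamma,\lambda})=\pank-D^-_{\gamma^{-1},\lambda^{-1}}$ established just before the lemma, the Schottky--Ford identification on one side propagates to an identification of $D^-(\gamma)$ with a twisted Ford disc whose scaling parameter is the reciprocal $\lambda^{-1}$ of the parameter used on the first side. The maximality hypothesis then lets me promote this open-disc identification to a closed-disc identification, since each closed disc is the union of its maximal open subdisc and a single boundary point. A symmetric argument with the roles of $\gamma$ and $\gamma^{-1}$ swapped produces the remaining equalities.

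The delicate point, which I expect to be the main obstacle, is the consistency of radii: that the unique parameter $\lambda$ forced on one side yields exactly the reciprocal scaling on the other side. This reduces to the identity $R\cdot R'=|ad-bc|/|c|^2$ between the radii of $D^+(\gamma)$ and $D^+(\gamma^{-1})$, which I would verify directly by computing the image of the ``disc at infinity'' $\pank-D^+(\gamma^{-1})$ under $\gamma$, along the lines of Lemma~\ref{lem:radiusdisc}: using $\gamma(z)-a/c=(bc-ad)/(c(cz+d))$, this image is an open disc centered at $a/c$ of radius $|ad-bc|/(|c|^2 R')$, and by hypothesis it equals $D^-(\gamma)$, which has radius $R$ by maximality. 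This algebraic identity is the essential piece of bookkeeping that makes a \emph{single} $\lambda$ work for both pairs simultaneously.
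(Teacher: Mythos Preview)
Your proposal is correct and follows essentially the same route as the paper: show $c\neq 0$, identify $a/c$ and $-d/c$ as centers of the relevant discs, fix $\lambda$ by matching radii, and invoke the key relation $RR'=|ad-bc|/|c|^2$. Your use of the transfer formula $\gamma(D^+_{\gamma,\lambda})=\pank-D^-_{\gamma^{-1},\lambda^{-1}}$ to pass from one side to the other is a mild reorganization of the paper's direct computation and in fact makes your final ``consistency'' paragraph redundant, since the transfer already forces the radii on the second side to match automatically.
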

\begin{proof}
Denote by $\alpha$ and~$\alpha'$ the attracting and repelling fixed points of~$\gamma$ respectively. By the same argument as in Remark~\ref{rem:Schottkyfigure}, we have $\alpha \in D^-(\gamma^{-1})$ and $\alpha' \in D^-(\gamma)$. Let $r,r'>0$ such that $D^-(\gamma) = D^-(\alpha',r')$ and $D^-(\gamma^{-1}) = D^-(\alpha,r)$.

Write $\gamma = \begin{bmatrix} a & b\\ c&d \end{bmatrix}$ with $a,b,c,d \in k$. Since $\alpha, \alpha' \in \aank$, we have $c\ne 0$. By assumption, $\infty \in \gamma(D^-(\gamma^{-1}))$, hence $-d/c \in D^-(\gamma^{-1})$ and $D^-(\gamma^{-1}) = D^-(-d/c,r)$. Similarly, we have $D^-(\gamma) = D^-(a/c,r')$.

Writing
\[\frac{aT+b}{cT+d} = \frac{a}{c} - \frac{ad-bc}{c^2}\, \frac{1}{T+\frac{d}{c}},\]
it is not difficult to compute $\gamma(D^-(\gamma^{-1}))$ and prove that we have
\[r = \frac{|ad-bc|}{|c|^2\, r'} = \frac{|\beta|\, |\alpha-\alpha'|^2}{r'}.\]

 Set
\[\lambda := \frac{r^2}{|\beta|\, |\alpha-\alpha'|^2} = \frac{r}{r'} = \frac{|\beta|\, |\alpha-\alpha'|^2}{(r')^2}.\]
Since $D^+(\gamma)$ and $D^+(\gamma^{-1})$ are disjoint, we have $\max(r,r') < |\alpha-\alpha'|$, hence $|\beta| < \min(\lambda,\lambda^{-1})$. It follows that $D^-_{\gamma,\lambda}$ and $D^-_{\gamma^{-1},\lambda^{-1}}$ contains respectively~$\alpha'$ and~$\alpha$, hence
\[D^-_{(\gamma,\lambda)} = D^-(\alpha',r') = D^-(\gamma) \textrm{ and } D^-_{(\gamma^{-1},\lambda^{-1})} = D^-(\alpha,r) = D^-(\gamma^{-1}).\]
\end{proof}

\begin{corollary}
Let~$\Gamma$ be a Schottky group over~$k$ whose limit set does not contain~$\infty$. Then, there exists a basis $(\gamma_{1},\dotsc,\gamma_{g})$ of~$\Gamma$ and $\lambda_{1},\dotsc,\lambda_{g} \in \R_{>0}$ such that the family of discs $\big(D^+_{(\gamma_{i}^\eps,\lambda_{i}^\eps)}, 1\le i\le g, \eps \in \{\pm1\}\big)$ is a Schottky figure that is adapted to $(\gamma_{1},\dotsc,\gamma_{g})$.
\end{corollary}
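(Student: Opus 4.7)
The plan is to apply Theorem~\ref{thm:groupfigure} to obtain a basis $(\gamma_{1},\dotsc,\gamma_{g})$ of~$\Gamma$ together with an adapted Schottky figure $\big(D^+(\gamma_{i}^\eps)\big)_{1\le i\le g,\,\eps=\pm1}$, and then to apply Lemma~\ref{lem:pairFord} to each pair $(\gamma_{i},\gamma_{i}^{-1})$ in order to identify every closed disc of the figure with a closed twisted Ford disc. The only condition required by Lemma~\ref{lem:pairFord} is that $D^-(\gamma_{i})$ and $D^-(\gamma_{i}^{-1})$ lie in~$\aank$ for each~$i$, or equivalently that $\infty$ lie in the closed Swiss cheese $F^+ = \pank - \bigcup_{i,\eps} D^-(\gamma_{i}^\eps)$.

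To arrange this, I would re-enter the construction given in the proof of Theorem~\ref{thm:groupfigure}. There, the Schottky figure is built from a skeleton~$\Sigma$ of the Mumford curve $X := \Gamma\backslash O$ (where $O = \pank - L$), disjoint simple loops $\alpha_{1},\dotsc,\alpha_{g}$ in~$\Sigma$ representing a basis of $\pi_{1}(X)\simeq\Gamma$, a non-branch point $x_{i}\in\alpha_{i}$ for each~$i$, and a lift $Y'\subseteq O$ of $X - \bigcup_{i}\tau^{-1}(x_{i})$, where $\tau\colon X\to\Sigma$ denotes the canonical retraction. By assumption $\infty\in O$, and its image $p(\infty)\in X$ is of type~1, hence not on~$\Sigma$, so $\tau(p(\infty))$ is a single well-defined point of~$\Sigma$. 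I would choose each $x_{i}\in\alpha_{i}$ distinct from the finitely many branch points of~$\Sigma$ lying on~$\alpha_{i}$ and also distinct from $\tau(p(\infty))$; this is always possible since each loop carries infinitely many non-branch points. With this choice, $p(\infty) \in X - \bigcup_{i}\tau^{-1}(x_{i})$, and I would take for~$Y'$ the connected component of its preimage in~$O$ that contains~$\infty$. The resulting closed Swiss cheese $F^+$ then satisfies $\infty \in Y\subseteq F^+$, as desired.

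With this choice secured, Lemma~\ref{lem:pairFord} applied to each pair $(\gamma_{i},\gamma_{i}^{-1})$ yields $\lambda_{i} \in \R_{>0}$ such that, for every $\sigma\in\{-,+\}$,
\[ D^\sigma(\gamma_{i}) = D^\sigma_{(\gamma_{i},\lambda_{i})} \quad\text{and}\quad D^\sigma(\gamma_{i}^{-1}) = D^\sigma_{(\gamma_{i}^{-1},\lambda_{i}^{-1})}. \]
Since these are literal equalities of subsets of~$\pank$, the family of closed twisted Ford discs $\big(D^+_{(\gamma_{i}^\eps,\lambda_{i}^\eps)}\big)$ coincides with the original Schottky figure, and is therefore itself a Schottky figure adapted to $(\gamma_{1},\dotsc,\gamma_{g})$.

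The main obstacle is the first step, namely ensuring $\infty\in F^+$: this cannot be accomplished by conjugating the basis (which would alter the group~$\Gamma$ itself), so one really has to re-open the proof of Theorem~\ref{thm:groupfigure} and exploit the freedom in the choice of the points $x_{i}$ and of the lift~$Y'$. Once this geometric positioning is in place, the identification with twisted Ford discs is a direct invocation of Lemma~\ref{lem:pairFord}.
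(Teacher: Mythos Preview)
Your approach is correct but takes a different route from the paper. You re-enter the construction of Theorem~\ref{thm:groupfigure} and carefully choose the cut points~$x_{i}$ and the lift~$Y'$ so that $\infty \in F^{+}$ from the outset; this works, and your argument that $\tau(p(\infty))$ can be avoided and that the component containing~$\infty$ is a valid choice of~$Y'$ is sound.

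The paper instead uses Theorem~\ref{thm:groupfigure} as a black box to obtain \emph{some} basis and Schottky figure, and then observes that since $\infty \in O = \bigcup_{\gamma\in\Gamma} \gamma F^{+}$, there is an element $\gamma\in\Gamma$ with $\infty \in \gamma F^{+}$. Replacing the basis $(\gamma_{1},\dotsc,\gamma_{g})$ by the conjugate basis $(\gamma\gamma_{1}\gamma^{-1},\dotsc,\gamma\gamma_{g}\gamma^{-1})$ and the discs by their $\gamma$-translates yields a new Schottky figure with $\infty$ in its closed fundamental domain, after which Lemma~\ref{lem:pairFord} applies. This is where your final remark goes astray: conjugating the basis by an element of~$\Gamma$ itself does \emph{not} alter the group~$\Gamma$, precisely because $\gamma\Gamma\gamma^{-1}=\Gamma$ for $\gamma\in\Gamma$. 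You were presumably thinking of conjugation by an arbitrary element of $\PGL_{2}(k)$, which would indeed change~$\Gamma$; but the point is that $\infty\in O$ already guarantees a group element does the job.

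The paper's argument is shorter and more modular, since it does not require revisiting the internals of the earlier proof. Your argument, on the other hand, shows directly that the basis can be chosen with the desired property without any post-hoc modification. Both are valid.
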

\begin{proof}
By Theorem~\ref{thm:groupfigure}, there exists a basis $\beta = (\gamma_{1},\dotsc,\gamma_{g})$ of~$\Gamma$ and a Schottky figure $\calB = (D^+(\gamma_{i}^\eps), 1\le i\le g, \eps \in \{\pm1\})$ that is adapted to~$\beta$. As in Section~\ref{sec:Schottkyfigures}, define the open discs $D^-(\gamma_{i}^{\pm1}$ and set 
\[F^+ := \PP^1_{k} - \bigcup_{\substack{1\le i\le g \\ \eps=\pm1}} D^{-}(\gamma_{i}^\eps).\]
By Theorem~\ref{thm:actionproper}, since~$\infty$ is not a limit point of~$\Gamma$, there exists $\gamma \in \Gamma$ such that $\infty \in \gamma F^+$.

Set $\beta' := (\gamma \gamma_{1} \gamma^{-1},\dotsc,\gamma\gamma_{g}\gamma^{-1})$. It is a basis of~$\Gamma$ and the family of discs $\calB' := (\gamma D^+(\gamma_{i}^\eps), 1\le i\le g, \eps \in \{\pm1\})$ is a Schottky figure that is adapted to it. Since all the discs $\gamma D^+(\gamma_{i}^{\pm1})$ are contained in~$\aank$, we may now apply Lemma~\ref{lem:pairFord} to conclude.
\end{proof}

\subsection{Local fields}\label{sec:localfields}

When~$k$ is a local field, the definition of a Schottky group can be greatly simplified. Our treatment here borrows from \cite[I (1.6)]{GerritzenPut80} (see also \cite[Lemma~2.1.1]{Marden07} in the complex setting).

\begin{lemma}\label{lem:uniformconvergence}
Let $(\gamma_{n})_{n\in \N}$ be a sequence of loxodromic M\"obius transformations such that 
\begin{enumerate}
\item $(\gamma_{n})_{n\in \N}$ has no convergent subsequence in $\PGL_{2}(k)$;
\item the sequence of Koebe coordinates $((\alpha_{n},\alpha'_{n},\beta_{n}))_{n\in\N}$ converges to some $(\alpha,\alpha',\beta) \in (\PP^1(k))^3$.
\end{enumerate}
Then, $(\gamma_n)_{n\in\N}$ converges to the constant function~$\alpha$ uniformly on compact subsets of $\pank-\{\alpha'\}$.

\end{lemma}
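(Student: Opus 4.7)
The plan is to exploit the explicit formula~(\ref{eq:LoxodromicMatrix}) for a loxodromic transformation in Koebe coordinates to derive the identity
\[\gamma_n(T) - \alpha_n = \frac{a_n(T-\alpha_n)}{(1-\beta_n)(T-\alpha'_n) + a_n}, \qquad a_n := \beta_n(\alpha_n - \alpha'_n),\]
as rational functions in $T$, and to show using hypothesis~(i) that $a_n \to 0$ while the denominator stays uniformly bounded away from~$0$ on any compact set in $\pank-\{\alpha'\}$.

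First I would reduce, by conjugating the whole sequence $(\gamma_n)$ by a suitable $\tau \in \PGL_2(k)$, to the case $\alpha, \alpha' \in k$ and $K \subset \aank$, where $K$ is the given compact subset of $\pank - \{\alpha'\}$. Since $\PP^1(k) \not\subseteq K$ (because $\alpha' \notin K$), such a $\tau$ exists: pick a $k$-rational point distinct from $\alpha, \alpha'$ and outside $K$, and send it to~$\infty$. This is harmless because conjugation by~$\tau$ is a homeomorphism of $\pank$, preserves~(i) and~(ii), and transforms Koebe coordinates as $(\alpha_n, \alpha'_n, \beta_n) \mapsto (\tau\alpha_n, \tau\alpha'_n, \beta_n)$, so the hypotheses remain valid in the new coordinates. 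The displayed identity then follows from a direct computation with~(\ref{eq:LoxodromicMatrix}).

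Next I would establish $a_n \to 0$. If $\alpha = \alpha'$, this is immediate from $|\beta_n| < 1$ combined with $\alpha_n - \alpha'_n \to 0$. If $\alpha \ne \alpha'$, the matrices $M(\alpha_n, \alpha'_n, \beta_n)$ converge entry-wise to $M(\alpha, \alpha', \beta)$; the determinant of the latter equals $\beta(\alpha-\alpha')^2$, so if $\beta \ne 0$ the limit would lie in $\GL_2(k)$ and $(\gamma_n)$ would converge in $\PGL_2(k)$, contradicting~(i). Hence $\beta = 0$ and again $a_n \to 0$. This is the one place where hypothesis~(i) is crucially used.

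Finally, on the compact set $K \subset \aank$, the continuous function $x \mapsto |T-\alpha'|_x$ attains a positive minimum~$m$, and $|T - \alpha_n|_x$ is uniformly bounded by some $M$ (since $|T|_x$ is bounded on $K$ and $\alpha_n \to \alpha \in k$). For $n$ large enough that $|\alpha'_n - \alpha'|$, $|a_n|$, and $|\alpha_n - \alpha|$ are all less than $m$, the non-Archimedean equality case gives $|T - \alpha'_n|_x = |T - \alpha'|_x \ge m$, and (using $|1-\beta_n| = 1$)
\[|(1-\beta_n)(T-\alpha'_n) + a_n|_x = |T-\alpha'_n|_x \ge m \quad \text{for all } x \in K.\]
Therefore $|\gamma_n(T) - \alpha_n|_x \le |a_n|M/m$, and
\[|T-\alpha|_{\gamma_n(x)} = |\gamma_n(T) - \alpha|_x \le \max\bigl(|a_n|M/m,\,|\alpha_n - \alpha|\bigr) \xrightarrow[n\to\infty]{} 0\]
uniformly in $x \in K$. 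Since the discs $D^-(\alpha, \varepsilon)$ form a basis of neighborhoods of~$\alpha$ in~$\pank$, we conclude that $\gamma_n(K) \subseteq U$ for every open $U \ni \alpha$ and all $n$ large enough, which is the desired uniform convergence. The main obstacle is the case split for proving $a_n \to 0$: the subcase $\alpha = \alpha'$ must be handled separately from $\alpha \ne \alpha'$ because $\beta$ need not vanish when the two fixed points come together, and one has to confirm that the arithmetic of Koebe coordinates remains well-behaved in the conjugated coordinates introduced at the start.
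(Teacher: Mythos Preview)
Your proof is correct and follows essentially the same approach as the paper: both reduce to $\alpha,\alpha'\in k$, use the explicit matrix $M(\alpha_n,\alpha'_n,\beta_n)$, and deduce from hypothesis~(i) that $\beta(\alpha-\alpha')^2=0$, then split into the two cases. The paper is terser, leaving the final uniform-convergence check to the reader, whereas you supply the explicit identity $\gamma_n(T)-\alpha_n = a_n(T-\alpha_n)\big/\bigl((1-\beta_n)(T-\alpha'_n)+a_n\bigr)$ and carry out the estimate on~$K$ in detail; this is a welcome elaboration rather than a different route.
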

\begin{proof}
By definition, for each $n\in \N$, we have $|\beta_{n}| <1$, which implies that $|\beta| <1$. 

Up to changing coordinates, we may assume that $\alpha,\alpha' \in k$. Up to modifying finitely many terms of the sequences, we may assume that, for each $n\in \N$, we have $\alpha_{n},\alpha'_{n} \in k$. In this case, for each $n\in \N$, we have 
\[\gamma_{n} =: \begin{bmatrix}
\alpha_{n} - \beta_{n} \alpha'_{n} & (\beta_{n}-1) \alpha_{n} \alpha'_{n}\\
1-\beta_{n} & \beta_{n} \alpha_{n} - \alpha'_{n}
\end{bmatrix}
\text{ in } \PGL_{2}(k).\]
The determinant of the above matrix is $\beta_{n}(\alpha_{n}-\alpha'_{n})^2$. Since $(\gamma_{n})_{n\in \N}$ has no convergent subsequence in $\PGL_{2}(k)$, we deduce that $\beta(\alpha-\alpha')^2 =0$. In each of the two cases $\beta=0$ and $\alpha=\alpha'$, it is not difficult to check that the claimed result holds.

%
%
\end{proof}

The result below shows that the definition of Schottky group may be simplified when $k$ is a local field. Note that, in this case, $\PP^1(k)$ is compact, hence closed in~$\pank$.

\begin{corollary}
Assume that $k$ is a local field. Let~$\Gamma$ be a subgroup of~$\PGL_{2}(k)$ all of whose non-trivial elements are loxodromic. 

Let $\Lambda$ be the set of fixed points of the elements of~$\Gamma - \{\id\}$ and let $\bar \Lambda$ be its closure in~$\pank$. Then, $\bar \Lambda$ is a compact subset of~$\pank$ that is contained in~$\PP^1(k)$ and the action of~$\Gamma$ on $\pank-\bar \Lambda$ is free and proper.

%


%
%
%
\end{corollary}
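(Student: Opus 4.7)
The plan is to establish the three assertions in order, with Lemma \ref{lem:uniformconvergence} doing the main work for properness.

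First I would check that $\bar\Lambda \subseteq \PP^1(k)$, from which compactness follows for free since $\PP^1(k)$ is compact and closed in $\pank$. By hypothesis each $\gamma \in \Gamma-\{\id\}$ is loxodromic, so by the discussion in Section \ref{sec:Koebe} its attracting and repelling fixed points $\alpha,\alpha'$ lie in $\PP^1(k)$. It remains to note that these are the \emph{only} fixed points of $\gamma$ on $\pank$: conjugating $\gamma$ to the normal form $z \mapsto \beta z$ with $0<|\beta|<1$, any fixed point $x$ must satisfy $|\beta|\,|T|_x = |T|_x$, forcing $|T|_x\in\{0,+\infty\}$, i.e. $x\in\{0,\infty\}$. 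Hence $\Lambda\subseteq\PP^1(k)$ and $\bar\Lambda\subseteq\PP^1(k)$ is closed inside a compact set.

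Freeness is then immediate from the definition of~$\Lambda$: any $x\in\pank$ fixed by a non-trivial $\gamma\in\Gamma$ belongs to $\Lambda\subseteq\bar\Lambda$, so $\pank-\bar\Lambda$ meets no fixed point of a non-trivial element.

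The real content is properness. By Proposition \ref{prop:proper}, and since $\Gamma$ is discrete (Remark \ref{rem:discrete}) and $\pank-\bar\Lambda$ is Hausdorff, it is enough to show that for every $x,y\in \pank-\bar\Lambda$ there exist neighborhoods $U_x, U_y$ with $\{\gamma\in\Gamma : \gamma U_x\cap U_y\neq\emptyset\}$ finite. Arguing by contradiction, suppose no such neighborhoods exist: we obtain a sequence of pairwise distinct elements $\gamma_n\in\Gamma$ and points $x_n\to x$, $y_n\to y$ with $\gamma_n(x_n)=y_n$. Writing the Koebe coordinates of $\gamma_n$ as $(\alpha_n,\alpha'_n,\beta_n)\in\PP^1(k)\times\PP^1(k)\times k^\circ$, compactness of $\PP^1(k)$ and $k^\circ$ lets us extract a subsequence with $(\alpha_n,\alpha'_n,\beta_n)\to(\alpha,\alpha',\beta)\in\PP^1(k)^3$. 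No subsequence of $(\gamma_n)$ can converge in $\PGL_2(k)$: by discreteness of $\Gamma$, such a convergent subsequence would eventually be constant, contradicting distinctness. Thus the hypotheses of Lemma \ref{lem:uniformconvergence} hold, and $\gamma_n\to\alpha$ uniformly on compact subsets of $\pank-\{\alpha'\}$.

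I finish with a case split. If $x\neq\alpha'$, then by Hausdorffness and local compactness of $\pank$ we may pick a compact neighborhood $K$ of $x$ disjoint from $\alpha'$; eventually $x_n\in K$, and uniform convergence gives $y_n=\gamma_n(x_n)\to\alpha$, so $y=\alpha$. But $\alpha=\lim \alpha_n$ with each $\alpha_n\in\Lambda$, whence $y\in\bar\Lambda$, a contradiction. If instead $x=\alpha'$, then $x=\lim\alpha'_n\in\bar\Lambda$, again a contradiction. The main potential obstacle is the verification that the Koebe coordinate $\beta_n$ indeed has a limit in the compact set $k^\circ$ (rather than escaping to $\infty$ in $\PP^1(k)$), together with a clean extraction of a single subsequence achieving both convergence in $\PP^1(k)^3$ and non-convergence in $\PGL_2(k)$; once those are in hand, Lemma \ref{lem:uniformconvergence} delivers the contradiction directly.
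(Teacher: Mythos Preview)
Your proof is correct and follows essentially the same approach as the paper's: reduce properness to a sequential criterion, extract a subsequence with convergent Koebe coordinates, and invoke Lemma~\ref{lem:uniformconvergence} to force the limit point into~$\bar\Lambda$. The only noteworthy differences are organizational: the paper dispenses with your case split by observing directly that $\alpha'=\lim\alpha'_n\in\bar\Lambda$, so $x\ne\alpha'$ is automatic; and the paper is explicit about invoking metrizability of~$\pank$ (Corollary~\ref{cor:metrizable}, valid since a local field has a countable dense subset) to extract the sequence~$(\gamma_n,x_n)$, which you take for granted. Your worry about~$\beta_n$ is unfounded: since $|\beta_n|<1$ and $k^\circ$ is compact for a local field, a convergent subsequence with limit in $k^\circ\subset\PP^1(k)$ exists immediately.
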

\begin{proof}
Since~$k$ is locally compact for the topology given by the absolute value, $\PP^1(k)$ is compact. By Remark~\ref{rem:topologyk}, the topology on~$k$ given by the absolute value coincides with that induced by the topology on~$\aank$. We deduce that~$\PP^1(k)$ is a compact subset of~$\pank$. It follows that $\bar \Lambda$ is contained in~$\PP^1(k)$ and that it is compact, as it is closed.

The action of~$\Gamma$ is obviously free on $\pank-\bar \Lambda$. Assume, by contradiction, that it is not proper. Then, there exist $x,y \notin \bar \Lambda$ such that, for every neighborhoods~$U$ and~$V$ of~$x$ and~$y$ respectively, the set $\{\gamma\in \Gamma \mid \gamma U \cap V \ne \emptyset\}$ is infinite. 

Since~$k$ is a local field, Corollary~\ref{cor:metrizable} ensures that the space~$\aank$ is metrizable. In particular, we may find countable bases of neighborhoods $(U_{n})_{n\in\N}$ and $(V_{n})_{n\in\N}$ of~$x$ and~$y$ respectively. By assumption, there exist a sequence $(\gamma_{n})_{n\in \N}$ of distinct elements of~$\Gamma$ and a sequence $(x_{n})_{n\in \N}$ of elements of $\pank-\bar\Lambda$ such that, for each $n\in\N$, we have $x_{n}\in U_{n}$ and $\gamma_{n}(x_{n}) \in V_{n}$. In particular, $(x_{n})_{n\in \N}$ converges to~$x$ and $(\gamma_{n}(x_{n}))_{n\in \N}$  converges to~$y$.

Since all the non-trivial elements of~$\Gamma$ are loxodromic, by the same argument as in Remark~\ref{rem:discrete}, the group~$\Gamma$ is discrete. As a result, up to passing to a subsequence, we may assume that the assumptions of Lemma~\ref{lem:uniformconvergence} are satisfied. Define~$\alpha$ and~$\alpha'$ as in this Lemma. Since~$x$ does not belong to~$\bar\Lambda$, it cannot be equal to~$\alpha'$. It follows that the sequences $(\gamma_{n}(x_{n}))_{n\in \N}$ and $(\gamma_{n}(x))_{n\in \N}$ converge to the same limit $y = \alpha$, and we get a contradiction since $\alpha\in \bar\Lambda$.

\end{proof}

\begin{corollary}
Assume that $k$ is a local field. Then, a subgroup~$\Gamma$ of~$\PGL_{2}(k)$ is a Schottky group if, and only if, it is finitely generated and all its non-trivial elements are loxodromic.
\qed
\end{corollary}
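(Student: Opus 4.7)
The forward direction is immediate from Definition~\ref{def:SchottkyGroup}. For the converse, assume $\Gamma$ is finitely generated with all non-trivial elements loxodromic; condition~(ii) is then given, so I need only verify freeness (completing~(i)) and produce a non-empty $\Gamma$-invariant connected open subset of~$\pank$ on which the action is free and proper (condition~(iii)). The natural candidate for this subset is $U := \pank - \bar\Lambda$, where $\bar\Lambda$ is as in the preceding corollary. That corollary gives, essentially for free, that $U$ is open and $\Gamma$-invariant, that the action on~$U$ is free and proper, and that $\bar\Lambda \subseteq \PP^1(k)$; in particular $U$ is non-empty since every point of $\pank$ of type~2, 3, or~4 lies in~$U$.

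The main obstacle will be connectedness of~$U$, which I plan to establish by exploiting the tree structure of~$\pank$: by Proposition~\ref{prop:pathk}, $\pank$ is uniquely path-connected, and by Lemma~\ref{lem:cccomplement} a point of type~1 is an end-point of this tree, with a single branch emanating from it. Consequently, for any $x, y \in U$, the interior points of the unique injective path $[x,y]$ in~$\pank$ must have at least two branches at them, so they are of type~2 or~3 and hence cannot lie in $\bar\Lambda \subseteq \PP^1(k)$; since $x$ and~$y$ themselves lie in~$U$, this yields $[x,y] \subseteq U$, and $U$ is path-connected.

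Finally, for freeness of~$\Gamma$, I will run the construction in the proof of Theorem~\ref{thm:groupfigure} applied to the action of~$\Gamma$ on the connected open set $U$. As Remark~\ref{rem:free} observes, that proof never uses freeness of~$\Gamma$ but in fact derives it: one retracts $\Gamma \backslash U$ onto a finite metric graph and extracts a basis $(\gamma_1, \dots, \gamma_g)$ of~$\Gamma$ together with an adapted Schottky figure. Proposition~\ref{prop:Gammafree} then shows that $\Gamma$ is free on these generators, completing the verification that $\Gamma$ is a Schottky group.
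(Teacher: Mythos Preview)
Your proof is correct and matches the approach the paper leaves implicit with its bare \qed: the converse direction rests on the preceding corollary (free and proper action on $\pank-\bar\Lambda$, with $\bar\Lambda\subseteq\PP^1(k)$) together with Remark~\ref{rem:free} for freeness. Your connectedness argument via the tree structure---interior points of $[x,y]$ carry at least two branches, hence cannot be of type~1, hence cannot lie in $\bar\Lambda\subseteq\PP^1(k)$---is exactly the detail the paper suppresses, and it is sound.
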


%
%

\section{Uniformization of Mumford curves}\label{sec:uniformization}

The main result of this section, Theorem \ref{thm:analyticuniformization}, states that the procedure described in Section~\ref{sec:Schottkyfigures} can be reversed: any Mumford curve may be uniformized by an open subset of the Berkovich projective line~$\pank$ with a Schottky group as group of deck transformations. The consequences of this result are many and far-reaching.
Some of them are discussed in Appendix~\ref{app:unif}.

This was first proved by D.~Mumford in his influential paper \cite{Mumford72}, where he introduces this as a non-Archimedean analogue of the uniformization of handlebodies by means of Schottky groups in the complex setting. His arguments make a heavy use of formal models of the curves. Here, we argue directly on the curves themselves, following the strategy of~\cite[Chapter IV]{GerritzenPut80} and~\cite[Proposition~4.6.6]{Lutkebohmert16}. Note, however that the proof in the first reference is flawed (since it relies on the wrong claim that every $k$-analytic curve of genus~0 embeds into~$\pank$, see Remark~\ref{rem:Liu})
and that the second reference assumes that the curve contains at least three rational points.

As an application, we discuss how Theorem \ref{thm:analyticuniformization} can be used to study the automorphism groups of Mumford curves. This is far from being the sole purpose of uniformization. Other important consequences are mentioned in Appendix \ref{app:unif}.

\subsection{The uniformization theorem}

In this section, we prove that any analytic Mumford curve as defined in \ref{def:Mumfordcurve} can be obtained as the quotient of an open dense subspace of $\pana{k}$ by the action of a Schottky group, leading to a purely analytic proof of Mumford's theorem.
We begin with a few preparatory results.

\begin{lemma}\label{lem:extensionofautomorphisms}
Let $L$ be a compact subset of~$\PP^1(k)$. Set $O := \pank - L$. 
\begin{enumerate}
\item Every bounded analytic function on~$O$ is constant.
\item Every automorphism of~$O$ is induced by an element of~$\PGL_{2}(k)$.
\end{enumerate}
\end{lemma}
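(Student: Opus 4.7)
For part~(i), the plan is to show first that $|f|$ is constant on each connected component of~$O$, and then to upgrade this to $f$ itself being constant. For the first step, suppose by contradiction that $|f|$ is non-constant on a connected component~$U$. By Corollary~\ref{cor:maximumprinciple}, there exist $y \in \partial U \subseteq L \subseteq \PP^1(k)$ and a branch $b \in \calB_{y}$ along which $\lim_{z \to y} |f(z)| = \sup_{U} |f|$ and $|f|$ is monomial of strictly negative integer exponent~$N$. Since $y$ is of type~1, Lemma~\ref{lem:length} tells us that any representative $(y, z_{0}]$ of~$b$ has infinite exponential length, so for any fixed $z \in (y,z_{0}]$, sending $t \in (y, z]$ towards~$y$ makes $\ell([t,z]) \to +\infty$. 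The monomial relation $|f(t)| = |f(z)| \cdot \ell([t,z])^{-N}$ (with $-N > 0$) then forces $|f(t)| \to +\infty$, contradicting boundedness; hence $|f|$ is constant on~$U$. For the second step, I would extend scalars to~$\wka$, noting that fibers of $\pi \colon \panwka \to \pank$ above $k$-rational points are singletons, so $L$ remains compact in $\PP^1(\wka)$ and the hypotheses are preserved. On a connected component of $O_{\wka}$, pick a $\wka$-rational point $x_{0}$ and set $c := f(x_{0}) \in \wka$; then $g := f - c$ is bounded analytic with $g(x_{0}) = 0$. The first step applied to~$g$ makes $|g|$ constant, and the vanishing at~$x_{0}$ forces $g \equiv 0$. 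Galois-invariance of~$f$, combined with the transitivity of the Galois action on the components of $O_{\wka}$ lying above a given component of~$O$, then shows $c \in k$, so $f$ is constant on that component of~$O$.

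For part~(ii), let $\varphi \colon O \to O$ be an automorphism. After a change of coordinate we may assume $\infty \in L$, so that $T \in \calO(O)$ and $F := \varphi^\sharp(T) \in \calO(O)$ is well-defined; the goal is to show $\varphi$ is induced by an element of $\PGL_{2}(k)$. The plan is to extend $\varphi$ to a continuous automorphism $\tilde\varphi \colon \pank \to \pank$, from which the conclusion follows by the classical fact that every analytic automorphism of the projective line comes from a M\"obius transformation (a consequence of GAGA applied to $\PP^1_{k}$, since $\pank$ is the analytification of~$\PP^1_{k}$). For each $p \in L$ and sequence $x_{n} \to p$ in~$O$, compactness of~$\pank$ provides subsequential limits for $(\varphi(x_{n}))$, and any such limit~$q$ must lie in~$L$: otherwise $q \in O$, and continuity of~$\varphi^{-1}$ at~$q$ would yield $p = \varphi^{-1}(q) \in O$, a contradiction. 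To show uniqueness of the accumulation point, one argues by contradiction: given distinct candidates $q \ne q' \in L$, I would construct a bounded non-constant analytic function on~$O$ by composing~$\varphi$ with a carefully chosen M\"obius transformation~$h$ whose pole is placed inside~$L$ (so $h \circ \varphi$ is analytic on~$O$) and whose values separate $q$ and~$q'$ (so $h \circ \varphi$ takes distinct limits along the two sequences approaching~$p$, hence is non-constant). Such a function contradicts part~(i), which establishes uniqueness.

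Once $\tilde\varphi \colon \pank \to \pank$ is constructed, it is a continuous bijection of a compact Hausdorff space, hence a homeomorphism; its analyticity at the points of~$L$ can be checked locally by combining the extension with the boundedness principle of part~(i) to control local expansions, after which the appeal to $\mathrm{Aut}(\pank) = \PGL_{2}(k)$ finishes the proof. The main obstacle will be making the uniqueness step rigorous at points of~$L$ that are not isolated: the naive choice of the auxiliary function $h \circ \varphi$ may fail to be bounded on all of~$O$, because one cannot invoke a punctured-neighborhood Laurent expansion, and one must arrange the poles of~$h$ and, if necessary, precompose with a further M\"obius transformation, in such a way that boundedness on~$O$ is genuinely achieved while the separating property between $q$ and~$q'$ is preserved.
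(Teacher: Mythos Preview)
Your argument for part~(i) is correct and close in spirit to the paper's: both exploit that a nonzero exponent for $|f|$, pushed towards a type~1 point, forces $|f|\to\infty$ because the exponential length blows up there (Lemma~\ref{lem:length}). The paper organizes it differently: it extends scalars first, then from a point where $|F|$ has positive exponent along some branch it builds, via Zorn's lemma, a maximal path along which $|F|$ keeps increasing, and shows by harmonicity (Theorem~\ref{thm:harmonicityanalytic}) that the endpoint must be of type~1. You instead invoke Corollary~\ref{cor:maximumprinciple} to land directly at a boundary point in $L$, already of type~1. Both work; yours is arguably more streamlined. One incidental simplification: since $L\subseteq\PP^1(k)$ consists entirely of endpoints of the tree $\pank$, the complement $O$ (and likewise $O_{\wka}$) is automatically connected, so your Galois-transitivity-on-components step is vacuous and $c\in k$ follows at once from Galois invariance of~$f$.

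For part~(ii), the plan has genuine gaps beyond the one you flag. The sequential construction of~$\tilde\varphi$ is unsafe because $\pank$ is not metrizable in general (Corollary~\ref{cor:metrizable}), so nets or filters would be needed; and even granting a continuous extension, you still owe analyticity across~$L$, which is not a formality and which your sketch does not supply. The paper avoids extension altogether and proceeds much more directly. After arranging (by a finite scalar extension if needed) that $O$ contains at least two $k$-rational points, it takes these to be $0$ and $\infty$ \emph{inside}~$O$ --- the opposite of your normalization --- picks $\tau\in\PGL_2(k)$ agreeing with~$\sigma$ at $0$ and~$\infty$, and considers the analytic function $(\tau^{-1}\circ\sigma)^\sharp(T)/T$ on~$O$. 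This is bounded near~$0$ and near~$\infty$ because numerator and denominator have matching order of zero and pole, and bounded on the rest of~$O$ because the automorphism $\tau^{-1}\circ\sigma$ carries the complement of a neighborhood of~$\infty$ into a bounded region of~$\aank$. Part~(i) then makes this ratio a nonzero constant, whence $\tau^{-1}\circ\sigma$ is $z\mapsto cz$ and $\sigma\in\PGL_2(k)$. No limit argument or extension to~$\pank$ is required.
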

\begin{proof}
(i) Let $F \in \calO(O)$. The function~$F$ is constant if, and only if, its pullback to $O_{\wka}$ is, hence we may assume that~$k$ is algebraically closed. 

Assume, by contradiction, that~$F$ is not constant. Then, there exists $x\in O$ and a branch~$b$ at~$x$ such that $F(x) \ne 0$ and $|F|$ is monomial at~$x$ along~$b$ with a positive integer exponent. We may assume that~$x$ is of type~2 or~3. Then, there exists $y \in O - \{x\}$ and $N \in \N_{\ge 1}$ such that, for each $z\in [x,y]$, we have $|F(z)| = |F(x)| \, \ell([x,z])^N$. 

Let us now consider a path~$[x,y]$, with $y \in \pank$, with the following property: for each $z \in (x,y)$, $|F|$ is monomial at~$z$ with positive integer slope along the branch in~$(x,y)$ going away from~$x$. By Zorn's lemma, we may find a maximal path~$[x,y]$ among those. 

We claim that $y$ is of type~1. If~$y$ is of type~4, then, by Theorem~\ref{thm:sigmabF}, $|F|$ is constant in the neighborhood of~$y$ in~$(x,y)$, and we get a contradiction. Assume that~$b$ is of type~2 or~3. Then, the exponent of~$|F|$ at~$y$ along the branch corresponding to~$[y,x]$ is negative. By Corollary~\ref{cor:Kirchhoff}, there exists a branch~$b$ at~$y$ such that~$|F|$ is monomial with positive exponent at~$y$ along~$b$, which contradicts the maximality. Finally, $y$~is of type~1. 

By assumption, $|F|$ has a positive integer exponent everywhere on~$(x,y)$. It follows that, for each $z \in (x,y)$, we have $|F(z)| \ge |F(x)| \, \ell([x,z])$. Since~$y$ is of type~1, by Lemma~\ref{lem:length}, we have $\ell([x,y]) = \infty$, hence~$F$ is unbounded. This is a contradiction.

\medbreak

(ii) Let~$\sigma$ be an automorphism of~$O$. 

Let us first assume that~$O$ contains at least 2 $k$-rational points. Up to changing coordinates, we may assume that $0, \infty \in O$. Let us choose an automorphism $\tau \in \PGL_{2}(k)$ that agrees with~$\sigma$ on~$0$ and~$\infty$. Then $\tau^{-1} \circ \sigma$ is an automorphism of~$O$ that fixes~0 and~$\infty$. In particular, it corresponds to an analytic function with a zero of order~1 at~0 and a pole of order~1 at~$\infty$. 
Let us consider the quotient analytic function $\varphi := (\tau^{-1} \circ \sigma)/\id$. There exist a neighborhood~$U$ of~0 and a neighborhood~$V$ of~$\infty$ on which~$\varphi$ is bounded. Since $\tau^{-1} \circ \sigma$ is an automorphism, it sends~$V$ to a neighborhood of~$\infty$, hence it is bounded on $O - V$. It follows that~$\varphi$ is bounded on $O - (U \cup V)$, hence on~$O$. By (i), we deduce that~$\varphi$ is constant, and the result follows.


Let us now handle the case where $O \cap \PP^1(k) = \emptyset$. There exists a finite extension~$K$ of~$k$ such that $O_{K}$ contains a $K$-rational point. Applying the previous argument after extending the scalars to~$K$, we deduce that $\sigma \in \PGL_{2}(K)$. Since $\sigma$ preserves $\PP^1(k)$, it actually belongs to~$\PGL_{2}(k)$. 
\end{proof}

\begin{lemma}\label{lem:compactintoP1}
Let~$Y$ be a connected $k$-analytic $\AA^1$-like curve of genus~0. 
Let $\calT = (S,\calA,\calD)$ be a triangulation of~$Y$. Assume that~$\calA$ is non-empty and consists of annuli. Let~$U$ be an open relatively compact subset of~$\Sigma_{\calT}$. Then, there exists an embedding of~$\tau_{\calT}^{-1}(U)$ into~$\pank$ such that the complement of~$\tau_{\calT}^{-1}(U)$ is a disjoint union of finitely many closed discs.
\end{lemma}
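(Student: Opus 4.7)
Plan:

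The strategy is to construct a smooth compact $k$-analytic curve $\bar W$ that contains $W := \tau_\calT^{-1}(U)$ as a dense open subspace whose complement is a finite disjoint union of closed discs, and then to identify $\bar W$ with $\pank$ using the genus~0 hypothesis and the existence of $k$-rational points. Since $\Sigma_\calT$ is a locally finite metric graph and $\bar U$ is compact, the boundary $\partial U = \bar U \setminus U$ is a finite set $\{x_1,\ldots,x_n\}$. I describe the construction in detail under the simplifying assumption $\partial U \cap S = \emptyset$, so that each $x_i$ lies in the interior of the skeleton $\Sigma_{A_i}$ of some $A_i \in \calA$; the general case requires essentially the same idea, but is notationally more involved. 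Since $\calA$ consists of actual annuli, I fix an isomorphism $A_i \simeq A^-(\alpha_i, r_i, s_i)$ with $x_i = \eta_{\alpha_i,t_i}$ for some $r_i < t_i < s_i$, arranged so that the $s_i$-side of $x_i$ points out of $U$.

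For each $i$, pick $t_i' \in (r_i, t_i)$ close enough to $t_i$ that the subinterval $(\eta_{\alpha_i, t_i'}, x_i)$ lies in $U$, and set $B_i := A^-(\alpha_i, t_i', s_i)$ and $E_i := D^+(\alpha_i, s_i)$, so that $B_i$ is naturally an open sub-annulus of both $W$ (as a sub-annulus of $A_i$) and $E_i$, and $E_i \setminus B_i = D^+(\alpha_i, t_i')$ is a closed disc. Define $\bar W$ by gluing $W$ and the discs $E_i$ along the common open annuli $B_i$. Then $\bar W$ is a smooth compact $k$-analytic curve with a canonical open immersion $W \hookrightarrow \bar W$, and the complement $\bar W \setminus W$ is the finite disjoint union $\bigsqcup_i D^+(\alpha_i, t_i')$ of closed discs.

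To conclude, I show $\bar W \simeq \pank$. Since $Y$ has genus~0, Definition~\ref{def:genus} and Proposition~\ref{prop:retraction} give that $\Sigma_\calT$ is a tree (it is a deformation retract of $Y$ and $b_1(Y) = 0$) and that every type~2 point of $Y$ has genus~0. The skeleton of $\bar W$ is obtained from $\bar U$ by extending each end by the segment of $\Sigma_{A_i}$ running from $x_i$ to $\eta_{\alpha_i, s_i}$, so it is a finite subtree, and hence $b_1(\bar W) = 0$. Type~2 points of $\bar W$ lying in $W$ have genus~0 by hypothesis, and type~2 points in the glued discs $E_i$ have genus~0 by Example~\ref{ex:residuecurve}; thus $g(\bar W) = 0$. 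By Theorem~\ref{thm:smoothcompact} and Theorem~\ref{thm:genus}, $\bar W$ is the analytification of a smooth projective curve of genus~0 over~$k$, and since $\bar W$ contains $k$-rational points (for instance any rigid point in some $E_i$), this curve is isomorphic to $\PP^1_k$. Hence $\bar W \simeq \pank$, and the composition $W \hookrightarrow \bar W \simeq \pank$ is the required embedding.

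The main technical obstacle is the general case where $\partial U$ meets~$S$. At such an $x_i \in S \cap \partial U$, the preimage $\tau_\calT^{-1}(x_i)$ consists of $\{x_i\}$ together with all the virtual open discs of $\calD$ attached to~$x_i$, and the region outside~$W$ near~$x_i$ includes both outgoing half-annuli along the edges of~$\Sigma_\calT$ emanating from~$x_i$ and these hanging discs. Since $x_i$ has stable genus~0 (as $Y$ has genus~0), Proposition~\ref{prop:bijrescurve} ensures that the branches at~$x_i$ are in bijection with the closed points of $\scrC_{x_i} \simeq \PP^1_{\tilde k}$, which makes it possible to glue a single closed disc that simultaneously caps off every outgoing edge direction and absorbs every hanging disc of $\calD$ attached to~$x_i$. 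Verifying that this more elaborate gluing still produces a smooth compact $k$-analytic curve of genus~0 whose complement in $\pank$ remains a finite disjoint union of closed discs is the main technical step.
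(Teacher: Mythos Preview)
Your strategy --- cap off each end of $W = \tau_\calT^{-1}(U)$ by gluing a closed disc along a collar annulus, obtain a compact genus-$0$ curve with $k$-rational points, and identify it with $\pank$ via Theorems~\ref{thm:smoothcompact} and~\ref{thm:genus} --- is exactly the paper's. However, your explicit formulas are wrong. You set $B_i := A^-(\alpha_i, t_i', s_i)$ and claim it lies in $W$, but since the $s_i$-side of $x_i = \eta_{\alpha_i,t_i}$ points out of $U$, the portion of $\Sigma_{A_i}$ between $x_i$ and $\eta_{\alpha_i,s_i}$ is outside $U$, whence $B_i \not\subset W$. (Also $E_i \setminus B_i$ as you wrote it is $D^+(\alpha_i,t_i') \cup A^+(\alpha_i,s_i,s_i)$, not a closed disc.) The correct collar is $B_i := A^-(\alpha_i, t_i', t_i) \subset W$, and one then glues an open disc $D_i \supset B_i$ with $D_i \setminus B_i$ a closed disc on the $x_i$-side; such an embedding of an open annulus into a disc always exists (use inversion to put the closed-disc complement on whichever end you like). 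This is precisely what the paper does: for each $z \in \partial U$ it picks an open interval $I_z \subset U$ with endpoint $z$, shrinks it to avoid $S$, sets $A_z := \tau_\calT^{-1}(I_z)$, and embeds $A_z$ in an open disc $D_z$ with closed-disc complement.

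Your treatment of the case $\partial U \cap S \ne \emptyset$ is overcomplicated. Since $Y$ has genus $0$, $\Sigma_\calT$ is a tree; as $U$ is connected and $x_i \notin U$, the set $U$ lies in a single component of $\Sigma_\calT \setminus \{x_i\}$, so there is exactly one direction from $x_i$ into $U$ regardless of whether $x_i \in S$. The hanging discs $D \in \calD$ attached at $x_i$ satisfy $\tau_\calT(D) = \{x_i\} \not\subset U$, hence are disjoint from $W$ and irrelevant to capping it off. One therefore takes the interval $I_{x_i}$ along that unique direction (automatically $x_i \notin I_{x_i}$), shrinks it to avoid other points of $S$, and glues a single disc exactly as in the generic case; no appeal to Proposition~\ref{prop:bijrescurve} or to any ``simultaneous capping'' is needed.
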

\begin{proof}
Recall that~$\Sigma_{\calT}$ is a locally finite graph (see Theorem~\ref{thm:triangulation}). As a consequence, the boundary~$\partial U$ of~$U$ in~$\Sigma_{\calT}$ is finite. For each~$z\in \partial U$, let~$I_{z}$ be an open interval in~$\Sigma_{\calT}$ having~$z$ as an end-point. Up to shrinking the~$I_{z}$'s, we may assume that they are disjoint.

Let~$z\in \partial U$. Set $A_{z} := \tau_{\calT}^{-1}(I_{z})$. Since every element of~$\calA$ is an annulus, up to shrinking~$I_{z}$ (so that it contains no points of~$S$), we may assume that~$A_{z}$ is an annulus. The open annulus~$A_{z}$ may be embedded into an open disc~$D_{z}$ such that the complement is a closed disc.

Let us construct a curve~$Y'$ by starting from~$\tau_{\calT}^{-1}(U)$ an gluing~$D_{z}$ along~$A_{z}$ for each $z\in \partial U$. By construction, the curve~$Y'$ is compact and of genus~0. Moreover, it contains rational points, as the~$D_{z}$ do. It follows from Theorems~\ref{thm:smoothcompact} and~\ref{thm:genus} that~$Y'$ is isomorphic to~$\pank$. By construction, 
\[\pank - \tau_{\calT}^{-1}(U) = \bigcup_{z\in \partial U} D_{z} - A_{z}\]
is a disjoint union of finitely many closed discs.
\end{proof}


We now state and prove the uniformization theorem.

\begin{theorem}\label{thm:analyticuniformization}
Let $X$ be a $k$-analytic Mumford curve. 
Then the fundamental group~$\Gamma$ of~$X$ is a Schottky group.
If we denote by $L$ the limit set of~$\Gamma$, then $O := \pank - L$ is a universal cover of~$X$.
In particular, we have $X \simeq \Gamma \backslash O$.
\end{theorem}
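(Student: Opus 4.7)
The approach is to build a universal cover of $X$ analytically inside $\pank$, using exhaustion by compact genus-$0$ pieces, and to then recognise its deck-transformation group as a Schottky subgroup of $\PGL_2(k)$. First, by Proposition~\ref{prop:Mumfordtriangulation} and Corollary~\ref{cor:Mumfordgenus}, fix a triangulation $\calT=(S,\calA,\calD)$ of $X$ in which the elements of $\calA$ are open annuli (not merely virtual) and the points of $S$ have stable genus $0$, and whose skeleton $\Sigma_{\calT}$ has first Betti number $g=g(X)$. Let $p\colon\tilde X\to X$ be the topological universal cover; $\tilde X$ inherits the structure of a simply connected $\AA^1$-like $k$-analytic curve, the triangulation $\calT$ pulls back to a triangulation $\tilde\calT$ whose skeleton $\Sigma_{\tilde X}$ is an infinite tree (the universal cover of $\Sigma_{\calT}$) and whose annuli remain annuli. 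Standard covering-space theory gives $\Gamma:=\pi_1(X)$ free of rank $g$.

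Next I would embed $\tilde X$ into $\pank$ as an open subset by exhaustion. Choose an increasing sequence $T_0\subset T_1\subset\cdots$ of finite subtrees of $\Sigma_{\tilde X}$ with $\bigcup_n T_n=\Sigma_{\tilde X}$, each obtained from the previous one by adjoining a single edge, and set $Y_n:=\tau_{\tilde\calT}^{-1}(T_n)$. Every $Y_n$ is a compact connected $\AA^1$-like curve of genus $0$ whose triangulation has genuine annuli, so Lemma~\ref{lem:compactintoP1} embeds it in $\pank$ with complement a finite disjoint union of closed discs. Fix an embedding $\iota_0\colon Y_0\hookrightarrow\pank$ and extend inductively: the closure of $Y_n\setminus Y_{n-1}$ is a compact genus-$0$ $\AA^1$-like curve meeting $Y_{n-1}$ along a single annulus $A$; Lemma~\ref{lem:compactintoP1} embeds it in $\pank$, and using Proposition~\ref{prop:isoannulus} to match the two embeddings on $A$ together with the transitivity of $\PGL_2(k)$ on closed discs, one postcomposes by a suitable M\"obius transformation so that the ``outer'' complementary disc becomes the component of $\pank\setminus\iota_{n-1}(Y_{n-1})$ adjacent to $\iota_{n-1}(A)$. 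Gluing along $A$ defines $\iota_n$ extending $\iota_{n-1}$, and passing to the direct limit yields an open immersion $\iota\colon\tilde X\hookrightarrow\pank$. Set $O:=\iota(\tilde X)$ and $L:=\pank\setminus O$; the latter is closed in compact $\pank$, hence compact.

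The heart of the argument is to show $L\subset\PP^1(k)$. By construction, every $x\in L$ lies in a decreasing sequence $(D_n)$ of closed discs, with $D_n$ the connected component of $\pank\setminus\iota_n(Y_n)$ containing $x$. I would control the radii of the $D_n$ via the fact that $\calA$ is finite, so the moduli of its elements are bounded below by some $q>1$: by Theorem~\ref{th:degreelength} and Lemma~\ref{lem:ccannuli}, every time the exhaustion adds a full annulus lift in the direction of $x$, the ratio $\mathrm{rad}(D_{n+1})/\mathrm{rad}(D_n)$ is multiplied by a factor at most $q^{-1}$. Since any infinite geodesic ray in the tree $\Sigma_{\tilde X}$ projects to an infinite walk in the finite graph $\Sigma_{\calT}$, it must cross infinitely many full lifts of members of $\calA$; thus $\mathrm{rad}(D_n)\to 0$, and by completeness of $k$ the intersection $\bigcap_n D_n$ is a single $k$-rational point. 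This gives $L\subset\PP^1(k)$.

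Finally, $\Gamma$ acts by analytic automorphisms on $O=\pank\setminus L$, and Lemma~\ref{lem:extensionofautomorphisms}(ii) realises $\Gamma$ as a subgroup of $\PGL_2(k)$. By Remark~\ref{rem:nonloxodromic} a non-loxodromic M\"obius transformation fixes points of type $\geq 2$ in $\pank$; such fixed points would have to lie in $O$ since $L\subset\PP^1(k)$, contradicting the freeness of the $\Gamma$-action on $O$, so every non-trivial element of $\Gamma$ is loxodromic. Properness of the action is inherited from the topological covering of the Hausdorff space $X$. All three axioms of Definition~\ref{def:SchottkyGroup} are thus verified and $\Gamma$ is a Schottky group. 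Applying Theorem~\ref{thm:actionproper} to $\Gamma$ identifies the limit set with the complement of the locus where the action is free and proper, namely $L$; and $X=\Gamma\backslash\tilde X\simeq\Gamma\backslash O$ concludes the proof. The main obstacle is the third step: translating the purely combinatorial fact that rays in $\Sigma_{\tilde X}$ cross infinitely many base annuli into the geometric statement that the associated nested discs in $\pank$ have radii tending to zero, via the modulus control afforded by Theorem~\ref{th:degreelength} and Lemma~\ref{lem:ccannuli}.
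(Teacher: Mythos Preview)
Your overall strategy---build the universal cover as an $\AA^1$-like curve, embed it in $\pank$, and then identify the deck group inside $\PGL_2(k)$ via Lemma~\ref{lem:extensionofautomorphisms}---is the paper's, and your endgame (the loxodromicity argument via Remark~\ref{rem:nonloxodromic}, and the identification of $L$ via Theorem~\ref{thm:actionproper}) is fine. The genuine gap is in step~3, the inductive gluing.

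You assert that, given $\iota_{n-1}$ and a separate embedding $j$ of the new piece $Z_n$, one can postcompose $j$ by a M\"obius transformation so that it agrees with $\iota_{n-1}$ on the overlap annulus $A$. This is not justified: the composite $\iota_{n-1}|_A\circ(j|_A)^{-1}$ is an isomorphism between two annuli in $\pank$, but such an isomorphism need not be the restriction of an element of $\PGL_2(k)$. Proposition~\ref{prop:isoannulus} only says that an isomorphism of annuli has a single dominant monomial term; it does \emph{not} say the isomorphism is that monomial. For a concrete obstruction, $z\mapsto z+pz^2$ is an automorphism of $A^-(0,1/2,1)$ over~$\Cp$ that fixes the skeleton pointwise yet is not M\"obius. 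Hence the two embeddings cannot in general be matched on $A$ by adjusting with $\PGL_2(k)$, and the glued map $\iota_n$ is not well defined. Transitivity of $\PGL_2(k)$ on closed discs lets you match the \emph{images} of $A$, but not the maps themselves.

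This is precisely the obstacle that forces the paper to abandon compatibility of the $\varphi_n$ and argue by \emph{convergence} instead. Each $Y_n$ is embedded separately, normalised by three marked ends $0,1,\infty$; the key estimate (Lemma~\ref{lem:hnm}) bounds $\|\psi_n/\psi_m-1\|_{Y_N}$ using the harmonicity of analytic functions (Theorem~\ref{thm:harmonicityanalytic}, Corollary~\ref{cor:maximumprinciple}) together with the growing length from $Y_N$ to $\partial Y_m$. The limit $\psi$ then furnishes the global embedding. Your radius-decay idea in step~4 is essentially the content of Lemma~\ref{lem:varphin}, but it only becomes usable once the embedding already exists.
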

\begin{proof}
Assume that the genus of~$X$ is bigger than or equal to~2.

Let $p \colon Y \to X$ be the topological universal cover of~$X$. Since~$p$ is a local homeomorphism, we may use it to endow~$Y$ with a $k$-analytic structure. The set~$Y$ then becomes an $\AA^1$-like curve and the map~$p$ becomes a local isomorphism of locally ringed spaces. Note that the curve~$Y$ has genus~0. 

We claim that it is enough to prove that~$Y$ is isomorphic to an open subset of~$\pank$ whose complement lies in~$\PP^1(k)$. Indeed, in this case, $Y$ is simply connected, hence the fundamental group~$\Gamma$ of~$X$ may be identified with the group of deck transformations of~$p$. By Lemma~\ref{lem:extensionofautomorphisms}, it embeds into~$\PGL_{2}(k)$. It now follows from the properties of the universal cover and the fundamental group that~$\Gamma$ is a Schottky group (see Remark~\ref{rem:nonloxodromic} for the fact that the non-trivial elements of~$\Gamma$ are loxodromic). Moreover, by Theorem~\ref{thm:actionproper}, we have $Y \subseteq \pank -L$, where $L$ is the limit set of~$\Gamma$, hence $X = \Gamma\backslash Y \subset \Gamma\backslash (\pank-L)$. Since $\Gamma\backslash Y$ and $\Gamma\backslash (\pank-L)$ are both connected proper curves, they have to be equal, hence  $Y = \pank-L$.

\medbreak

In the rest of the proof, we show that~$Y$ embeds into~$\pank$ with a complement in~$\PP^1(k)$. Since~$X$ is a $k$-analytic Mumford curve of genus at least~2, it has a minimal skeleton~$\Sigma_{X}$ and the connected components of~$\Sigma_{X}$ deprived of its branch points are skeleta of open annuli over~$k$. Its preimage $p^{-1}(\Sigma_X)$ coincides with the minimal skeleton~$\Sigma_{Y}$ of~$Y$. Similarly, the connected components of~$\Sigma_{Y}$ deprived of its branch points are skeleta of open annuli over~$k$. We denote by $\tau_{Y} \colon Y \to \Sigma_{Y}$ the canonical retraction. 

\medbreak

Let~$x_{0} \in X$ and $y_{0} \in p^{-1}(x_{0})$. Let~$\ell_{X}$ be a loop in~$\Sigma_{X}$ based at~$x_{0}$ that is not homotopic to~0. It lifts to a path in~$\Sigma_{Y}$ between~$y_{0}$ and a point~$y_{1}$ of $p^{-1}(x_{0})$. We may then lift again~$\ell_{X}$ to a path in~$\Sigma_{Y}$ between~$y_{1}$ and a point~$y_{2}$ of $p^{-1}(x_{0})$. Repeating the procedure, we obtain a non-relatively compact path~$\lambda(\ell_{X})$ in~$\Sigma_{Y}$ starting at~$y_{0}$. Note that the length of~$\lambda(\ell_{X})$ is infinite since it contains infinitely many copies of~$\ell_{X}$.

More generally, all the maximal paths starting from~$y_{0}$ in~$\Sigma_{Y}$ are of infinite length, since they contain infinitely many lifts of loops from~$\Sigma_{X}$.

Since~$X$ is of genus at least~2, we may find two loops~$\ell_{X,0}$ and~$\ell_{X,1}$ based at~$x_{0}$ in~$\Gamma_{X}$ that are not homotopic to~0 and not homotopic one to the other. Set $\ell_{0} := \lambda(\ell_{X,0})$, $\ell_{\infty} := \lambda(\ell_{X,0}^{-1})$ and $\ell_{1} := \lambda(\ell_{X,1})$. Away from some compact set of~$Y$, the three paths $\ell_{0},\ell_{\infty}, \ell_{1}$ are disjoint. Up to moving~$x_{0}$ and~$y_{0}$, we may assume that 
\[ \ell_{0} \cap \ell_{1} = \ell_{\infty} \cap \ell_{1} = \ell_{0} \cap \ell_{\infty} = \{y_{0}\}.\]
For $i\in \{0,1,\infty\}$ and $r\in \R_{\ge 1}$, we denote by~$\xi_{i,r}$ the unique point of~$\ell_{i}$ such that $\ell([y_{0},\xi_{i,r}]) = r$.

%

\medbreak

Let $n\in \N_{\ge 1}$. Set 
\[ U_{n} := \{ z \in \Sigma_{Y} : \ell([y_{0},z]) < 2^{n}\} \textrm{ and } Y_{n} := \tau_{Y}^{-1
}(U_{n}).\]
We already saw that all the maximal paths starting from~$y_{0}$ in~$\Sigma_{Y}$ are of infinite length, hence~$U_{n}$ is relatively compact in~$\Sigma_{Y}$. Denote by~$\partial U_{n}$ the boundary of~$U_{n}$ in~$\Sigma_{Y}$. For each $z\in \partial U_{n}$, we have $\ell([y_{0},z]) = 2^n$. 

By Lemma~\ref{lem:compactintoP1}, there exists an open subset~$O_{n}$ of~$\pank$ and an isomorphism $\varphi_{n} \colon Y_{n} \xrightarrow[]{\sim} O_{n}$ such that $\pank - O_{n}$ is a disjoint union of closed discs. For each $z\in \partial U_{n}$, we denote by~$p_{z}$ the end-point of~$\varphi_{n}([y_{0},z))$ in $\pank - O_{n}$ and by~$D_{z}$ the connected component of $\pank - O_{n}$ whose boundary point is~$p_{z}$. To ease the notation, for $i \in \{0,1,\infty\}$, we set $D_{i,n} := D_{\xi_{i,2^n}}$. 

Let us fix a ($k$-rational) point at infinity on~$\pank$ and a coordinate~$T$ on $\aank \subset \pank$. We may assume that, for each $i \in \{0,1,\infty\}$, we have $i \in D_{i,n}$. By pulling-back the analytic function~$T$ on~$O_{n}$ by~$\varphi_{n}$, we get a analytic function on~$Y_{n}$. We denote it by~$\psi_{n}$. Recall that it is actually equivalent to give oneself~$\varphi_{n}$ or~$\psi_{n}$, see Lemma~\ref{lem:morphismtoA1}. 


\begin{lemma}\label{lem:varphin}
We have $\varphi_{n}(y_{0}) = \eta_{1}$. For each $r\in [1,2^n)$, we have
\[\varphi_{n}(\xi_{0,r}) = \eta_{1/r},\ \varphi_{n}(\xi_{\infty,r}) = \eta_{r} \textrm{ and } \varphi_{n}(\xi_{1,r}) = \eta_{1,1/r}.\]

Let~$C$ be a connected component~$C$ of $Y - (\ell_{0} \cup \ell_{\infty})$. For each $y \in C\cap Y_{n}$, we have 
\[|\psi_{n}(y)| = 
\begin{cases}
1/r &\textrm{ if the boundary point of } C \textrm{ is } \xi_{0,r};\\
r &\textrm{ if the boundary point of } C \textrm{ is } \xi_{\infty,r}.
\end{cases}
\]

Let $N \in \cn{1}{n}$. The image $\varphi_{n}(Y_{N})$ is an open Swiss cheese. More precisely, there exist $d \in \N_{\ge 2}$, $\alpha_{2},\dotsc,\alpha_{d} \in k^\ast$ and, for each $j\in \cn{2}{d}$, $r_{j}\in [2^{-N},|\alpha_{j}|)$ such that $\varphi_{n}(Y_{N})$ is the subset of~$\aank$ defined by the following conditions:
\[\begin{cases}
2^{-N} < |T| < 2^N;\\ 
|T-1| > 2^{-N};\\
\forall j \in \cn{2}{d},\ |T-\alpha_{j}| > r_{j}.
\end{cases}\]
\end{lemma}
\begin{proof}
It follows from the construction that, for each  $i \in \{0,1,\infty\}$, $\varphi_{n}([y_{0},\xi_{i,2^n})$ is an injective path joining~$\varphi_{n}(y_{0})$ to the boundary point of a disc centered at~$i$. Since those paths only meet at~$\varphi_{n}(y_{0})$, the only possibility is that $\varphi_{n}(y_{0}) = \{\eta_{1}\}$.

Let $r\in [1,2^n)$. 
Since lengths are preserved by automorphism (see Proposition~\ref{prop:isoannulus}), for each $i \in \{0,1,\infty\}$, we have $\ell([\eta_{1},\varphi_{n}(\xi_{i,r})]) = r$. Since $\varphi_{n}(\xi_{\infty,r})$ belongs to $[\eta_{1},\infty]$, it follows that $\varphi_{n}(\xi_{\infty,r}) = \eta_{r}$. By a similar argument, we have $\varphi_{n}(\xi_{0,r}) = \eta_{1/r}$ and $\varphi_{n}(\xi_{1,r}) = \eta_{1,1/r}$.

\medbreak
 

Recall that we have $I_{0} = \{\eta_{r} : r\in \R_{\ge 0}\} \subset \aank$. Let~$C$ be a connected component of~$\aank - I_{0}$ and let~$\eta_{r}$ be its boundary point. Then, for each $z \in C$, we have $|T(z)| = r$. 

We have $\varphi_{n}^{-1}(I_{0} \cap O_{n}) = (\ell_{0} \cup \ell_{\infty}) \cap Y_{n}$. By definition of~$\psi_{n}$, for each $y\in Y_{n}$, we have
$ |\psi_{n}(y)| = |T(\varphi_{n}(y))| $.
It follows that, for each connected component~$C$ of $Y - (\ell_{0} \cup \ell_{\infty})$ and each $y \in C\cap Y_{n}$, we have 
\[|\psi_{n}(y)| = 
\begin{cases}
1/r &\textrm{ if the boundary point of } C \textrm{ is } \xi_{0,r};\\
r &\textrm{ if the boundary point of } C \textrm{ is } \xi_{\infty,r}.
\end{cases}
\]

\medbreak

The set~$O_{n}$ is an open Swiss cheese. The set $\varphi_{n}(U_{N})$ is a connected open subset of its skeleton and $\varphi_{n}(Y_{N})$ is the preimage of it by the retraction. It follows that $\varphi_{n}(Y_{N})$ is an open Swiss cheese too, hence the complement in~$\pank$ of finitely many closed discs $E_{\infty},E_{0},\dotsc,E_{d}$. Let $z_{\infty}, z_{0}, \dotsc,z_{d}$ denote the corresponding boundary points. The set $\varphi_{n}(Y_{N})$ contains $\varphi_{n}(y_{0}) = \eta_{1}$ and, by construction of~$Y_{N}$, for each $i\in \{\infty\} \cup \cn{0}{d}$, we have $\ell([\eta_{1},z_{i}]) = 2^N$.

Since $0$, $1$ and~$\infty$ do not belong to~$O_{n}$, some of those discs~$E_{i}$ contain those points. Since $\varphi_{n}(Y_{N})$ contains~$\eta_{1}$, those discs are disjoint. We may assume that, for each $i \in \{0,1,\infty\}$, we have $i \in E_{i}$. The length property then implies that we have $z_{\infty} = \eta_{2^N}$, $z_{0} = \eta_{2^{-N}}$ and $z_{0} = \eta_{1,2^{-N}}$. In other words,
\[ \pank - (E_{\infty} \cup E_{0} \cup E_{1}) = \{x \in \aank :  2^{-N} < |T(x)| < 2^N, \ |T-1| > 2^{-N}\}.\]

For $j \in \cn{2}{d}$, let~$\alpha_{j}$ be a $k$-rational point of~$E_{j}$. The boundary point~$z_{j}$ of~$E_{j}$ is then of the form~$\eta_{\alpha_{j},r_{j}}$ for some $r_{j} \in \R_{\ge 0}$. Since~$E_{j}$ does not contain~0, we have $r_{j} < |\alpha_{j}|$. Moreover, the condition $\ell([\eta_{1},\eta_{\alpha_{j},r_{j}}]) = 2^{N}$ implies that $r_{j} \ge 2^{-N}$ (see Example~\ref{ex:leta}). The result follows.
\end{proof}

Let $N,n,m \in\N_{\ge 1}$ with $n\ge m> N$. The analytic function~$\psi_{m}$ has no zeros on~$Y_{m}$, hence the quotient $\psi_{n|Y_{m}}/(\psi_{m})$ defines an analytic function on~$Y_{m}$. Set 
\[h_{n,m} := \frac{\psi_{n|Y_{m}}}{\psi_{m}} -1 \in \calO(Y_{m}).\]

\begin{lemma}\label{lem:hnm}
For $N,n,m \in\N_{\ge 1}$ with $n\ge m> N$, we have $\|h_{n,m}\|_{Y_{N}} \le \max(2^{N-m},2^{-m/2})$.
\end{lemma}
\begin{proof}
By Lemma~\ref{lem:varphin}, for each $y\in Y_{m}$, we have $|\psi_{n}(y)| = |\psi_{m}(y)|$. It follows that $\|h_{m,n}\|_{Y_{m}} \le 1$. We now distinguish two cases.

\smallbreak

$\bullet$ Assume that $|h_{n,m}|$ is not constant on $Y_{N}$.

By Corollary~\ref{cor:maximumprinciple}, there exists $y \in \partial Y_{N}$ such that $\|h_{n,m}\|_{Y_{N}} = |h_{n,m}(y)|$ and $|h_{n,m}|$ has a negative exponent at~$y$ along the branch entering~$Y_{N}$. By harmonicity (see Theorem~\ref{thm:harmonicityanalytic}), there exist a branch~$b$ at~$y$ not belonging to~$Y_{N}$ such that the exponent of~$|h_{n,m}|$ along~$b$ is positive. Repeating the procedure, we construct a path joining~$y$ to a boundary point~$y'$ of~$Y_{m}$ such that $|h_{n,m}|$ has a positive exponent at each point of $[y,y')$ along the branch pointing towards~$y'$. It follows that we have
\[\|h_{n,m}\|_{Y_{n}} \ge |h_{n,m}(y)| \, \ell([y,y']) \ge \|h_{n,m}\|_{Y_{N}}\, 2^{m-N},\]
hence
\[ \|h_{n,m}\|_{Y_{N}} \le 2^{N-m}.\]

\smallbreak

$\bullet$ Assume that $|h_{n,m}|$ is constant on $Y_{N}$.

Let~$N'$ be the maximum integer smaller than or equal to~$m$ such that $|h_{n,m}|$ is constant on~$Y_{N'}$. Then, for every $r \in [1,2^{N'})$, we have $|h_{n,m}(\xi_{1,r})| = \|h_{n,m}\|_{Y_{N'}}$. We also have
\begin{align*} 
|h_{n,m}(\xi_{1,r})|  & = \frac{|(\psi_{n} - \psi_{m})(\xi_{1,r})|}{|\psi_{m}(\xi_{1,r})|}\\
& =  \frac{|(\psi_{n} - \psi_{m})(\xi_{1,r})|}{|T(\eta_{1,1/r})|}\\
& \le \max (|(\psi_{n}-1)(\xi_{1,r})|, |(\psi_{m}-1)(\xi_{1,r})| )\\
&\le |(T-1)(\eta_{1,1/r})|\\ 
&\le \frac1r. 
\end{align*}
We deduce that $\|h_{n,m}\|_{Y_{N'}} \le 2^{-N'}$.
 
If $N' < m$, it follows from the previous case that we have $\|h_{n,m}\|_{Y_{N'}}  \le 2^{N'-m}$. 

In any case, we have 
\[\|h_{n,m}\|_{Y_{N}} \le 2^{- m/2}.\]
\end{proof}

\medbreak

It follows from Lemma~\ref{lem:hnm} that the sequence $(\psi_{n})_{n > N}$ converges uniformly on~$Y_{N}$. Let~$\psi^{(N)}$ be its limit. It is an analytic function on~$Y_{N}$.

The functions~$\psi^{(N)}$ are compatible, by uniqueness of the limit, which gives rise to an analytic function $\psi \in \calO(Y)$. By Lemma~\ref{lem:morphismtoA1}, there exists a unique analytic morphism $\varphi \colon Y \to \aank$ such that the pull-back of~$T$ by~$\varphi$ is~$\psi$.

\medbreak

Let $N\in \N_{\ge 1}$.
By Lemma~\ref{lem:hnm}, there exists $m > N$ such that, for each $n\ge m$, we have $\|h_{n,m}\|_{Y_{N}} \le 2^{-2N}$. (For instance, one could choose $m=4N$.)
By Lemma~\ref{lem:varphin}, we have $\|\psi_{m}\|_{Y_{N}} = \|T\|_{\varphi_{m}(Y_{N})} = 2^N$. It follows that 
$\|\psi_{n} - \psi_{m}\|_{Y_{N}} \le \|\psi_{m}\|_{Y_{N}} \, \|h_{n,m}\|_{Y_{N}} \le 2^{-N}$.
By passing to the limit over~$n$, we deduce that 
\[\|\psi - \psi_{m}\|_{Y_{N}} \le 2^{-N}.\]

\begin{lemma}\label{lem:isoYN}
We have $\varphi(Y_{N}) = \varphi_{m}(Y_{N})$ and $\varphi_{|Y_{N}}$ is an isomorphism onto its image.
\end{lemma}
\begin{proof}
By Lemma~\ref{lem:varphin}, there exist $d \in \N_{\ge 2}$, $\alpha_{2},\dotsc,\alpha_{d} \in k^\ast$ and, for each $j\in \cn{2}{d}$, $r_{j}\in [2^{-N},|\alpha_{j}|)$ such that $\varphi_{m}(Y_{N})$ is the subset of~$\aank$ defined by 
\[\begin{cases}
2^{-N} < |T| < 2^N;\\ 
|T-1| > 2^{-N};\\
\forall j \in \cn{2}{d},\ |T-\alpha_{j}| > r_{j}.
\end{cases}\]
For $t \in (1,2^N)$, let~$W_{t}$ be the subset of~$\aank$ defined by 
\[\begin{cases}
2^{-N}t \le |T| \le 2^Nt^{-1};\\ 
|T-1| \ge 2^{-N}t;\\
\forall j \in \cn{2}{d},\ |T-\alpha_{j}| \ge r_{j}\, t.
\end{cases}\]
Each~$W_{t}$ is compact and the family $(W_{t})_{t \in (1,2^N)}$ is an exhaustion of~$\varphi_{m}(Y_{N})$. 

Let $n\ge m$. For $t \in (1,2^N)$, the set $\varphi^{-1}(W_{t}) \cap Y_{N}$ is the subset of points $y \in Y_{N}$ such that
\[\begin{cases}
2^{-N}t \le |\psi(y)| \le 2^Nt^{-1};\\ 
|\psi(y)-1| \ge 2^{-N}t;\\
\forall j \in \cn{2}{d},\ |\psi(y)-\alpha_{j}| \ge r_{j}\, t.
\end{cases}\]
From the inequality $\|\psi - \psi_{m}\|_{Y_{N}} \le 2^{-N}$, we deduce that $\varphi^{-1}(W_{t}) \cap Y_{N} = \varphi_{m}^{-1}(W_{t}) \cap Y_{N}$. 

It follows that $\varphi(Y_{N}) = \varphi_{m}(Y_{N})$ and that the morphism $\varphi_{|Y_{N}} \colon Y_{N} \to \varphi(Y_{N})$ is proper. Since~$Y_{N}$ is a smooth curve and~$\varphi_{|Y_{N}}$ is not constant, it is actually finite.

To prove that $\varphi_{|Y_{N}}$ is an isomorphism, it is enough to show that it is of degree~1. We will prove that, for each $r \in [1,2^N)$, we have $\varphi_{{|Y_{N}}}^{-1}(\xi_{\infty,r}) = \{\eta_{r}\}$. This implies the result, by Theorem~\ref{th:degreelength}.

Let $r \in [1,2^N)$. Let $y\in Y_{N}$ such that $\varphi(y) = \eta_{r}$. To prove that $y = \xi_{\infty,r}$, we may extend the scalars to~$\wka$. The point~$\eta_{r}$ of~$\aank$ is characterized by the following equalities:
\[ \begin{cases}
|T(\eta_{r})| =r;\\[3pt] 
\forall \alpha \in \wka \textrm{ with } |\alpha| = r,\  |(T-\alpha)(\eta_{r})| = r.
\end{cases}\]
Since $\varphi(y) = \eta_{r}$, we have 
\[ \begin{cases}
|\psi(y)| =r;\\[3pt] 
\forall \alpha \in \wka \textrm{ with } |\alpha| = r,\  |\psi(y)-\alpha| = r.
\end{cases}\]
Since $\|\psi-\psi_{m}\|_{Y_{N}} \le 2^{-N} < r$, the same equalities hold with~$\psi_{m}$ instead of~$\psi$. It follows that~$\psi_{m}(y) = \eta_{r}$, hence $y = \xi_{\infty,r}$ since $\psi_{m}$ is injective.
\end{proof}

It follows from Lemmas~\ref{lem:varphin} and~\ref{lem:isoYN} that, for each $N\in \N_{\ge 1}$, $\pank - \varphi(Y_{N})$ is a disjoint union of closed discs with radii smaller than or equal to~$2^{-N}$. It follows that  
\[ \pank - \varphi(Y) = \bigcap_{N\ge 1} \pank - \varphi(Y_{N})\]
is a compact subset of~$\PP^1(k)$ (see the proof of Corollary~\ref{cor:intersectiondiscs} for details on $k$-rationality). By Lemma~\ref{lem:isoYN} again, $\varphi$ induces an isomorphism onto its image.

\medbreak

We briefly sketch how the proof needs to be modified to handle the case of genus~0 and~1. One may use similar arguments but the paths $\ell_{0},\ell_{\infty},\ell_{1}$ have to be constructed in a different way. In genus~0, one first proves that~$X$ has rational points and consider paths joining~$y_{0}$ to them. (In this case, one may also argue more directly to prove that~$X$ is isomorphic to~$\pank$ by Theorems~\ref{thm:smoothcompact} and~\ref{thm:genus}.) In genus~1, the skeleton provides two paths and we can use a rational point to construct the third one. Such a point has to exist, since any annulus over~$k$ whose skeleton is of large enough length contains some.
\end{proof}

\begin{remark}\label{rem:Liu}
The most difficult part of the proof of Theorem~\ref{thm:analyticuniformization} consists in proving that the $k$-analytic curve~$Y$, which is known to be of genus~0, may be embedded into~$\pank$. Contrary to what happens over the field of complex numbers, this is not automatic. This problem was studied extensively by Q.~Liu under the assumption that $k$ is algebraically closed. He proved that the answer depends crucially on the maximal completeness of~$k$. If it holds, then any smooth connected $k$-analytic curve of finite genus may be embedded into the analytification of an algebraic curve of the same genus (hence into~$\pank$ in the genus~0 case), see \cite[Th\'eor\`eme~3]{TheseLiu} or \cite[Th\'eor\`eme~3.2]{Liu87}. Otherwise, there exists a smooth connected $k$-analytic curve of genus~0 with no embedding into~$\pank$, see \cite[Proposition~5.5]{TheseLiu}. Q.~Liu also prove several other positive results that hold over an algebraically closed base field.

The results of Q.~Liu are stated and proved in the language of rigid analytic geometry. We believe that it is worth adapting them to the setting of Berkovich geometry and that this could lead to a different point of view on the sufficient conditions for algebraizablity. One may also wonder whether it is necessary to assume that the base field is algebraically closed to obtain an unconditional positive result. The case of a discretely valued base field (hence maximally complete but not algebraically closed) is, of course, particularly interesting.
\end{remark}

\subsection{Automorphisms of Mumford curves}\label{sec:automorphisms}

In this section, we use the uniformization of Mumford curves to study their groups of $k$-linear automorphisms. 
The fundamental result, proven by Mumford in \cite[Corollary 4.12]{Mumford72} is the following theorem.
We include a proof of this fact that relies on the topology of Berkovich curves.

\begin{theorem}\label{thm:automorphismgroup}
Let $X$ be a $k$-analytic Mumford curve. Let $\Gamma \subset \PGL_2(k)$ be its fundamental group, and let $N := N_{\PGL_2(k)}(\Gamma)$ be the normalizer of $\Gamma$ in $\PGL_2(k)$.
Then, we have \[\Aut(X) \cong N/\Gamma .\]
\end{theorem}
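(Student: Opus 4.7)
The plan is to use the uniformization theorem (Theorem~\ref{thm:analyticuniformization}) to identify $X$ with $\Gamma\backslash O$, where $O = \pank - L$ and $L$ is the limit set of $\Gamma$, and then to construct mutually inverse maps between $N/\Gamma$ and $\Aut(X)$ by moving between the curve and its universal cover.

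First, I would build a map $\Phi\colon N \to \Aut(X)$. Given $n\in N$, the conjugation $\gamma\mapsto n\gamma n^{-1}$ preserves $\Gamma$, so $n$ permutes the set $\Lambda$ of fixed points of nontrivial elements of $\Gamma$; since $L=\overline{\Lambda}$ by Theorem~\ref{thm:actionproper}, the M\"obius transformation $n$ satisfies $n(L)=L$ and hence restricts to an automorphism of $O$. The relation $n\Gamma=\Gamma n$ makes the induced map $\bar n\colon \Gamma x\mapsto \Gamma n(x)$ a well-defined analytic automorphism of $X$, and we set $\Phi(n):=\bar n$. Clearly $\Gamma\subseteq\ker\Phi$.

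Next, I would show $\ker\Phi=\Gamma$. If $\bar n=\id_X$, then $n\colon O\to O$ is a lift of the identity along the Galois cover $p\colon O\to X$ whose deck group is precisely $\Gamma$ (this is the content of Theorem~\ref{thm:analyticuniformization}, since $O$ is the universal cover of $X$). Hence $n\in\Gamma$. The obstacle here is only notational: one needs the standard covering-space fact that a lift of the identity along a Galois cover is a deck transformation, which applies because $O\to X$ is a topological covering and $n$ is a continuous lift.

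For surjectivity, let $\sigma\in\Aut(X)$. Using that $p\colon O\to X$ is the (simply connected) universal cover in the analytic category, I would lift $\sigma$ to an analytic automorphism $\tilde\sigma\colon O\to O$ (choose base points and use the usual path-lifting, noting that $\tilde\sigma$ is automatically an isomorphism of locally ringed spaces since $p$ is a local isomorphism). Since $L\subset \PP^1(k)$ is compact (Corollary~\ref{cor:limittype1} plus compactness of $\pank$), Lemma~\ref{lem:extensionofautomorphisms}(ii) applies and produces an element $\tilde\sigma\in\PGL_2(k)$ inducing this automorphism of $O$. Finally, for every $\gamma\in\Gamma$ the map $\tilde\sigma\gamma\tilde\sigma^{-1}$ still lifts $\id_X\circ\sigma\circ\sigma^{-1}=\id_X$, so it is a deck transformation and hence lies in $\Gamma$; thus $\tilde\sigma\in N$ and $\Phi(\tilde\sigma)=\sigma$.

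The main obstacle is the surjectivity step: one must know that automorphisms of $O=\pank-L$ extend to elements of $\PGL_2(k)$, which is exactly Lemma~\ref{lem:extensionofautomorphisms}(ii) and crucially requires the inclusion $L\subseteq \PP^1(k)$ established in Theorem~\ref{thm:actionproper}. The lifting step itself is routine covering-space theory, since $O\to X$ is a topological Galois cover with $O$ simply connected. Putting everything together, $\Phi$ descends to the claimed isomorphism $N/\Gamma\xrightarrow{\sim}\Aut(X)$.
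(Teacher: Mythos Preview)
Your proof is correct and follows essentially the same approach as the paper: construct the map $N\to\Aut(X)$ by descending the action on~$O$, and prove surjectivity by lifting automorphisms of~$X$ to~$O$ and invoking Lemma~\ref{lem:extensionofautomorphisms}(ii). One small point: the identity $L=\overline{\Lambda}$ is not what Theorem~\ref{thm:actionproper} asserts, and the paper instead argues directly from Definition~\ref{def:limit} that any $n\in N$ preserves~$L$ (if $\gamma_n(x_0)\to x$ then $(n\gamma_n n^{-1})(n(x_0))\to n(x)$), which avoids this detour.
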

\begin{proof}
Let $p \colon O \to X$ be the universal cover of $X$ provided by Theorem \ref{thm:analyticuniformization}, and let $\sigma\in\Aut(X)$.
Since~$p$ is locally an isomorphism of $k$-analytic curves, the automorphism $\sigma$ can be lifted to an analytic automorphism $\widetilde{\sigma} \in \Aut(O)$ such that $p\circ \widetilde{\sigma}=\sigma\circ p$.
By Lemma \ref{lem:extensionofautomorphisms}, $\widetilde{\sigma}$ extends uniquely to an automorphism of $\pana{k}$, that is, an element $\tau \in \PGL_2(k)$.
The automorphism $\tau$ has to normalize~$\Gamma$: in fact, for any $\gamma\in \Gamma$, the element $\tau \gamma \tau^{-1} \in \Aut(O)$ induces the automorphism $\sigma \sigma^{-1} = \id$ on~$X$. It follows that $\tau \gamma \tau^{-1}\in \Gamma$, so that $\tau\in N$.

Conversely, let $\tau \in N$. By definition, the limit set~$L$ of~$\Gamma$ is preserved by~$\tau$. It follows that~$\tau$ induces an automorphism of $O = \pank-L$. Moreover, for each $\gamma\in \Gamma$ and each $x \in \pank$, we have 
\[\tau (\gamma(x)) = (\tau \gamma \tau^{-1})(\tau(x)) \in \Gamma \cdot \tau(x).\]
It follows that~$\tau$ descends to an automorphism of $X \simeq \Gamma \backslash O$.
\end{proof}

As it was the case for the uniformization, Mumford's proof relies on non-trivial results in formal geometry. 
The Berkovich analytic proof turns out to be shorter and much less technical due to the fact that the uniformization of a Mumford curve can be interpreted as a universal cover of analytic spaces.

\smallskip
Recall from Remark \ref{rem:skeletonX} that the skeleton $\Sigma_X$ of the Mumford curve $X$ is a finite metric graph.
We will denote by $\Aut(\Sigma_X)$ the group of isometric automorphisms of~$\Sigma_X$.
An interesting feature of the automorphism group of an analytic curve that is immediate in the Berkovich setting, is the existence of a \emph{restriction homomorphism}
\[\begin{array}{rcl}
 \rho: \Aut(X)& \too & \Aut(\Sigma_X)\\
 \sigma & \longmapsto & \sigma_{|\Sigma_X}.
\end{array}\]

\begin{proposition}\label{prop:AutomorphismRestriction}
Let $X$ be a Mumford curve of genus at least 2.
Then, the restriction homomorphism $\rho: \Aut(X) \to  \Aut(\Sigma_X)$ is injective.
\end{proposition}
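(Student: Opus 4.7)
The plan is to use the uniformization from Theorem~\ref{thm:analyticuniformization} and the description of $\Aut(X)$ from Theorem~\ref{thm:automorphismgroup} to reduce the statement to a rigidity claim about elements of $\PGL_2(k)$. Given $\sigma$ in the kernel of $\rho$, I would pick a lift $\tau \in N_{\PGL_2(k)}(\Gamma)$, so it suffices to show $\tau \in \Gamma$. Theorem~\ref{thm:actionproper} gives the key structural input: if $p\colon O \to X$ denotes the uniformization, then $\Sigma_O = p^{-1}(\Sigma_X)$ and $p$ realizes $\Sigma_X$ as the quotient $\Gamma\backslash \Sigma_O$. The hypothesis $\sigma\vert_{\Sigma_X} = \mathrm{id}$ therefore translates into the pointwise statement $\tau(y) = \gamma_y(y)$ for some $\gamma_y \in \Gamma$, for every $y \in \Sigma_O$.

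Next I would fix some $y_0 \in \Sigma_O$ and replace $\tau$ by $\gamma_{y_0}^{-1}\tau$, which is an equivalent lift of $\sigma$ modulo $\Gamma$ and satisfies $\tau(y_0) = y_0$. The heart of the argument is to upgrade this single fixed point to the stronger statement that $\tau$ fixes all of $\Sigma_O$ pointwise, by an open-closed argument on the connected tree $\Sigma_O$ (which is simply connected because $O$ is the universal cover of $X$, and $\Sigma_O$ is a deformation retract of $O$). Closedness of the fixed locus $F \subseteq \Sigma_O$ is immediate from continuity. To show openness at $y \in F$, the plan is to use the properness of the $\Gamma$-action on $O$ (Theorem~\ref{thm:actionproper}) to pick a neighborhood $U$ of $y$ in $O$ such that $\Delta_y := \{\gamma \in \Gamma : \gamma U \cap U \ne \emptyset\}$ is finite, then shrink $U$ so that $\tau(U) \subseteq U$, and argue by contradiction: a sequence $z_n \in \Sigma_O \cap U$ with $z_n \to y$ but $\tau(z_n) \ne z_n$ forces $\gamma_{z_n} \in \Delta_y \setminus \{\mathrm{id}\}$; extracting a constant subsequence $\gamma_{z_n} = \gamma$ and passing to the limit yields $\gamma(y) = y$, contradicting the freeness of the $\Gamma$-action. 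The main obstacle here is precisely the transition between properness, which lives on the ambient space $O$, and the condition $z_n \in \Sigma_O$; making sure this argument goes through for sequences restricted to $\Sigma_O$ is where one must be careful.

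To conclude, I would invoke Theorem~\ref{thm:groupfigure} to obtain a basis $(\gamma_1, \ldots, \gamma_g)$ of $\Gamma$ associated with a Schottky figure. The hypothesis $g \ge 2$ allows us to select two of these generators. By Remark~\ref{rem:Schottkyfigure} their attracting and repelling fixed points $\alpha_i, \alpha_i' \in \PP^1(k)$ for $i = 1, 2$ lie in the four pairwise disjoint discs $D^-(\gamma_i^{\pm 1})$ of the Schottky figure, so they are four distinct points. The open axes $(\alpha_i', \alpha_i)$ are contained in $\Sigma_O$ by the explicit description $\Sigma_O = O \cap \bigcup_{x, y \in L}[x,y]$ from Theorem~\ref{thm:actionproper}, so $\tau$ fixes them pointwise by the previous step; by continuity of $\tau$ on $\pank$, it also fixes their endpoints $\alpha_i, \alpha_i'$. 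Thus $\tau \in \PGL_2(k)$ fixes four distinct $k$-rational points of $\pank$, which forces $\tau = \mathrm{id}$ and hence $\sigma = \mathrm{id}$ in $\Aut(X)$.
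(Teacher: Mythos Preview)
Your proof is correct and follows essentially the same strategy as the paper's: lift $\sigma$ to $\tau\in N_{\PGL_2(k)}(\Gamma)$, adjust by an element of $\Gamma$ to fix a point of $\Sigma_O=p^{-1}(\Sigma_X)$, propagate to show $\tau$ fixes all of $\Sigma_O$, and conclude $\tau=\id$ from having enough $k$-rational fixed points. The only differences are in execution: the paper handles the propagation step more informally (``$\widetilde\sigma$ fixes the fundamental domain in $p^{-1}(\Sigma_X)$ \ldots\ by continuity, $\widetilde\sigma$ has to fix the whole $p^{-1}(\Sigma_X)$''), while your open--closed argument using properness and freeness of the $\Gamma$-action spells this out; and for the final step the paper invokes the full (infinite) limit set $L$, whereas you extract four specific points $\alpha_i,\alpha_i'$ via a Schottky figure---three would already suffice. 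One small wording issue: ``shrink $U$ so that $\tau(U)\subseteq U$'' is stronger than needed and not obviously achievable; it suffices to take $V=U\cap\tau^{-1}(U)$, or simply note that $\tau(z_n)\to\tau(y)=y$ forces $\tau(z_n)\in U$ for large $n$.
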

\begin{proof}
Let $\sigma\in \Aut(X)$ such that $\rho(\sigma)=\id$, that is, $\sigma$ acts trivially on the skeleton $\Sigma_X$.
Then, as in the proof of Theorem~\ref{thm:automorphismgroup} one can lift $\sigma$ to an automorphism of the universal cover $p:O \longrightarrow X$.
By possibly composing this lifting with an element of the Schottky group, we can find a lifting $\widetilde{\sigma}$ that fixes a point $x$ in the preimage $p^{-1}(\Sigma_X) \subset O$.
Since $\sigma$ fixes $\Sigma_X$ pointwise, then $\widetilde{\sigma}$ fixes the fundamental domain in $p^{-1}(\Sigma_X)$ by the action of the Schottky group $\Gamma_X$ containing $x$.
By continuity of the action of $\Gamma_X$ on $p^{-1}(\Sigma_X)$, the automorphism $\widetilde{\sigma}$ has to fix the whole $p^{-1}(\Sigma_X)$ pointwise.
But then the corresponding element $\tau\in \PGL_2(k)$ obtained by extending $\widetilde \sigma$ thanks to Lemma~\ref{lem:extensionofautomorphisms}(ii) has to fix the limit set of $\Gamma_X$, which is infinite when $g(X) \geq 2$.
It follows that $\tau$ is the identity of $\PGL_2(k)$, hence that $\sigma$ is the identify automorphism.
\end{proof}

\begin{remark}
The previous proposition can be proved also using algebraic methods as follows. 
The fact that $g(X)\geq 2$ implies that $\Aut(X)$ is a finite group.
Then, for every $\sigma\in \Aut(X)$, $Y := X/\langle \sigma \rangle$ makes sense as a $k$-analytic curve, and the quotient map $f_\sigma:X \to Y$ is a ramified covering. 
Let us now suppose that $\rho(\sigma)=\id$. Then $Y$ contains an isometric image of the graph~$\Sigma_{X}$, whose cyclomatic number is~$g(X)$, by Corollary~\ref{cor:Mumfordgenus}. It follows from the definition of the genus that $g(Y) \ge g(X)$.
We can now apply Riemann-Hurwitz formula to find that 
\[2g(X)-2 = \deg(f_\sigma) (2g(Y)-2) + R,\]
where $R$ is a positive quantity.
Since $g(Y) \ge g(X)\geq 2$, we deduce that $\deg(f_\sigma)=1$, hence $\sigma=\id$.
\end{remark}

The proposition shows that $\Aut(\Sigma_X)$ controls $\Aut(X)$, but it is a very coarse bound when the genus is high.
Much better bounds are known, as one can see in the examples below and in the first part of Appendix \ref{app:unif}, containing an outline of further results about automorphisms of Mumford curves, including the case of positive characteristic.

\begin{example}
Let $X$ be a Mumford curve such that $\Aut(\Sigma_X) = \{1\}$.
Then Proposition~\ref{prop:AutomorphismRestriction} ensures that $X$ has no non-trivial automorphisms as well.
Since up to replacing $k$ with a suitable field extension every stable metric graph can be realized as the skeleton of a Mumford curve, one can build in this way plenty of examples of Mumford curves without automorphisms.
For example, the graph of genus 3 in Figure~\ref{fig:graph3} below has a trivial automorphism group, as long as the edge lengths are generic enough, for example when all lengths are different.

\begin{figure}[h]
\centering
\begin{tikzpicture}[
	xscale=.9,
  yscale=.7,
  level distance=3cm,
  line width=8pt,
]
\coordinate (a) at (0,0);
\coordinate (b) at (-2.5,-5);
\coordinate (c) at (2.5,-5);
\draw[thin] (a) to[out=-160,in=80] (b);
\draw[thin] (a) to[out=-105,in=40] (b);
\draw[thin] (b) to[out=15,in=165] (c);
\draw[thin] (a) to[out=-75,in=140] (c);
\draw[thin] (a) to[out=-20,in=100] (c);
\fill (a) circle[radius=2pt];
\node[anchor=south] at (a) {$v_1$};
\fill (b) circle[radius=2pt];
\node[anchor=east] at (b) {$v_2$};
\fill (c) circle[radius=2pt];
\node[anchor=west] at (c) {$v_3$};
\end{tikzpicture}
\caption{The metric graph $\Sigma_X$ has trivial group of automorphisms if the edge lengths are all different.}
\label{fig:graph3}
\end{figure}
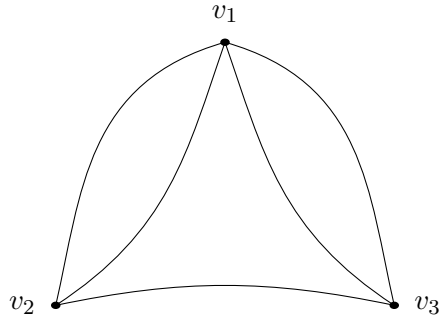

This graph can be obtained by pairwise identifying the ends of a tree as in Figure~\ref{fig:tree3}.

\begin{figure}[h]
\centering
\begin{tikzpicture}[
  scale=.8,
  level distance=3cm,
  line width=8pt,
]
\coordinate (a) at (0,0);
\coordinate (b) at (-1.75,-2);
\coordinate (c) at (1.75,-2);
\draw[thin] (a) -- (b);
\draw[thin] (a) -- (c);
\draw[thin] (a) -- (-3,-.5);
\draw[thin] (a) -- (3,-.5);
\draw[thin] (b) -- (-4.5,-4);
\draw[thin] (b) -- (-1,-4);
\draw[thin] (c) -- (1,-4);
\draw[thin] (c) -- (4.5,-4);
\fill (a) circle[radius=2pt];
\node[anchor=south] at (a) {$p^{-1}(v_1)$};
\fill (b) circle[radius=2pt];
\node[anchor=east] at (b) {$p^{-1}(v_2)$};
\fill (c) circle[radius=2pt];
\node[anchor=west] at (c) {$p^{-1}(v_3)$};
\node[circle, draw, ultra thin] at (-3,-.5) {};
\node[circle, draw, ultra thin] at (-4.5,-4) {};
\node[regular polygon,regular polygon sides=3, draw, ultra thin, scale=0.6] at (-1,-4) {};
\node[regular polygon,regular polygon sides=3, draw, ultra thin, scale=0.6] at (1,-4) {};
\node[draw, ultra thin] at (3,-.5) {};
\node[draw, ultra thin] at (4.5,-4) {};
\end{tikzpicture}
\caption{The graph in the previous figure is obtained from its universal covering tree by pairwise gluing the ends of the finite sub-tree $\Sigma_F$. The gluing is made by identifying the ends that are marked with the same shape.}
\label{fig:tree3}
\end{figure}
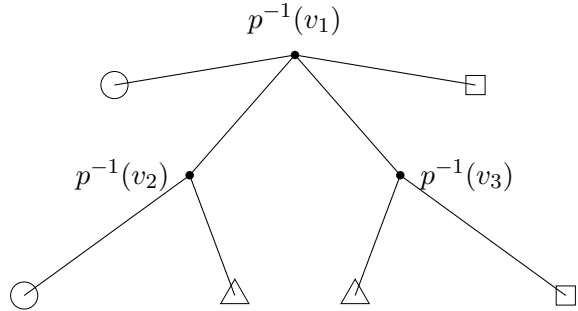

One can realize this tree inside $\pank$ as the skeleton of a fundamental domain under the action of a Schottky group in many ways.
As an example, if $k=\Q_p$ with $p\geq 5$, a suitable Schottky group is obtained by carefully choosing the Koebe coordinates that give rise to the desired skeleton.
One can for instance pick $\Gamma=\langle M(0,\infty,p^3), M(1,2,p^4), M(p,p-2,p^3) \rangle$ and verify that it gives rise to a fundamental domain whose skeleton is the tree in Figure~\ref{fig:tree3}.
As a consequence of Theorem \ref{thm:automorphismgroup}, the normalizer of $\Gamma$ in $PGL_2(k)$ is the group $\Gamma$ itself.
\end{example}

\begin{example}\label{ex:hyperelliptic}
Assume that $k$ is algebraically closed and that its residue characteristic is different from~2 and~3. Let $\pi, \rho$ be elements of~$k$ satisfying $|\pi|<1$, $\rho^3=1$ and $\rho \neq 1$. Fix the following elements of $\PGL_2(k)$:
\[a= \begin{bmatrix}
-\pi & 0 \\ -2 & \pi
\end{bmatrix}, \; b= \begin{bmatrix}
1+\pi - \rho & (1+\pi)(\rho-1) \\ 1 - \rho & (1+\pi) \rho - 1
\end{bmatrix}. \]
These elements are of finite order, respectively two and three.
The fixed rigid points of $a$ are $0$ and $\pi$, while the fixed rigid points of $b$ are $1$ and $1+\pi$.

Thanks to our assumption that $\Char(\widetilde{k})\ne2$, the transformation $a$ acting on $\pank$ fixes the path joining $0$ and $\pi$, and sends every open disc whose boundary point lies on this path to a disjoint open disc with the same boundary point.
For example, the image by $a$ of the disc $D^-(-\frac{\pi}{\pi-2},1)$ is the disc\footnote{Recall that on the projective line we consider also discs ``centered in $\infty$'' such as this one.}~$\pana{k} - D^+\big(0, |\pi|\big)$, and vice versa.

The same happens for the action of $b$: the path joining $1$ and $1+\pi$ is fixed, while any open disc with its boundary point on this path is sent to a disjoint open disc with the same boundary point.
Since $b$ is of order three, the orbit of such a disc consists of three disjoint discs.
For example, the orbit of $D^-(0,1)$ contains $b\big(D^-(0,1)\big) = D^-\big (1-\frac{\pi}{(1+\pi)\rho-1}, 1 \big)$ and $b^2\big(D^-(0,1)\big) = D^-\big (1-\frac{\pi}{(1+\pi)\rho^2-1}, 1 \big)$.

Let us consider the elements $\gamma_1 :=abab^2$ and $\gamma_2 :=ab^2ab$. Using the geometry of $a$ and $b$ described above, one can check that the 4-tuple $\big( D^+(\gamma_1), D^+(\gamma_1^{-1}), D^+(\gamma_2), D^+(\gamma_2^{-1})\big)$
 represented in Figure~\ref{fig:SchottkyFigureEx} provides a Schottky figure adapted to $(\gamma_1,\gamma_2)$.


\begin{figure}[h]
\centering
\begin{tikzpicture}[
  scale=.8,
  level distance=3cm]
\coordinate (o) at (1,1);
\coordinate (p) at (1,-4);
\coordinate (au) at (-5,-1.5);
\coordinate (gu) at (-4,1);
\coordinate (gupp) at (-5,0);
\coordinate (ggu) at (-4,-4);
\coordinate (ggupp) at (-5,-3);
\coordinate (u) at (2,-3);
\coordinate (up) at (7,-1.5);
\coordinate (bo) at (5,-4);
\coordinate (babi) at (6,-3);
\coordinate (bio) at (5,1);
\coordinate (biab) at (6,0);
\draw[thin] (au) -- (-1,-1.5) -- (3, -1.5);
\draw[thin] (-3.5,0) -- (-1,-1.5) -- (-3.5,-3) ;
\draw[thin] (gu) -- (-3.5,0) -- (gupp);
\draw[thin] (ggu) -- (-3.5,-3) -- (ggupp);
\draw[thick] (o) -- (1,-1.5) -- (p);
\draw[thick] (u) -- (3, -1.5) -- (up);
\draw[thin] (bo) -- (3, -1.5) -- (bio);
\draw[thin] (babi) -- (4.2, -3);
\draw[thin] (biab) -- (4.2, 0);
\fill (o) circle[radius=2pt];
\fill (bo) circle[radius=2pt];
\fill (babi) circle[radius=2pt];
\fill (bio) circle[radius=2pt];
\fill (biab) circle[radius=2pt];
\fill (p) circle[radius=2pt];
\fill (u) circle[radius=2pt];
\fill (au) circle[radius=2pt];
\fill (gu) circle[radius=2pt];
\fill (gupp) circle[radius=2pt];
\fill (ggu) circle[radius=2pt];
\fill (ggupp) circle[radius=2pt];
\fill (up) circle[radius=2pt];
\fill (-1,-1.5) circle[radius=2pt];
\fill (3, -1.5) circle[radius=2pt];
\node[anchor=east] at (o) {$0$};
\node[anchor=north] at (bo) {$\gamma_1^{-1}(\pi)$};
\node[anchor=west] at ($(babi)+(0.1,0)$) {$\gamma_1^{-1}(0)$};
\node[anchor=south] at ($(bio)+(0,0.1)$) {$\gamma_2^{-1}(0)$};
\node[anchor=west] at ($(biab)+(0.1,0)$) {$\gamma_2^{-1}(\pi)$};
\node[anchor=north] at (p) {$\pi$};
\node[anchor=north] at (u) {$1$};
\node[anchor=north] at (up) {$1+\pi$};
\node[anchor=east] at (au) {$-\frac{\pi}{\pi-2}$};
\node[anchor=south] at ($(gu)+(0,0.1)$) {$\gamma_2(1)$};
\node[anchor=east] at ($(gupp)-(0.1,0)$) {$\gamma_2(1+\pi)$};
\node[anchor=north] at ($(ggu)-(0,0.2)$) {$\gamma_1(1)$};
\node[anchor=east] at ($(ggupp)- (0.1,0)$) {$\gamma_1(1+\pi)$};
\node[anchor=south] at (-1,-1.5) {$x_1$};
\node[anchor=south] at (2.8, -1.5) {$x_2$};
\draw (-3.7,0.2) ellipse (43pt and 28pt);
\draw (-3.7,-3.2) ellipse (43pt and 28pt);
\draw (4.85,0.2) ellipse (37pt and 27pt);
\draw (4.85,-3.15) ellipse (37pt and 27pt);
\node[anchor=east] at (-0.5,1) {$D^+(\gamma_2)$}; 
\node[anchor=east] at (-0.5,-4) {$D^+(\gamma_1)$}; 
\node[anchor=west] at (1.7,1.1) {$D^+(\gamma_2^{-1})$};
\node[anchor=west] at (1.7,-4) {$D^+(\gamma_1^{-1})$}; 
\end{tikzpicture}
\caption{The Schottky figure associated with $(\gamma_1, \gamma_2)$}
\label{fig:SchottkyFigureEx}
\end{figure}
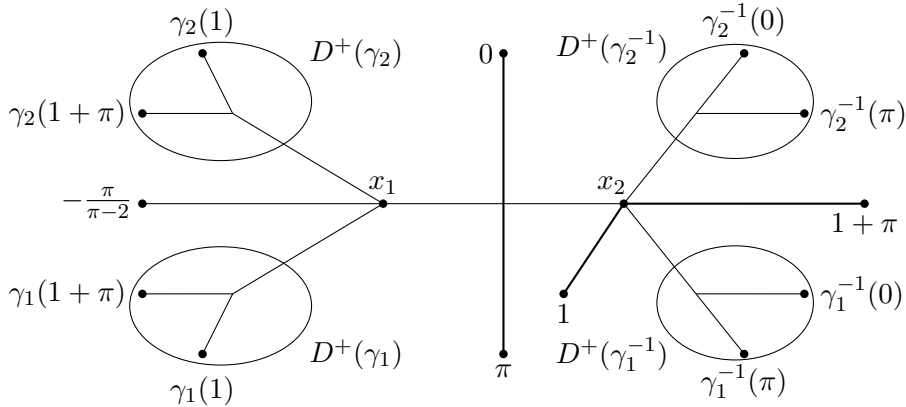

Thanks to Proposition \ref{prop:geometrygroup}, the existence of a Schottky figure ensures that $\Gamma=\langle \gamma_1, \gamma_2 \rangle$ is a Schottky group of rank 2.
Denote its limit set by~$L$. By Theorem~\ref{thm:actionproper}, the quotient $X := \Gamma\backslash(\pank-L)$ makes sense as a $k$-analytic space and it is a Mumford curve of genus~2. 
Let $p \colon (\pank-L) \to X$ denote the universal cover. 
It also follows from Theorem~\ref{thm:actionproper} that the topology of~$X$ may be described quite explicitly from the action of $\Gamma$. We deduce in this way that the skeleton~$\Sigma_X$ of~$X$ is the metric graph represented in Figure \ref{fig:SkeletonEx}.
By measuring the lengths of the paths joining the boundaries of the discs in the Schottky figure, one can verify that the three edges of $\Sigma_X$ have equal lengths.


\begin{figure}[h]
\centering
\begin{tikzpicture}[
  level distance=3cm]
  \coordinate (a) at (0,-1.5);
  \coordinate (b) at (3.5, -1.5);
\draw[thin] (a) -- (b);
\draw[thin] (a) to[out=90,in=90] (b);
\draw[thin] (a) to[out=-90,in=-90] (b);
\fill (a) circle[radius=2pt];
\fill (b) circle[radius=2pt];
\node[anchor=east] at (a) {$p(x_1)$};
\node[anchor=west] at (b) {$p(x_2)$};
\end{tikzpicture}
\caption{The skeleton $\Sigma_X$ of the Mumford curve uniformized by $\Gamma$}
\label{fig:SkeletonEx}
\end{figure}
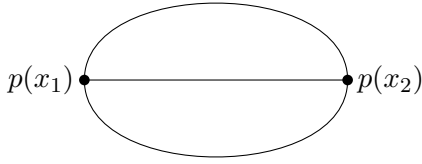

Let us now compute the automorphism group $\Aut(X)$.
By Theorem \ref{thm:automorphismgroup}, this can be done by computing the normalizer $N$ of $\Gamma$ in $\PGL_2(k)$.
The elements $a$ and $b$ lie in $N$, since $\gamma_i a = a\gamma_i^{-1}$ for $i=1,2$ and $\gamma_1 b = b \gamma_2^{-1}$, but we can also find elements in~$N$ that do not belong to the subgroup generated by $a$ and $b$.
Let 
\[c := \begin{bmatrix}
1+\pi & -\pi(1+\pi) \\ 2 & -(1+\pi)
\end{bmatrix} \in \PGL_2(k).\]
A direct computation shows that the transformation $c$ is such that $c^2=\id$, $cac=a$ and $cbc=b^2$, so that $c$ belongs to~$N$.
The group $N'=\langle a,b,c \rangle \subset \PGL_2(k)$ is then contained in $N$, and the quotient~$N'/\Gamma$ is isomorphic to the dihedral group $D_6$ of order 12.
In fact, if we call $\alpha, \beta, \gamma$ the respective classes of $a,b,c$ in $N'/\Gamma$, we have that $\alpha \beta = \beta \alpha$, and then $\langle \alpha, \beta \rangle$ is a cyclic group of order~6.
However, the same computation above shows that $\gamma$ does not commute with $\beta$.
The group~$D_6$ is also the automorphism group of the skeleton $\Sigma_X$, and so, by Proposition \ref{prop:AutomorphismRestriction}, we have $N=N'$ and the restriction homomorphism $\Aut(X)\to \Aut(\Sigma_X)$ is an isomorphism.

Note that one can extract quite a lot of information from the study of the action of $N$ on $\pank$.
In this example, $\alpha \in \Aut(X)$ is an order 2 automorphism known as the \emph{hyperelliptic involution}, since it induces a degree 2 cover of the projective line $\varphi: X\to \pank$.
This last fact can be checked on the skeleton $\Sigma_X$ by noting that $\alpha(p(x_1))=p(x_2)$, and hence $\alpha$ has to switch the ends of every edge of~$\Sigma_X$.
As a result, the quotient $X/\langle \alpha \rangle$ is a contractible Mumford curve,
and hence it is isomorphic to~$\pank$.

This description of $X$ as a cover of $\pank$ is helpful to compute an explicit equation for the smooth projective curve whose analytification is $X$.
In fact, a genus 2 curve that is a double cover of the projective line can be realized as the smooth compactification of a plane curve of equation
\[y^2=\prod_{i=1}^6(x-a_i),\]
where the $a_i \in k$ are the ramification points of the cover, and the involution defining the cover sends~$y$ to~$-y$. 

In order to find the $a_i$, we shall first compute the branch locus $B \subset X$ of the hyperelliptic cover. The fixed points of $a$ are $0$ and $\pi$, so the corresponding points $p(0), p(\pi)$ are in $B$.
The other branch points can be obtained by finding those $x\in \pank(k)$ satisfying the condition $\gamma_i(x)=a(x)$ for $i=1,2$.
We have $\gamma_1(b(0))=abab^2b(0)=aba(0)=a(b(0))$, and the same applies to $b(\pi)$, so the images by~$p$ of these two points are also in $B$.
In the same way, we find that $\gamma_2(b^2(0))=a(b^2(0))$ and $\gamma_2(b^2(\pi))=a(b^2(\pi))$.
We have found in this way that $B=\{p(0), p(b(0)), p(b^2(0)), p(\pi), p(b(\pi)), p(b^2(\pi))\}$.

To find the ramification locus, we have to compute $\varphi(B)$. Since $\langle \alpha \rangle$ is a normal subgroup of $\Aut(X)$, the element $\beta$ acts as an automorphism of order 3 of $X/\langle \alpha \rangle \cong \pank$.
Up to a change of coordinate of this projective line, we can suppose that the fixed points of $\beta$ are $0$ and $\infty$, so that $\beta$ is the multiplication by a primititve third root of unity, and that the first ramification point is $a_1=\varphi(p(0))=1$.
Then, after possibly reordering them, the remaining ramification points are $a_2=\rho$, $a_3=\rho^2$ and $a_4, \rho a_4, \rho^2 a_4$, with $|a_4-1|<1$.

With a bit more effort, we can actually compute the value of $a_4$.
To do this, notice that the function $p$ is injective when restricted to the open fundamental domain \[F^-=\pank - \big(D^+(\gamma_1) \cup D^+(\gamma_1^{-1}) \cup D^+(\gamma_2) \cup D^+(\gamma_2^{-1}) \big).\]
If we set $F'=\varphi \circ p (F^-)$ we then have a two-fold cover $F^- \longrightarrow F' \subset \pank$ induced by $\varphi \circ p$, which can be explicitly written as a rational function $z \mapsto \frac{z^2}{(z-\pi)^2}$ (this function can be found by looking at the action of $a$ on $F^-$ explicitly).
Note that $F^-$ contains both the fixed rigid points of $b$, \textit{i.e.} $1+\pi$ and $1$, and those of $a$, \textit{i.e.} $0$ and $\pi$.
When we reparametrize the projective line on the target of $\varphi$ to get the wanted equation, we are imposing the conditions $\alpha \circ p (1+\pi) \mapsto \infty$, $\alpha \circ p (1) \mapsto 0$ and $\alpha \circ p (0) \mapsto 1=a_1$.
These choices leave only one possibility for the ramification point $a_4$: it is $\big( \frac{1-\pi}{1+\pi}\big)^2$.
We have now found the equation of the plane section of our Mumford curve: it is
\[y^2 = (x^3-1)\cdot \Big(x^3- \frac{(1-\pi)^6}{(1+\pi)^6} \Big ). \]
Note that $|a_4-a_1|=\left| \frac{(1-\pi)^2}{(1+\pi)^2} - 1 \right|=|\pi|$.
\end{example}

A different example of a hyperelliptic Mumford curve with a similar flavour is discussed in the expository paper \cite{CornelissenKato05}, accompanied with figures and other applications of automorphisms of Mumford curves. 

\begin{example}\label{ex:ArtinSchreierMumford}
The curve in Example~\ref{ex:hyperelliptic} has the same automorphism group in every characteristic (different from 2 and 3).
However, Mumford curves in positive characteristic have in general more automorphism than in characteristic 0.
An interesting class of examples are the so-called \emph{Artin-Schreier-Mumford curves}, first introduced by Subrao in \cite{Subrao75}.
We sketch here the main results and refer to \cite{CornelissenKatoEtAl10} for more detailed proofs of these facts.
Let $p$ be a prime, $q=p^e$ be a power of $p$, and $k=\F_q((t))$.
Let $X$ be the analytification of the curve defined inside $\PP^1_k \times \PP^1_k$ by the equation
\[(y^q-y)(x^q-x)=f(t) \; \; \text{with} \; \; f\in t\F_q[[t]].\]

This is an ordinary curve in characteristic $p>0$ with many automorphisms, and for this reason has caught the attention of cryptographers and positive characteristic algebraic geometers alike.
One way to study its automorphisms is to observe that $X$ is a Mumford curve.
A Schottky group attached to it can be constructed by fixing an element $v\in k$ and looking at the automorphisms of $\pank$ of the form
\[ a_u = \begin{bmatrix}
1 & u \\ 0 & 1
\end{bmatrix} , b_u = \begin{bmatrix}
v & 0 \\ u & v
\end{bmatrix} \in \PGL_2(k), \; u\in \F_q^\times. \]
These transformations are all of order $p$, $a_u$ represent translations by elements of $\F_q^\times$ and $b_u$ their conjugates under the inversion $z \mapsto \frac{v}{z}$.
The subgroup $\Gamma_v=\langle a_u^{-1}b_{u'}^{-1}a_ub_{u'} : (u, u')\in {\F_q^\times}^2 \rangle$ of $\PGL_2(k)$ is a Schottky group of rank $(q-1)^2$, and for a certain value of $v$\footnote{The relationship between $v$ and $f(t)$ is not immediate, and it is the object of the paper \cite{CornelissenKatoEtAl10}.} it gives rise to the curve $X$ by Schottky uniformization.
The immediate consequence of this fact, is that $X$ is a Mumford curve of genus $(q-1)^2$.

The group of automorphisms $\Aut(X)$ is isomorphic to a semi-direct product $(\Z/p\Z)^{2e} \rtimes D_{q-1}$, and its action is easy to describe using the equation of the curve: the elementary abelian subgroup $(\Z/p\Z)^{2e}$ consists of those automorphisms of the form $(x,y)\mapsto (x+\alpha,y+\beta)$ with $(\alpha,\beta)\in (\F_q)^2$, while the dihedral subgroup $D_{q-1}$ is generated by $(x,y) \mapsto (y,x)$ and $(x,y)\mapsto (\gamma x,\gamma^{-1}y)$  for $\gamma\in {\F}_q^\times$.
We deduce that the order of~$\Aut(X)$ is~$2(q-1)q^2$.
In characteristic 0, it is not possible to have these many automorphisms, thanks to bounds by Hurwitz and Herrlich that would give rise to a contradiction (see Appendix \ref{app:unif} for the precise statement of these bounds).
\end{example}

%
%


\addtocontents{toc}{\protect\setcounter{tocdepth}{0}}

\appendix

\section{Further readings}

\addcontentsline{toc}{chapter}{\textbf{Appendix A. \hspace{.2em} Further readings}}
%

\addtocontents{toc}{\protect\setcounter{tocdepth}{2}}

The theory of Berkovich curves has several applications to numerous fields of mathematics, and uniformization plays a role in many of these. 
A complete description of these applications goes far beyond the scope of the present text, but we would like to provide the interested reader with some hints about the state of the art and where to find more details in the existing literature, as well as point out which simplifications adopted in this text are actually instances of a much richer theory.

\subsection{Berkovich spaces and their skeleta}\label{app:Berkurves}

We provided a short introduction to the theory of Berkovich curves and their skeleta in Section~\ref{sec:smoothcurves} of this text.

The first discussion of this topic appears already in Chapter 4 of Berkovich's foundational book \cite{Berkovich90}. In this context, the definition of the skeleton of a Berkovich curve $X$ makes use of formal models and the semi-stable reduction theorem, that states that for the analytification of a smooth proper and geometrically irreducible algebraic curve over $k$, there exists a finite Galois extension $K$ of $k$ such that the base change $X_K$ has a semi-stable formal model.
Berkovich showed that the dual graph of the special fiber of any semi-stable formal model embeds in the curve $X_{K}$ and that it is invariant by the action of the Galois group $\Gal(K/k)$ over~$X_{K}$, which allows to define skeleta of~$X$ as quotients of skeleta of~$X_{K}$. This construction is again found in A. Thuillier's thesis \cite{ThuillierPhD}, where it is exploited to define a theory of harmonic functions on Berkovich curves.

A fruitful approach to the study of skeleta has been the one we adopted in Definition~\ref{def:skeleton}, \textit{via} the use of triangulations.
This was first introduced by Ducros in \cite{Ducros08} to study \'etale cohomology groups of Berkovich curves.
In the case where $k$ is algebraically closed, a comprehensive exposition of skeletons, retractions, and harmonic functions on non-Archimedean curves can be found in the paper \cite{BakerPayneEtAl14}.
There, the authors are motivated by connections with tropical geometry, as, for a given algebraic variety over $k$, the skeletons of its analytification are tightly related to its tropicalization maps.
Other than in the aforementioned paper, these connections are exposed in \cite{Werner16}, where the higher-dimensional cases are highlighted as well.

As for higher-dimensional spaces, Berkovich introduced skeleta in~\cite{Berkovich99}. They are simplicial sets onto which the spaces retract by deformation. They are constructed using semi-stable formal models and generalizations of them, so they are not known to exist in full generality, but Berkovich nonetheless managed to use them to prove that smooth spaces are locally contractible (hence admits universal covers). 

The connections with tropical geometry have been proven fruitful, among other things, to study finite covers of Berkovich curves $Y\to X$ over $k$.
The general pattern is that these covers are controlled by combinatorial objects that are enhanced versions of compatible pairs $(\Sigma_Y, \Sigma_X)$ of skeletons of the curves $Y$ and $X$.
Assume that $k$ is algebraically closed. Whenever the degree of such a cover is coprime with the residue characteristic of $k$, the papers \cite{AminiBakerEtAl15} and \cite{AminiBakerEtAl15a} give conditions on a pair $(\Sigma_Y, \Sigma_X)$ to lift to a finite morphism of curves $Y\to X$.
In the case of covers of degree divided by the residue characteristic of $k$, the situation is still far from understood, but progress has been made thanks to the work of Temkin and his collaborators in the papers \cite{CohenTemkinEtAl16,Temkin17,BreznerTemkin20}.
The main tool used in these works is the \emph{different function}.

With regard to higher-dimensional varieties, a new approach to skeletons was proposed by Hrushovski-Loeser \cite{HrushovskiLoeser} using techniques coming from model theory. They are able to define skeleta of analytifications of quasi-projective varieties and deduce the remarkable result that any such space has the homotopy type of a CW-complex.

In the specific case of curves over an algebraically closed base field, the paper~\cite{CubidesPoineau} uses triangulations in order to give a more concrete model-theoretic version of Berkovich curves (and morphisms between them). In particular, the authors manage to give an explicit description of definable subsets of curves and prove some tameness properties.

Without the assumption that $k$ is algebraically closed, or rather that $X$ has a semi-stable formal model over the valuation ring of $k$, the structure of analytic curves is much harder to grasp, due among other things to the difficulty of classifying virtual discs and virtual annuli.
The curious reader will find much food for thought in the book by Ducros \cite{DucrosRSS}, which can nevertheless be of difficult reading for a first approach. 
If $k$ is a discrete valuation field, a generalization of potential theory on Berkovich curves is provided in \cite{BakerNicaise} thanks to a careful study of regular models, and the introduction of the notion of \emph{weight function}. In regard to the problem of determining a minimal extension necessary for the existence of a semi-stable model, an approach via triangulations has been recently proposed in \cite{FantiniTurchetti19}.

Finally, let us mention that we choose to introduce Berkovich curves as $\A^1$-like curves because we are convinced that this is a natural framework for studying uniformization, but the general theory is much richer, and contains many examples of Berkovich curves that are not $\A^1$-like.

\subsection{Non-Archimedean uniformization in arithmetic geometry}\label{app:Shimura}

In the case of curves over the field of complex numbers, Schottky uniformization can be seen  in the context of the classical uniformization theorem for Riemann surfaces, proven independently by Koebe and Poincar\'e in 1907.
This states that every simply connected complex analytic curve $X$ is conformally equivalent to the complex projective line, the complex affine line, or the Poincar\'e upper-half plane.
As a consequence, the universal covering space of $X$ is also one of these, and when $X$ is compact, Koebe-Poincar\'e uniformization factors through the Schottky uniformization $\big(\pana{\C} - L \big) \to X$.
A remarkable book on complex uniformization \cite{SaintGervais10} has been written by the group of mathematicians known under the collective name of Henri Paul de Saint-Gervais. 
It constitutes an excellent reference both on the historical and mathematical aspects of the subject.

In the non-Archimedean case, the history of uniformization is much more recent. The uniformization theory of elliptic curves over a non-Archimedean field $(k, \va)$ was the main motivation underlying J. Tate's introduction of rigid analytic geometry in the 1960s.
Using his novel approach, Tate proved that every elliptic curve with split multiplicative reduction over $k$ is analytically isomorphic to the multiplicative group $k^\times/q^\Z$ for some $q$ in $k$ with $0<|q|<1$.
Tate's computations were known to experts, but remained unpublished until 1995, when they were presented in \cite{Tate} together with a discussion on further aspects of this theory, including automorphic functions, a classification of isogenies of Tate curves, and a brief mention of how to construct ``universal'' Tate curves over the ring $\Z[[q]][q^{-1}]$ using formal geometry.
These formal curves appeared for the first time in the paper \cite{DeligneRapoport73} by P. Deligne and M. Rapoport, who attributed it to M. Raynaud and called them \emph{generalized elliptic curves}.
In \emph{loc. cit.} the authors exploited them to give a moduli-theoretic interpretation at the cusps of the modular curves $X_0(Np)$ with $p \nmid N$.
Further reading in this direction include the foundational paper \cite{KatzMazur85}, that concerns the case of modular curves $X(Np^n)$ and \cite{Conrad07}, that provides a more contemporary perspective on generalized elliptic curves.

Interpreting the Schottky uniformization of Mumford curves of \cite{Mumford72} as a higher genus generalization of Tate's theory, inspired several novel arithmetic discoveries.
One of the most important is the uniformization of Shimura curves, fundamental objects in arithmetic geometry that vastly generalize modular curves.
In \cite{Cherednik76}, I. Cherednik considered a Shimura curve $\calC$ associated with a quaternion algebra $B$ over $\Q$. 
For a prime $p$ where $B$ is ramified, he proved that the $p$-adic analytic curve $(\calC\times_\Q \Q_p)^{\an}$ can be obtained as a quotient of \emph{Drinfeld $p$-adic halfplane} $\pana{\Q_p} - \pank(\Q_p)$, by the action of a Schottky group.
This Schottky group can be as a subgroup of a different quaternion algebra $B'$ over $\Q$, constructed explicitly from $B$ via a procedure known as \emph{interchange of invariants}.
The theory obtained in this way is classically referred to as \emph{Cherednik-Drinfeld uniformization}, since V. Drinfeld gave a different proof of this result in \cite{Drinfeld76}, building on a description of $\calC$ as a moduli space of certain abelian varieties.
The excellent paper \cite{BoutotCarayol91} provides a detailed account of these constructions.\\
By generalizing Drinfeld's modular interpretation, the approach can be extended to some higher dimensional Shimura varieties, resulting in their description as quotients of the Drinfeld upper-half space via a uniformization map introduced independently by G. Mustafin \cite{Mustafin78} and A. Kurihara \cite{Kurihara80}.
For a firsthand account of the development of this uniformization, we refer the reader to the book \cite{RapoportZink96} by M. Rapoport and T. Zink.

Non-Archimedean uniformization of Shimura varieties has remarkable consequences.
First of all, it makes possible to find and describe integral models of Shimura varieties, since the property of being uniformizable imposes restrictions on the special fibers of such models.
Furthermore, it gives a way to compute \'etale and $\ell$-adic cohomology groups, as well as the action of the absolute Galois group $\Gal(\overline{\Qp}/\Qp)$ on these, making it a powerful tool for studying Galois representations.
All the aforementioned results were shown in the framework of formal and rigid geometry. 
However, more contemporary approaches to uniformization of Shimura varieties and Rapoport-Zink spaces make use of Berkovich spaces (see \cite{Varshavsky98, JordanLivneEtAl03}), or Huber adic geometry in the form of perfectoid spaces (see \cite{ScholzeWeinstein13} and \cite{Caraiani19}). 
In particular, the perfectoid approach can be used to vastly generalize the uniformization of Shimura varieties and establish a theory of \emph{local Shimura varieties}. This construction is exposed in the lecture notes \cite{ScholzeWeinstein20} by P. Scholze and J. Weinstein.

Local and global uniformization of Shimura varieties are investigated in relation to period mappings, Gauss-Manin connections, and uniformizing differential equations in the book by Y.Andr\'e \cite{Andre03}, where striking similarities between the complex and $p$-adic cases are highlighted.
For more results about the relevance of Shimura varieties, not necessarily with regard to uniformization, we refer to \cite{Milne05}.

Finally, let us mention that Tate's uniformization of elliptic curves with split multiplicative reduction generalizes to abelian varieties.
This is also a result of Mumford, contained in the paper \cite{Mumford72a}, that can be regarded as a sequel to \cite{Mumford72}, since the underlying ideas are very similar.
In this case, the uniformization theorem is formulated by stating that a totally degenerate abelian variety of dimension $g$ over $k$ is isomorphic to the quotient of the analytic torus $(\G_{m,k}^g)^{\an}$ by the action of a torsion free subgroup of $(k^\times)^g$.
This applies in particular to Jacobians of Mumford curves, a case surveyed in detail in the monograph \cite{Lutkebohmert16}.
We shall remark that Mumford's constructions are more general than their presentation in this text: they work not only over non-Archimedean fields, but more generally over fields of fractions of complete integrally closed noetherian rings of any dimension.


\subsection{The relevance of Mumford curves}\label{app:unif}

The uniformization theorem in the complex setting is a very powerful tool, and one of the main sources of analytic methods applied to the study of algebraic curves.
This leads to the expectation that, in the non-Archimedean setting, Mumford curves can be more easily studied, turning out to be a good source of examples for testing certain conjectures.
This is indeed the case for several topics in algebraic curves and their applications, as we could already sample in Section \ref{sec:automorphisms} on the subject of computing the group of automorphisms of curves.

This appendix is a good place to remark that Examples \ref{ex:hyperelliptic} and \ref{ex:ArtinSchreierMumford} in that section are instances of a much deeper theory.
For a smooth projective algebraic curve $C$ of genus $g\geq 2$ over a field of characteristic zero, the Hurwitz bound ensures that the finite group of automorphisms $\Aut(C)$ is of order at most $84(g-1)$. 
This bound is sharp: there exist curves of arbitrarily high genus whose automorphism groups attain it, the so-called \emph{Hurwitz curves}.
However, if we know that $C$ is (the algebraization of) a Mumford curve, 
F. Herrlich proved a better bound in \cite{Herrlich80}.
Namely, if we denote by $p$ the residue characteristic of $K$, he showed that:

\[
|\Aut(C)|\leq
\begin{cases}
 48(g-1) & p=2\\
24(g-1) & p=3\\
30(g-1) & p=5\\
12(g-1) &  \mbox{otherwise}.
\end{cases}
\]
This result relies on the characterization of automorphism groups of Mumford curves as quotients $N/\Gamma$, where $\Gamma$ is a Schottky group associated with $C$ and $N$ its normalizer in $\PGL_2(K)$ (see Theorem \ref{thm:automorphismgroup}).
One can show that the group $N$ acts discontinuously on an infinite tree that contains the universal covering tree of the skeleton $\Sigma_{C^\mathrm{an}}$, and use Serre's theory of groups acting on trees to prove that $N$ is an amalgam of finite groups.
In his paper, Herrlich achieves the bounds above by classifying those amalgams that contain a Schottky group as a normal subgroup of finite index.

Over a field of characteristic $p>0$, the Hurwitz bound is replaced by the Stichtenoth bound, stating that $|\Aut(C)| \leq 16g^4$, unless $C$ is isomorphic to a Hermitian curve.
When $C$ is a Mumford curve, this bound can be improved in principle using Herrlich's strategy.
However, this is not an easy task, as one has to overcome the much bigger difficulties that arise in positive characteristic.
This has been achieved recently by M. Van der Put and H. Voskuil, who prove in \cite[Theorem 8.7]{VoskuilPut19} that $|\Aut(C)|<\max\{12(g-1) , g\sqrt{8g+1}+3\}$ except for three occurrences of (isomorphism classes of) $X$, which happen when $p=3$ and $g=6$.
Moreover, in \cite[Theorem 7.1]{VoskuilPut19} they show that the bound is achieved for any choice of the characteristic $p>0$.
The bound corrects and extends a bound given by G. Cornelissen, F. Kato and A. Kontogeorgis in \cite{CornelissenKatoEtAl01}.

Another application of uniformization of Mumford curves is the \emph{resolution of non-singularities} for hyperbolic curves\footnote{A hyperbolic curve in this context is a genus $g$ curve with $n$ marked points satisfying the inequality $2g-2+n>0$.} over $\overline{\Q}_p$.
Given such a curve $X$, and a smooth point $P$ of the special fiber of a semi-stable model of $X$, it is an open problem to find a finite \'etale cover $Y\longrightarrow X$ such that a whole irreducible component of the special fiber of the stable model of $Y$ lies above $P$.
Earlier versions of this problem were introduced and proved by S. Mochizuki \cite{Mochizuki96} and A. Tamagawa \cite{Tamagawa04}, that showed connections with important problems in anabelian geometry.
The interest of the version proposed here is also motivated by anabelian geometry: F. Pop and J. Stix proved in \cite{PopStix17} that any curve for which resolution of non-singularities holds satisfies also a valuative version of Grothendieck's section conjecture.
In the paper \cite{Lepage11}, E. Lepage uses Schottky uniformization in a Berkovich setting to show that resolution of non-singularities holds when $X$  is a hyperbolic Mumford curve.
His approach consists in studying $\mu_{p^n}$-torsors of the universal cover of $X$, which are better understood since they can be studied using logarithmic differentials of rational functions.
With this technique, he can show that there is a dense subset of type 2 points $\mathcal{V} \in X$, with the following property: every $x\in \mathcal{V}$ can be associated with a $\mu_{p^n}$-torsor $\tau:Y\to X$ such that $\tau^{-1}(x)$ is a point of positive genus.
This last condition ensures that the corresponding residue curve is an irreducible component of the stable model of $Y$.

Mumford curves have been also proven useful in purely analytic contexts, for instance to study potential theory and differential forms.
Using the fact that all type 2 points in a Mumford curve are of genus 0, P. Jell and V. Wanner \cite{JellWanner18} are able to establish a result of Poincar\'e duality and compute the Betti numbers of the tropical Dolbeaut cohomology arising from the theory of bi-graded real valued differential forms developed in \cite{ChambertLoirDucros12}.

Finally, let us mention that archimedean and non-archimedean Schottky uniformizations can be studied in a unified framework thanks to work of the authors~\cite{PoineauTurchetti}, where a moduli space $\calS_g$ parametrizing Schottky groups of fixed rank~$g$ over all possible valued fields is constructed for every $g\geq 2$.
This construction is performed in the framework of \emph{Berkovich spaces over $\Z$} developed in~\cite{A1Z, Poineau13, LemanissierPoineau}.
More precisely, the space $\calS_g$ is realized as an open, path-connected subspace of $\A^{3g-3, \mathrm{an}}_\Z$, it is endowed with a natural action of the group $\mathrm{Out}(F_g)$ of outer automorphisms of the free group, and exhibits interesting connections with other constructions of moduli spaces, in the frameworks of tropical geometry and geometric group theory.
The space $\calS_g$ seems to be ideal to study phenomena of degeneration of Schottky groups from archimedean to non-archimedean.

A different take on the interplay between archimedean and non-archimedean Schottky uniformizations is provided by Y. Manin's approach to Arakelov geometry.
In the paper \cite{Manin91} several formulas for computing the Green function on a Riemann surface using Schottky uniformization and are explicitly inspired by Mumford's construction.
These formulas involve the geodesics lengths in the hyperbolic handlebody uniformized by the Schottky group associated with such a surface, suggesting connections between hyperbolic geometry and non-archimedean analytic geometry.
This result has been reinterpreted in term of noncommutative geometry by C. Consani and M. Marcolli \cite{ConsaniMarcolli04} by replacing the Riemann surface with a noncommutative space that encodes certain properties of the archimedean Schottky uniformization.
This noncommutative formalism has led to applications both in the non-archimedean world (see for example \cite{ConsaniMarcolli03}) and in the archimedean one, for instance to Riemannian geometry in \cite{CornelissenMarcolli08}.
We think that the theory of Berkovich spaces could fit nicely in this picture, and it would be an interesting project to investigate the relations between noncommutative geometric objects related to Schottky uniformization (e.g. graph $C^\star$-algebras) and Mumford curves in the Berkovich setting.


\addtocontents{toc}{\vspace{1\baselineskip}}

\bibliographystyle{alpha}
\bibliography{MumfordBiblio}

\end{document}